\newtheorem{thm}{Theorem}[section]
\newtheorem{lemma}[thm]{Lemma}
\newtheorem{cor}[thm]{Corollary}
\newtheorem{prop}[thm]{Proposition}
\newtheorem{Definition}[thm]{Definition}
\newenvironment{definition}
  {\begin{Definition}\rm}{\end{Definition}}
\newtheorem{Example}[thm]{Example}
\newenvironment{example}
  {\pushQED{\qed}\begin{Example} \rm}{\popQED \end{Example}}
\newtheorem{Remark}[thm]{Remark}
\newenvironment{remark}
  {\pushQED{\qed}\begin{Remark}\rm}{\popQED \end{Remark}}
\newenvironment{customthm}[1]
  {\innercustomthm}
  {\endinnercustomthm}
\newenvironment{customlem}[1]
  {\innercustomlem}
  {\endinnercustomlem}
\numberwithin{equation}{section}
\apptocmd{\sloppy}{\hbadness 10000\relax}{}{}
\def\SetBasicGraph { 	
	\SetVertexMath
	\GraphInit[vstyle=Classic]
	\SetUpVertex[MinSize=0pt]
	\SetVertexLabel
	\tikzset{VertexStyle/.style = {shape = circle,fill = black,minimum size = 0pt,inner sep=0.75pt}}
	\SetUpEdge[color=black]
	\tikzset{->-/.style={decoration={ markings, mark=at position 0.8 with {\arrow{>}}},postaction={decorate}}}
}
\def\SetFancyGraph {
	\SetVertexMath
	\GraphInit[vstyle=Art]
	\SetUpVertex[MinSize=2pt]
	\SetVertexLabel
	\tikzset{VertexStyle/.style = {shape = circle,shading = ball,ball color = black,inner sep = 1.5pt}}
	\SetUpEdge[color=black]
	\tikzset{->-/.style={decoration={ markings, mark=at position 0.8 with {\arrow{>}}},postaction={decorate}}}
	\tikzset{->--/.style={decoration={ markings, mark=at position 0.55 with {\arrow{>}}},postaction={decorate}}}
}
\def\Triangle[#1,#2,#3,#4,#5] {
	\SetBasicGraph
	\Vertex[NoLabel,x=#1+-0.2,y=#2+-0.25]{v_1}
	\Vertex[NoLabel,x=#1+0.15,y=#2+0.25]{v_2}
	\Vertex[NoLabel,x=#1+0.5,y=#2+-0.25]{v_3}
	\ifthenelse{#3=0}{\Edges[style={thick}](v_1,v_2)}{}
	\ifthenelse{#3=1}{\Edges[style={->-,>=mytip2,thick}](v_1,v_2)}{}
	\ifthenelse{#3=-1}{\Edges[style={->-,>=mytip2,thick}](v_2,v_1)}{}
	\ifthenelse{#3=2}{\Edges[style={->,>=mytip2,thick}](v_1,v_2) \Edges[style={->,>=mytip2,thick}](v_2,v_1)}{}
	\ifthenelse{#4=0}{\Edges[style={thick}](v_1,v_3)}{}
	\ifthenelse{#4=1}{\Edges[style={->-,>=mytip2,thick}](v_3,v_1)}{}
	\ifthenelse{#4=-1}{\Edges[style={->-,>=mytip2,thick}](v_1,v_3)}{}
	\ifthenelse{#4=2}{\Edges[style={->,>=mytip2,thick}](v_1,v_3) \Edges[style={->,>=mytip2,thick}](v_3,v_1)}{}
	\ifthenelse{#5=0}{\Edges[style={thick}](v_2,v_3)}{}
	\ifthenelse{#5=1}{\Edges[style={->-,>=mytip2,thick}](v_3,v_2)}{}
	\ifthenelse{#5=-1}{\Edges[style={->-,>=mytip2,thick}](v_2,v_3)}{}
	\ifthenelse{#5=2}{\Edges[style={->,>=mytip2,thick}](v_2,v_3) \Edges[style={->,>=mytip2,thick}](v_3,v_2)}{}
}
\title{Fourientation activities and the Tutte polynomial}
\author{Spencer Backman}
\author{Sam Hopkins}
\author{Lorenzo Traldi}
\begin{document}

\begin{abstract}
A fourientation of a graph $G$ is a choice for each edge of the graph whether to orient that edge in either direction, leave it unoriented, or biorient it.  We may naturally view fourientations as a mixture of subgraphs and graph orientations where unoriented and bioriented edges play the role of absent and present subgraph edges, respectively.  Building on work of Backman and Hopkins~(2015), we show that given a linear order and a reference orientation of the edge set, one can define activities for fourientations of $G$ which allow for a new~$12$ variable expansion of the Tutte polynomial $T_G$.  Our formula specializes to both an orientation activities expansion of $T_G$ due to Las~Vergnas~(1984) and a generalized activities expansion of~$T_G$ due to Gordon and Traldi~(1990).
\end{abstract}

\maketitle

\section{Introduction}

This paper concerns the Tutte polynomial and its relationship to graph orientations. The Tutte polynomial~$T_G(x,y)$ of a graph~$G$ is among the most well-studied graph polynomials; see~\cite{welsh1999tutte}~\cite{welsh2000potts}. The connection between orientations and the Tutte polynomial goes back at least to the seminal work of Stanley~\cite{stanley1973acyclic} who showed that the number of acyclic orientations of~$G$ is~$T_G(2,0)$. For more on the history of orientations and the Tutte polynomial, including the work of many authors who showed that various classes of orientations are enumerated by the Tutte polynomial, see~\cite[\S1.1]{backman2015fourientations}. 

Backman and Hopkins~\cite{backman2015fourientations} discussed the connection between the Tutte polynomial and the~\emph{fourientations} of a graph. A fourientation of a graph is a kind of generalized orientation, given by specifying a set of orientations for each edge; the set may include neither orientation, either one, or both. The work of Backman and Hopkins was preceded by results of Gessel and Sagan~\cite{gessel1996tutte}, as well as Hopkins and Perkinson~\cite{hopkins2016bigraphical}, and Backman~\cite{backman2014partial}, showing that several classes of \emph{partial orientations} are also enumerated by the Tutte polynomial. Backman and Hopkins~\cite{backman2015fourientations} used fourientations to put these results about partial orientations, as well as the classical Tutte polynomial evaluations for orientations, in a unified framework.

The purpose of the present paper is to define activities for fourientations. Here ``activities'' refers to a pair of mappings defined by Tutte~\cite{tutte1954contribution}, which use a total order on $E(G)$ to associate two sets of edges to each spanning tree of a connected graph $G$ (\emph{internally active} edges are defined using cuts, and \emph{externally active} edges are defined using cycles). In fact, Tutte's original definition of his polynomial was in terms of internal and external activities of spanning trees. Las~Vergnas~\cite{las1984tutte} defined notions of cut and cycle activity for orientations, and found an expansion of the Tutte polynomial in terms of orientation activities that recaptures Stanley's result. In his final article, Las Vergnas refined this formula~\cite{las2012tutte} in a way which allowed him to recover additional evaluations such as Greene and Zaslavsky's interpretation of $T(1,0)$~\cite{greene1983interpretation}.  Las Vergnas's refined orientation expansion of the Tutte polynomial is very analogous to a generalized activities formula of Gordon-Traldi~\cite{gordon1990generalized}, which expresses the Tutte polynomial as a sum over spanning subgraphs. Gioan and Las Vergnas~\cite{gioan2009active}~\cite{gioanpreprint}~\cite{gioan2005activity} produced a bijection between certain classes of orientations and subgraphs, which is canonical up to edge order and allows for the orientation expansion of the Tutte polynomial to derived directly from the subgraph expansion. In this paper we offer a fourientation activities formula that simultaneously generalizes both the Las~Vergnas and Gordon-Traldi formulas and in addition recovers the main result of~\cite{backman2015fourientations}. 

Before stating our formula, we need to specify some notation and terminology. Let~$G$ be an undirected graph which may have multiple edges and/or loops. We use~$V(G)$ to denote the vertex set of $G$, and $E(G)$ the edge set of $G$. Throughout we will use~$n := \left\vert V(G) \right\vert$ for the number of vertices of $G$ and~\mbox{$g := \left\vert E(G) \right\vert - \left\vert V(G) \right\vert + \kappa$} for the cyclomatic number, where $\kappa := \kappa(G)$ is the number of connected components of~$G$. For basic background on and terminology for graphs, including such concepts as \emph{cycles}, \emph{cuts}, \emph{deletion-contraction}, and the \emph{Tutte polynomial}, see~\cite[\S2.1]{backman2015fourientations} and~\cite{welsh1999tutte}~\cite{welsh2000potts}. Recall that a \emph{spanning subgraph} of $G$ is a subgraph $H=(V(G),E(H))$ with $E(H) \subseteq E(G)$; i.e., it is a subgraph that includes all the vertices of $G$ and some edges of $G$. We identify such a subgraph $H$ with its subset of edges~$S := E(H)$. Therefore we let~$\mathcal{S}(G) := 2^{E(G)}$ denote the set of spanning subgraphs of $G$.

In order to talk about orientations of $G$ it is helpful to have a fixed reference orientation~$O_{\mathrm{ref}}$. The reference orientation~$O_{\mathrm{ref}}$ is a choice for each edge $e = \{u,v\} \in E(G)$ of a positive direction~$e^{+} = (u,v)$ and therefore also a negative direction $e^{-} = (v,u)$. With respect to~$O_{\mathrm{ref}}$ an orientation of $G$ is then just a subset $O \subseteq \mathbb{E}(G)$ of the set~$\mathbb{E}(G):=\{e^+,e^-\colon e \in E(G)\}$, which satisfies~$\left\vert \{e^+,e^-\} \cap O \right\vert  = 1$ for all $e \in E(G)$. Here we identify an orientation~$O$ (which we have defined to be just a set of formal symbols) with the set of directed edges~$\{(u,v)\colon e^{\delta} = (u,v) \in O, \delta \in \{+,-\}\}$ and this identification depends implicitly on $O_{\mathrm{ref}}$. We use $\mathcal{O}(G)$ to denote the set of orientations of $G$. Note that $\left\vert \mathcal{O}(G) \right\vert  = 2^{\left\vert E(G) \right\vert }$ even when $G$ has loops or multiple edges. 

Recall from Backman-Hopkins~\cite{backman2015fourientations} that a \emph{fourientation} of a graph $G$ with respect to some fixed reference orientation $O_{\mathrm{ref}}$ is just an arbitrary subset of~$\mathbb{E}(G)$. We use~$\mathcal{O}^{4}(G)$ to denote the set of fourientations of $G$. (The superscript is intended not as an exponent but merely as a reference to the word ``four''.) Let~$O \in \mathcal{O}^{4}(G)$. We say that~$e \in E(G)$ is \emph{unoriented} in $O$ if $\{e^{+},e^{-}\} \cap O = \varnothing$ and we say that $e$ is \emph{bioriented} in~$O$ if $\{e^{+},e^{-}\} \subseteq O$. We say $e$ is \emph{simply oriented}, or just \emph{oriented}, in $O$ if it is neither unoriented nor bioriented. Let $O^o \in \mathcal{S}(G)$ denote the set of oriented edges of~$O$, $O^u \in \mathcal{S}(G)$ the set of unoriented edges, and $O^b \in \mathcal{S}(G)$ the set of bioriented edges. Let $O^+$ denote the set of oriented edges of~$O$ oriented in agreement with $O_{\mathrm{ref}}$ and $O^{-}$ the set of oriented edges oriented in disagreement with $O_{\mathrm{ref}}$, so that $O^{o} = O^{+} \sqcup O^{-}$.

\begin{thm} \label{thm:main}
There exist fourientation cut activities $I\colon \mathcal{O}^{4}(G)\to \mathcal{S}(G)$ and cycle activities~\mbox{$L\colon \mathcal{O}^{4}(G) \to \mathcal{S}(G)$}, as well as a map $\varphi\colon \mathcal{O}^{4}(G) \to \mathcal{S}(G)$, such that
{\footnotesize \begin{gather*}
(k_1+m)^{n-\kappa}(k_2+l)^{g}T_G\left(\frac{k_1x+k_2w+m\hat{x}+l\hat{w}}{k_1+m},\frac{k_2y+k_1z+l\hat{y}+m\hat{z}}{k_2+l}\right) =  \\
\sum_{O \in \mathcal{O}^{4}(G)} \hspace{-0.3cm} k_1^{\left\vert O^{o} \cap \varphi(O) \right\vert}k_2^{\left\vert O^{o} \setminus \varphi(O) \right\vert}l^{\left\vert O^{u} \right\vert}m^{\left\vert O^{b}\right\vert}x^{\left\vert I^{+}(O) \right\vert} w^{\left\vert I^{-}(O) \right\vert} {\hat{x}}^{\left\vert I^{b}(O)\right\vert} {\hat{w}}^{\left\vert I^{u}(O) \right\vert} y^{\left\vert L^{+}(O) \right\vert} z^{\left\vert L^{-}(O) \right\vert}{\hat{y}}^{\left\vert L^{u}(O) \right\vert} {\hat{z}}^{\left\vert L^{b}(O) \right\vert},
\end{gather*}}
where we use the notation $I^{o}(O) := I(O) \cap O^{o}$, $L^{b}(O) := L(O) \cap O^{b}$ and so on.
\end{thm}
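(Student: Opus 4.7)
The strategy is to reduce the identity to the Gordon--Traldi generalized activities expansion of $T_G$ by partitioning the fourientations of $G$ according to a canonical associated spanning subgraph $\varphi(O)$, then exhibiting the sum over each fiber as an explicit product over edges.

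First, I would define $\varphi\colon \mathcal{O}^{4}(G)\to \mathcal{S}(G)$ greedily with respect to the fixed linear order on $E(G)$: every bioriented edge is placed in $\varphi(O)$, every unoriented edge is omitted, and each simply oriented edge $e$ is included or excluded according to a local min-edge rule comparing $e$ (using the sign of its orientation relative to $O_{\mathrm{ref}}$) with the minimum element of the fundamental cut or cycle it determines among previously-processed edges. This is the natural extension to fourientations of the Gioan--Las~Vergnas active bijection, recast in the unoriented/bioriented language of Backman--Hopkins. I would then set $I(O)$ and $L(O)$ to be the internal and external activity sets of the spanning subgraph $\varphi(O)$ in the Gordon--Traldi sense.

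Next, I would show that the fiber $\varphi^{-1}(S)$ has a product structure for each $S\in\mathcal{S}(G)$: the state of each edge $e$ in a fourientation $O\in\varphi^{-1}(S)$ is constrained only by whether $e\in S$ and whether $e$ is internally active, externally active, or inactive in $S$. Summing the right-hand side monomial over the fiber therefore factors as a product over edges of explicit polynomials $f_{e}(k_{1},k_{2},l,m,x,w,\hat{x},\hat{w},y,z,\hat{y},\hat{z})$ depending only on these four edge types. Summing these products over all $S\in\mathcal{S}(G)$ will match the Gordon--Traldi six-variable subgraph expansion of $T_G$: at inactive edges the factors involving $k_{1},k_{2},l,m$ account for the prefactors $(k_{1}+m)^{n-\kappa}(k_{2}+l)^{g}$ (an edge in $S$ may be oriented-in or bioriented, while an edge outside $S$ may be oriented-out or unoriented), and at active edges the factors recover the weighted arguments $(k_{1}x+k_{2}w+m\hat{x}+l\hat{w})/(k_{1}+m)$ and $(k_{2}y+k_{1}z+l\hat{y}+m\hat{z})/(k_{2}+l)$ fed into $T_G$.

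The main obstacle is designing $\varphi$ in the first step so that the fibers factorize cleanly. Since $I$ and $L$ are defined via global cut/cycle data on $\varphi(O)$, the constraint $\varphi(O)=S$ \emph{a priori} couples the states of different edges in $O$. The resolution is that the greedy min-edge rule used to build $\varphi$ depends on each oriented edge only through a single comparison with a single other edge, so that once $S$ is fixed the constraints on $O$ split edgewise. Verifying this carefully---and that the resulting construction specializes correctly to the Las~Vergnas orientation formula (taking $l=m=0$), to the Gordon--Traldi subgraph formula (degenerating the orientation variables), and to the unweighted fourientation count of Backman--Hopkins (setting all activity variables equal to $1$)---is where the combinatorial work lies; once these checks are in hand, the theorem follows by summing the local factors and invoking Gordon--Traldi.
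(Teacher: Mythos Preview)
Your high-level strategy---reduce to the Gordon--Traldi expansion via a map $\varphi$ and then sum over fibers---matches the paper exactly, and you are right that constructing $\varphi$ is the entire difficulty. But your proposed construction of $\varphi$ is where the plan breaks down.

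You describe $\varphi$ as a ``greedy'' rule that processes edges in order and decides, for each simply oriented edge, whether to include it via a ``local min-edge rule comparing $e$ \ldots\ with the minimum element of the fundamental cut or cycle it determines among previously-processed edges.'' This is too vague to be a definition (there is no spanning tree yet, so it is unclear what fundamental cut or cycle you mean), and more importantly the paper gives concrete evidence that no such local rule can work. In the triangle example the paper exhibits fourientations $O_1,O_2$ that agree on $\{e_2,e_3\}$ yet have $e_2\in I^b(O_1)$ and $e_2\notin I^b(O_2)$; since $I^b(O)=\hat I(\varphi(O))\cap O^b$, this means $\varphi(O)$ cannot be computed edge-by-edge from edges $\geq e$. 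The paper also notes that its $\varphi$ fails the characteristic property of the Gioan--Las~Vergnas active bijection (an orientation and its full reversal need not land in the same Crapo interval), so ``the natural extension of the active bijection'' is not what is being built here.

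What the paper actually does is define $\varphi$ recursively by deleting or contracting the \emph{maximum} edge $e$, with the choice for an oriented $e$ governed by a dichotomy lemma: for every fourientation $O$, at least one of two packages of eight equalities relating the activity sets $I^o,I^u,L^o,L^b$ of $O$, ${}^eO$, $O/e$, and $O-e$ holds, and which package holds dictates whether $e\in\varphi(O)$. Proving this lemma occupies a long appendix. Once $\varphi$ is built this way, the paper shows that $O\mapsto(\varphi(O),O^o)$ is a bijection $\mathcal{O}^4(G)\to\mathcal{S}(G)\times\mathcal{S}(G)$ and that $I^+(O)\subseteq\varphi(O)$, $I^-(O)\cap\varphi(O)=\varnothing$, etc. These two facts together are what make all twelve exponents functions of the pair $(S,S')=(\varphi(O),O^o)$, which is the precise content behind your ``fibers factorize edgewise'' slogan. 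Your justification for that factorization (``the greedy rule depends on each oriented edge only through a single comparison'') is not correct; the decoupling comes instead from the bijection with $\mathcal{S}(G)\times\mathcal{S}(G)$ and the containment relations between $I^\pm(O),L^\pm(O)$ and $\varphi(O)$, both of which have to be proved inductively from the key lemma.
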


All parameters in Theorem~\ref{thm:main} are described later in the paper; see Figure~\ref{fig:parameters}.

Now let us briefly outline the structure of the paper. In Section~\ref{sec:setup} we recall the formulas of Gordon-Traldi~\cite{gordon1990generalized} and Las Vergnas~\cite{las1984tutte}. In Section~\ref{sec:main} we provide more details about the significance of Theorem~\ref{thm:main}, and we connect it to a number of known results; in particular, the formulas of Gordon-Traldi and Las Vergnas are both evaluations of the formula of Theorem~\ref{thm:main}. We also outline the proof of Theorem~\ref{thm:main}, which is modeled on Las~Vergnas's justification of his orientation activity formula~\cite{las1984tutte}. (In contrast with Las~Vergnas's discussion, though, our proof is strictly graph-theoretic.) The centerpiece of the proof is the construction of the surjection~$\varphi$ from fourientations to subgraphs. The activity mappings $I$ and $L$ are derived from $\varphi$ using generalized activities, and this derivation gives rise to the connection between Theorem~\ref{thm:main} and the formula of Gordon-Traldi~\cite{gordon1990generalized}. The connection between Theorem~\ref{thm:main} and the formula of Las Vergnas~\cite{las1984tutte} arises from the fact that although the precise significance of $I$ and $L$ is obscured by the recursive definition of $\varphi$, the sets $I^o$, $L^o$, $I^u$ and $L^b$ have separate descriptions which do not require recursion. Our construction of~$\varphi$ is conditional on a certain key lemma, Lemma~\ref{lem:key}, whose long and technical proof we include in an appendix. In Section~\ref{sec:ex} we give an example of the surjection~$\varphi$ and discuss how~$\varphi$ can be rather subtle. In Section~\ref{sec:future} we discuss some possible extensions of our results and future directions. In Appendix~\ref{sec:genact} we review the proofs of some results related to the generalized activities of Gordon-Traldi in order to make this paper self-contained. And in Appendix~\ref{sec:proofkeylem} we finally prove the key lemma.

\noindent {\bf Acknowledgements}: We are grateful to Emeric Gioan for illuminating conversations about activity-preserving bijections, and for sharing~\cite{gioanpreprint} with us. We are also grateful to two anonymous readers, whose advice improved the readability of the paper.  The second author was supported by NSF grant~$\#1122374$. An extended abstract of this paper appeared in the proceedings of the 28th International Conference on Formal Power Series and Algebraic Combinatorics (FPSAC '16)~\cite{backman2016abstract}. 

\section{Background results} \label{sec:setup}

In order to fix a convention, let us take as definition of the Tutte polynomial~$T_G(x,y)$ of~$G$ the following:
\[T_G(x,y) := \sum_{S \in \mathcal{S}(G)} (x-1)^{\kappa(S)-\kappa} (y-1)^{\left\vert S \right\vert + \kappa(S) - n},\]
where $\kappa(S) := \kappa(V(G),S)$ is the number of connected components of the spanning subgraph~$(V(G),S)$. This is known as the \emph{corank-nullity} generating function expansion for the Tutte polynomial (see for instance~\cite[(2.5)]{welsh1999tutte}~\cite[(8)]{welsh2000potts}). When written in this form, the closely related polynomial~$T_G(1+x,1+y)$ is often called the \emph{Whitney rank generating function} of~$G$ in recognition of the fundamental contributions of Whitney~\cite{whitney1935abstract}.

\subsection{Generalized activities}

Recall that a cut $Cu$ of our graph~$G$ is given by a subset $E(Cu) \subseteq E(G)$, which is minimal among nonempty subsets $S \in \mathcal{S}(G)$ such that $\kappa(E(G) \setminus S)> \kappa(G)$. A cycle $Cy$ is given by a subset $E(Cy) \subseteq E(G)$, which is minimal among nonempty subsets $S \in \mathcal{S}(G)$ such that no cut of $G$ includes precisely one element of $S$. If~$S \in \mathcal{S}(G)$ then abusing notation, we say that a cut $Cu$ of $G$ is a \emph{cut of~$S$} if~$E(Cu) \cap S = \varnothing$. Similarly we say that a cycle~$Cy$ of $G$ is a \emph{cycle of~$S$} if $E(Cy) \subseteq S$. Let $<$ be a total order on~$E(G)$. Gordon-Traldi~\cite{gordon1990generalized} define generalized cut and cycle activities for arbitrary spanning subgraphs of $G$ as follows. An edge~$e \in E(G)$ is \emph{cut active} with respect to $S$ if it is the min edge (where ``min'' is the minimum according to~$<$) in~$E(Cu)$ for $Cu$ some cut of~$S \setminus \{e\}$. We use~$\hat{I}(S)$ (where $I$ is for \emph{isthmus}) to denote the set of cut active edges of~$S$. Similarly,~$e$ is \emph{cycle active} with respect to $S$ if it is the min edge in $E(Cy)$ for $Cy$ some cycle of~$S \cup \{e\}$. We use~$\hat{L}(S)$ (where $L$ is for \emph{loop}) to denote the set of cycle active edges of $S$. (The notation~$I(S)$, $L(S)$ is used in~\cite{gordon1990generalized}, but we use $\hat{I}(S)$, $\hat{L}(S)$ here in order to distinguish between generalized activities and other notions of activity defined below, using orientations and fourientations.) The term~\emph{generalized activities} is used because these definitions directly generalize Tutte's original definition of activity~\cite{tutte1954contribution} from spanning trees to spanning subgraphs. Observe that the maps~$\hat{I},\hat{L}\colon \mathcal{S}(G) \to \mathcal{S}(G)$ depend on the fixed edge order~$<$ but we leave this dependence implicit. One easy consequence of the definitions is that~$\hat{I}(S) \cap \hat{L}(S) = \varnothing$ for all $S \in \mathcal{S}(G)$. We now review a few other basic results about the generalized activities~$\hat{I}$ and~$\hat{L}$. For the sake of completeness, we present proofs of these results in Appendix~\ref{sec:genact}. 

\begin{lemma}[{Gordon-Traldi~\cite[Theorem 2]{gordon1990generalized}}] \label{lem:intervals}
Let $S, T \in \mathcal{S}(G)$ with~$S \setminus (\hat{I}(S) \cup \hat{L}(S)) \subseteq T \subseteq S \cup \hat{I}(S) \cup \hat{L}(S)$. Then~$\hat{I}(S) = \hat{I}(T)$ and~$\hat{L}(S) = \hat{L}(T)$.
\end{lemma}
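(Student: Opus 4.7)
The plan is to induct on $|S \triangle T|$. The base case $T = S$ is trivial. In the inductive step, pick any $e \in S \triangle T$; the interval hypothesis forces $e \in \hat{I}(S) \cup \hat{L}(S)$. Setting $S' := S \triangle \{e\}$, it will suffice to prove the single-edge version $\hat{I}(S) = \hat{I}(S')$ and $\hat{L}(S) = \hat{L}(S')$, since then $|S' \triangle T| < |S \triangle T|$ and the interval hypothesis transfers from $(S,T)$ to $(S',T)$, so the outer induction applies.

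To prove the single-edge statement I split into Case~A ($e \in \hat{I}(S)$, with a witness cut $Cu_0$ such that $e = \min E(Cu_0)$ and $E(Cu_0) \cap (S \setminus \{e\}) = \emptyset$) and Case~B ($e \in \hat{L}(S)$, dual, with a witness cycle $Cy_0$). Matroid duality makes the cases symmetric, so I describe Case~A. Membership of $e$ itself is immediate: cut-activity depends only on $S \setminus \{e\} = S' \setminus \{e\}$, and cycle-activity only on $S \cup \{e\} = S' \cup \{e\}$.

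For $f \neq e$, the equality $\hat{I}(S) = \hat{I}(S')$ is handled via cut elimination. Given a cut $Cu$ witnessing $f \in \hat{I}(\cdot)$ on either side, if $e \notin E(Cu)$ the same $Cu$ works on both sides. Otherwise $e \in E(Cu)$; the condition $f = \min E(Cu)$ with $f \neq e$ forces $f < e$, and hence $f \notin E(Cu_0)$. Cut elimination applied to $(Cu, Cu_0)$ at $e$ then produces a cut $Cu''$ with $f \in E(Cu'') \subseteq (E(Cu) \cup E(Cu_0)) \setminus \{e\}$; using $E(Cu_0) \cap S \subseteq \{e\}$ and the witness property of $Cu$, one checks $f = \min E(Cu'')$ and $E(Cu'') \cap S \subseteq \{f\}$, so $Cu''$ is a valid witness on both sides.

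The delicate step is $\hat{L}(S) = \hat{L}(S')$, where I claim no cycle witness for $f \neq e$ can contain $e$. Suppose $Cy$ is such a witness with $e \in E(Cy)$; then $E(Cy) \subseteq S \cup \{e, f\}$, and by cycle--cut orthogonality $|E(Cy) \cap E(Cu_0)| \neq 1$, so there is some $g \neq e$ in that intersection. Since $E(Cu_0)$ avoids $S \setminus \{e\}$, necessarily $g = f$, so $f \in E(Cu_0)$. But then $e = \min E(Cu_0)$ and $f \neq e$ give $f > e$, while $f = \min E(Cy)$ together with $e \in E(Cy)$, $e \neq f$, give $e > f$---a contradiction. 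So $e \notin E(Cy)$ and the same witness works on both sides. This min-order/orthogonality clash is the principal technical obstacle; Case~B is dispatched dually, interchanging cuts and cycles throughout.
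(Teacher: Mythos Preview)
Your proof is correct, but it takes a genuinely different route from the paper's. The paper first establishes an alternative characterization of the activities via a deletion--contraction tree (Proposition~\ref{prop:altgenactdef}): processing edges from largest to smallest, $e_{p-i}\in\hat{I}(S)$ precisely when it is an isthmus in the minor $G_i(S)$ obtained so far, and dually for $\hat{L}$. The proof of the lemma is then a one-liner: $S$ and $T$ differ only on edges that are isthmuses or loops at the moment they are processed, and deleting versus contracting such an edge does not change which later edges become isthmuses or loops.

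Your argument instead works directly from the cut/cycle definitions, reducing by induction on $|S\triangle T|$ to the single-edge toggle and then invoking strong cocircuit elimination together with circuit--cocircuit orthogonality. This is a clean matroid-axiomatic proof: it never mentions deletion or contraction and would carry over verbatim to arbitrary matroids. The paper's approach, by contrast, is tailored to the recursive deletion--contraction framework that drives the rest of the paper (the surjection $\varphi$ is built the same way), so their proof integrates more naturally with the surrounding machinery even though yours is arguably more self-contained.
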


\begin{definition} \label{def:crapointervals}
For $S,T \in \mathcal{S}(G)$ we use $S\sim T$ to denote
\[ S \setminus (\hat{I}(S) \cup \hat{L}(S)) \subseteq T \subseteq S \cup \hat{I}(S) \cup \hat{L}(S).\]
Lemma~\ref{lem:intervals} guarantees that $\sim$ is an equivalence relation on $\mathcal{S}(G)$. Observe that the equivalence class to which $S \in \mathcal{S}(G)$ belongs is $[ S \setminus (\hat{I}(S) \cup \hat{L}(S)), S \cup \hat{I}(S) \cup \hat{L}(S)]$, an interval in the poset of spanning subgraphs~$\mathcal{S}(G)$ ordered by containment. This partition of~$\mathcal{S}(G)$ into intervals was first demonstrated by Crapo~\cite{crapo1969tutte}, so we refer to the equivalence classes of subgraphs under~$\sim$ as~\emph{Crapo intervals}.
\end{definition}

\begin{lemma} [{Gordon-Traldi~\cite[Theorem 1]{gordon1990generalized}}]\label{lem:rankact}
We have $\left\vert\hat{I}(S) \setminus S\right\vert = \kappa(S) - \kappa$ and $\left\vert \hat{L}(S) \cap S \right\vert = \kappa(S) + \left\vert S \right\vert-n$ for all~$S \in \mathcal{S}(G)$.
\end{lemma}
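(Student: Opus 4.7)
The plan is to re-express cut- and cycle-activity in terms that admit a direct, edge-by-edge counting argument. Specifically, I will establish two local characterizations: for $e \in E(G) \setminus S$, $e \in \hat{I}(S)$ if and only if $e$ joins two distinct components of the spanning subgraph $(V(G), S \cup \{f \in E(G) : f < e\})$; and for $e \in S$, $e \in \hat{L}(S)$ if and only if both endpoints of $e$ lie in the same component of $(V(G), S \cap \{f \in E(G) : f > e\})$. Given these characterizations, both equalities will follow by greedily adding edges and tracking how the number of components changes.

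For the cut-activity characterization, I will unwind the definition. Since $e \notin S$, the condition $E(Cu) \cap (S \setminus \{e\}) = \varnothing$ simplifies to $E(Cu) \cap S = \varnothing$, and combining this with $e = \min E(Cu)$ gives the equivalent condition that there is a cut $Cu$ of $G$ with $e \in E(Cu)$ and $E(Cu) \cap X = \varnothing$, where $X := S \cup \{f \in E(G) : f < e\}$. I will show that such a cut exists if and only if $e$ joins two components of $(V(G), X)$. The forward direction is direct: if $Cu$ is disjoint from $X$ and contains $e$, then removing $E(Cu)$ from $G$ separates the endpoints of $e$, and since $X \subseteq E(G) \setminus E(Cu)$ the endpoints of $e$ are already separated in $(V(G), X)$. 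For the converse, suppose the endpoints of $e$ lie in distinct components of $(V(G), X)$ and let $A$ be the component containing one endpoint of $e$; then the edge boundary $\partial A := \{e' \in E(G) : e' \text{ has exactly one endpoint in } A\}$ contains $e$ and is disjoint from $X$, and one extracts a minimal bond through $e$ inside $\partial A$ to obtain the required cut $Cu$. The cycle-activity characterization is dual and somewhat easier: $e \in \hat{L}(S) \cap S$ means $e$ lies on a cycle contained in $S \cap \{f \geq e\}$; the forward direction takes that cycle minus $e$ as a path in $S \cap \{f > e\}$ between the endpoints of $e$, and the reverse direction concatenates any such path with $e$ to form the cycle.

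Granting these characterizations, the first count proceeds as follows. Enumerate the edges of $E(G) \setminus S$ in increasing order $e_1 < \cdots < e_k$ and set $X_i := S \cup \{e_1, \ldots, e_i\}$, so that $X_0 = S$, $X_k = E(G)$, and $X_{i-1} = S \cup \{f \in E(G) : f < e_i\}$. The characterization shows that $e_i \in \hat{I}(S) \setminus S$ precisely when adding $e_i$ to $(V(G), X_{i-1})$ decreases the component count by one, so summing these increments yields $|\hat{I}(S) \setminus S| = \kappa(X_0) - \kappa(X_k) = \kappa(S) - \kappa$. The second count is symmetric: enumerate the edges of $S$ in decreasing order $e_1 > \cdots > e_k$ (with $k = |S|$) and set $Y_i := \{e_1, \ldots, e_i\}$, so that $Y_{i-1} = S \cap \{f \in E(G) : f > e_i\}$. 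By the characterization, $e_i \in \hat{L}(S) \cap S$ iff adding $e_i$ does not decrease the component count of $(V(G), Y_{i-1})$. Among the $|S|$ additions, exactly $\kappa(Y_0) - \kappa(Y_k) = n - \kappa(S)$ are decreases, so the remaining $|S| - (n - \kappa(S)) = \kappa(S) + |S| - n$ are non-decreases, giving the second equality.

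The main obstacle is verifying the local characterization for $\hat{I}$ in a clean graph-theoretic fashion, specifically extracting a genuine bond through $e$ from the edge boundary $\partial A$ (and not just some arbitrary disconnecting set). Once that step is settled, the dual characterization for $\hat{L}$ and both greedy counts are essentially routine.
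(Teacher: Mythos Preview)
Your proof is correct and takes a genuinely different route from the paper's. The paper's argument leans on Lemma~\ref{lem:intervals}: it passes from $S$ to the maximal forest $T=(S\setminus\hat{L}(S))\cup\hat{I}(S)$ in the same Crapo interval, notes that for $T$ both sides of each equation vanish, and then toggles edges of $\hat{I}(T)$ and $\hat{L}(T)$ one at a time (each toggle changing both sides by~$1$) to reach $S$. Your argument avoids Lemma~\ref{lem:intervals} entirely: you give direct local characterizations of $\hat{I}(S)\setminus S$ and $\hat{L}(S)\cap S$ (essentially, $e\notin S$ is cut-active iff $e\notin\mathrm{cl}(S\cup\{f<e\})$ in the graphic matroid, and dually for cycle activity), and then run a greedy sweep that counts component drops. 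This is self-contained and arguably more elementary; the paper's version is shorter only because it amortizes the work into the already-proved interval lemma. One small point: your ``extract a bond through $e$ inside $\partial A$'' step is correct but deserves a sentence --- the cleanest justification is that in $G/X$ the edge $e$ is not a loop (its endpoints lie in distinct $X$-components), hence $e$ lies in some cocircuit of $G/X$, and cocircuits of $G/X$ are exactly the bonds of $G$ disjoint from $X$.
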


The main result about the subgraph activities~$\hat{I}$ and~$\hat{L}$ is~\cite[Theorem 3]{gordon1990generalized}, which provides the following expansion of the Tutte polynomial~$T_G(x,y)$ of~$G$. 

\begin{thm}[{Gordon-Traldi~\cite[Theorem 3]{gordon1990generalized}; see also~\cite[Theorem 3.5]{las2013tutte}}] \label{thm:gordontraldi}
For any graph~$G$ and any total order $<$ on $E(G)$ we have
\[T_G(\hat{x}+\hat{w},\hat{y}+\hat{z}) = \sum_{S \in \mathcal{S}(G)} {\hat{x}}^{\left\vert \hat{I}(S)\cap S\right\vert} {\hat{w}}^{\left\vert \hat{I}(S)\setminus S\right\vert} {\hat{y}}^{\left\vert \hat{L}(S)\setminus S\right\vert} {\hat{z}}^{\left\vert \hat{L}(S)\cap S\right\vert}.\]
\end{thm}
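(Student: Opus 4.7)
The plan is to exploit the Crapo interval partition furnished by Definition~\ref{def:crapointervals} and Lemma~\ref{lem:intervals}. The equivalence relation $\sim$ groups $\mathcal{S}(G)$ into intervals $[A_C,B_C]$ on which both $\hat{I}$ and $\hat{L}$ are constant; call the common values $I_C$ and $L_C$. Unwinding the definitions gives $B_C \setminus A_C = I_C \sqcup L_C$ (disjoint since $\hat{I}(S) \cap \hat{L}(S) = \varnothing$), so each $T \in [A_C, B_C]$ is determined by making two independent choices of subsets $T \cap I_C \subseteq I_C$ and $T \cap L_C \subseteq L_C$.

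First I would evaluate the right-hand side of the claimed formula on a single Crapo interval. Because $|I_C \cap T| + |I_C \setminus T| = |I_C|$ and $|L_C \cap T| + |L_C \setminus T| = |L_C|$, applying the binomial theorem in each factor yields
\[
\sum_{T \in [A_C,B_C]} \hat{x}^{|I_C \cap T|}\hat{w}^{|I_C \setminus T|}\hat{y}^{|L_C \setminus T|}\hat{z}^{|L_C \cap T|} = (\hat{x}+\hat{w})^{|I_C|}(\hat{y}+\hat{z})^{|L_C|}.
\]

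Next I would apply Lemma~\ref{lem:rankact} to rewrite the corank-nullity expansion as
\[
T_G(x,y) = \sum_{S \in \mathcal{S}(G)}(x-1)^{|\hat{I}(S)\setminus S|}(y-1)^{|\hat{L}(S)\cap S|},
\]
specialize to $(x,y)=(\hat{x}+\hat{w},\hat{y}+\hat{z})$, and again group by Crapo intervals. With the same parametrization, the binomial identities
\[
\sum_{I' \subseteq I_C}(\hat{x}+\hat{w}-1)^{|I_C|-|I'|} = (\hat{x}+\hat{w})^{|I_C|} \quad\text{and}\quad \sum_{L' \subseteq L_C}(\hat{y}+\hat{z}-1)^{|L'|} = (\hat{y}+\hat{z})^{|L_C|}
\]
collapse each interval's contribution to $(\hat{x}+\hat{w})^{|I_C|}(\hat{y}+\hat{z})^{|L_C|}$. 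Summing over all Crapo intervals, both sides of the theorem reduce to $\sum_C (\hat{x}+\hat{w})^{|I_C|}(\hat{y}+\hat{z})^{|L_C|}$, which finishes the proof.

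The substantive content is packaged in Lemmas~\ref{lem:intervals} and~\ref{lem:rankact}: the former lets us partition $\mathcal{S}(G)$ so that activities are locally constant, and the latter converts corank and nullity into the sizes of active-edge complements. Given both, there is no real obstacle in the theorem itself; the only bookkeeping is verifying the decomposition $B_C \setminus A_C = I_C \sqcup L_C$ and then applying the binomial theorem twice on each interval.
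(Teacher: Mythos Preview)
Your proof is correct and follows essentially the same approach as the paper's own argument in Appendix~\ref{sec:genact}: partition $\mathcal{S}(G)$ into Crapo intervals using Lemma~\ref{lem:intervals}, collapse each interval on both sides to $(\hat{x}+\hat{w})^{|I_C|}(\hat{y}+\hat{z})^{|L_C|}$ via the binomial theorem, and invoke Lemma~\ref{lem:rankact} to identify the corank-nullity exponents with $|\hat{I}(S)\setminus S|$ and $|\hat{L}(S)\cap S|$. The paper packages the two binomial collapses into a single Corollary~\ref{activerank} and indexes the intervals by their maximal forests, but the content is the same.
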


The expansion given in the theorem recovers many other expansions of the Tutte polynomial. For instance, when $G$ is connected, taking $\hat{w} :=0$ and $\hat{z} := 0$ recovers Tutte's spanning tree activity expansion for $T_G$~\cite[(13)]{tutte1954contribution}:
\[T_G(\hat{x},\hat{y}) = \sum_{\substack{\textrm{$T$ spanning tree}\\ \textrm{of $G$} }} {\hat{x}}^{\left\vert \hat{I}(T)\right\vert } {\hat{y}}^{\left\vert \hat{L}(T)\right\vert }.\]
When $G$ is not connected there is an entirely analogous expansion, indexed by maximal forests. Also, taking $\hat{x} := 1$ and $\hat{y} := 1$ in Theorem~\ref{thm:gordontraldi} recovers the corank-nullity generating function expansion for the Tutte polynomial:
\[T_G(1+\hat{w},1+\hat{z}) = \sum_{S \in \mathcal{S}(G)} {\hat{w}}^{\left\vert \hat{I}\setminus S \right\vert} {\hat{z}}^{\left\vert \hat{L}(S) \cap S\right\vert} = \sum_{S \in \mathcal{S}(G)} {\hat{w}}^{\kappa(S)-\kappa} {\hat{z}}^{\kappa(S)+\left\vert S \right\vert -n}.\]
(The second equality follows from Lemma~\ref{lem:rankact}.) A two-variable formula of Gordon-Traldi~\cite{gordon1990generalized} is obtained from Theorem~\ref{thm:gordontraldi} by taking $\hat{x}:=\hat{w}:= \hat{u}/2$ and $\hat{y}:=\hat{z} := \hat{v}/2$: 
\[T_G(\hat{u},\hat{v}) = \sum_{S \in \mathcal{S}(G)} \left(  \frac{\hat{u}}{2}\right)^{\left\vert \hat{I}(S) \right\vert} \left(  \frac{\hat{v}}{2}\right)^{\left\vert  \hat{L}(S) \right\vert}.\] Each Crapo interval contains precisely one spanning tree (or maximal forest, if $G$ is not connected), so this formula yields the spanning tree activity formula directly, through term collection. Other expansions related to Theorem~\ref{thm:gordontraldi} are given in~\cite{gordon1990generalized} and~\cite{las2013tutte}. Notice that the corank-nullity formula implies that the number of spanning subgraphs of $G$ is~$T_G(2,2)$, the number of spanning forests is~$T_G(2,1)$, the number of spanning subgraphs with $\kappa(S)=\kappa$ is $T_G(1,2)$, and the number of maximal forests (spanning trees if $G$ is connected) is $T_G(1,1)$. More generally, by taking the variables~$\hat{x},\hat{y},\hat{w},\hat{z} \in \{0,1\}$, Theorem~\ref{thm:gordontraldi} gives combinatorial interpretations for all evaluations $T_G(a,b)$ with integer~$0 \leq a,b \leq 2$ in terms of spanning subgraphs and activities.

\subsection{Orientation activities}

There is a very analogous story for orientation activities due to Las~Vergnas~\cite{las1984tutte}.\footnote{The idea of defining activities for orientations was first introduced by Berman~\cite{berman1977dichromate}, but his account of their connection with the Tutte polynomial was not correct as he counted active (co)circuits rather than active elements. See the footnote on page~370 of~\cite{las1984tutte} for details.} If $C$ is a cut of $G$, then there is a subset $X \subset V(G)$ such that $C$ includes the edges of $G$ which connect a vertex from $X$ to a vertex outside $X$. There are two ways to consistently orient $C$, into $X$ and out from $X$. Similarly, if $C$ is a cycle of $G$ then there are two ways to consistently orient $C$. A~\emph{directed cut (cycle) of }$G$ is obtained by choosing one of the two orientations of a cut (cycle) of $G$. We use $\mathbb{E}(C) \subseteq \mathbb{E}(G)$ to denote the set of directions for the edges of~$C$ which respect the chosen orientation. If $O$ is an orientation of $G$ then we say that $C$ is a~\emph{directed cut (cycle) of }$O$ if $\mathbb{E}(C) \subseteq O$. In other words, a directed cut (cycle) of $O$ is a cut (cycle) of $G$ whose edges are oriented consistently in $O$.

Las Vergnas defines his orientation activities as follows. Again we must fix a total order~$<$ of $E(G)$. We say that $e \in E(G)$ is \emph{cut (cycle) active} in the orientation $O$ if it is the min edge in $E(C)$ for $C$ some directed cut (cycle) of~$O$. We let $I(O)$ denote the set of cut active edges of $O$ and $L(O)$ the set of cycle active edges. In order to state the orientation analog of Theorem~\ref{thm:gordontraldi} we need one more piece of notation. For $O \in \mathcal{O}(G)$ and $\delta \in \{+,-\}$ set $O^{\delta} := \{e \in E(G)\colon e^{\delta} \in O\}$ so that~$E(G) = O^{+} \sqcup O^{-}$. To simplify notation we write $I^{+}(O) := I(O) \cap O^+$ and so on.

\begin{thm}[{Las Vergnas~\cite{gioanpreprint}~\cite{las1984tutte}~\cite{las2012tutte}}] \label{thm:lasvergnas}
For any graph $G$, any reference orientation~$O_{\mathrm{ref}}$, and any total order $<$ on $E(G)$ we have
\[T_G(x+w,y+z) = \sum_{O \in \mathcal{O}(G)} {x}^{\left\vert I^{+}(O) \right\vert} {w}^{\left\vert I^{-}(O) \right\vert} {y}^{\left\vert  L^{+}(O) \right\vert} {z}^{\left\vert L^{-}(O) \right\vert}.\]
\end{thm}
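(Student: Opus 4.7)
The plan is to prove Theorem~\ref{thm:lasvergnas} by induction on $|E(G)|$ via deletion--contraction, pivoting on the \emph{maximum} edge $e \in E(G)$ with respect to $<$. The base case $|E(G)|=0$ is trivial since both sides equal $1$. The observation driving the induction is that, because activities are defined via the \emph{minimum} edge of a directed cut or cycle, the maximum edge $e$ can be cut active in an orientation $O$ only when $\{e\}$ is itself a cut (so $e$ is a bridge), and cycle active only when $\{e\}$ is a cycle (so $e$ is a loop). This splits the inductive step into three cases matching the deletion--contraction recursion for $T_G$.

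If $e$ is a bridge, then $e$ is cut active in every orientation (and never cycle active), contributing a factor of $x$ or $w$ according to its direction; summing over both directions yields the $(x+w)$ factor in $T_G(x+w,y+z)=(x+w)\,T_{G/e}(x+w,y+z)$. To close the case one checks that the cut and cycle activities of every $f<e$ in $(G,O)$ agree with those of $f$ in $(G-e,\,O\setminus\{e^+,e^-\})$; this is immediate from the fact that the cycles of $G$ are precisely the cycles of $G-e$ while the cuts of $G$ are either $\{e\}$ or cuts of $G-e$. The case of a loop $e$ is dual, producing the $(y+z)$ factor matching $T_G=(y+z)\,T_{G-e}$.

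When $e$ is neither a bridge nor a loop, $e$ is inactive in every orientation and so contributes no variable; for each orientation $O$ of $G-e$ (equivalently of $G/e$), I would pair up $O^{\pm}:=O\cup\{e^{\pm}\}\in\mathcal{O}(G)$ and verify the identity
\[
\mathrm{wt}_{G}(O^{+})+\mathrm{wt}_{G}(O^{-})\;=\;\mathrm{wt}_{G-e}(O)+\mathrm{wt}_{G/e}(O),
\]
which, combined with the inductive hypothesis, reproduces the recursion $T_G=T_{G-e}+T_{G/e}$. The hard part will be the edge-by-edge bookkeeping this requires: for each $f<e$ one must compare the existence of a directed cut (or cycle) of $(G,O^{\pm})$ minimized by $f$ against the existence of such an object in $(G-e,O)$ and in $(G/e,O)$. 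The matroidal facts to exploit are that cuts of $G-e$ are precisely cuts of $G$ avoiding $e$, that cycles of $G/e$ arise from cycles of $G$ through $e$ by deleting $e$, and crucially, since $e$ is neither a bridge nor a loop, that directed cuts (respectively cycles) through $e$ in $(G,O^{+})$ and $(G,O^{-})$ have opposite compatibilities with deletion versus contraction, which is exactly what makes the two sides balance when summed over $\delta\in\{+,-\}$.

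An alternative route would be to derive Theorem~\ref{thm:lasvergnas} directly from the Gordon--Traldi expansion (Theorem~\ref{thm:gordontraldi}) by exhibiting an activity-preserving bijection between $\mathcal{O}(G)$ and $\mathcal{S}(G)$ in the style of Gioan--Las Vergnas, sending $I^{+}(O),I^{-}(O),L^{+}(O),L^{-}(O)$ to $\hat{I}(S)\cap S,\hat{I}(S)\setminus S,\hat{L}(S)\setminus S,\hat{L}(S)\cap S$ respectively. This would be conceptually very appealing, but constructing the bijection canonically is at least as involved as the case analysis above, so I would prefer the deletion--contraction proof.
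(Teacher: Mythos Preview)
Your plan is sound and is essentially Las Vergnas's original argument. The displayed identity
\[
\mathrm{wt}_{G}(O^{+})+\mathrm{wt}_{G}(O^{-})=\mathrm{wt}_{G-e}(O)+\mathrm{wt}_{G/e}(O)
\]
for an ordinary maximal edge $e$ is exactly the orientation case of the paper's key Lemma~\ref{lem:key} (restricted to $I^{o}$ and $L^{o}$); the paper even flags this comparison with~\cite[Lemma~3.2]{las1984tutte}. What the key lemma actually gives is slightly sharper than the sum identity: one of the two extensions $O^{\pm}$ has precisely the activities of $O/e$ and the other has precisely those of $O-e$. Your ``edge-by-edge bookkeeping'' sketch points in the right direction, but be aware that carrying it out rigorously is not short; Appendix~\ref{sec:proofkeylem} devotes many lemmas to the fourientation version, and the orientation case, while simpler, still requires careful cut/cycle manipulations (symmetric-difference arguments for cuts, path-concatenation arguments for cycles).

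The paper's route to Theorem~\ref{thm:lasvergnas} is genuinely different in packaging, though it rests on the same key lemma. Rather than verifying the deletion--contraction recursion for the orientation generating function directly, the paper uses Lemma~\ref{lem:key} to build the surjection $\varphi\colon\mathcal{O}^{4}(G)\to\mathcal{S}(G)$ of Theorem~\ref{thm:fouracttutteeval}, then derives Theorem~\ref{thm:main} from the Gordon--Traldi expansion (Theorem~\ref{thm:gordontraldi}) by substitution, and finally specializes $(k_1,k_2,l,m)=(1,1,0,0)$ to recover Las Vergnas. In other words, your ``alternative route'' via an activity-preserving bijection $\mathcal{O}(G)\to\mathcal{S}(G)$ is, ironically, much closer to what the paper actually does: the restriction of $\varphi$ to $\mathcal{O}(G)$ is precisely such a bijection. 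Your direct approach is leaner if all you want is Theorem~\ref{thm:lasvergnas}; the paper's approach buys the simultaneous generalization to fourientations and the link with Gordon--Traldi.
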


Again, Theorem~\ref{thm:lasvergnas} is a very general (and elegant) expansion for the Tutte polynomial with many consequences. For instance, a two-variable formula of Las Vergnas~\cite{las1984tutte} is obtained by taking $x:= w:= u/2$ and $y:=z := v/2$: 
\[T_G(u,v) = \sum_{O \in \mathcal{O}(G)} \left(  \frac{u}{2}\right)^{\left\vert I(O) \right\vert} \left(  \frac{v}{2}\right)^{\left\vert  L(O) \right\vert}.\] 
Also, taking $x:= w:= 1$, and $y:=z := 0$ we recover Stanley's celebrated result that the number of acyclic orientations of $G$ is~$T_G(2,0)$~\cite{stanley1973acyclic}. Dually, taking $x:= w:= 0$, and $y:=z := 1$ yields Las Vergnas's own result that the number of strongly connected orientations of $G$ is~$T_G(0,2)$~\cite{las1980convexity}. And more generally by taking $x,y,w,z \in \{0,1\}$ Theorem~\ref{thm:lasvergnas} gives combinatorial interpretations for all $T_G(a,b)$ with $0 \leq a,b \leq 2$ in terms of orientations. This $3 \times 3$ square of orientation classes has been explored in the works of Gioan~\cite{gioan2007enumerating}~\cite{gioan2008circuit} and Bernardi~\cite{bernardi2008tutte}. Gioan and Bernardi unified the results of many authors who, as mentioned in the introduction, showed that various classes of orientations are enumerated by the Tutte polynomial. We remark that Bernardi worked with a ribbon graph structure, i.e. a combinatorial map, rather than a total order on the edge and a reference orientation. For more on the history of the Tutte polynomial and its relation to orientation classes, see~\cite[\S1.1]{backman2015fourientations}

Of course, when the variables with hats are set equal to those without, Theorems~\ref{thm:gordontraldi} and~\ref{thm:lasvergnas} offer two different combinatorial expansions for the same polynomial. Proving bijectively that these expressions are indeed equal by matching terms on either side is one aim of the so-called ``active bijection'' of Gioan-Las Vergnas~\cite{gioan2009active}~\cite{gioanpreprint}~\cite{gioan2005activity}. Here we connect Theorems~\ref{thm:gordontraldi} and~\ref{thm:lasvergnas} in a different way: we offer an expansion of the Tutte polynomial in terms of fourientation activities that simultaneously generalizes both the Gordon-Traldi and Las Vergnas formulas. Fourientations can therefore be seen as a hybrid of subgraphs and orientations.

\section{Proving Theorem~\ref{thm:main}} \label{sec:main}

As discussed above, our purpose is to extend the work of Backman-Hopkins on fourientations \cite{backman2015fourientations} to provide a Tutte polynomial formula that specializes to both the Las Vergnas and the Gordon-Traldi formulas discussed in Section~\ref{sec:setup}. The key definition in~\cite{backman2015fourientations} is that of a potential cut (cycle) of a fourientation. Backman and Hopkins enumerated many classes of fourientations defined in terms of potential cuts and cycles.

\begin{definition} (\cite{backman2015fourientations})
Let~$O \in \mathcal{O}^{4}(G)$. We say a directed cut $Cu$ of $G$ is a \emph{potential cut of $O$} if for $\delta \in \{+,-\}$, ~$e^{\delta} \in \mathbb{E}(Cu) \Rightarrow e^{-\delta} \notin O$. Similarly, a directed cycle~$Cy$ of~$G$ is a \emph{potential cycle of $O$} if for $\delta \in \{+,-\}$, $e^{\delta} \in \mathbb{E}(Cy) \Rightarrow e^{\delta} \in O$. In other words, a potential cut of a fourientation is the same as a directed cut of an orientation except that some edges of the cut are allowed to be unoriented; and a potential cycle of a fourientation is the same as a directed cycle of an orientation except that some edges of the cycle are allowed to be bioriented.
\end{definition}

Theorem~\ref{thm:main} involves fourientation activities $I,L\colon \mathcal{O}^{4}(G) \to \mathcal{S}(G)$. As noted in the introduction, we use the notation $I^{u}(O) := I(O) \cap O^{u}$, $L^{+}(O) := L(O) \cap O^{+}$ etc. Some of these sets can be defined individually:

\begin{definition} \label{def:fouractivity}
Let $G$ be a graph, $O_{\mathrm{ref}}$ a reference orientation, and $<$ a total order on~$E(G)$. Also let $\sigma_u,\sigma_b\colon E(G) \to \{+,-\}$ be arbitrary sign labels of the edges of~$G$.\footnote{These sign labels $\sigma_u$ and $\sigma_b$ offer a certain degree of ``independence'' between the edges of $G$ in our main theorem, Theorem~\ref{thm:main}. As we will see in a moment, the sign labels show that the map~$\varphi$ of Theorem~\ref{thm:main} is highly non-unique. Note that even for a fixed choice of $\sigma_u$ and $\sigma_b$, the map~$\varphi$ is still not unique, as discussed later in Remark~\ref{rem:k4}.} Then for each~$O\in\mathcal{O}^{4}(G)$, we set
\begin{enumerate}
\item $I^{o}(O) := \{e\in O^{o} \colon \textrm{$e$ is the min edge in some potential cut of $O$}\}$;
\item $L^{o}(O) := \{e\in O^{o} \colon \textrm{$e$ is the min edge in some potential cycle of $O$}\}$;
\item $I^{u}(O) := \{e\in O^{u} \colon \textrm{$e$ is the min edge in some potential cut of $O \cup \{e^{\sigma_u(e)}\}$}\}$;
\item $L^{b}(O) := \{e\in O^{b} \colon \textrm{$e$ is the min edge in some potential cycle of $O \setminus \{e^{\sigma_b(e)}\}$}\}$.
\end{enumerate}
\end{definition}

From now on we fix $G$, $O_{\mathrm{ref}}$, $<$, $\sigma_u$, and~$\sigma_b$ as in Definition~\ref{def:fouractivity}. We will define the map~$\varphi$ of Theorem~\ref{thm:main} recursively using deletion-contraction. We will always delete and contract the maximum edge $e_{\mathrm{max}}$ of $G$ according to $<$, and we will always assume that the deletion $G-e_{\mathrm{max}}$ and the contraction $G/e_{\mathrm{max}}$ inherit from $G$ the several auxiliary features that appear in the definitions above (the edge order~$<$, the reference orientation~$O_{\mathrm{ref}}$, and the sign labels~$\sigma_u$ and~$\sigma_b$). For $O \in \mathcal{O}^{4}(G)$ and $e \in E(G)$, we use the notation $O-e$ to denote the fourientation of $G-e$ obtained from $O$ by restricting to $\mathbb{E}(G-e)$. The notation $O/e$ is used similarly (and in fact as sets $O - e = O/e$). One more piece of notation: for $e \in E(G)$ and $O \in \mathcal{O}^{4}(G)$ set $^{e}O := O \Delta \{e^+,e^-\}$ where $\Delta$ denotes set-theoretic symmetric difference. Explicitly, if $e$ is unoriented in $O$ then $e$ is bioriented in $^{e}O$ and vice-versa; and if~$e$ is oriented one way in $O$ then it is oriented oppositely in~$^{e}O$. In the generic case where~$e_{\mathrm{max}}$ is neither an isthmus nor a loop, we apply the following key lemma (which should be compared to~\cite[Proposition 1.1]{gioan2002correspondance}~\cite[Lemma 3.2]{las1984tutte}~\cite[Lemma 3.4]{las2012tutte}):

\begin{lemma}\label{lem:key}
Suppose the maximum edge $e := e_{\mathrm{max}}$ of $G$ is neither an isthmus nor a loop. Let $O \in \mathcal{O}^{4}(G)$. Then the sets defined in Definition~\ref{def:fouractivity} satisfy at least one of the following sets of conditions:
\begin{enumerate}
\item \begin{itemize}
\item $I^{o}(O) = I^{o}(O/e)$, $I^{u}(O) = I^{u}(O/e)$,
\item $L^{o}(O) = L^{o}(O/e)$, $L^{b}(O) = L^{b}(O/e)$,
\item $I^{o}(^{e}O) = I^{o}(O-e)$, $I^{u}(^{e}O) = I^{u}(O-e)$, 
\item $L^{o}(^{e}O) = L^{o}(O-e)$, $L^{b}(^{e}O) = L^{b}(O-e)$;\end{itemize}
\item \begin{itemize}
\item $I^{o}(^{e}O) = I^{o}(O/e)$, $I^{u}(^{e}O) = I^{u}(O/e)$,
\item $L^{o}(^{e}O) = L^{o}(O/e)$, $L^{b}(^{e}O) = L^{b}(O/e)$,
\item $I^{o}(O) = I^{o}(O-e)$, $I^{u}(O) = I^{u}(O-e)$,
\item $L^{o}(O) = L^{o}(O-e)$, $L^{b}(O) = L^{b}(O-e)$.\end{itemize}
\end{enumerate}
(Note that (2) merely asserts that (1) holds for $^{e}O$.) Moreover, if $e$ is bioriented in $O$ then certainly (1) holds and if $e$ is unoriented in $O$ then certainly (2) holds.
\end{lemma}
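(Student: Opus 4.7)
The plan is to prove the lemma by a deletion-contraction analysis at $e = e_{\mathrm{max}}$, driven by a Minty-type dichotomy for fourientations. Throughout, we use that $e$ is the max edge and is neither an isthmus nor a loop, so $e$ is never itself in any of the activity sets (it could be ``min'' only of the singleton $\{e\}$, which is neither a cut nor a cycle).

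First I would establish a baseline chain of inclusions for each of the four activities. Potential cuts of $O/e$ on $G/e$ correspond bijectively to potential cuts of $O$ not containing $e$, giving $I^{o}(O/e) \subseteq I^{o}(O)$; and any potential cut of $O$ containing $e$ restricts (after removing $e$) to a potential cut of $O-e$ on $G-e$ of the same min, giving $I^{o}(O) \subseteq I^{o}(O-e)$. The same chain works for $I^{u}$ because the test fourientation $O \cup \{f^{\sigma_u(f)}\}$ in its definition differs from $O$ only at the edge $f \neq e$ being tested. Cycle activities $L^{o}, L^{b}$ satisfy the reversed chain $L(O-e) \subseteq L(O) \subseteq L(O/e)$. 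The task is thus to show that one endpoint of each chain is attained.

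Next I would dispatch the easy cases called out in the final sentence of the lemma. If $e$ is bioriented in $O$, then no potential cut of $O$ can contain $e$ (both directions at $e$ are blocked), so $I^{o}(O) = I^{o}(O/e)$ and $I^{u}(O) = I^{u}(O/e)$ are immediate; and any potential cycle of $O$ through $e$ descends to a potential cycle of $O/e$ with the same min, giving $L^{o}(O) = L^{o}(O/e)$ and $L^{b}(O) = L^{b}(O/e)$. This is precisely case~(1). The case $e$ unoriented is symmetric: no potential cycle of $O$ can use $e$, and potential cuts through $e$ descend to $O-e$, giving case~(2).

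The main obstacle is the remaining case: $e$ is simply oriented in $O$, say $e^{+} \in O$. Here I would apply a Minty-type painting lemma. Color each edge $f \neq e$ by its state in $O$: \emph{green} if bioriented, \emph{red} if unoriented, or \emph{blue} (with the $O$-direction as the ``blue direction'') if oriented. By the classical Minty three-painting lemma applied to the distinguished signed edge $e^{+}$, exactly one of the following holds:
\begin{itemize}
\item (Cut case) $e^{+}$ lies in a directed cocycle whose other edges are blue-matching-direction or red; equivalently, $O$ has a potential cut through $e^{+}$.
\item (Cycle case) $e^{+}$ lies in a directed cycle whose other edges are blue-matching-direction or green; equivalently, $O$ has a potential cycle through $e^{+}$.
\end{itemize}
Applied also to $^{e}O$ at $e^{-}$, this gives four sub-cases in total. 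In each sub-case, a cut/cycle exchange argument — take the symmetric difference of an ``extra'' cut or cycle of $O-e$ or $O/e$ with the distinguished witness guaranteed by the dichotomy, and extract a min-preserving replacement — shows that one endpoint of each chain is actually attained, pinning the activities of $O$ and of $^{e}O$ to the appropriate contraction or deletion and placing $O$ into case~(1) or case~(2). The delicate part, which I expect to be the bulk of the appendix proof, is carrying out this exchange uniformly across all four activity types and simultaneously for $O$ and $^{e}O$, while correctly accommodating the sign labels $\sigma_u, \sigma_b$; the latter is manageable because the label-dictated modifications occur only at a tested edge $f \neq e$ and so leave the Minty dichotomy at $e$ undisturbed.
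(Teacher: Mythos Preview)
Your high-level plan is close to the paper's in spirit—the baseline chains of inclusions, the bioriented/unoriented cases, and the reliance on cut/cycle exchange are all correct and are exactly the ingredients the paper uses. But the organizing role you assign to the Minty dichotomy has a real gap.

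The problem is your claim that the four Minty sub-cases (cycle/cycle, cycle/cut, cut/cycle, cut/cut for $(O,{}^{e}O)$) each \emph{place $O$ into case~(1) or case~(2)}. This works cleanly only in the two mixed sub-cases. In the symmetric sub-cases it fails: the Minty witness is merely \emph{some} potential cycle (or cut) through $e$, and when you take a symmetric difference with an ``extra'' cycle of $O/e$, the edges of the Minty witness can have smaller index than the target edge, so the resulting cycle in $O-e$ need not have the correct minimum. Concretely, take a graph with two disjoint directed triangles sharing a vertex, plus $e=e_{\max}$ joining them so that both a $v\to u$ path and a $u\to v$ path exist in $O-e$ (your ``cycle/cycle'' sub-case). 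One can arrange that $L^{o}(O)\neq L^{o}(O/e)$, so (1) fails, while (2) holds; the Minty status at $e$ gives no hint of this asymmetry, and exchanging with an arbitrary $v\to u$ witness does not recover the needed minimum.

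The paper sidesteps this by \emph{not} trying to decide (1) versus (2) up front. Instead it proves directly that the failure of any one of the eight equalities in (2) forces all eight equalities in (1). The crucial point is that the witness it uses in each exchange is not the Minty witness but the \emph{activity-discrepancy} witness: an edge $e_i$ that is (say) in $L^{o}(O)$ but not in $L^{o}(O-e)$. The extra hypothesis $e_i\notin L^{o}(O-e)$ is exactly what makes the exchange min-preserving (it rules out the polluted-minimum case), and this is what your Minty witness lacks. So your ``distinguished witness guaranteed by the dichotomy'' needs to be replaced by the discrepancy witness coming from a failed activity equality; once you do that, you are essentially carrying out the paper's appendix proof.
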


In the case where $e_{\mathrm{max}}$ is either an isthmus or a loop we apply one of the following two simpler lemmas instead. (The proofs are straightforward, and are left to the reader.)

\begin{lemma} \label{lem:isthmus}
Suppose the maximum edge $e := e_{\mathrm{max}}$ of $G$ is an isthmus. For any fourientation~\mbox{$O \in \mathcal{O}^{4}(G)$}, the sets defined in Definition~\ref{def:fouractivity} satisfy the following:
\begin{itemize}
\item if $e\in O^{o}$ then $I^{o}(O)=\{e\}\cup I^{o}(O-e)$ and $I^{u}(O) = I^{u}(O-e)$;
\item if $e\in O^{u}$ then $I^{o}(O) = I^{o}(O-e)$ and $I^{u}(O)=\{e\}\cup I^{u}(O-e)$;
\item $L^{o}(O) = L^{o}(O-e)$;
\item $L^{b}(O) = L^{b}(O-e)$.
\end{itemize}
\end{lemma}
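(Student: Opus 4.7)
The plan is to exploit two basic structural facts about an isthmus $e$: first, $\{e\}$ is itself a cut of $G$ and is the \emph{unique} cut of $G$ containing $e$ (any cut containing $e$ must contain the minimal cut $\{e\}$, and by minimality must equal it), so the cuts of $G$ decompose as $\{e\}$ together with the cuts of $G-e$; second, $e$ lies in no cycle of $G$, so the cycles of $G$ and those of $G-e$ coincide as edge-subsets. Since $e = e_{\mathrm{max}}$ is the maximum edge in the order, the only cut of $G$ having $e$ as its minimum element is $\{e\}$ itself; every cut containing some $f<e$ necessarily avoids $e$. Moreover, the potential-cut and potential-cycle conditions on a cut or cycle that does not involve $e$ depend only on the orientations of edges other than $e$, which are identical in $O$ and in $O-e$.

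The two $L$-statements are immediate. Since $e$ lies in no cycle, $e$ cannot be the minimum edge of any potential cycle, so $e \notin L^o(O)\cup L^b(O)$. For $f \neq e$, the potential cycles of $O$ through $f$ (respectively, of $O\setminus\{f^{\sigma_b(f)}\}$) are literally the same edge-subsets as the potential cycles of $O-e$ through $f$ (respectively, of $(O-e)\setminus\{f^{\sigma_b(f)}\}$), by the second structural fact and the orientation observation. Hence $L^o(O)=L^o(O-e)$ and $L^b(O)=L^b(O-e)$.

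For the $I$-statements I first handle $e$ itself. If $e \in O^o$ with $e^{\delta}\in O$, then the directed cut $\{e^{\delta}\}$ on the cut $\{e\}$ satisfies the potential-cut condition for $O$ (we have $e^{-\delta}\notin O$ because $e$ is simply oriented), and $e$ is trivially its minimum edge, so $e\in I^o(O)$. If $e\in O^u$ then, in $O\cup\{e^{\sigma_u(e)}\}$, the directed cut $\{e^{\sigma_u(e)}\}$ is potential for exactly the same reason, witnessing $e\in I^u(O)$. Next, for $f\neq e$ any cut of $G$ having $f<e$ as its minimum element must avoid $e$ (by the first structural fact), so it is a cut of $G-e$; the potential-cut condition on such a cut in $(G,O)$ or in $(G, O\cup\{f^{\sigma_u(f)}\})$ matches the corresponding condition in $(G-e, O-e)$ or in $(G-e, (O-e)\cup\{f^{\sigma_u(f)}\})$. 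Consequently, for $f\neq e$, $f\in I^o(O)\iff f\in I^o(O-e)$ and $f\in I^u(O)\iff f\in I^u(O-e)$. Combined with the analysis for $e$, this yields all four $I$-statements. There is no real obstacle here: the rigidity of cut and cycle structure around an isthmus makes an analogue of Lemma~\ref{lem:key} unnecessary, and the only bookkeeping is in choosing the correct direction of the cut $\{e\}$.
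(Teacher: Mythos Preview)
Your proof is correct and is precisely the straightforward argument the paper has in mind; note that the paper does not give its own proof but explicitly leaves Lemmas~\ref{lem:isthmus} and~\ref{lem:loop} to the reader. Your two structural observations (that $\{e\}$ is the unique cut through an isthmus $e$, and that $e$ lies in no cycle) are exactly what makes this case trivial compared with the ordinary-edge case of Lemma~\ref{lem:key}, and your bookkeeping for $f\neq e$ versus $f=e$ is clean and complete.
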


\begin{lemma} \label{lem:loop}
Suppose the maximum edge $e := e_{\mathrm{max}}$ of $G$ is a loop. For any fourientation~\mbox{$O \in \mathcal{O}^{4}(G)$}, the sets defined in Definition~\ref{def:fouractivity} satisfy the following:
\begin{itemize}
\item $I^{o}(O)= I^{o}(O/e)$;
\item $I^{u}(O)  = I^{u}(O/e)$;
\item if $e\in O^{o}$ then $L^{o}(O)=\{e\}\cup L^{o}(O-e)$ and $L^{b}(O) = L^{b}(O-e)$;
\item if $e\in O^{b}$ then $L^{o}(O)= L^{o}(O-e)$ and $L^{b}(O) = \{e\}\cup L^{b}(O-e)$.
\end{itemize}
\end{lemma}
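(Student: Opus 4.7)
The plan is to verify each claim by direct inspection of Definition~\ref{def:fouractivity}, relying on two structural facts about a loop $e = e_{\mathrm{max}}$: (i) $e$ is contained in no cut of $G$, so the cuts of $G$ and of $G/e$ coincide; and (ii) the only cycle of $G$ containing $e$ is the singleton $\{e\}$, so the remaining cycles of $G$ are precisely the cycles of $G-e$. Together, these observations allow every relevant potential cut or potential cycle not involving $e$ to be transferred verbatim between $G$ and the relevant minor.

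For the cut activities, since $e$ lies in no cut, $e$ can never belong to $I^o(O)$ or $I^u(O)$, and in any case $e \notin E(G/e)$. For any edge $e' \neq e$, the potential cuts of $O$ (respectively of $O \cup \{e'^{\sigma_u(e')}\}$) that are relevant to the activity of $e'$ depend only on the orientation data of edges in those cuts; since the loop $e$ appears in none of them, they correspond bijectively to the analogous potential cuts of $O/e$ (respectively of $(O/e) \cup \{e'^{\sigma_u(e')}\}$). This yields $I^o(O) = I^o(O/e)$ and $I^u(O) = I^u(O/e)$.

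For the cycle activities I split into cases. If $e \in O^o$, then the singleton cycle $\{e\}$, oriented along whichever of $e^+, e^-$ lies in $O$, is a potential cycle of $O$ having $e$ as its unique (hence minimum) edge, so $e \in L^o(O)$; meanwhile $e \notin L^b(O)$ because $L^b(O) \subseteq O^b$. For any $e' \neq e$, the potential cycles of $O$ containing $e'$ are cycles of $G$ containing $e'$ and so avoid $e$ entirely, placing them in bijection with the potential cycles of $O-e$ containing $e'$; the corresponding statement holds for potential cycles of $O \setminus \{e'^{\sigma_b(e')}\}$ when $e' \in O^b$. This gives $L^o(O) = \{e\} \cup L^o(O-e)$ and $L^b(O) = L^b(O-e)$. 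If instead $e \in O^b$, then $e \notin L^o(O)$ since $L^o(O) \subseteq O^o$; and because removing $e^{\sigma_b(e)}$ from $O$ still leaves $e^{-\sigma_b(e)} \in O$, the singleton $\{e\}$ directed along $e^{-\sigma_b(e)}$ is a potential cycle of $O \setminus \{e^{\sigma_b(e)}\}$ with $e$ as minimum, so $e \in L^b(O)$. The same correspondence of cycles not involving $e$ handles the $e' \neq e$ cases, yielding $L^o(O) = L^o(O-e)$ and $L^b(O) = \{e\} \cup L^b(O-e)$.

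I do not foresee any real obstacle: the entire argument is bookkeeping anchored on observations (i) and (ii) and the explicit form of Definition~\ref{def:fouractivity}. The only moment of care is in the case $e \in O^b$, where one must check that removing the single directed edge $e^{\sigma_b(e)}$ from $O$ still leaves enough orientation data on $e$ to make $\{e\}$ a potential cycle; this holds precisely because $e$ is bioriented, so the opposite direction $e^{-\sigma_b(e)}$ remains in $O$.
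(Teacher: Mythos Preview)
Your proof is correct and is exactly the kind of direct verification the paper has in mind; the paper itself simply declares the lemma straightforward and leaves it to the reader. Your two structural observations about loops---(i) no cut contains $e$ and (ii) the only cycle through $e$ is the singleton $\{e\}$---are precisely what makes the bookkeeping transparent, and your handling of the $e\in O^b$ case is the one place requiring a moment's thought, which you identified and resolved correctly.
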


Given the above lemmas it is straightforward to recursively define a surjection $\varphi$ of the type mentioned in Theorem~\ref{thm:main}. We can also specify several additional properties, which are needed to justify the formula of Theorem~\ref{thm:main}.

\begin{thm} \label{thm:fouracttutteeval}
There exists a $2^{\left\vert E(G)\right\vert}$-to-one surjection $\varphi\colon \mathcal{O}^{4}(G) \to \mathcal{S}(G)$ such that
\begin{itemize}
\item the map $O \mapsto (\varphi(O),O^{o})$ is a bijection between $\mathcal{O}^{4}(G)$ and $\mathcal{S}(G) \times \mathcal{S}(G)$;
\item we have $O^{u} \cap \varphi(O) = \varnothing$ and $O^{b} \subseteq \varphi(O)$ for all $O \in \mathcal{O}^{4}(G)$;
\item setting $I(O) := \hat{I}(\varphi(O))$ and $L(O) := \hat{L}(\varphi(O))$ (where $\hat{I},\hat{L}$ are the Gordon-Traldi generalized activities with respect to the same edge order $<$), the resulting maps~$I,L\colon \mathcal{O}^{4}(G) \to \mathcal{S}(G)$ are compatible with Definition~\ref{def:fouractivity} in the sense that for every~$O\in \mathcal{O}^{4}(G)$, $I(O) \cap O^{o}=I^{o}(O)$, $I(O) \cap O^{u}=I^{u}(O)$, $L(O) \cap O^{o}=L^{o}(O)$ and~$L(O) \cap O^{b}=L^{b}(O)$;
\item the mappings $I,L$ have $I^{+}(O) \subseteq \varphi(O)$, $I^{-}(O) \cap \varphi(O) = \varnothing$, $L^{-}(O) \subseteq \varphi(O)$, and $L^{+}(O) \cap \varphi(O) = \varnothing$ for all $O \in \mathcal{O}^{4}(G)$;
\item if $O^{o}=\varnothing$ then $I(O)=\hat{I}(O^b)$ and $L(O)=\hat{L}(O^b)$.
\end{itemize}
\end{thm}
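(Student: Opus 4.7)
The plan is to induct on $|E(G)|$, with the empty-graph base case $\varphi(\varnothing) := \varnothing$ for which all claims hold vacuously. For the inductive step, set $e := e_{\max}$, and assume the inductive hypothesis yields maps $\varphi_{G-e}$ and $\varphi_{G/e}$ satisfying the conclusions of the theorem on the smaller graphs $G-e$ and $G/e$. I will define $\varphi = \varphi_G$ by cases on the nature of $e$.

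In the generic case (neither isthmus nor loop), I apply Lemma~\ref{lem:key} to partition $\mathcal{O}^{4}(G)$ into \emph{contract} and \emph{delete} fourientations as follows: every $O$ with $e \in O^{b}$ is labelled contract, every $O$ with $e \in O^{u}$ is labelled delete, and in each orbit $\{O, {}^{e}O\}$ with $e$ oriented I designate as contract the representative satisfying condition~(1) of Lemma~\ref{lem:key} (with an arbitrary tie-break when both conditions hold) and the other as delete. Then set $\varphi(O) := \varphi_{G/e}(O/e) \cup \{e\}$ if $O$ is contract and $\varphi(O) := \varphi_{G-e}(O-e)$ if $O$ is delete. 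For the isthmus case, using Lemma~\ref{lem:isthmus}, set $\varphi(O) := \varphi_{G-e}(O-e) \cup \{e\}$ when $e \in O^{b} \cup O^{+}$ and $\varphi(O) := \varphi_{G-e}(O-e)$ otherwise; for the loop case, using Lemma~\ref{lem:loop}, include $e$ in $\varphi(O)$ iff $e \in O^{b} \cup O^{-}$. The asymmetry between isthmuses (agreeing with $O_{\mathrm{ref}}$) and loops (disagreeing) is exactly what forces the desired signed inclusions $I^{+} \subseteq \varphi$, $I^{-} \cap \varphi = \varnothing$ at an isthmus $e$ and the dual inclusions for $L^{\pm}$ at a loop $e$.

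Verification of each bulleted claim then proceeds inductively. A direct fibre count yields both the $2^{|E(G)|}$-to-one property and the bijection $O \mapsto (\varphi(O), O^{o})$: for each $S \in \mathcal{S}(G)$ exactly two of the four possible states of $e$ in $O$ contribute preimages (one with $e \in \varphi(O)$, one without), each providing $2^{|E(G)|-1}$ elements via the inductive bijection for $G-e$ or $G/e$, giving $2\cdot 2^{|E(G)|-1} = 2^{|E(G)|}$ altogether. The inclusions $O^{b} \subseteq \varphi(O)$ and $O^{u} \cap \varphi(O) = \varnothing$ hold at $e$ by construction and elsewhere by induction. The compatibility identities $I(O) \cap O^{\star} = I^{\star}(O)$, $L(O) \cap O^{\star} = L^{\star}(O)$ split into a contribution at $e$---easy, because when $e$ is isthmus (resp.\ loop) it lies in every $\hat{I}(\varphi(O))$ (resp.\ $\hat{L}(\varphi(O))$), matching the forced membership dictated by Lemma~\ref{lem:isthmus} (resp.\ Lemma~\ref{lem:loop}), and when $e$ is generic it lies in none of $\hat{I}(\varphi(O)),\hat{L}(\varphi(O)),I^{\star}(O),L^{\star}(O)$ since the only cut or cycle of $G$ in which $e = e_{\max}$ could be minimal is $\{e\}$, which by hypothesis is neither---and a contribution at all other edges, which reduces by induction to the analogous identities on $G-e$ or $G/e$. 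Finally, $O^{o} = \varnothing$ combined with the two inclusions forces $\varphi(O) = O^{b}$, whence $I(O) = \hat{I}(O^{b})$ and $L(O) = \hat{L}(O^{b})$.

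I expect the main obstacle to be the verification of the compatibility identities at edges other than $e$. This step requires showing that the Gordon-Traldi activities $\hat{I}, \hat{L}$ of $\varphi(O)$ computed in $G$ restrict on $E(G) \setminus \{e\}$ to the corresponding activities of $\varphi_{G-e}(O-e)$ or $\varphi_{G/e}(O/e)$ computed in the minors, and it is precisely here that Lemma~\ref{lem:key} is doing its essential work: the contract/delete dichotomy has been engineered to match the recursions that $I^{o}, I^{u}, L^{o}, L^{b}$ obey under deletion and contraction of $e_{\max}$, as encoded in conditions (1) and (2) of the key lemma, so that the recursive definition of $\varphi$ correctly propagates these identities from the minors up to $G$. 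The signed inclusions $I^{+}(O) \subseteq \varphi(O)$ etc.\ at non-$e$ edges then pass through the same induction once the compatibility equalities are in place.
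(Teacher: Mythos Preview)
Your proposal is correct and follows essentially the same inductive deletion--contraction strategy as the paper: both arguments recurse on $e_{\max}$, use Lemma~\ref{lem:key} to route each fourientation to the contraction or deletion minor, handle isthmuses and loops with the sign conventions you describe, and then verify the bulleted properties by combining the inductive hypothesis with the standard recursions for $\hat{I},\hat{L}$ under deletion and contraction of the maximum edge. The only cosmetic differences are that the paper fixes a specific tie-break (agreement with $O_{\mathrm{ref}}$) where you allow an arbitrary one, and for the isthmus case the paper writes $\varphi_{G/e}$ rather than $\varphi_{G-e}$, but since an isthmus has $G-e$ and $G/e$ sharing the same cycle matroid these choices are immaterial (as the paper itself confirms in Remark~\ref{rem:k4}).
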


\begin{proof}[Proof of Theorem~\ref{thm:fouracttutteeval} from Lemmas~\ref{lem:key},~\ref{lem:isthmus}, and~\ref{lem:loop}]
If $E(G)=\varnothing$ then there is nothing to prove. We proceed to define $\varphi$ by induction on~$\left\vert E(G)\right\vert >0$. Let~$e := e_{\mathrm{max}}$ be the maximum edge of $G$ and let~$\varphi_{c}:\mathcal{O}^{4}(G/e)\rightarrow\mathcal{S}(G/e)$ and~$\varphi_{d}:\mathcal{O}^{4}(G-e)\rightarrow\mathcal{S}(G-e)$ be surjections provided by the inductive hypothesis. We use $\varphi_{c}$ and $\varphi_{d}$ to define the surjection~$\varphi:\mathcal{O}^{4}(G)\rightarrow\mathcal{S}(G)$ as follows:

\begin{enumerate}
\item If $e$ is an isthmus and $O\in\mathcal{O}^{4}(G)$ then define $\varphi(O):=\{e\}\cup\varphi_{c}(O/e)$ if~$e^{+}\in O$, and $\varphi(O):=\varphi_{d}(O-e)$ if~$e^{+}\notin O$.

\item If $e$ is a loop and $O\in\mathcal{O}^{4}(G)$, then define $\varphi(O):=\{e\}\cup\varphi_{c}(O/e)$ if~$e^{-}\in O$, and $\varphi(O):=\varphi_{d}(O-e)$ if $e^{-}\notin O$.

\item If $e$ is neither an isthmus nor a loop and condition~(2) of Lemma~\ref{lem:key} fails for~$O\in\mathcal{O}^{4}(G)$, then set~$\varphi(O):=\{e\}\cup\varphi_{c}(O/e)$ and $\varphi(^{e}O):=\varphi_{d}(O-e)$. Note that Lemma~\ref{lem:key} tells us that condition~(2) of Lemma~\ref{lem:key} holds for~$^{e}O$, so there is no conflict if it should happen that $^{e}O$ is considered rather than $O$.

\item If $e$ is neither an isthmus nor a loop and condition~(1) of Lemma~\ref{lem:key} fails for~$O \in \mathcal{O}^{4}(G)$, set~$\varphi(O):=\varphi_{d}(O-e)$ and~$\varphi(^{e}O):=\{e\}\cup\varphi_{c}(O/e)$. Lemma~\ref{lem:key}
tells us that $^{e}O$ satisfies condition~(1) of Lemma~\ref{lem:key}, so there is no conflict if it should happen that~$^{e}O$ is considered rather than~$O$.

\item If $O\in\mathcal{O}^{4}(G)$ and conditions~(1) and~(2) of Lemma~\ref{lem:key} both hold for $O$, then conditions~(1) and~(2) of Lemma~\ref{lem:key} both hold for $^{e}O$ too. If $e$ is bioriented, set~$\varphi(O):=\{e\}\cup\varphi_{c}(O/e)$ and $\varphi(^{e}O):=\varphi_{d}(O-e)$. If $e$ is unoriented, set~$\varphi(^{e}O):=\{e\}\cup\varphi_{c}(O/e)$ and~$\varphi(O):=\varphi_{d}(O-e)$. If $e$ is oriented and $e^{+}\in O$, set~$\varphi(O):=\{e\}\cup\varphi_{c}(O/e)$ and~$\varphi(^{e}O):=\varphi_{d}(O-e)$. And finally, if $e$ is oriented and $e^{-}\in O$, set~$\varphi(^{e}O):=\{e\}\cup\varphi_{c}(O/e)$ and~$\varphi(O):=\varphi_{d}(O-e)$. Toggling the status of $e$ in $\varphi(O)$ guarantees that there is no conflict if it should happen that~$^{e}O$ is considered rather than~$O$.

\end{enumerate}

Lemma~\ref{lem:key} asserts that if $e$ is bioriented then condition~(1) of that lemma certainly holds and if $e$ is unoriented then condition~(2) certainly holds, so~for all $O \in \mathcal{O}^{4}(G)$ we have~$O^b\subseteq \varphi(O)$ and~$O^u \cap \varphi(O) = \varnothing$. Also, the definition of~$\varphi$ implies that~$e\in\varphi(O)$ if and only if $e\notin\varphi(^{e}O)$ which can be used to see inductively that $O \mapsto (\varphi(O),O^o)$ is a bijection from $\mathcal{O}^{4}(O)$ to~$\mathcal{S}(G) \times \mathcal{S}(G)$. That~$I(O) := \hat{I}(\varphi(O))$ and~$L(O) := \hat{L}(\varphi(O))$ are compatible with Definition~\ref{def:fouractivity} is also verified inductively, using Lemmas~\ref{lem:key}, \ref{lem:isthmus}, and~\ref{lem:loop} together with the following simple observations about~$\hat{I}$ and~$\hat{L}$ (where~$\hat{I}_{\Gamma}$ and~$\hat{L}_{\Gamma}$ denote these generalized activities for the graph~$\Gamma$):
\begin{itemize}
\item if $e$ is neither an isthmus nor a loop, then
\begin{align*}
e & \in S\Rightarrow \hat{I}_{G}(S)=\hat{I}_{G/e}(S\setminus\{e\}) \textrm{ and } \hat{L}_{G}(S)=\hat{L}_{G/e}(S\setminus\{e\}),\\
e & \not \in S\Rightarrow \hat{I}_{G}(S)=\hat{I}_{G-e}(S) \textrm{ and } \hat{L}_{G}(S)=\hat{L}_{G-e}(S);
\end{align*}
\item is $e$ is an isthmus, then
\begin{align*}
e & \in S\Rightarrow \hat{I}_{G}(S)=\hat{I}_{G/e}(S\setminus\{e\}) \textrm{ and } \hat{L}_{G}(S)=\{e\}\cup \hat{L}_{G/e}(S\setminus\{e\}),\\
e & \not \in S\Rightarrow \hat{I}_{G}(S)=\hat{I}_{G-e}(S) \textrm{ and } \hat{L}_{G}(S)=\{e\}\cup \hat{L}_{G-e}(S);
\end{align*}
\item is $e$ is a loop, then
\begin{align*}
e & \in S\Rightarrow \hat{I}_{G}(S)=\{e\}\cup \hat{I}_{G/e}(S\setminus
\{e\}) \textrm{ and } \hat{L}_{G}(S)=\hat{L}_{G/e}(S\setminus\{e\}),\\
e  & \not \in S\Rightarrow \hat{I}_{G}(S)=\{e\}\cup \hat{I}_{G-e}(S) \textrm{ and }\hat{L}_{G}(S)=\hat{L}_{G-e}(S).
\end{align*}
\end{itemize}
That~$I(O) := \hat{I}(\varphi(O))$ and~$L(O) := \hat{L}(\varphi(O))$ satisfy~$I(O) = \hat{I}(O^b)$ and~$L(O) = \hat{L}(O^b)$ when $O^o = \varnothing$ is clear because in this case we have $\varphi(O) = O^b$ by construction. \end{proof}

To complete the proof of Theorem~\ref{thm:fouracttutteeval} we must prove the key lemma, Lemma~\ref{lem:key}. The proof of Lemma~\ref{lem:key} is long and rather technical, so it is included in Appendix~\ref{sec:proofkeylem}. 

Theorem~\ref{thm:main} follows readily from  Theorem~\ref{thm:fouracttutteeval}:

\begin{proof}[Proof of Theorem~\ref{thm:main} from Theorem~\ref{thm:fouracttutteeval}]
By Theorem~\ref{thm:gordontraldi}, we have the following.
{\footnotesize
\begin{gather*}
(k_1+m)^{n-\kappa}(k_2+l)^{g}T_G\left(\frac{k_1x+k_2w+m\hat{x}+l\hat{w}}{k_1+m},\frac{k_2y+k_1z+l\hat{y}+m\hat{z}}{k_1+l}\right) = \\
 \\
(k_1+m)^{n-\kappa}(k_2+l)^{g}\sum_{S \in \mathcal{S}(G)} \left\{
\begin{array}[c]{c}
\left(\frac{k_1x+m\hat{x}}{k_1+m}\right)^{\left\vert \hat{I}(S)\cap S \right\vert} \left(\frac{k_2w+l\hat{w}}{k_1+m}\right)^{\left\vert \hat{I}(S)\setminus S\right\vert} \\ 
\cdot \left(\frac{k_2y+l\hat{y}}{k_2+l}\right)^{\left\vert \hat{L}(S) \setminus S\right\vert} \left(\frac{k_1z+m\hat{z}}{k_2+l}\right)^{\left\vert \hat{L}(S)\cap S \right\vert}
\end{array} \right. = \\
 \\
\sum_{S \in \mathcal{S}(G)} \left\{ 
\begin{array}[c]{c}
(k_1+m)^{n-\kappa-\left\vert \hat{I}(S) \right\vert}(k_2+l)^{g-\left\vert \hat{L}(S) \right\vert} \\ 
\cdot (k_1x+m\hat{x})^{\left\vert \hat{I}(S)\cap S \right\vert} (k_2w+l\hat{w})^{\left\vert \hat{I}(S)\setminus S \right\vert}(k_2y+l\hat{y})^{\left\vert \hat{L}(S) \setminus S \right\vert} (k_1z+m\hat{z})^{\left\vert \hat{L}(S)\cap S\right\vert}
\end{array} \right. = \\
 \\
\sum_{S \in \mathcal{S}(G)} \left\{ 
\begin{array}[c]{c}
(k_1+m)^{\left\vert S \setminus (\hat{I}(S) \cup \hat{L}(S)) \right\vert}(k_2+l)^{\left\vert E(G) \setminus (S \cup \hat{I}(S) \cup \hat{L}(S)) \right\vert} \\ 
\cdot (k_1x+m\hat{x})^{\left\vert \hat{I}(S)\cap S \right\vert} (k_2w+l\hat{w})^{\left\vert \hat{I}(S)\setminus S \right\vert}(k_2y+l\hat{y})^{\left\vert \hat{L}(S) \setminus S \right\vert} (k_1z+m\hat{z})^{\left\vert \hat{L}(S)\cap S \right\vert}
\end{array} \right. \\
\end{gather*}}
Above we have used the fact that for all $S \in \mathcal{S}(G)$ we have the equalities
\begin{align*}
n-\kappa-\left\vert \hat{I}(S)\right\vert &= \left\vert S\setminus(\hat{I}(S)\cup \hat{L}(S))\right\vert \text{ and} \\
g-\left\vert \hat{L}(S)\right\vert &= \left\vert E(G)\setminus (S\cup \hat{I}(S) \cup \hat{L}(S))\right\vert,
\end{align*}
which follow from Lemma~\ref{lem:rankact}. Now we expand all the products to transform the above expression into
\[\sum_{S \in \mathcal{S}(G)} \sum_{S' \in \mathcal{S}(G)} \left\{ 
\begin{array}[c]{c}
k_1^{\left\vert S \cap S'\right\vert} k_2^{\left\vert S' \setminus S\right\vert} l^{\left \vert E(G) \setminus (S' \cup S) \right \vert} m^{\left \vert S \setminus S' \right \vert} \\
\cdot x^{\left \vert \hat{I}(S)\cap (S\cap S') \right \vert}w^{\left \vert \hat{I}(S)\cap (S' \setminus S) \right \vert} \hat{x}^{\left \vert \hat{I}(S) \cap (S \setminus S')) \right \vert} \hat{w}^{\left \vert \hat{I}(S) \cap (E(G) \setminus (S \cup S')) \right \vert} \\
\cdot y^{\left \vert \hat{L}(S)\cap (S' \setminus S) \right \vert} z^{\left \vert \hat{L}(S)\cap (S \cap S') \right \vert} \hat{y}^{\left \vert \hat{L}(S) \cap (E(G) \setminus (S' \cup S)) \right \vert} \hat{z}^{\left \vert \hat{L}(S) \cap (S \setminus S') \right \vert}
\end{array} \right.\]
Finally, we use Theorem~\ref{thm:fouracttutteeval} to rewrite exponents as functions of fourientations rather than subgraphs. The resulting formula (where $S$ becomes $\varphi(O)$ and $S'$ becomes $O^o$) is 
{\footnotesize
\begin{gather*}
\sum_{S \in \mathcal{S}(G)} \sum_{O \in \varphi^{-1}(S)} \left\{
\begin{array}[c]{c}
k_1^{\left\vert O^o \cap \varphi(O)) \right\vert}k_2^{\left\vert O^o \setminus \varphi(O) \right\vert}l^{\left\vert O^{u} \right\vert}m^{\left\vert O^{b} \right\vert} \\ 
\cdot x^{\left\vert I^{+}(O) \right\vert} w^{\left\vert I^{-}(O) \right\vert} {\hat{x}}^{\left\vert I^{b}(O) \right\vert} {\hat{w}}^{\left\vert I^{u}(O) \right\vert} y^{\left\vert L^{+}(O) \right\vert} z^{\left\vert L^{-}(O) \right\vert}{\hat{y}}^{\left\vert L^{u}(O)\right\vert} {\hat{z}}^{\left\vert L^{b}(O)\right\vert}
\end{array} \right. = \\
 \\
\sum_{O \in \mathcal{O}^{4}(G)} \hspace{-0.3cm} k_1^{\left\vert O^o \cap \varphi(O) \right\vert}k_2^{\left\vert O^o \setminus \varphi(O) \right\vert}l^{\left\vert O^{u}\right\vert}m^{\left\vert O^{b}\right\vert}x^{\left\vert I^{+}(O)\right\vert} w^{\left\vert I^{-}(O)\right\vert} {\hat{x}}^{\left\vert I^{b}(O)\right\vert} {\hat{w}}^{\left\vert I^{u}(O)\right\vert} y^{\left\vert L^{+}( O)\right\vert} z^{\left\vert L^{-}(O)\right\vert}{\hat{y}}^{\left\vert L^{u}(O)\right\vert} {\hat{z}}^{\left\vert L^{b}(O)\right\vert}.
\end{gather*}} 
\end{proof}

Observe that the compatibility provisions of Theorem~\ref{thm:fouracttutteeval} imply that Theorem~\ref{thm:main} recovers Theorem~\ref{thm:gordontraldi} by taking~$(k_1,k_2,l,m) := (0,0,1,1)$, and Theorem~\ref{thm:lasvergnas} by taking~$(k_1,k_2,l,m) := (1,1,0,0)$. Moreover, Theorem~\ref{thm:main} offers a nontrivial interpolation of the Gordon-Traldi and Las~Vergnas formulas. Taking $k_1 := k_2 := k$, $\hat{x} := \hat{y} := 1$, and~$x,y,w,z,\hat{w},\hat{z}\in\{0,1\}$ we recover all the enumerations of (in fact, generating functions for) ``min-edge classes'' of fourientations obtained by Backman-Hopkins~\cite{backman2015fourientations}. (These min-edge classes form an intersection lattice of~$64$ sets of fourientations defined in terms of restrictions on potential cuts and cycles.) We remark that these min-edge classes of fourientations themselves already unified earlier results of Gessel-Sagan~\cite{gessel1996tutte}, Hopkins-Perkinson~\cite{hopkins2016bigraphical}, and Backman~\cite{backman2014partial}.  For instance, by taking~$(k_1,k_2,l,m) := (1,1,1,0)$, $y := z := 0$ and~$x := w := \hat{x} := \hat{w} := \hat{y} := \hat{z} := 1$, the formula says that the number of acyclic partial orientations of $G$ is $2^{g}T_G(3,1/2)$, a result originally obtained by Gessel-Sagan~\cite{gessel1996tutte}.\footnote{See Backman-Hopkins~\cite{backman2015fourientations} for precise definitions of \emph{partial orientation}, \emph{acyclic partial orientation}, \emph{$q$-connected partial orientation}, \emph{min-edge class}, et cetera.} Furthermore, Backman-Hopkins~\cite[\S4.4]{backman2015fourientations} showed that, when $G$ is connected, for any choice of root $q \in V(G)$ there is a choice of reference orientation~$O_{\mathrm{ref}}$, total edge order $<$, and edge labels $\sigma_u,\sigma_b$ so that the $q$-connected fourientations of~$G$ are precisely those fourientations~$O \in \mathcal{O}^{4}(G)$ which satisfy~$I^{u}(O) = I^{-}(O) = \varnothing$ (where~$I$ is as in Theorem~\ref{thm:fouracttutteeval}). Thus Theorem~\ref{thm:fouracttutteeval} also implies that (for connected graphs~$G$) the number of~$q$-connected fourientations is $2^{\left\vert E(G) \right\vert}T_G(1,2)$ and the number of acyclic, $q$-connected partial orientations is~$2^{g}T_G(1,1/2)$, two other evaluations first obtained by Gessel-Sagan~\cite{gessel1996tutte}.  In sum, we see that Theorem~\ref{thm:main} encompasses many results that have appeared in the literature.

Note that~$\varphi$ restricts to an activity-preserving bijection $\mathcal{O}(G) \stackrel{\sim}\longrightarrow \mathcal{S}(G)$, and any such bijection provides an equivalence between Theorems~\ref{thm:gordontraldi} and~\ref{thm:lasvergnas}.  As discussed in~\cite{gioanpreprint}, in general there are many such activity-preserving bijections, with various properties. In particular, the active bijection of Gioan-Las Vergnas~\cite{gioan2009active}~\cite{gioanpreprint}~\cite{gioan2005activity} has some attractive properties not shared by $\varphi$. One such property is that if $O \in \mathcal{O}(G)$ and~$O^{\mathrm{rev}}$ is the orientation obtained from~$O$ by reversing all edge orientations, then the images of~$O$ and~$O^{\mathrm{rev}}$ under the active bijection belong to the same Crapo interval. As discussed in Remark~\ref{rem:k4} below, the images of~$O$ and~$O^{\mathrm{rev}}$ under $\varphi$ need not belong to the same Crapo interval.

For convenience we have included a table, Figure~\ref{fig:parameters}, which describes each parameter in Theorem~\ref{thm:main} and either refers to the direct definition of the quantity this parameter records or refers to a remark where we discuss why such a direct interpretation is difficult to find.

\begin{figure} 
\renewcommand*{\arraystretch}{1.4}
\begin{tabular}{c | c | c}
Parameter in Theorem~\ref{thm:main} & Quantity it records & Description of this quantity \\ \hline
$k_1$ & $|O^o \cap \varphi(O)|$ & \parbox{2.3in}{\vspace{.5\baselineskip} $|O^o|$ is the number of oriented edges in $O$; no direct interpretation of $|O^o \cap \varphi(O)|$ alone, see Remark~\ref{rem:k1andk2} \vspace{.5\baselineskip} } \\ \hline
$k_2$ & $|O^o \setminus \varphi(O)|$ & \parbox{2.3in}{\vspace{.5\baselineskip} $|O^o|$ is the number of oriented edges in $O$; no direct interpretation of $|O^o \setminus \varphi(O)|$ alone, see Remark~\ref{rem:k1andk2} \vspace{.5\baselineskip}} \\ \hline
$l$ & $|O^u|$ & \parbox{2.3in}{\vspace{.5\baselineskip} number of unoriented edges in $O$ \vspace{.5\baselineskip}} \\ \hline
$m$ & $|O^b|$ & \parbox{2.3in}{\vspace{.5\baselineskip} number of bioriented edges in $O$ \vspace{.5\baselineskip}} \\ \hline
$x$ & $|I^+(O)|$ & \parbox{2.3in}{\vspace{.5\baselineskip} number of cut active edges in $O$ oriented in agreement with $O_{\mathrm{ref}}$; see Definition~\ref{def:fouractivity} \vspace{.5\baselineskip}} \\ \hline
$w$ & $|I^-(O)|$ & \parbox{2.3in}{\vspace{.5\baselineskip} number of cut active edges in $O$ oriented in diagreement with $O_{\mathrm{ref}}$; see Definition~\ref{def:fouractivity} \vspace{.5\baselineskip}} \\ \hline
$\hat{x}$ & $|I^b(O)|$ & \parbox{2.3in}{\vspace{.5\baselineskip} number of cut active bioriented edges in $O$; no direct intepretation, see Remark~\ref{rem:mysteryact} \vspace{.5\baselineskip}} \\ \hline
$\hat{w}$ & $|I^u(O)|$ & \parbox{2.3in}{\vspace{.5\baselineskip} number of cut active unoriented edges in $O$; see Definition~\ref{def:fouractivity} \vspace{.5\baselineskip}} \\ \hline
$y$ & $|L^+(O)|$ & \parbox{2.3in}{\vspace{.5\baselineskip} number of cycle active edges in $O$ oriented in agreement with $O_{\mathrm{ref}}$; see Definition~\ref{def:fouractivity} \vspace{.5\baselineskip}} \\ \hline
$z$ & $|L^-(O)|$ & \parbox{2.3in}{\vspace{.5\baselineskip} number of cycle active edges in $O$ oriented in diagreement with $O_{\mathrm{ref}}$; see Definition~\ref{def:fouractivity} \vspace{.5\baselineskip}} \\ \hline
$\hat{y}$ & $|L^u(O)|$ & \parbox{2.3in}{\vspace{.5\baselineskip} number of cycle active unoriented edges in $O$; no direct intepretation, see Remark~\ref{rem:mysteryact} \vspace{.5\baselineskip}} \\ \hline
$\hat{z}$ & $|L^b(O)|$ & \parbox{2.3in}{\vspace{.5\baselineskip} number of cycle active bioriented edges in $O$; see Definition~\ref{def:fouractivity} \vspace{.5\baselineskip}} \\ \hline
\end{tabular} 
\caption{Table of parameters in Theorem~\ref{thm:main}.} \label{fig:parameters}
\end{figure}

\section{Examples and discussion of~\texorpdfstring{$\varphi$}{phi}} \label{sec:ex}

In this section we will give an example of what the surjection $\varphi$ is in one of the simplest nontrivial cases: the triangle graph. Then we make some remarks about how the map $\varphi$ can be rather subtle.

\begin{example}\label{ex:main}
Let $G$ and $O_{\mathrm{ref}}$ be as below:
\begin{center}
\begin{tikzpicture}[scale=1]
	\SetFancyGraph
	\Vertex[NoLabel,x=0.75,y=0]{v_3}
	\Vertex[NoLabel,x=0,y=1]{v_2}
	\Vertex[NoLabel,x=-0.75,y=0]{v_1}
	\Edges[style={thick}](v_1,v_2)
	\Edges[style={thick}](v_1,v_3)
	\Edges[style={thick}](v_2,v_3)
	\node at (0.6,0.7) {$e_3$};
	\node at (0,-0.25) {$e_2$};
	\node at (-0.6,0.7) {$e_1$};
	\node at (0,-0.8){$G$};
\end{tikzpicture} \qquad \begin{tikzpicture}[scale=1]
	\SetFancyGraph
	\SetFancyGraph
	\Vertex[NoLabel,x=0.75,y=0]{v_3}
	\Vertex[NoLabel,x=0,y=1]{v_2}
	\Vertex[NoLabel,x=-0.75,y=0]{v_1}
	\Edges[style={thick,->,>=mytip}](v_1,v_2)
	\Edges[style={thick,->,>=mytip}](v_3,v_1)
	\Edges[style={thick,->,>=mytip}](v_3,v_2)
	\node at (0,-0.8){$O_{\mathrm{ref}}$};
\end{tikzpicture}
\end{center}
We take the total edge order $e_1 < e_2 < e_3$. We set $\sigma_u(e) := -$ and $\sigma_b(e) := +$ for all edges~$e \in E(G)$. Then Figures~\ref{fig:expart1} and~\ref{fig:expart2} show the fibers~$\varphi^{-1}(S)$ for all~$S \in \mathcal{S}(G)$. In our depiction of a subgraph~$S \in \mathcal{S}(G)$, edges that belong to~$S$ are solid and edges that do not belong to~$S$ are dashed. And in our depiction of a fourientation~$O \in \mathcal{O}^{4}(G)$ oriented edges $e^{\delta}=(u,v)$ are depicted by an arrow from~$u$ to~$v$, while unoriented edges are solid lines with no arrows, and bioriented edges are drawn with two arrows, one in each direction. Note that in this example there is a unique $\varphi$ satisfying the constraints of Theorem~\ref{thm:fouracttutteeval}. \end{example}

\afterpage{
\begin{figure}
\begin{center}
\begin{tabular}{ >{\centering\arraybackslash}m{3cm} | >{\centering\arraybackslash}m{2cm} | >{\centering\arraybackslash}m{2cm} | >{\centering\arraybackslash}m{3cm}}
$S \in \mathcal{S}(G)$ & $I(S)$ & $L(S)$ & $\varphi^{-1}(S) $\\
\hline
\begin{tikzpicture}
	\SetFancyGraph
	\Vertex[NoLabel,x=-0.75,y=0]{v_1}
	\Vertex[NoLabel,x=0,y=1]{v_2}
	\Vertex[NoLabel,x=0.75,y=0]{v_3}
	\Edges[style={dashed}](v_1,v_2)
	\Edges[style={dashed}](v_1,v_3)
	\Edges[style={dashed}](v_2,v_3)
\end{tikzpicture}& $\{e_1,e_2\}$ & $\varnothing$ & \begin{tikzpicture}
	\node at (0,0.25) {};
	\Triangle[1,0,-1,-1,-1]
	\Triangle[0,-1,0,-1,-1]
	\Triangle[1,-1,-1,0,-1]
	\Triangle[2,-1,-1,-1,0]
	\Triangle[0,-2,0,0,-1]
	\Triangle[1,-2,0,-1,0]
	\Triangle[2,-2,-1,0,0]
	\Triangle[1,-3,0,0,0]
\end{tikzpicture} \\ \hline
\begin{tikzpicture}
	\SetFancyGraph
	\Vertex[NoLabel,x=-0.75,y=0]{v_1}
	\Vertex[NoLabel,x=0,y=1]{v_2}
	\Vertex[NoLabel,x=0.75,y=0]{v_3}
	\Edges[style={dashed}](v_1,v_2)
	\Edges[style={ultra thick}](v_1,v_3)
	\Edges[style={dashed}](v_2,v_3)
\end{tikzpicture}& $\{e_1,e_2\}$ & $\varnothing$ & \begin{tikzpicture}
	\node at (0,0.25) {};
	\Triangle[1,0,-1,1,1]
	\Triangle[0,-1,0,1,1]
	\Triangle[1,-1,-1,2,-1]
	\Triangle[2,-1,-1,1,0]
	\Triangle[0,-2,0,2,-1]
	\Triangle[1,-2,0,1,0]
	\Triangle[2,-2,-1,2,0]
	\Triangle[1,-3,0,2,0]
\end{tikzpicture} \\ \hline
\begin{tikzpicture}
	\SetFancyGraph
	\Vertex[NoLabel,x=-0.75,y=0]{v_1}
	\Vertex[NoLabel,x=0,y=1]{v_2}
	\Vertex[NoLabel,x=0.75,y=0]{v_3}
	\Edges[style={ultra thick}](v_1,v_2)
	\Edges[style={dashed}](v_1,v_3)
	\Edges[style={dashed}](v_2,v_3)
\end{tikzpicture}& $\{e_1,e_2\}$ & $\varnothing$ & \begin{tikzpicture}
	\node at (0,0.25) {};
	\Triangle[1,0,1,-1,-1]
	\Triangle[0,-1,2,-1,-1]
	\Triangle[1,-1,1,0,-1]
	\Triangle[2,-1,1,-1,0]
	\Triangle[0,-2,2,0,-1]
	\Triangle[1,-2,2,-1,0]
	\Triangle[2,-2,1,0,0]
	\Triangle[1,-3,2,0,0]
\end{tikzpicture} \\ \hline
\begin{tikzpicture}
	\SetFancyGraph
	\Vertex[NoLabel,x=-0.75,y=0]{v_1}
	\Vertex[NoLabel,x=0,y=1]{v_2}
	\Vertex[NoLabel,x=0.75,y=0]{v_3}
	\Edges[style={ultra thick}](v_1,v_2)
	\Edges[style={ultra thick}](v_1,v_3)
	\Edges[style={dashed}](v_2,v_3)
\end{tikzpicture}& $\{e_1,e_2\}$ & $\varnothing$ & \begin{tikzpicture}
	\node at (0,0.25) {};
	\Triangle[1,0,1,1,1]
	\Triangle[0,-1,2,1,1]
	\Triangle[1,-1,1,2,1]
	\Triangle[2,-1,1,1,0]
	\Triangle[0,-2,2,2,-1]
	\Triangle[1,-2,2,1,0]
	\Triangle[2,-2,1,2,0]
	\Triangle[1,-3,2,2,0]
\end{tikzpicture} \\ \hline
\end{tabular}
\end{center}
\caption{The first half of the $S \in \mathcal{S}(G)$ for Example~\ref{ex:main}, together with their activities and fibers $\varphi^{-1}(S)$.} \label{fig:expart1}
\end{figure}  \clearpage}

\afterpage{
\begin{figure}
\begin{center}
\begin{tabular}{ >{\centering\arraybackslash}m{3cm} | >{\centering\arraybackslash}m{2cm} | >{\centering\arraybackslash}m{2cm} | >{\centering\arraybackslash}m{3cm}}
$S \in \mathcal{S}(G)$ & $I(S)$ & $L(S)$ & $\varphi^{-1}(S) $\\
\hline
 \begin{tikzpicture}
	\SetFancyGraph
	\Vertex[NoLabel,x=-0.75,y=0]{v_1}
	\Vertex[NoLabel,x=0,y=1]{v_2}
	\Vertex[NoLabel,x=0.75,y=0]{v_3}
	\Edges[style={dashed}](v_1,v_2)
	\Edges[style={dashed}](v_1,v_3)
	\Edges[style={ultra thick}](v_2,v_3)
\end{tikzpicture}& $\{e_1\}$ & $\varnothing$ & \begin{tikzpicture}
	\node at (0,0.25) {};
	\Triangle[1,0,-1,1,-1]
	\Triangle[0,-1,0,1,-1]
	\Triangle[1,-1,-1,0,1]
	\Triangle[2,-1,-1,1,2]
	\Triangle[0,-2,0,0,1]
	\Triangle[1,-2,0,1,2]
	\Triangle[2,-2,-1,0,2]
	\Triangle[1,-3,0,0,2]
\end{tikzpicture} \\ \hline
\begin{tikzpicture}
	\SetFancyGraph
	\Vertex[NoLabel,x=-0.75,y=0]{v_1}
	\Vertex[NoLabel,x=0,y=1]{v_2}
	\Vertex[NoLabel,x=0.75,y=0]{v_3}
	\Edges[style={ultra thick}](v_1,v_2)
	\Edges[style={dashed}](v_1,v_3)
	\Edges[style={ultra thick}](v_2,v_3)
\end{tikzpicture}& $\{e_1\}$ & $\varnothing$ & \begin{tikzpicture}
	\node at (0,0.25) {};
	\Triangle[1,0,1,-1,1]
	\Triangle[0,-1,2,1,-1]
	\Triangle[1,-1,1,0,1]
	\Triangle[2,-1,1,-1,2]
	\Triangle[0,-2,2,0,1]
	\Triangle[1,-2,2,1,2]
	\Triangle[2,-2,1,0,2]
	\Triangle[1,-3,2,0,2]
\end{tikzpicture} \\ \hline
\begin{tikzpicture}
	\SetFancyGraph
	\Vertex[NoLabel,x=-0.75,y=0]{v_1}
	\Vertex[NoLabel,x=0,y=1]{v_2}
	\Vertex[NoLabel,x=0.75,y=0]{v_3}
	\Edges[style={dashed}](v_1,v_2)
	\Edges[style={ultra thick}](v_1,v_3)
	\Edges[style={ultra thick}](v_2,v_3)
\end{tikzpicture}& $\varnothing$ & $\{e_1\}$ & \begin{tikzpicture}
	\node at (0,0.25) {};
	\Triangle[1,0,1,1,-1]
	\Triangle[0,-1,0,-1,1]
	\Triangle[1,-1,1,2,-1]
	\Triangle[2,-1,1,1,2]
	\Triangle[0,-2,0,2,1]
	\Triangle[1,-2,0,-1,2]
	\Triangle[2,-2,1,2,2]
	\Triangle[1,-3,0,2,2]
\end{tikzpicture} \\ \hline
\begin{tikzpicture}
	\SetFancyGraph
	\Vertex[NoLabel,x=-0.75,y=0]{v_1}
	\Vertex[NoLabel,x=0,y=1]{v_2}
	\Vertex[NoLabel,x=0.75,y=0]{v_3}
	\Edges[style={ultra thick}](v_1,v_2)
	\Edges[style={ultra thick}](v_1,v_3)
	\Edges[style={ultra thick}](v_2,v_3)
\end{tikzpicture}& $\varnothing$ & $\{e_1\}$ & \begin{tikzpicture}
	\node at (0,0.25) {};
	\Triangle[1,0,-1,-1,1]
	\Triangle[0,-1,2,-1,1]
	\Triangle[1,-1,-1,2,1]
	\Triangle[2,-1,-1,-1,2]
	\Triangle[0,-2,2,2,1]
	\Triangle[1,-2,2,-1,2]
	\Triangle[2,-2,-1,2,2]
	\Triangle[1,-3,2,2,2]
\end{tikzpicture} \\ \hline
\end{tabular}
\end{center}
\caption{The second half of the $S \in \mathcal{S}(G)$ for Example~\ref{ex:main}, together with their activities and fibers $\varphi^{-1}(S)$.} \label{fig:expart2}
\end{figure} \clearpage}

\begin{remark} \label{rem:wheredoorientededgesgo}
There can be $S \in \mathcal{S}(G)$ with $O_1,O_2 \in \varphi^{-1}(S)$ but~$O_1^{+} \cap O_2^{-}\neq \varnothing$. For instance, with the setup of Example~\ref{ex:main}, for the following fourientations $O_1$ and $O_2$, we have~$O_1, O_2 \in \varphi^{-1}(\{e_3\})$ but $e_3 \in O_1^{-}$ while~$e_3 \in O_2^{+}$:
\begin{center}
\begin{tikzpicture}[scale=1.5]
\Triangle[0,0,0,1,-1]
\node at (0.2,-0.5) {$O_1$};
\end{tikzpicture} \qquad \begin{tikzpicture}[scale=1.5]
\Triangle[0,0,0,0,1]
\node at (0.2,-0.5) {$O_2$};
\end{tikzpicture}
\end{center}
Thus~$\varphi^{-1}(S)$ is not just a direct interpolation between a  fourientation in which every edge is oriented and a fourientation in which no edge is oriented. Put differently, it does not seem possible to deduce Theorem~\ref{thm:fouracttutteeval} from Theorems~\ref{thm:gordontraldi} and~\ref{thm:lasvergnas} alone. On the other hand, as suggested to us by~E.~Gioan, we can at least say the following: say the edges of $G$ are~$e_1 < \cdots < e_p$; then if $O_1,O_2 \in \varphi^{-1}(S)$ for some $S \in \mathcal{S}(G)$ and~$\{e_1,\ldots,e_i\} \cap O_1^{o} = \{e_1,\ldots,e_i\} \cap O_2^{o}$ for some~$i\in \{1,\ldots,p\}$, we in fact can conclude that~$O_1$ and~$O_2$ agree when restricted to~$\{e_1,\ldots,e_i\}$.
\end{remark}

\begin{remark} \label{rem:mysteryact}
It seems hard to give explicit descriptions of $I^{b}(O)$ and $L^{u}(O)$ using the intrinsic properties of a fourientation~$O$. One particularly intriguing fact is that whether an edge~$e \in E(G)$ belongs to~$I^{b}(O)$ depends on more than just the status in~$O$ of all edges~$f \in E(G)$ with~\mbox{$f \geq e$}. For instance, with the setup of Example~\ref{ex:main}, for the following fourientations we have~$e_2 \in I(O_1^b)$ but $e_2 \notin I(O_2^b)$ even though $O_1$ and $O_2$ look the same when restricted to~$\{e_2,e_3\}$:
\begin{center}
\begin{tikzpicture}[scale=1.5]
    \Triangle[0,0,-1,2,-1]
    \node at (0.2,-0.5) {$O_1$};
\end{tikzpicture} \qquad \begin{tikzpicture}[scale=1.5]
    \Triangle[0,0,1,2,-1]
    \node at (0.2,-0.5) {$O_2$};
\end{tikzpicture}
\end{center}
This is in marked contrast to the situation for~$I^{o}(O)$, $L^{o}(O)$, $I^{u}(O)$, $L^{b}(O)$, or indeed the Gordon-Traldi or Las Vergnas activities where an edge's being active depends only on the status of edges greater than or equal to it in the total edge order. Somehow~$I^{u}(O)$ and~$L^{b}(O)$ are ``local'' (see the versatility of~$\sigma_u$ and~$\sigma_b$) whereas~$I^{b}(O)$ and~$L^{u}(O)$ must be ``global.''
\end{remark}

\begin{remark}
In the Las Vergnas orientation activities formula, Theorem~\ref{thm:lasvergnas}, the variables $x$ and $w$ play a symmetric role, as do the variables $y$ and $z$. This symmetry can be explained combinatorially as follows. For~$O \in \mathcal{O}(G)$ and~$S \in \mathcal{S}(G)$, let us use~$O^{\mathrm{rev}(S)}$ to denote the orientation obtained from $O$ by reversing the orientation of all edges in $S$; and let~$O^{\mathrm{rev}} := O^{\mathrm{rev}(E(G))}$. It is a well-known fact that if~$Cu(O)$ denotes all edges in $E(G)$ that belong to directed cuts of $O$ (the \emph{acyclic part} of~$O$) and~$Cy(O)$ denotes all edges in $E(G)$ that belong to directed cycles of $O$ (the \emph{cyclic part} of $O$) then~$\{Cu(O),Cy(O)\}$ is a partition of $E(G)$ for all $O \in \mathcal{O}(G)$. Thus the map~$O \mapsto O^{\mathrm{rev}(Cu(O))}$ is an involution on $\mathcal{O}(G)$ that swaps the roles of~$x$ and~$w$ in Theorem~\ref{thm:lasvergnas}. Similarly,~$O \mapsto O^{\mathrm{rev}(Cy(O))}$ swaps the roles of~$y$ and~$z$ in Theorem~\ref{thm:lasvergnas}. In particular,~$O \mapsto O^{\mathrm{rev}}$ preserves activities. When~$(k_1,k_2,l,m) := (1,1,1,1)$, the variables in~$\{x,w,\hat{x},\hat{w}\}$ and the variables in~$\{y,z,\hat{y},\hat{z}\}$ play symmetric roles in Theorem~\ref{thm:fouracttutteeval}. One might hope that there would be a way to see this symmetry combinatorially by reversing edge orientations. However, with the setup of Example~\ref{ex:main}, the four fourientations $O_1$, $O_2$, $O_3$, and~$O_4$ pictured below are all related through various kinds of ``fourientation reversals'' (which either swap unoriented and bioriented edges, or leave them fixed) and yet they exhibit all three different activity patterns that occur in~$K_3$:
\begin{center}
\begin{tikzpicture}[scale=1.5]
    \Triangle[0,0,0,0,-1]
    \node at (0.2,-0.5) {$O_1$};
\end{tikzpicture} \qquad \begin{tikzpicture}[scale=1.5]
    \Triangle[0,0,0,0,1]
    \node at (0.2,-0.5) {$O_2$};
\end{tikzpicture} \qquad \begin{tikzpicture}[scale=1.5]
    \Triangle[0,0,2,2,-1]
    \node at (0.2,-0.5) {$O_3$};
\end{tikzpicture} \qquad \begin{tikzpicture}[scale=1.5]
    \Triangle[0,0,2,2,1]
    \node at (0.2,-0.5) {$O_4$};
\end{tikzpicture}
\end{center}
Nevertheless we can see the symmetry of~$\{x,w,\hat{x},\hat{w}\}$ combinatorially by using the map~$\varphi$. In the Gordon-Traldi formula, Theorem~\ref{thm:gordontraldi}, the variables $\hat{x}$ and $\hat{w}$ play a symmetric role, as do the variables $\hat{y}$ and $\hat{z}$. This symmetry can be seen by ``toggling'' rather than reversing orientations. Specifically, it follows from Lemma~\ref{lem:intervals} that the map~$S \mapsto S\Delta\hat{I}(S)$ (where~$\Delta$ denotes symmetric difference) is an involution on~$\mathcal{S}(G)$ that swaps the roles of~$\hat{x}$ and~$\hat{w}$ in Theorem~\ref{thm:gordontraldi}; and similarly~$S \mapsto S\Delta\hat{L}(S)$ is an involution on $\mathcal{S}(G)$ that swaps the roles of~$\hat{y}$ and~$\hat{z}$. We can push this activity toggling through~$\varphi$. Let $\widetilde{\varphi}\colon \mathcal{O}^4(G) \stackrel{\sim}\longrightarrow \mathcal{S}(G)\times\mathcal{S}(G)$ denote the bijection $\widetilde{\varphi}(O) := (\varphi(O),O^o)$. Then~\mbox{$O \mapsto \widetilde{\varphi}^{-1}(\varphi(O)\Delta(\hat{I}(\varphi(O))\setminus O^o),O^o)$} is an involution on $\mathcal{O}^{4}(G)$ that swaps the roles of~$\hat{x}$ and~$\hat{w}$ in Theorem~\ref{thm:fouracttutteeval} (when, as stated before, $(k_1,k_2,l,m) := (1,1,1,1)$). Similarly,~$O \mapsto \widetilde{\varphi}^{-1}(\varphi(O)\Delta(\hat{I}(\varphi(O))\cap O^o),O^o)$ swaps the roles of~$x$ and~$w$ in Theorem~\ref{thm:fouracttutteeval}. Finally,~$O \mapsto \widetilde{\varphi}^{-1}(\varphi(O),O^o\Delta(\hat{I}(\varphi(O))\cap \varphi(O)))$ swaps the roles of~$x$ and~$\hat{x}$ in Theorem~\ref{thm:fouracttutteeval}. The symmetry of~$\{y,z,\hat{y},\hat{z}\}$ can of course be explained similarly by toggling the edges in~$\hat{L}(\varphi(O))$ instead.
\end{remark}

\begin{remark} \label{rem:k4}
Considerations of space prohibit a complete analysis of $K_4$ like the analysis of $K_3$ presented above, but we take a moment to mention two interesting properties of the orientations $O_1$ and $^{e_{6}}O_{1}=O_{2}$ of $K_4$ that are pictured below (with edge order $e_1 < e_2 < \ldots < e_6$):
\begin{center}
\begin{tikzpicture}[scale=1]
	\SetFancyGraph
	\Vertex[NoLabel,x=-1,y=0]{v_1}
	\Vertex[NoLabel,x=-1,y=2]{v_2}
	\Vertex[NoLabel,x=1,y=0]{v_3}
    \Vertex[NoLabel,x=1,y=2]{v_4}
	\Edges[style={thick}](v_1,v_2)
	\Edges[style={thick}](v_1,v_3)
    \Edges[style={thick}](v_2,v_4)
	\Edges[style={thick}](v_2,v_3)
    \Edges[style={thick}](v_3,v_4)
    \Edges[style={thick}](v_1,v_4)
	\node at (1.3,.9) {$e_3$};
	\node at (0,-0.3) {$e_1$};
	\node at (-1.3,.9) {$e_5$};
    \node at (0,2.3) {$e_6$};
    \node at (.65,1.3) {$e_2$};
    \node at (-.6,1.3) {$e_4$};
	\node at (0,-0.8){$G$};
\end{tikzpicture} \qquad \begin{tikzpicture}[scale=1]
    \SetFancyGraph
	\Vertex[NoLabel,x=-1,y=0]{v_1}
	\Vertex[NoLabel,x=-1,y=2]{v_2}
	\Vertex[NoLabel,x=1,y=0]{v_3}
    \Vertex[NoLabel,x=1,y=2]{v_4}
	\Edges[style={thick,->,>=mytip}](v_2,v_1)
	\Edges[style={thick,->,>=mytip}](v_1,v_3)
    \Edges[style={thick,->,>=mytip}](v_2,v_4)
	\Edges[style={thick,->,>=mytip}](v_2,v_3)
    \Edges[style={thick,->,>=mytip}](v_4,v_3)
    \Edges[style={thick,->,>=mytip}](v_1,v_4)
	\node at (0,-0.8){$O_{\mathrm{ref}}$};
\end{tikzpicture} \qquad \begin{tikzpicture}[scale=1]
	\SetFancyGraph
	\SetFancyGraph
	\Vertex[NoLabel,x=-1,y=0]{v_1}
	\Vertex[NoLabel,x=-1,y=2]{v_2}
	\Vertex[NoLabel,x=1,y=0]{v_3}
    \Vertex[NoLabel,x=1,y=2]{v_4}
	\Edges[style={thick,->,>=mytip}](v_2,v_1)
	\Edges[style={thick,->,>=mytip}](v_1,v_3)
    \Edges[style={thick,->,>=mytip}](v_2,v_4)
	\Edges[style={thick,->,>=mytip}](v_3,v_2)
    \Edges[style={thick,->,>=mytip}](v_3,v_4)
    \Edges[style={thick,->,>=mytip}](v_4,v_1)
	\node at (0,-0.8){$O_{1}$};
\end{tikzpicture} \qquad \begin{tikzpicture}[scale=1]
	\SetFancyGraph
	\SetFancyGraph
	\Vertex[NoLabel,x=-1,y=0]{v_1}
	\Vertex[NoLabel,x=-1,y=2]{v_2}
	\Vertex[NoLabel,x=1,y=0]{v_3}
    \Vertex[NoLabel,x=1,y=2]{v_4}
	\Edges[style={thick,->,>=mytip}](v_2,v_1)
	\Edges[style={thick,->,>=mytip}](v_1,v_3)
    \Edges[style={thick,->,>=mytip}](v_4,v_2)
	\Edges[style={thick,->,>=mytip}](v_3,v_2)
    \Edges[style={thick,->,>=mytip}](v_3,v_4)
    \Edges[style={thick,->,>=mytip}](v_4,v_1)
	\node at (0,-0.8){$O_{2}$};
\end{tikzpicture}
\end{center}
The recursive definition of~$\varphi$ given in Section~\ref{sec:main} indicates that $\varphi(O_{1})=\{e_{6},e_{3},e_{2}\}$, $\varphi(O_{2})=\{e_{5},e_{3},e_{2}\}$, $\varphi(O_{1}^{\mathrm{rev}})=\{e_{5},e_{3},e_{2},e_{1}\}$ and~$\varphi(O_{2}^{\mathrm{rev}})=\{e_{6},e_{3},e_{2},e_{1}\}$. The first interesting feature of these orientations of~$K_4$ is that~$O_1$ and~$O_2$ both satisfy conditions~(1) and~(2) of Lemma~\ref{lem:key}, with 
\begin{gather*}
I(O_{1})=I(O_{2})=I(O_{1}/e_{6})=I(O_{2}/e_{6})=I(O_{1}-e_{6})=I(O_{2}-e_{6})=\varnothing; \\
L(O_{1})=L(O_{2})=L(O_{1}/e_{6})=L(O_{2}/e_{6})=L(O_{1}-e_{6})=L(O_{2}-e_{6})=\{e_{1}\}.
\end{gather*}
This first feature implies that our recursive definition of~$\varphi$ really does need some kind of~``tiebreaker'' clause to cover the case where conditions~(1) and~(2) of Lemma~\ref{lem:key} both hold for $O$. In the definition of~$\varphi$ given in Section~\ref{sec:main} we use whether $e_{\mathrm{max}}$ agrees or disagrees with~$O_{\mathrm{ref}}$ as a tiebreaker. But note that any time conditions~(1) and~(2) of Lemma~\ref{lem:key} both hold for $O$, the value $\varphi(O)$ is not uniquely determined by the properties required of it in Theorem~\ref{thm:fouracttutteeval}. Thus this example shows that $\varphi$ is very far from being unique.  In the special case of orientations, this feature of our work contrasts with ``the active bijection" of Gioan-Las Vergnas~\cite{gioan2009active}~\cite{gioanpreprint}~\cite{gioan2005activity} which fits into a deletion-contraction framework, but gives a canonical (up to edge order) bijection between certain activity classes of orientations and Crapo intervals of subgraphs.

Recall the equivalence relation $\sim$ given in Definition \ref{def:crapointervals}.  The second interesting feature of these orientations of $K_4$ is that for~$i\neq j\in \{1,2\}$ we have~$\varphi(O_{i}^{\mathrm{rev}})\sim \varphi(O_{j})$ but $\varphi(O_{i}^{\mathrm{rev}}) \not \sim \varphi(O_{i})$. This aspect is also different from the active bijection of Gioan-Las~Vergnas, where any orientation~$O$ and its reverse~$O^{\mathrm{rev}}$ are mapped to subgraphs in the same Crapo interval. In the terminology of Gioan-Las~Vergnas, $\varphi$ preserves activities but does not preserve the ``active partition.'' (We do not discuss this partition here.) This behavior is not peculiar to~$K_4$; whenever $e_{\mathrm{max}}$ is neither an isthmus nor a loop, $e_{\mathrm{max}} \in O^o$, and $O$ satisfies conditions~(1) and~(2) of Lemma~\ref{lem:key}, it will be the case that~$\varphi(O^{\mathrm{rev}}) \not \sim \varphi(O)$.
\end{remark}

\section{Extensions of our results} \label{sec:future}

In this section we discuss some possible extensions of our results and future directions.

\begin{remark}
We could take as additional input data two more arbitrary reference orientations of our graph, call them~$O_{\mathrm{ref}}^{Cu}$ and~$O_{\mathrm{ref}}^{Cy}$, and for each~$O\in\mathcal{O}^{4}(G)$ define
\begin{align*}
I^{+}(O) &:= I^o(O) \cap \{e\in O^o: \textrm{$e$ agrees with $O_{\mathrm{ref}}^{Cu}$} \};\\
I^{-}(O) &:= I^o(O) \cap \{e\in O^o: \textrm{$e$ disagrees with $O_{\mathrm{ref}}^{Cu}$} \};\\
L^{+}(O) &:= L^o(O) \cap \{e\in O^o: \textrm{$e$ agrees with $O_{\mathrm{ref}}^{Cy}$} \};\\
L^{-}(O) &:= L^o(O) \cap \{e\in O^o: \textrm{$e$ disagrees with $O_{\mathrm{ref}}^{Cy}$} \},
\end{align*}
instead of the current definitions of~$I^{\delta}(O)$ and~$L^{\delta}(O)$. Theorems~\ref{thm:main} and~\ref{thm:fouracttutteeval} would go through exactly and with the same proofs. However, we chose to present these theorems as stated because not much strength is gained by allowing the extra reference orientations~$O_{\mathrm{ref}}^{Cu}$ and~$O_{\mathrm{ref}}^{Cy}$.
\end{remark}

\begin{remark} \label{rem:k1andk2}
When $k_1 := k_2 := k$ in Theorem~\ref{thm:fouracttutteeval}, it is clear that the parameter~$k$ records the number of oriented edges in a fourientation. The effect of splitting the variable $k$ into $k_1$ and $k_2$ is not clear from the fourientation point of view because we do not know how to describe the sets~$O^{o}\cap \varphi(O)$ and~$O^{o}\setminus \varphi(O)$ without appealing to the recursive definition of~$\varphi$. Indeed, any simple, explicit description of~$O^{o}\cap \varphi(O)$ would immediately yield a simple, non-recursive activity-preserving bijection between~$\mathcal{O}(G)$ and~$\mathcal{S}(G)$. Such a simple bijection seems too much to hope for.  Finally, we mention the possibility that the mystery surrounding the variables $k_1$ and $k_2$ may be closely related to the mystery surrounding the activities $I^b(O)$ and $L^u(O)$.
\end{remark}

\begin{remark}
In~\cite{kung2010convolution}, Kung offers a ``convolution-multiplication'' formula for the Tutte polynomial that generalizes the convolution formula for the Tutte polynomial due to \'{E}tienne-Las~Vergnas~\cite{etienne1998external} and Kook-Reiner-Stanton~\cite{kook1999convolution}. Using the same notation as Kung~\cite{kung2010convolution}, this convolution-multiplication formula is
{\small \begin{equation} \label{eq:kung}
 T_G(\lambda \xi + 1, xy + 1) = \sum_{S \in \mathcal{S}(G)} \lambda^{\kappa(S) - \kappa} (-y)^{|S|+\kappa(S)-n} T_{G\rvert_S}(1-\lambda,1-x)T_{G/S}(1+\xi,1+y),
\end{equation}}where $G \rvert_S$ denotes the graph obtained from $G$ by restricting to $S$, and $G/S$ denotes the graph obtained from $G$ by contracting all edges in $S$. (Note that $G\rvert_S$ is the same as the spanning subgraph $S$.) One consequence of~(\ref{eq:kung}) is the following:
\begin{equation} \label{eq:lambda}
2^{g}T_G\left(2\lambda+1, \frac{1+y}{2}\right) = \sum_{S \in \mathcal{S}(G)} \lambda^{\kappa(S) - \kappa}T_{G\rvert_S}(\lambda+1,y).
\end{equation}
The special cases $\lambda = 1$ and $\lambda = \frac{1}{2}$ of~(\ref{eq:lambda}) have interpretations in terms of fourientation activities. For instance, with  $\lambda = 1$, equation~(\ref{eq:lambda}) becomes
\[ 2^{g}T_{G}\left(3,\frac{1+y}{2}\right) = \sum_{S \in \mathcal{S}(G)} T_{G\rvert_S}(2,y),\]
which can be seen as two different ways of writing the generating function
\begin{equation} \label{eq:partorcycleact}
 \sum_{O \in \mathcal{O}^{4}(G), \; O^b = \varnothing} \left(\frac{y}{2}\right)^{\lvert L^o(O)\rvert}.
\end{equation}
On the one hand, Theorem~\ref{thm:fouracttutteeval} can be applied directly to show that~(\ref{eq:partorcycleact}) is equal to~$2^{g}T_{G}\left(3,\frac{1+y}{2}\right)$. On the other hand, Definition~\ref{def:fouractivity} makes it clear that~(\ref{eq:partorcycleact}) is also
\begin{align*}
\sum_{O \in \mathcal{O}^{4}(G), \; O^b = \varnothing} \left(\frac{y}{2}\right)^{\lvert L^o(O) \rvert} &= \sum_{S \in \mathcal{S}(G)}  \sum_{\substack{O \in \mathcal{O}^{4}(G), \\ O^b = \varnothing, \; O^u = E \setminus S}}  \left(\frac{y}{2}\right)^{\vert L^o(O) \rvert}  \\
&=  \sum_{S \in \mathcal{S}(G)} \sum_{\substack{O \in \mathcal{O}^{4}(G\rvert_S), \\ O^u = O^b = \varnothing}} \left(\frac{y}{2}\right)^{\vert L^o(O) \rvert},
\end{align*}
and this last expression is~$ \sum_{S \in \mathcal{S}(G)} T_{G\rvert_S}(2,y)$, again thanks to Theorem~\ref{thm:fouracttutteeval}. Similarly, with $\lambda = \frac{1}{2}$, equation~(\ref{eq:lambda}) becomes
\[ 2^{g}T_{G}\left(2,\frac{1+y}{2}\right) = \sum_{S \in \mathcal{S}(G)} \left(\frac{1}{2}\right)^{\kappa(S) -\kappa} T_{G\rvert_S}\left (\frac{3}{2},y\right),\]
which can be seen by applying Theorem~\ref{thm:fouracttutteeval} to
\[\left(\frac{1}{2}\right)^{n-\kappa} \cdot \sum_{O \in \mathcal{O}^{4}(G)} \left(\frac{y}{3}\right)^{\vert L^o(O) \cup L^b(O) \rvert} = \left(\frac{1}{2}\right)^{n-\kappa} \cdot \sum_{S \in \mathcal{S}(G)} \sum_{O \in \mathcal{O}^{4}(G\rvert_S), \; O^u = \varnothing} \left(\frac{y}{3}\right)^{\vert L^o(O) \cup L^b(O) \rvert}. \]
It is also worth discussing the case $\lambda = 0$ of equation~(\ref{eq:lambda}). With $\lambda = 0$, equation~(\ref{eq:lambda}) becomes
\[ 2^{g}T_{G}\left(1,\frac{1+y}{2}\right) = \sum_{S \in \mathcal{S}(G), \; \kappa(S) = \kappa} T_{G\rvert_S}\left (1,y\right),\]
which would following by applying Theorem~\ref{thm:fouracttutteeval} to
\begin{equation} \label{eq:qconnected}
\sum_{\substack{O \in \mathcal{O}^{4}(G), \; O^b = \varnothing, \\ I^{u}(O) = I^{-}(O) = \varnothing}} \left(\frac{y}{2}\right)^{\vert L^o(O) \rvert} = \sum_{\substack{S \in \mathcal{S}(G), \\ \kappa(S) = \kappa}} \quad \sum_{\substack{O \in \mathcal{O}^{4}(G\rvert_S), \; O^b = O^u = \varnothing, \\ I^{u}(O) = I^{-}(O) = \varnothing}} \left(\frac{y}{2}\right)^{\vert L^o(O) \rvert}.
\end{equation}
if we could show directly that indeed equation~(\ref{eq:qconnected}) holds. Note that we can show directly that the cases $y=0$ and $y=2$ of equation~(\ref{eq:qconnected}) hold: this is thanks to the fact, mentioned above, that (when $G$ is connected) we can choose data so that the set of fourientations~$O \in \mathcal{O}^{4}(G)$ which satisfy~$I^{u}(O) = I^{-}(O) = \varnothing$ is exactly the set of $q$-connected fourientations for some choice of root $q \in V(G)$ (see also Backman-Hopkins~\cite[\S4.4]{backman2015fourientations}). Of course, there are also dual versions of all of these identities which exchange cut and cycle activities. It would be interesting to further explore the connection between our fourientation activities and convolution formulas.
\end{remark}

\begin{remark}
Several different parametrized or weighted versions of the Tutte polynomial have appeared in the literature~\cite{bollobas1999tutte}~\cite{ellismonaghan2006parametrized}~\cite{sokal2005multivariate}~\cite{traldi1989dichromatic}~\cite{zaslavsky1992strong}. Let us focus on the edge-weighted Tutte polynomial analog that Zaslavsky~\cite{zaslavsky1992strong} calls the \emph{normal function}. Given a commutative ring $R$ and any two functions $\alpha,\beta:E(G)\rightarrow R$, the normal function is
\[N_{G}(\alpha(e),\beta(e);u,v):=\sum_{S\in\mathcal{S}(G)}\left({\prod_{e\in S}}\alpha(e)\right)\left({\prod_{e\notin S}}\beta(e)\right)(u-1)^{\kappa(S)-\kappa}(v-1)^{\left\vert S\right\vert -n+\kappa(S)}.\]
Suppose for a moment that $\alpha(e)\equiv\alpha$ and $\beta(e)\equiv\beta$ are constants; we then adopt the notation~$\gamma:=\alpha+\beta(u-1)$ and~$\delta:=\alpha(v-1)+\beta$. The defining formula of the normal function becomes
\begin{gather} \label{eq:normaltotutte}
N_{G}(\alpha,\beta;u,v)=N_{G}\left(\alpha,\beta;\frac{\beta+\gamma-\alpha}{\beta},\frac{\alpha+\delta-\beta}{\alpha}\right)=\\
\sum_{S\in\mathcal{S}(G)}\alpha^{\left\vert S\right\vert }\beta^{\left\vert E(G)\setminus S\right\vert }\left(\frac{\gamma-\alpha}{\beta}\right)^{\kappa(S)-\kappa}\left(\frac{\delta-\beta}{\alpha}\right)^{\left\vert S\right\vert -n+\kappa(S)}=\nonumber\\
\sum_{S\in\mathcal{S}(G)}\alpha^{n-\kappa(S)}\beta^{\left\vert E(G)\setminus S\right\vert +\kappa-\kappa(S)}(\gamma-\alpha)^{\kappa(S)-\kappa}(\delta -\beta)^{\left\vert S\right\vert -n+\kappa(S)}=\nonumber\\
\alpha^{n-\kappa}\beta^{\left\vert E(G)\right\vert -n+\kappa}\cdot\sum_{S\in\mathcal{S}(G)}\left(\frac{\gamma}{\alpha}-1\right)^{\kappa(S)-\kappa}\left(\frac{\delta}{\beta}-1\right)^{\left\vert S\right\vert -n+\kappa(S)}=\nonumber\\
\alpha^{n-\kappa}\beta^{g} T_{G}\left(\frac{\gamma}{\alpha},\frac{\delta}{\beta}\right),\nonumber
\end{gather}
where $T_{G}$ is the standard unweighted Tutte polynomial of $G$. Consider the following assignment of variables, where $k_1$, $k_2$, $l$, $m$, $w$, $\hat{w}$, $x$, $\hat{x}$, $y$, $\hat{y}$, $z$, and~$\hat{z}$ are independent indeterminates:
\begin{align} \label{eq:assign}
\alpha &:=k_1+m;\\
\beta & :=k_2+l;\nonumber\\
\gamma &:=k_1x+k_2w+m\hat{x}+l\hat{w};\nonumber\\
\delta &:=k_2y+k_1z+l\hat{y}+m\hat{z}.\nonumber
\end{align}
With $\alpha$, $\beta$, $\gamma$, and~$\delta$ as in~(\ref{eq:assign}), the last line of~(\ref{eq:normaltotutte}) becomes
\[(k_1+m)^{n-\kappa}(k_2+l)^{g}T_G\left(\frac{k_1x+k_2w+m\hat{x}+l\hat{w}}{k_1+m},\frac{k_2y+k_1z+l\hat{y}+m\hat{z}}{k_2+l}\right).\]
Thus Theorem~\ref{thm:fouracttutteeval} implies
{\footnotesize
\begin{gather} \label{eq:mainnormal}
N_{G}\left(k_1+m,k_2+l;1+\frac{k_1x+k_2w+m\hat{x}+l\hat{w} - k_1 -m}{k_2+l},1+\frac{k_2y+k_1z+l\hat{y}+m\hat{z} - k_2 -l} {k_1+m}\right)= \\
\sum_{O \in \mathcal{O}^{4}(G)} \hspace{-0.3cm} k_1^{\left\vert O^{o} \cap \varphi(O) \right\vert} k_2^{\left\vert O^{o} \setminus \varphi(O) \right\vert}l^{\left\vert O^{u} \right\vert}m^{\left\vert O^{b}\right\vert}x^{\left\vert I^{+}(O) \right\vert} w^{\left\vert I^{-}(O) \right\vert} {\hat{x}}^{\left\vert I^{b}(O)\right\vert} {\hat{w}}^{\left\vert I^{u}(O) \right\vert} y^{\left\vert L^{+}( O) \right\vert} z^{\left\vert L^{-}(O) \right\vert}{\hat{y}}^{\left\vert L^{u}(O) \right\vert} {\hat{z}}^{\left\vert L^{b}(O) \right\vert}. \nonumber
\end{gather}}In general it seems hard to deform~(\ref{eq:mainnormal}) in a natural way into a fourientation activities expression involving nonconstant edge weights, because the left-hand side of~(\ref{eq:mainnormal}) mentions $k_1$, $k_2$, $l$ and $m$ in the formulas for $\alpha$ and $\beta$ (which would presumably vary with~$e$) and also in the formulas for $u$ and $v$ (which do not vary with $e$). However, we can offer an interesting edge-weighted deformation of a special case of~(\ref{eq:mainnormal}). Let us set~$x:=\hat{x}:=y:=\hat{y}:=1$ and $\hat{w}:=w$ and $\hat{z}:=z$. Then the appearances of $k_1$, $k_2$, $l$ and~$m$ in the formulas for $u$ and $v$ cancel out, and~(\ref{eq:mainnormal}) becomes
\begin{gather} \label{eq:normalspec}
N_{G}(k_1+m,k_2+l;1+w,1+z) = \\
\sum_{O \in \mathcal{O}^{4}(G)}  k_1^{\left\vert O^{o} \cap \varphi(O) \right\vert} k_2^{\left\vert O^{o} \setminus \varphi(O) \right\vert} l^{\left\vert O^u\right\vert} m^{\left\vert O^b\right\vert} w^{\left\vert I^{u}(O) \cup I^{-}(O) \right\vert}z^{\left\vert L^{b}(O)\cup L^{-}(O)\right\vert} \nonumber
\end{gather}
In fact, an edge-weighted version of~(\ref{eq:normalspec}) holds; namely,
\begin{gather} \label{eq:normalspecdeform} N_G(k_1(e)+m(e),k_2(e)+l(e);1+w,1+z) =\\
\sum_{O \in \mathcal{O}^{4}(G)} \prod_{e \in O^o \cap \varphi(O)} \hspace{-0.25cm} k_1(e) \hspace{-0.25cm} \prod_{e \in O^o \setminus \varphi(O)} \hspace{-0.25cm} k_2(e) \prod_{e \in O^u}l(e) \prod_{e \in O^b}m(e) \cdot w^{\left\vert I^{u}(O) \cup I^{-}(O) \right\vert}z^{\left\vert L^{b}(O)\cup L^{-}(O)\right\vert} \nonumber,
\end{gather}
where $k_1(e)$, $k_2(e)$, $l(e)$, and $m(e)$ are arbitrary functions $E(G) \to R$. Indeed, from Theorem~\ref{thm:fouracttutteeval} it follows that
{\small \begin{gather*} 
\sum_{O \in \mathcal{O}^{4}(G)} \prod_{e \in O^o \cap \varphi(O)} \hspace{-0.25cm} k_1(e) \hspace{-0.25cm} \prod_{e \in O^o \setminus \varphi(O)} \hspace{-0.25cm} k_2(e) \prod_{e \in O^u}l(e) \prod_{e \in O^b}m(e) \cdot w^{\left\vert I^{u}(O) \cup I^{-}(O) \right\vert}z^{\left\vert L^{b}(O)\cup L^{-}(O)\right\vert} = \\ 
\sum_{S \in \mathcal{S}(G)} \prod_{e \in \hat{I}(S)\setminus S} \hspace{-0.3cm} (wk_2(e)+wl(e)) \hspace{-0.3cm} \prod_{e \in \hat{L}(S)\cap S} \hspace{-0.3cm} (zk_1(e)+zm(e)) \hspace{-0.3cm} \prod_{e\in S \setminus \hat{L}(S)} \hspace{-0.3cm} (k_1(e)+m(e)) \hspace{-0.3cm} \prod_{e\notin S \cup \hat{L}(S)} \hspace{-0.3cm} (k_1(e)+l(e)) = \\
\sum_{S\in\mathcal{S}(G)} \left(\prod_{e \in S} (k_1(e)+m(e))\right) \left(\prod_{e \notin S}(k_2(e)+l(e))\right)w^{\left\vert\hat{I}(S)\setminus S\right\vert} z^{\left\vert\hat{L}(S)\cap S\right\vert} =\\ \sum_{S\in\mathcal{S}(G)} \left(\prod_{e \in S}( k_1(e)+m(e))\right) \left(\prod_{e \notin S} (k_2(e)+l(e))\right)w^{\kappa(S)-\kappa} z^{\left\vert S\right\vert -n+\kappa(S)} = \\
N_G(k_1(e)+m(e),k_2(e)+l(e);1+w,1+z),
\end{gather*}}where in the second-to-last line we applied Lemma~\ref{lem:rankact}. See~\cite[\S4.7, Remark~4.49]{backman2015fourientations} for discussion of some probabilistic results related to~(\ref{eq:normalspecdeform}).
\end{remark}

\begin{remark}
In~\cite[\S2]{backman2015fourientations}, Backman and Hopkins investigated ``Tutte properties'' of fourientations.  These are essentially the properties of potential cuts and cycles which determine classes of fourientations enumerated by Tutte polynomial evaluations.  A classification theorem was proven, showing that all Tutte properties are min-edge classes (which, as mentioned earlier, are enumerated by Theorem~\ref{thm:fouracttutteeval}) with the curious exception of the so-called ``cut (cycle) weird'' fourientations.  These are fourientations such that each potential cut (cycle) contains at least one oriented edge, and the minimum oriented edge in each potential cut (cycle) is oriented in agreement with its reference orientation.  What is indeed strange about this property, and thus inspired its name, is that it depends on the status of the minimum oriented edge in the potential cut (cycle), not the actual minimum edge.  Therefore, cut (cycle) weird fourientations do not fit into the framework of this paper. Nevertheless they are enumerated by the Tutte polynomial. It would be interesting to better understand cut (cycle) weird fourientations and how they relate to the approach presented here.
\end{remark}

\begin{remark}
Backman-Hopkins~\cite[\S4]{backman2015fourientations} showed how min-edge classes of fourientations appear in various algebraic settings. It would be extremely interesting to try to interpret the equation in Theorem~\ref{thm:fouracttutteeval} algebraically, e.g., as a formula for the Betti numbers of some polyhedral complex or the Hilbert series of some polynomial ideal.
\end{remark}

\begin{remark}
We have restricted our discussion to graphs in the present paper, and we leave open the problem of extending Theorem~\ref{thm:main} to the more general setting of Las~Vergnas's account of Theorem~\ref{thm:lasvergnas} in~\cite{las1984tutte} (namely, oriented matroids and their perspectives). Indeed, part of our intent in writing this paper is to provide a strictly graph-theoretic proof of~Theorem~\ref{thm:lasvergnas}, which does not require the oriented matroid machinery. Such a proof may be extracted from our discussion by restricting one's attention to the parts of the key lemma that involve $I^o$ and $L^o$.
\end{remark}

\bibliographystyle{plain}
\bibliography{four_activities}

\appendix

\section {Generalized activities: a brief account} \label{sec:genact}

In this appendix we provide some details regarding the generalized activities description of the Tutte polynomial introduced by Gordon and Traldi~\cite{gordon1990generalized}. In particular we prove Lemmas~\ref{lem:intervals} and~\ref{lem:rankact} and Theorem~\ref{thm:gordontraldi} stated in Section~\ref{sec:setup}. We will let $G$ be a graph given with a fixed edge order $e_1 < \ldots < e_p$. Recall the definitions of the Gordon-Traldi activities~$\hat{I}(S)$ and~$\hat{L}(S)$ for~$S \in \mathcal{S}(G)$ given in Section~\ref{sec:main}: $e \in \hat{I}(S)$ if and only if $e$ is the min edge in some cut of~$S \setminus \{e\}$, and $e \in \hat{L}(S)$ if and only if $e \in \hat{L}(S)$ if and only if $e$ is the min edge in some cycle of $S \cup \{e\}$. Recall that~$Cu$ being a cut of~$S$ means that~$E(Cu) \cap S = \varnothing$ and~$Cy$ being a cycle of~$S$ means that~$E(Cy) \subseteq S$. Here we present an alternative description of~$\hat{I}(S)$ and~$\hat{L}(S)$ using deletion-contraction trees that is more algorithmically transparent.

If $p$ is a positive integer then a \emph{perfect binary tree of depth} $p$ is a rooted tree with one level 0 vertex (the root), two level $1$ vertices, four level $2$ vertices, and so on. If~$1\leq i<p$ then each vertex at level $i$ has a parent at level $i-1$, a sibling at level $i$, and two children at level $i+1$. The vertices at level $p$ are \emph{leaves}.

\begin{definition}
Let $G$ be a graph given with a fixed edge order $e_1 < \ldots < e_p$. The corresponding \emph{perfect deletion-contraction tree} $DCT(G)$ is a perfect binary tree of depth~$p$. The root of $DCT(G)$ is labeled $G$, and for $i\in\{1,\ldots,p\}$ the vertices at level~$i$ are labeled with the $2^{i}$ graphs obtained from $G$ by using deletion and contraction to remove the edges $e_{p},\ldots,e_{p-i+1}$. If $i<p$ then the children of the level $i$ vertex labeled~$H$ are~$H-e_{p-i}$ and $H/e_{p-i}$.
\end{definition}

The tree $DCT(G)$ has $2^{p}$ leaves, labeled by the edgeless graphs obtained from~$G$ by removing all of its edges via deletion or contraction. That is, for every subset of edges~$S\in \mathcal{S}(G)$, $DCT(G)$ has a leaf labeled by the graph~$(G/S)-(E(G)\setminus S)$; to simplify notation we will also use~$S$ to label this leaf. If~$S\in \mathcal{S}(G)$ then there is a unique path in~$DCT(G)$ from the root to the leaf labeled $S$. We denote this path~$P(S)$, and call it the \emph{branch} of~$DCT(G)$ corresponding to $S$. The level $i$ vertex of~$P(S)$ is labeled by the graph
\[G_{i}(S):=(G/(\{e_{p-i+1},\ldots,e_{p}\}\cap S))-(\{e_{p-i+1},\ldots,e_{p}\}\setminus S).\]
We think of this vertex of $P(S)$ as corresponding to $e_{p-i}$.

\begin{prop} \label{prop:altgenactdef}
For $S \in \mathcal{S}(G)$ and~$0 \leq i \leq p-1$, we have $e_{p-i} \in \hat{I}(S)$ if and only if $e_{p-i}$ is an isthmus in~$G_i(S)$. Similarly, we have $e_{p-i} \in \hat{L}(S)$ if and only if $e_{p-i}$ is a loop in $G_i(S)$.
\end{prop}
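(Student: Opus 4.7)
My plan is to unpack the definition of $G_i(S)$ and use the standard correspondence between cuts (resp.\ cycles) of a deletion-contraction and cuts (resp.\ cycles) of the original graph. Let $A := \{e_{p-i+1},\ldots,e_p\} \cap S$ and $B := \{e_{p-i+1},\ldots,e_p\} \setminus S$, so $G_i(S) = G/A - B$. Since $e_{p-i}$ remains an edge of $G_i(S)$, saying $e_{p-i}$ is an isthmus of $G_i(S)$ means exactly that $\{e_{p-i}\}$ is a cut of $G_i(S)$, and saying $e_{p-i}$ is a loop means exactly that $\{e_{p-i}\}$ is a cycle of $G_i(S)$.

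The first key step is the matroidal correspondence: cuts of $G/A - B$ are precisely the non-empty minimal sets of the form $E(Cu^*)\setminus B$ where $Cu^*$ is a cut of $G$ with $E(Cu^*) \cap A = \varnothing$; dually, cycles of $G/A - B$ are the non-empty minimal sets of the form $E(Cy^*)\setminus A$ where $Cy^*$ is a cycle of $G$ with $E(Cy^*)\cap B = \varnothing$. I will briefly justify this (it follows immediately from the definitions of cuts and cycles in terms of cocircuits/circuits of the graphic matroid, together with how matroid contraction and deletion interact with these).

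The second step is to combine this with the edge order. Suppose $e_{p-i}$ is an isthmus in $G_i(S)$. Then there is a cut $Cu^*$ of $G$ with $E(Cu^*) \cap A = \varnothing$ and $E(Cu^*)\setminus B = \{e_{p-i}\}$. The first condition means $E(Cu^*) \cap S \cap \{e_{p-i+1},\ldots,e_p\} = \varnothing$, while the second forces $E(Cu^*) \subseteq \{e_{p-i}\} \cup B \subseteq \{e_{p-i},e_{p-i+1},\ldots,e_p\}$. Together these say $E(Cu^*) \cap (S\setminus\{e_{p-i}\}) = \varnothing$ and $e_{p-i}$ is the minimum element of $E(Cu^*)$ in the edge order, i.e.\ $e_{p-i} \in \hat I(S)$. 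Conversely, given a cut $Cu^*$ of $G$ witnessing $e_{p-i}\in\hat I(S)$, minimality of $e_{p-i}$ in $E(Cu^*)$ forces $E(Cu^*)\subseteq\{e_{p-i},\ldots,e_p\}$, and disjointness from $S\setminus\{e_{p-i}\}$ then gives $E(Cu^*)\cap A = \varnothing$ and $E(Cu^*)\setminus B \subseteq \{e_{p-i}\}$. Since $E(Cu^*)\setminus B$ contains a cut of $G_i(S)$ by the correspondence and is contained in $\{e_{p-i}\}$, we conclude it equals $\{e_{p-i}\}$, making $e_{p-i}$ an isthmus of $G_i(S)$.

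The cycle case is completely dual, swapping the roles of $S$ and $E(G)\setminus S$, of $A$ and $B$, and of cuts and cycles. There I expect no additional difficulty. The only place where care is needed is the matroidal correspondence in the first step, particularly the claim that every cut of $G_i(S)$ arises from a cut of $G$ avoiding $A$ (and similarly for cycles); this is where one has to be careful about the distinction between cuts/cycles surviving contraction versus deletion, but it is standard and well-known for graphic matroids and can be verified directly by describing a cut of $G_i(S)$ via a vertex partition and pulling it back through the vertex identifications induced by contracting $A$.
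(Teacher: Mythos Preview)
Your proof is correct and follows essentially the same approach as the paper's, just stated more formally: you make the matroid-minor correspondence between cuts (cocircuits) of $G$ disjoint from $A$ and cuts of $G/A-B$ explicit, whereas the paper simply asserts that if $e_{p-i}$ is the min edge of a cut disjoint from $S\setminus\{e_{p-i}\}$ then ``all the other edges \ldots\ will have already been deleted'' and conversely that an isthmus of $G_i(S)$ ``determines a cut $Cu$ of $G$'' with the required properties. Your careful verification that $E(Cu^*)\setminus B=\{e_{p-i}\}$ is automatically minimal (being a singleton) fills in the one step the paper leaves implicit.
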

\begin{proof}
If $e_{p-i}$ is the min edge in some cut $Cu$ of $S\setminus \{e_{p-i}\}$, then in $G_i(S)$ all the other edges in~$E(Cu)$ will have already been deleted and so $e_i$ will indeed be an isthmus. Conversely, if~$e_{p-i}$ is an isthmus of~$G_i(S)$ it determines a cut $Cu$ of $G$ where all edges~$e_j \in E(Cu)$ with~$e_j \neq e_{p-i}$ satisfy~$j > p-i$ and~$e_j \notin S$. Thus~$e_{p-i} \in \hat{I}(S)$ in this case. The claim about~$\hat{L}(S)$ is proved analogously.
\end{proof}

We now prove Lemma~\ref{lem:intervals}, which for convenience we restate.

\begin{customlem}{\ref{lem:intervals}}
Let $S, T \in \mathcal{S}(G)$ with~$S \setminus (\hat{I}(S) \cup \hat{L}(S)) \subseteq T \subseteq S \cup \hat{I}(S) \cup \hat{L}(S)$. Then~$\hat{I}(S) = \hat{I}(T)$ and~$\hat{L}(S) = \hat{L}(T)$.
\end{customlem}
\begin{proof}
The hypothesis implies that the only differences between the two sequences of graphs~$G=G_{0}(S),G_{1}(S),\ldots,G_{p}(S)$ and~$G=G_{0}(T),G_{1}(T),\ldots,G_{p}(T)$ are that some isthmuses or loops may be deleted in one sequence and contracted in the other. These differences will not affect the status of any edge; precisely the same edges will be loops and isthmuses in the two sequences. So from Proposition~\ref{prop:altgenactdef} we conclude~$\hat{I}(T)=\hat{I}(S)$ and~$\hat{L}(T)=\hat{L}(S)$.
\end{proof}

Next we prove Lemma~\ref{lem:rankact}, which again we restate.

\begin{customlem}{\ref{lem:rankact}}
We have $\left\vert\hat{I}(S) \setminus S\right\vert = \kappa(S) - \kappa$ and $\left\vert \hat{L}(S) \cap S \right\vert = \kappa(S) + \left\vert S \right\vert-n$ for all~$S \in \mathcal{S}(G)$.
\end{customlem}

\begin{proof}
Let $S \in \mathcal{S}(G)$ and set $T := (S \setminus \hat{L}(S))\cup\hat{I}(S)$. By Lemma~\ref{lem:intervals},~$\hat{I}(T) = \hat{I}(S)$ and~$\hat{L}(T) = \hat{L}(S)$. Note that $T$ is a maximal forest: it has no cycles since~$\hat{L}(T) \cap T = \varnothing$ and it has $\kappa(T) = \kappa$ since~$\hat{I}(T) \setminus T = \varnothing$. Thus the claimed formulas certainly hold when~$S$ is replaced by $T$ because both sides are equal to zero. Also, Lemma~\ref{lem:intervals} shows that these formula continue to hold as we add elements of~$\hat{I}(T)$ or remove elements of~$\hat{L}(T)$. So they hold for $S$ as well.
\end{proof}

Now recall the equivalence relation~$\sim$ on~$\mathcal{S}(G)$ defined in Definition~\ref{def:crapointervals}: $S \sim T$ if and only if~$S \setminus (\hat{I}(S) \cup \hat{L}(S)) \subseteq T \subseteq S \cup \hat{I}(S) \cup \hat{L}(S)$. Observe that for each $S \in \mathcal{S}$, we have $S \sim T$ for a unique maximal forest $T$ of $G$; namely,~$T := (S \setminus \hat{L}(S))\cup\hat{I}(S)$.

\begin{cor}
\label{activerank}
Let $S_{0}\in \mathcal{S}(G)$. Then
\begin{gather*}
\sum_{S_{0}\sim S\mathcal{\in S}(G)}\hat{x}^{\left\vert \hat{I}(S)\cap S\right\vert}\hat{w}^{\left\vert \hat{I}(S)\setminus S\right\vert }\hat{y}^{\left\vert\hat{L}(S)\setminus S\right\vert }\hat{z}^{\left\vert \hat{L}(S)\cap S\right\vert }=\\
\sum_{S_{0}\sim S\mathcal{\in S}(G)}(\hat{x}+\hat{w}-1)^{\kappa(S)-\kappa}(\hat{y}+\hat{z}-1)^{\left\vert S\right\vert -n+\kappa(S)}.
\end{gather*}
\end{cor}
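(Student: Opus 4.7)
The plan is to compute both sides of the identity explicitly by parametrizing the Crapo interval $[S_0]_\sim$ and recognizing that each side factors into two binomial sums. First I would invoke Lemma~\ref{lem:intervals} to fix $\hat{I} := \hat{I}(S_0)$ and $\hat{L} := \hat{L}(S_0)$, which are constant across the interval; together with the fact (noted in the paper) that $\hat{I}(S) \cap \hat{L}(S) = \varnothing$, this lets me parametrize the Crapo interval by pairs $(T_I, T_L)$ with $T_I \subseteq \hat{I}$ and $T_L \subseteq \hat{L}$, via
\[ S = (S_0 \setminus (\hat{I} \cup \hat{L})) \cup T_I \cup T_L. \]
Under this parametrization, $|\hat{I}(S) \cap S| = |T_I|$, $|\hat{I}(S) \setminus S| = |\hat{I}| - |T_I|$, $|\hat{L}(S) \cap S| = |T_L|$, and $|\hat{L}(S) \setminus S| = |\hat{L}| - |T_L|$.

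Next I would evaluate the left-hand side directly: the sum factors as
\[ \left(\sum_{T_I \subseteq \hat{I}} \hat{x}^{|T_I|}\hat{w}^{|\hat{I}|-|T_I|}\right)\left(\sum_{T_L \subseteq \hat{L}} \hat{y}^{|\hat{L}|-|T_L|}\hat{z}^{|T_L|}\right) = (\hat{x}+\hat{w})^{|\hat{I}|}(\hat{y}+\hat{z})^{|\hat{L}|}, \]
by the binomial theorem. For the right-hand side, I would apply Lemma~\ref{lem:rankact} to rewrite $\kappa(S)-\kappa = |\hat{I}(S)\setminus S| = |\hat{I}|-|T_I|$ and $|S|-n+\kappa(S) = |\hat{L}(S)\cap S| = |T_L|$, so that the right-hand side becomes
\[ \sum_{T_I \subseteq \hat{I}, \; T_L \subseteq \hat{L}} (\hat{x}+\hat{w}-1)^{|\hat{I}|-|T_I|}(\hat{y}+\hat{z}-1)^{|T_L|}. \]
A second application of the binomial theorem (using $\sum_{T \subseteq E}(a-1)^{|T|} = a^{|E|}$ and its mirror $\sum_{T \subseteq E}(a-1)^{|E|-|T|} = a^{|E|}$) collapses this into $(\hat{x}+\hat{w})^{|\hat{I}|}(\hat{y}+\hat{z})^{|\hat{L}|}$ as well, matching the left-hand side.

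No real obstacle is expected: the heavy lifting was done in Lemmas~\ref{lem:intervals} and~\ref{lem:rankact}, which respectively guarantee the product structure of a Crapo interval and identify the exponents on the two sides. The only care needed is to verify the disjointness $\hat{I} \cap \hat{L} = \varnothing$ so that the parametrization by $(T_I, T_L)$ is genuinely a product, and to keep straight which of the two binomial identities (with $|T|$ or with $|E|-|T|$ in the exponent) applies on each factor.
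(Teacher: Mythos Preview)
Your proposal is correct and follows essentially the same route as the paper's proof: both parametrize the Crapo interval as a product indexed by subsets of $\hat{I}(S_0)$ and $\hat{L}(S_0)$, evaluate each side to $(\hat{x}+\hat{w})^{|\hat{I}(S_0)|}(\hat{y}+\hat{z})^{|\hat{L}(S_0)|}$ via the binomial theorem, and invoke Lemma~\ref{lem:rankact} to translate the exponents on the right-hand side. The only cosmetic difference is that the paper first replaces $S_0$ by the maximal forest in its class and parametrizes from there, whereas you start from the minimal element $S_0 \setminus (\hat{I}\cup\hat{L})$; this is a harmless reindexing.
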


\begin{proof}
For convenience we replace $S_{0}$ by the maximal forest $(S \setminus \hat{L}(S))\cup\hat{I}(S)$ equivalent to it. According to Lemma~\ref{lem:intervals}, the equivalence class of $S_{0}$ under $\sim$ includes all sets of the form $S=(S_0\cup S_{L})\setminus S_I$, where~$S_{I}$ is an arbitrary subset of $\hat{I}(S_{0})$ and $S_{L}$ is an arbitrary subset of $\hat{L}(S_{0})$. Consequently,
\begin{gather*}
\sum_{S_{0}\sim S\mathcal{\in S}(G)}\hat{x}^{\left\vert \hat{I}(S)\cap S\right\vert }\hat{w}^{\left\vert\hat{I}(S)\setminus S\right\vert}\hat{y}^{\left\vert \hat{L}(S)\setminus S\right\vert}\hat{z}^{\left\vert \hat{L}(S)\cap S\right\vert}=\\
\sum_{i=0}^{\left\vert \hat{I}(S_{0})\right\vert }\sum_{j=0}^{\left\vert\hat{L}(S_{0})\right\vert }\binom{\left\vert \hat{I}(S_{0})\right\vert }{i}\binom{\left\vert \hat{L}(S_{0})\right\vert }{j}\hat{x}^{i}\hat{w}^{\left\vert\hat{I}(S_{0})\right\vert -i}\hat{y}^j\hat{z}^{\left\vert \hat{L}(S_{0})\right\vert -j}=\\
(\hat{x}+\hat{w})^{\left\vert \hat{I}(S_{0})\right\vert }(\hat{y}+\hat{z})^{\left\vert \hat{L}(S_{0})\right\vert }.
\end{gather*}
Also, Lemma~\ref{lem:rankact} says that each~$S\sim S_{0}$ has~$\left\vert S_{I}\right\vert =\kappa(S)-\kappa$ and~$\left\vert S_{L}\right\vert=\left\vert S\right\vert -n+\kappa(S)$; so,
\begin{gather*}
(\hat{x}+\hat{w})^{\left\vert \hat{I}(S_{0})\right\vert }(\hat{y}+\hat{z})^{\left\vert \hat{L}(S_{0})\right\vert }=\\
(\hat{x}+\hat{w}-1+1)^{\left\vert \hat{I}(S_{0})\right\vert }(\hat{y}+\hat{z}-1+1)^{\left\vert \hat{L}(S_{0})\right\vert }=\\
\sum_{i=0}^{\left\vert \hat{I}(S_{0})\right\vert }\sum_{j=0}^{\left\vert\hat{L}(S_{0})\right\vert }\binom{\left\vert \hat{I}(S_{0})\right\vert }{i}\binom{\left\vert \hat{L}(S_{0})\right\vert }{j}(\hat{x}+\hat{w}-1)^{i}(\hat{y}+\hat{z}-1)^{j}=\\
\sum_{S_{0}\sim S\mathcal{\in S}(G)}(\hat{x}+\hat{w}-1)^{\kappa(S)-\kappa}(\hat{y}+\hat{z}-1)^{\left\vert S\right\vert -n+\kappa(S)}
\end{gather*}
as claimed.
\end{proof}

We can now prove Theorem~\ref{thm:gordontraldi}, which again we restate for convenience.

\begin{customthm}{\ref{thm:gordontraldi}}[Gordon-Traldi]
We have
\[T_G(\hat{x}+\hat{w},\hat{y}+\hat{z}) = \sum_{S \in \mathcal{S}(G)} {\hat{x}}^{\left\vert \hat{I}(S)\cap S\right\vert} {\hat{w}}^{\left\vert \hat{I}(S)\setminus S\right\vert} {\hat{y}}^{\left\vert \hat{L}(S)\setminus S\right\vert} {\hat{z}}^{\left\vert \hat{L}(S)\cap S\right\vert}.\]
\end{customthm}
\begin{proof}
By Corollary~\ref{activerank} we have
\begin{gather*}
\sum_{S \in \mathcal{S}(G)} {\hat{x}}^{\left\vert \hat{I}(S)\cap S\right\vert} {\hat{w}}^{\left\vert \hat{I}(S)\setminus S\right\vert} {\hat{y}}^{\left\vert \hat{L}(S)\setminus S\right\vert} {\hat{z}}^{\left\vert \hat{L}(S)\cap S\right\vert} = \\
\sum_{\substack{\textrm{$T$ maximal forest}\\\textrm{of $G$}}} \; \sum_{T \sim S\mathcal{\in S}(G)}\hat{x}^{\left\vert \hat{I}(S)\cap S\right\vert }\hat{w}^{\left\vert \hat{I}(S)\setminus S\right\vert}\hat{y}^{\left\vert \hat{L}(S)\setminus S\right\vert}\hat{z}^{\left\vert \hat{L}(S)\cap S\right\vert}=\\
\sum_{\substack{\textrm{$T$ maximal forest}\\\textrm{of $G$}}} \; \sum_{T \sim S\mathcal{\in S}(G)}(\hat{x}+\hat{w}-1)^{\kappa(S)-\kappa}(\hat{y}+\hat{z}-1)^{\left\vert S\right\vert -n+\kappa(S)}=\\
\sum_{S \in \mathcal{S}(G)}(\hat{x}+\hat{w}-1)^{\kappa(S)-\kappa}(\hat{y}+\hat{z}-1)^{\left\vert S\right\vert -n+\kappa(S)},
\end{gather*}
the corank-nullity expansion of~$T_G(\hat{x}+\hat{w},\hat{y}+\hat{z})$.
\end{proof}

\section{Proof of the key lemma} \label{sec:proofkeylem}

In this appendix we provide detailed arguments for all cases of the key lemma, Lemma~\ref{lem:key}. The usual proof of the Tutte polynomial's deletion-contraction recipe,
\[T_G=T_{G/e}+T_{G-e}\] 
for an ordinary edge $e$, involves the natural idea that a subset $S \subseteq E(G)$ may be associated with $G/e$ if $e \in S$, and with $G-e$ if $e \notin S$. The key lemma provides a similar dichotomy for fourientations: if $E(G)$ is ordered as $e_1 < \ldots < e_p$ then we may decide, for each fourientation $O$ of $G$, whether $O$ should be associated with $G/e_p$ or~$G-e_p$, and moreover these decisions can be made in such a way that $^{e_p}O$ is associated with the other of $G/e_p$, $G-e_p$. Regrettably, the similarity between the two dichotomies does not extend to their proofs: the deletion-contraction recipe takes about ten lines to prove, whereas the key lemma requires many more than that.

Throughout our proof of Lemma~\ref{lem:key} our setup will be as in the statement of that lemma: $G$ is a graph (which comes with the extra data of $<$, $O_{\mathrm{ref}}$, $\sigma_u$, and $\sigma_b$), its maximum edge~$e := e_{\mathrm{max}}$ is neither an isthmus nor loop, and~$O \in \mathcal{O}^{4}(G)$ is some fourientation. For convenience let~$e_1 < \ldots < e_p$ be the elements of~$E(G)$ (so~$e=e_p$). The facts that condition~(1) of the lemma holds when~$e$ is bioriented in $O$, and that condition~(2) of the lemma holds when~$e$ is unoriented in $O$, are easy to see because when~$e$ is bioriented there is an obvious one-to-one correspondence between potential cycles of~$O$ and potential cycles of~$O/e$, and when~$e$ is unoriented there is an obvious one-to-one correspondence between potential cuts of~$O$ and of~$O-e$. So we may assume that~$e$ is oriented: say~$e = \{u,v\}$ and~$e$ is oriented from~$u$ to~$v$ in~$O$. Our aim is then to show that at least one of conditions~(1) or~(2) holds, although we cannot say which one. Each of~(1) and~(2) consists of eight assertions; we will now show systematically that if any one of the eight assertions in~(2) fails then all eight of the assertions in~(1) hold. But first let us prove a useful lemma about potential paths.

\begin{definition}
Let~$O\in \mathcal{O}^{4}(G)$. Then a \emph{potential path} of~$O$ is a directed path of $G$ such that each edge of the path is either bioriented in $O$ or is oriented in $O$ consistently with the direction of the path.
\end{definition}

\begin{lemma}
\label{lem:preliminary}
Let $O\in \mathcal{O}^{4}(G)$. Suppose $u\neq v\in V(G)$, $P_{1}$ is a potential path of~$O$ from~$v$ to~$u$, and $P_{2}$ is a potential path of $O$ from $u$ to $v$. Let $P_{1}P_{2}$ be the closed walk obtained by concatenating $P_{1}$ and $P_{2}$. Then for each oriented edge $e$ that appears in~$P_{1}P_{2}$, there is a potential cycle $Cy$ of $O$ with $e\in Cy\subseteq P_{1}P_{2}$. Moreover, if a bioriented edge $e^{\prime }$ appears in $Cy$ then the orientation of $e^{\prime }$ that is consistent with the direction of~$Cy$ is also consistent with the direction of at least one of $P_{1},P_{2}$.
\end{lemma}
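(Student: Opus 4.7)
The plan is to produce the desired potential cycle $Cy$ by extracting a simple directed cycle from the closed walk $W := P_1P_2$ that passes through a chosen occurrence of $e$, inheriting its traversal direction from $W$. The point of this strategy is that every edge of $W$ already satisfies the defining condition of a potential path with respect to $W$'s direction (oriented edges agree with $W$, bioriented edges are unrestricted), and this property is preserved when we restrict to any sub-cycle traversed in the induced direction. Thus once the extraction is done, the potential-cycle condition will be automatic.

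To perform the extraction, I would fix one appearance of $e$ in $W$, traversed from $a$ to $b$, and then choose among all closed sub-walks of $W$ containing this distinguished appearance of $e$ one of minimum edge-length, say $W'$. Writing $W'$ as $a = w_0, w_1, \ldots, w_k = a$, I claim $W'$ must be a simple directed cycle. Indeed, were some vertex $w$ to appear as both $w_j$ and $w_l$ with $j<l$, then the two closed sub-walks $w_0,\ldots,w_j,w_{l+1},\ldots,w_k$ and $w_j,w_{j+1},\ldots,w_l$ would each be strictly shorter than $W'$, and the one containing the distinguished occurrence of $e$ would contradict the minimality of $W'$. Hence $W'$ has no repeated vertices other than $w_0 = w_k$, so it is a simple cycle $Cy$ passing through $e$ whose edges are traversed in exactly the same direction in which $W$ traverses them.

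The remaining verification is a direct unwinding of the definitions. Each edge $f$ of $Cy$ lies inside $P_1$ or $P_2$, and the direction in which $Cy$ traverses $f$ is the same as the direction in which the containing $P_i$ does; by the potential-path condition for $P_i$, $f$ is either bioriented in $O$ or oriented in $O$ in that direction, which is exactly the potential-cycle condition for $Cy$. The ``moreover'' clause drops out at the same moment: a bioriented edge $e' \in Cy$ appears in some $P_i$ with its $Cy$-consistent orientation equal to its $P_i$-direction, so the $Cy$-consistent orientation agrees with the direction of $e'$ in at least one of $P_1,P_2$.

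The only step requiring genuine care is the shortcut/minimality argument producing $Cy$, since one must track the distinguished appearance of $e$ through closed sub-walks of a walk that may well revisit vertices or even repeat edges. I anticipate this being the main — and only — real obstacle, but it is an entirely standard graph-theoretic maneuver and should occupy just a few lines.
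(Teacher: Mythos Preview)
Your argument is correct and takes a genuinely different route from the paper's. The paper proceeds by induction on the length of the shorter of $P_1,P_2$: if the two paths are internally disjoint then $P_1P_2$ is already a cycle; otherwise one finds the last vertex $x$ on $P_1$ that also lies on $P_2$, so that the ``inner'' pieces $P_1''P_2'$ form a potential cycle outright, and if $e$ is not there one recurses on the shorter pair $P_1',P_2''$. Your approach instead extracts a simple cycle directly from the closed walk $W$ by a minimality/shortcutting argument, never splitting the paths. This is more elementary and avoids the bookkeeping of tracking two paths through the recursion; the paper's decomposition, on the other hand, makes the structure of $P_1P_2$ as a union of cycles a bit more explicit.

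One small point you should make explicit: the inference ``no repeated vertex other than $w_0=w_k$, hence simple cycle'' also requires that no edge is repeated. For $k\geq 3$ this is automatic (distinct vertex pairs force distinct edges), and $k=1$ is a loop; but for $k=2$ the two steps lie between the same vertex pair and one must rule out that they use the same edge. Here is exactly where the hypothesis that $e$ is \emph{oriented} enters: the distinguished step traverses $e$ from $a$ to $b$, while the other step goes from $b$ to $a$, and since $e$ is oriented it can only be traversed $a\to b$ in any potential path, so the second step must use a different (parallel) edge. The paper remarks immediately after the lemma that the conclusion fails for bioriented $e$ precisely because this $k=2$ degeneracy can occur, so it is worth flagging this case in your write-up.
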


\begin{proof}
If $P_{1}$ and $P_{2}$ are both of length 1, then either there is no oriented edge in $P_{1}P_{2}$ or $P_{1}P_{2}$ consists of two parallel edges between $u$ and $v$. The lemma is satisfied in either case. So we proceed using induction on the length of the shorter of $P_{1}$ or $P_{2}$.

If $P_{1}$ and $P_{2}$ are internally edge- and vertex-disjoint then $P_{1}P_{2}$ is a circuit of $G$, so~$Cy=P_{1}P_{2}$ satisfies the lemma. Otherwise, there is some vertex $x\notin \{u,v\}$, which is visited by both $P_{1}$ and $P_{2}$. Let $x$ be the last such vertex on $P_{1}$, and let $P_{1}=P_{1}^{\prime }xP_{1}^{\prime \prime }$ and $P_{2}=P_{2}^{\prime}xP_{2}^{\prime \prime }$. No vertex appears twice in the closed subwalk $P_{1}^{\prime \prime }P_{2}^{\prime }$, so $P_{1}^{\prime \prime}P_{2}^{\prime }$ is a potential cycle of $O$; if $e$ appears on this potential cycle the claim is satisfied. If not, then $e$ appears on the closed subwalk $P_{1}^{\prime }P_{2}^{\prime \prime }$. As $P_{1}^{\prime }$ is a potential path of $O$ from~$v$ to~$x$, $P_{2}^{\prime \prime }$ is a potential path of $O$ from~$x$ to~$v$, and their lengths are strictly less than those of $P_{1}$ and $P_{2}$ (respectively), the inductive hypothesis applies.
\end{proof}

We should emphasize that the conclusion of Lemma~\ref{lem:preliminary} does not hold for bioriented edges. For example, if $e$ is a bioriented edge between $u$ and $v$ then~$P_{1}=\{e\}$ and~$P_{2}=\{e\}$\ satisfy the hypothesis of the lemma, but $P_{1}P_{2}$ does not contain a potential cycle, because it does not contain a circuit.

Now we proceed to work out the consequences of the negation of each of the assertions that make up condition~(2) of Lemma~\ref{lem:key}. Again, the setup is always that the maximum edge~$e$ of~$G$ is neither an isthmus nor a loop and is oriented from~$u$ to~$v$ in~$O$.

\subsection{Consequences of \texorpdfstring{$L^{o}(O)\neq L^{o}(O-e)$}{}}

\begin{lemma} $L^{o}(O)\neq L^{o}(O-e)\Rightarrow L^{o}(^{e}O)=L^{o}(O-e)$.
\end{lemma}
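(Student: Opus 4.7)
The plan is to exploit the fact that $e$ is the maximum edge (so no minimum edge of a potential cycle can equal $e$), together with Lemma~\ref{lem:preliminary}, to concatenate two potential cycles through $e$ (one in $O$, one in $^{e}O$) and extract a smaller potential cycle that avoids $e$ entirely.

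First, I would observe the easy containment $L^{o}(O-e) \subseteq L^{o}(^{e}O)$: any potential cycle of $O-e$ lives in $G-e$, and since $O$ and $^{e}O$ agree on edges other than $e$, such a cycle is automatically a potential cycle of $^{e}O$ with the same minimum edge. (This inclusion does not even require the hypothesis.) So the whole task is to prove $L^{o}(^{e}O) \subseteq L^{o}(O-e)$ under the assumption $L^{o}(O)\neq L^{o}(O-e)$.

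Suppose for contradiction that some $g\in L^{o}(^{e}O)\setminus L^{o}(O-e)$. Pick a witnessing potential cycle $Cy'$ of $^{e}O$ with minimum edge $g$; since $g\notin L^{o}(O-e)$, the cycle $Cy'$ must traverse $e$ (otherwise it would already certify $g\in L^{o}(O-e)$). As $e$ is oriented $v\to u$ in $^{e}O$, the subpath $P_{2}:=Cy'-e$ is a potential path of $^{e}O$ from $u$ to $v$, and — because it avoids $e$ — it is also a potential path of $O$. The hypothesis $L^{o}(O)\neq L^{o}(O-e)$ gives, by the symmetric reasoning applied to $O$, some $f\in L^{o}(O)\setminus L^{o}(O-e)$, witnessed by a potential cycle $Cy$ of $O$ with min edge $f$ that must also use $e$; then $P_{1}:=Cy-e$ is a potential path of $O$ from $v$ to $u$.

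Now I would feed $P_{1},P_{2}$ into Lemma~\ref{lem:preliminary} with the oriented edge $e':=\min(f,g)$ (both $f$ and $g$ are oriented in $O$: $f$ because $f\in L^{o}(O)$, and $g$ because $g\neq e$ and $g$ is oriented in $^{e}O$). The lemma produces a potential cycle $Cy''$ of $O$ with $e'\in Cy''\subseteq P_{1}P_{2}$. Since $P_{1}P_{2}$ omits $e$, $Cy''$ is a potential cycle of $O-e$. Moreover every edge of $Cy''$ lies in $(Cy\cup Cy')\setminus\{e\}$, hence is $\geq f$ or $\geq g$, and in either case $\geq e'$; thus $e'$ is the minimum edge of $Cy''$, giving $e'\in L^{o}(O-e)$. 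This contradicts either the choice of $f$ (if $e'=f$) or the assumption on $g$ (if $e'=g$), completing the proof.

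The main obstacle I anticipate is bookkeeping: one must verify carefully that passing between $O$, $^{e}O$, and $O-e$ does not alter which non-$e$ edges are oriented, bioriented, or unoriented, and that the concatenated closed walk $P_{1}P_{2}$ really lies entirely inside $Cy\cup Cy'\setminus\{e\}$ so that the minimality bound on edges of $Cy''$ is legitimate. Beyond that, the argument is a straightforward application of Lemma~\ref{lem:preliminary}.
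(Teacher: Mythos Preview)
Your proposal is correct and follows essentially the same approach as the paper's proof: both arguments extract from the hypothesis a potential cycle of $O$ through $e$ and a potential cycle of $^{e}O$ through $e$, remove $e$ to get two potential paths of $O-e$ between $u$ and $v$, and then apply Lemma~\ref{lem:preliminary} to the smaller of the two minimum edges to produce a potential cycle of $O-e$ certifying membership in $L^{o}(O-e)$, yielding the contradiction. The only cosmetic difference is that the paper writes ``interchanging $O$ and $^{e}O$ if necessary we may presume $i\leq j$'' where you instead set $e':=\min(f,g)$ explicitly.
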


\begin{proof}
Every potential cycle of $O-e$ is also a potential cycle of both $O$ and $^{e}O$, so $L^{o}(O-e)\subseteq L^{o}(O)\cap L^{o}(^{e}O)$. Consequently if the lemma fails then $G$ has edges $e_{i}\in L^{o}(O)$ and $e_{j}\in L^{o}(^{e}O)$, neither of which is included in $L^{o}(O-e)$. Interchanging $O$ and $^{e}O$ if necessary, we may presume that $i\leq j$.

There are then a potential cycle $e_{i_{1}},\ldots,e_{i_{s}}$ of $O$ and a potential cycle $e_{j_{1}}^{\prime},\ldots,e_{j_{t}}^{\prime }$ of $^{e}O$, with $i=\min \{i_{1},\ldots,i_{s}\}$ and $j=\min \{j_{1},\ldots,j_{t}\}$. As neither $e_{i}$ nor $e_{j}$ is included in $L^{o}(O-e)$, both potential cycles include $e$. We may presume that $e_{i_{1}}=e_{j_{1}}^{\prime }=e$. Then $P_{1}=e_{i_{2}},\ldots,e_{i_{s}}$ is a potential path of $O-e$ from $v$ to $u$ and $P_{2}=e_{j_{2}}^{\prime},\ldots,e_{j_{t}}^{\prime }$ is a potential path of $O-e$ from $u$ to $v$. Lemma~\ref{lem:preliminary} tells us that there is a potential cycle $Cy$ of $O-e$, such that $e_{i}\in Cy\subseteq P_{1}P_{2}$. But this is impossible, as $e_{i}\notin L^{o}(O-e)$.
\end{proof}

\begin{lemma}\label{cyclesb}
$L^{o}(O)\neq L^{o}(O-e)\Rightarrow L^{b}(^{e}O)=L^{b}(O-e)$.
\end{lemma}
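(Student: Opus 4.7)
The plan is to establish the two inclusions $L^{b}(\,^{e}O) \subseteq L^{b}(O-e)$ and $L^{b}(O-e) \subseteq L^{b}(\,^{e}O)$ separately. The second is immediate: any potential cycle of $(O-e) \setminus \{(e')^{\sigma_{b}(e')}\}$ lives entirely in $E(G-e)$, and the two fourientations $(O-e) \setminus \{(e')^{\sigma_{b}(e')}\}$ and $\,^{e}O \setminus \{(e')^{\sigma_{b}(e')}\}$ agree on $E(G-e)$, so the same cycle also witnesses $e' \in L^{b}(\,^{e}O)$. This direction uses no hypothesis.

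For the first inclusion I plan to argue by contradiction. Assume $e' \in L^{b}(\,^{e}O) \setminus L^{b}(O-e)$, witnessed by a potential cycle $C^{(2)}$ of $\,^{e}O \setminus \{(e')^{\sigma_{b}(e')}\}$ with $e'$ minimum. Since $e' \notin L^{b}(O-e)$, the cycle $C^{(2)}$ must use $e$ and, because $e$ is oriented $v\to u$ in $\,^{e}O$, must traverse $e$ in that direction; removing $e$ leaves a potential path $P\colon u\to v$ in $(O-e) \setminus \{(e')^{\sigma_{b}(e')}\}$ which contains $e'$ and all of whose edges are $\geq e'$. The hypothesis $L^{o}(O) \neq L^{o}(O-e)$ supplies $e_{i} \in L^{o}(O) \setminus L^{o}(O-e)$, realized as the minimum of a potential cycle $C^{(1)}$ of $O$ that analogously must use $e$ in direction $u\to v$; removing $e$ leaves a potential path $Q\colon v\to u$ in $O-e$ all of whose edges are $\geq e_{i}$. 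Note $e_i \neq e'$, since $e_i$ is oriented in $O$ but $e'$ is bioriented.

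The main obstacle is that, although $P$ lies in $(O-e) \setminus \{(e')^{\sigma_{b}(e')}\}$, the path $Q$ is only guaranteed to lie in $O-e$ and so may traverse the forbidden directed edge $(e')^{\sigma_{b}(e')}$. I plan to split on this. In Case~A, where $Q$ avoids $(e')^{\sigma_{b}(e')}$, the closed walk $P\cdot Q$ lies in $(O-e) \setminus \{(e')^{\sigma_{b}(e')}\}$. If $e_i > e'$, then every edge of $P\cup Q$ is $\geq e'$, and feeding the oriented edge $e'$ into Lemma~\ref{lem:preliminary} yields a potential cycle of $(O-e) \setminus \{(e')^{\sigma_{b}(e')}\}$ that contains $e'$ and whose minimum edge must be $e'$, contradicting $e' \notin L^{b}(O-e)$. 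If instead $e_i < e'$, then feeding $e_i$ into Lemma~\ref{lem:preliminary} yields a potential cycle of $(O-e) \setminus \{(e')^{\sigma_{b}(e')}\}$ contained in $P\cup Q$ and containing $e_i$, all of whose edges are $\geq e_{i}$, forcing the minimum to be $e_{i}$; since $(O-e) \setminus \{(e')^{\sigma_{b}(e')}\}$ only restricts $e'$ from bioriented to simply oriented, this cycle is also a potential cycle of $O-e$, so $e_{i} \in L^{o}(O-e)$, contradicting the choice of $e_{i}$.

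In Case~B, where $Q$ uses $(e')^{\sigma_{b}(e')}$, the edge $e'$ lies on $C^{(1)}$, so $e_{i} \leq e'$, and then $e_{i} \neq e'$ forces $e_{i} < e'$. The walk $P\cdot Q$ no longer lives in $(O-e) \setminus \{(e')^{\sigma_{b}(e')}\}$, but it does live in $O-e$, where $e'$ is bioriented and therefore accommodates both directions. Applying Lemma~\ref{lem:preliminary} to $P\cdot Q$ in $O-e$ with the oriented edge $e_{i}$ produces a potential cycle of $O-e$ contained in $P\cup Q$ and containing $e_{i}$, whose edges are all $\geq e_{i}$; its minimum is therefore $e_{i}$, again contradicting $e_{i} \notin L^{o}(O-e)$. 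Since every case yields a contradiction, no such $e'$ exists, and the proof is complete.
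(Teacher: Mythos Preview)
Your proof is correct and follows essentially the same approach as the paper: extract a potential path $P:u\to v$ from the cycle witnessing $e'\in L^{b}(\,^{e}O)$ and a potential path $Q:v\to u$ from the cycle witnessing $e_i\in L^{o}(O)$, then apply Lemma~\ref{lem:preliminary} to the closed walk $P\cdot Q$ to reach a contradiction. The only difference is organizational: the paper first applies Lemma~\ref{lem:preliminary} (in $O-e$) to the oriented edge $e_i$ to force $e'<e_i$, and then---since $e'$ therefore cannot lie on $Q$---applies the lemma a second time (implicitly in $(O-e)\setminus\{(e')^{\sigma_b(e')}\}$) to $e'$; you instead case-split on whether $Q$ uses the forbidden direction $(e')^{\sigma_b(e')}$ and handle the comparison $e_i\lessgtr e'$ within that split. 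Your decomposition has the virtue of being more explicit about which fourientation Lemma~\ref{lem:preliminary} is invoked in at each step, which is a point the paper leaves somewhat implicit.
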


\begin{proof}
If $L^{o}(O)\neq L^{o}(O-e)$ then $G$ has\ an edge $e_{i}\in L^{o}(O)$, which is excluded from $L^{o}(O-e)$. That is, there is a potential cycle $e_{i_{1}},\ldots,e_{i_{s}}$ of $O$ with $i=\min \{i_{1},\ldots,i_{s}\}$, and every such potential cycle includes $e$. If the lemma fails then $G$ has an edge $e_{j}\in L^{b}(^{e}O)$, which is excluded from $L^{b}(O-e)$. That is, if $^{e}O^{\prime }=$ $^{e}O\setminus \{e_{j}^{\sigma _{b}(e_{j})}\}$ then there is a potential cycle $e_{j_{1}}^{\prime },\ldots,e_{j_{t}}^{\prime }$ of $^{e}O^{\prime }$ with $j=\min \{j_{1},\ldots,j_{t}\}$, and every such potential cycle includes $e$. Notice that $e_{j_{1}}^{\prime },\ldots,e_{j_{t}}^{\prime }$ is also a potential cycle of~$^{e}O$.

After cyclic permutations of the two cycles we may presume that $e_{i_{1}}=e_{j_{1}}^{\prime }=e$. Then $P_{1}=e_{i_{2}},\ldots,e_{i_{s}}$ is a potential path of $O-e$ from $v$ to $u$ and $P_{2}=e_{j_{2}}^{\prime},\ldots,e_{j_{t}}^{\prime }$ is a potential path of $O-e$ from $u$ to $v$. Lemma \ref{lem:preliminary} tells us that there is a potential cycle $Cy$ of $O-e$, such that $e_{i}\in Cy\subseteq P_{1}P_{2}$. As $e_{i}\notin L^{o}(O-e)$, $e_{i} $ cannot be the least-indexed edge of $Cy$; necessarily then $j<i$.

Lemma \ref{lem:preliminary} also tells us that there is a potential cycle $Cy^{\prime }$ of $O-e$, such that $e_{j}\in Cy^{\prime }\subseteq P_{1}P_{2}$. Moreover, the direction of $e_{j}$ that is consistent with the preferred orientation of $Cy^{\prime }$ is the same as the direction of $e_{j}$ that is consistent with the preferred orientation of $P_{2}$. (As $j<i$, $e_{j}$ does not appear on $P_{1}$.) Consequently $Cy^{\prime }$ is a potential cycle of $^{e}O^{\prime }-e$. As $e\notin Cy^{\prime }$, this contradicts the hypothesis that $e_{j}$ is excluded from $L^{b}(O-e)$.
\end{proof}

\begin{lemma}
\label{cuts}
$L^{o}(O)\neq L^{o}(O-e)\Rightarrow I^{o}(^{e}O)=I^{o}(O-e)$.
\end{lemma}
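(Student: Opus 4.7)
The plan is to verify both inclusions $I^o(O-e) \subseteq I^o({}^{e}O)$ and $I^o({}^{e}O) \subseteq I^o(O-e)$; only the former will require the hypothesis. Begin by extracting from the hypothesis a potential cycle $Cy$ of $O$ that contains $e$: since every potential cycle of $O-e$ is already a potential cycle of $O$, the hypothesis produces some $e_i \in L^o(O) \setminus L^o(O-e)$, and any potential cycle of $O$ witnessing $e_i$ must include $e$. Because $e$ is oriented from $u$ to $v$ in $O$, this cycle has $e^+ \in \mathbb{E}(Cy)$. Note also that $e \notin I^o({}^{e}O)$: since $e$ is the maximum edge, it could only be the minimum of a cut if $\{e\}$ were itself a cut, making $e$ an isthmus, which is excluded. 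Thus it suffices to fix $f \neq e$ and verify $f \in I^o({}^{e}O) \iff f \in I^o(O-e)$.

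For $I^o({}^{e}O) \subseteq I^o(O-e)$, which does not require the hypothesis, let $f \in I^o({}^{e}O)$ be witnessed by a potential cut $Cu$ of ${}^{e}O$. If $e \notin E(Cu)$, then $Cu$ (with the same orientation) is directly a potential cut of $O-e$. If $e \in E(Cu)$, the potential cut condition combined with $e^- \in {}^{e}O$ forces $e^- \in \mathbb{E}(Cu)$. A standard manipulation shows that $E(Cu) \setminus \{e\}$ is again a bond of $G-e$: the bipartition defining $E(Cu)$ still separates the relevant component in $G-e$, and minimality holds because $e$ is not an isthmus, so restoring any single edge of $E(Cu) \setminus \{e\}$ would reconnect the bipartition in $G$ and hence in $G-e$. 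The restricted orientation is then a potential cut of $O-e$ with minimum $f$.

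For $I^o(O-e) \subseteq I^o({}^{e}O)$, let $f \in I^o(O-e)$ be witnessed by a potential cut $Cu'$ of $O-e$. Either $E(Cu')$ is already a bond of $G$ (in which case $Cu'$ lifts with the same orientation to a potential cut of ${}^{e}O$), or $B := E(Cu') \cup \{e\}$ is a bond of $G$. In the latter case, let $\mathbb{E}_1$ be the orientation of $B$ extending $\mathbb{E}(Cu')$. If $e^- \in \mathbb{E}_1$, then $\mathbb{E}_1$ realizes $B$ as a potential cut of ${}^{e}O$ with minimum $f$, and we are done. The remaining sub-case $e^+ \in \mathbb{E}_1$ is precisely what the hypothesis rules out: both $\mathbb{E}_1$ and $\mathbb{E}(Cy)$ assign $e$ the direction $e^+$, so they agree at $e$. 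By the standard orthogonality between directed cuts and directed cycles (at their shared edges the number of agreements equals the number of disagreements), there must exist $h \in E(Cu') \cap E(Cy)$ and $\delta \in \{+,-\}$ with $h^\delta \in \mathbb{E}(Cy)$ and $h^{-\delta} \in \mathbb{E}(Cu')$. The former forces $h^\delta \in O$, while the latter (since $Cu'$ is a potential cut of $O-e$ and $h \neq e$) forces $h^\delta \notin O$, a contradiction. The main obstacle is carving the case analysis so that the single problematic configuration is exactly the one the hypothesis-supplied cycle $Cy$ can close off via cut-cycle orthogonality.
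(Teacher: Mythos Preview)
Your argument is correct and follows essentially the same route as the paper: both establish the easy inclusion $I^o({}^eO) \subseteq I^o(O-e)$ without the hypothesis, then use the potential cycle through $e$ (extracted from the hypothesis) to rule out the one problematic sub-case in the reverse inclusion. The paper phrases the final contradiction concretely (the potential path from $v$ to $u$ obtained from $Cy$ must cross into $U$, producing an edge that is bioriented or directed into $U$), while you invoke directed cut--cycle orthogonality; these are equivalent formulations of the same step.

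One small technical imprecision: in the easy direction, when $e \notin E(Cu)$ you assert that $Cu$ is ``directly a potential cut of $O-e$.'' In the paper's conventions a cut is a bond, and a bond $Cu$ of $G$ with $e \notin Cu$ need not remain a bond of $G-e$: if $e$ lies inside one side of the bipartition defining $Cu$, removing $e$ may disconnect that side, so $Cu$ splits into a disjoint union of bonds in $G-e$. This is why the paper writes that $Cu \setminus \{e\}$ is ``a union of disjoint potential cuts of $O-e$.'' The repair is immediate (take the bond in that union containing $f$, which still has minimum $f$), so this does not affect the substance of your proof.
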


\begin{proof}
If the lemma fails then $I^{o}(^{e}O)\not=I^{o}(O-e)$. Each potential cut $Cu$ of $^{e}O$ has the property that $Cu\setminus \{e\}$ is a union of disjoint potential cuts of $O-e$, so $I^{o}(^{e}O)\subseteq I^{o}(O-e)$. It follows that $G$ has\ an edge $e_{j}\in I^{o}(O-e)$, which is excluded from $I^{o}(^{e}O)$. Then there is a potential cut $Cu$ of $O-e$ whose least-indexed element is $e_{j}$, and no such $Cu$ has either $Cu$ or $Cu\cup \{e\}$ a potential cut of~$^{e}O$.

Let $Cu$ be a potential cut of $O-e$, which includes $e_{j}$. That is, there is a subset $U\subseteq V(G)$ such that every edge of $G-e$ with precisely one end-vertex in $U$ is either unoriented in $O$ or directed away from $U$, and $Cu$ is the set of edges of $G-e$ with precisely one end-vertex in $U$. As neither $Cu$ nor $Cu\cup \{e\}$ is a potential cut of $^{e}O$, and $e$ is oriented from $v$ to $u$ in $^{e}O$, it follows that $u\in U$ and $v\notin U$.

If $L^{o}(O)\neq L^{o}(O-e)$ then $G$ has an edge $e_{i}\in L^{o}(O)$, such that $e$ is included in every potential cycle $e_{i_{1}},\ldots,e_{i_{s}}$ of $O$ with $i=\min \{i_{1},\ldots,i_{s}\}$. Suppose $e$ is included in a potential cycle $e_{i_{1}},\ldots,e_{i_{s}}$ of $O$. We may presume that $e=e_{i_{1}}$; then $e_{i_{2}},\ldots,e_{i_{s}}$ is a potential path of $O-e$ from $v$ to $u$. As $v\notin U$ and $u\in U$, some edge of this path must have precisely one end-vertex in $U$, and be either bioriented or directed into $U$. The existence of such an edge contradicts the description of $Cu$ in the preceding paragraph.
\end{proof}

\begin{lemma}
\label{cutsb}
$L^{o}(O)\neq L^{o}(O-e)\Rightarrow I^{u}(^{e}O)=I^{u}(O-e)$.
\end{lemma}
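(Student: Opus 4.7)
The plan is to mimic the structure of Lemma~\ref{cuts}, adapting the argument from $I^{o}$ to $I^{u}$. Note that the edges unoriented in $^{e}O$ coincide with the edges unoriented in $O-e$, both equal to $O^{u}$ (since $e$ is oriented in both), so $I^{u}(^{e}O)$ and $I^{u}(O-e)$ share the same candidate set. I would first establish the easy inclusion $I^{u}(^{e}O)\subseteq I^{u}(O-e)$: given any $f\in I^{u}(^{e}O)$ witnessed by a potential cut $Cu$ of $^{e}O\cup\{f^{\sigma_{u}(f)}\}$ with $f=\min E(Cu)$, the edge set $E(Cu)\setminus\{e\}$ in $G-e$ is a disjoint union of cuts of $G-e$, one of which contains $f$ as its minimum edge; since the orientations of edges other than $e$ agree between $^{e}O$ and $O-e$, this smaller cut is a potential cut of $(O-e)\cup\{f^{\sigma_{u}(f)}\}$, certifying $f\in I^{u}(O-e)$.

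For the reverse inclusion $I^{u}(O-e)\subseteq I^{u}(^{e}O)$, I would suppose for contradiction that $f=e_{j}$ lies in $I^{u}(O-e)\setminus I^{u}(^{e}O)$. Then there is a potential cut $Cu$ of $(O-e)\cup\{f^{\sigma_{u}(f)}\}$ with minimum edge $f$, directed away from some vertex set $U\subseteq V(G)$, yet by hypothesis neither $Cu$ nor $Cu\cup\{e\}$ is a potential cut of $^{e}O\cup\{f^{\sigma_{u}(f)}\}$. As in the proof of Lemma~\ref{cuts}, the only configuration consistent with this is $u\in U$ and $v\notin U$: otherwise $e$ either does not cross $U$ at all (so $Cu$ would remain a potential cut of $^{e}O\cup\{f^{\sigma_{u}(f)}\}$) or crosses in a direction compatible with $e$'s orientation $v\to u$ in $^{e}O$ (so $Cu\cup\{e\}$ would be a potential cut).

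Next, I would invoke the hypothesis $L^{o}(O)\neq L^{o}(O-e)$, which provides some $e_{i}\in L^{o}(O)$ with the property that every potential cycle of $O$ whose minimum edge is $e_{i}$ must contain $e$. Fixing such a potential cycle and cyclically rotating so that $e$ appears first (traversed $u\to v$), the remaining edges form a potential path $P$ of $O-e$ from $v$ to $u$. Since $v\notin U$ and $u\in U$, some edge $e'$ of $P$ must cross from the $V\setminus U$ side into $U$, and hence $e'$ is either bioriented in $O$ or simply oriented in $O$ into $U$. The crux of the argument, which distinguishes it from Lemma~\ref{cuts}, is verifying that $e'\neq f$: since $f$ is unoriented in $O$ and potential paths cannot traverse unoriented edges, $f$ cannot appear among the edges of $P$. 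Consequently $e'$'s orientation status in $(O-e)\cup\{f^{\sigma_{u}(f)}\}$ agrees with its status in $O-e$, so $e'$ being bioriented or directed into $U$ contradicts $Cu$ being a potential cut of $(O-e)\cup\{f^{\sigma_{u}(f)}\}$ directed away from $U$. The main obstacle is precisely this last verification that $e'\neq f$, which ensures that the augmentation by $f^{\sigma_{u}(f)}$ cannot repair the cut obstruction and thereby allows the Lemma~\ref{cuts} argument to carry over verbatim.
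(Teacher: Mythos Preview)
Your proposal is correct and follows essentially the same approach as the paper's proof. In fact you are slightly more careful than the paper: you explicitly justify that the offending edge $e'$ on the potential path cannot be $f=e_{j}$ (since $f$ is unoriented in $O$ and hence cannot lie on a potential path), a point the paper leaves implicit when it concludes that ``no such edge exists, according to the description of $Cu$.''
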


\begin{proof}
If $j<p$ then each potential cut $Cu$ of $^{e}O\cup \{e_{j}^{\sigma_{u}(e_{j})}\}$ has the property that $Cu\setminus \{e\}$ is a union of disjoint potential cuts of $(O-e)\cup \{e_{j}^{\sigma _{u}(e_{j})}\}$, so $I^{u}(^{e}O)\subseteq I^{u}(O-e)$. Consequently, if the lemma fails then $G $ has\ an edge $e_{j}\in I^{u}(O-e)$, which is excluded from $I^{u}(^{e}O)$. Then there is a potential cut $Cu$ of $(O-e)\cup\{e_{j}^{\sigma _{u}(e_{j})}\}$ whose least-indexed element is $e_{j}$, and no such $Cu$ has either $Cu$ or $Cu\cup \{e\}$ a potential cut of $^{e}O\cup\{e_{j}^{\sigma _{u}(e_{j})}\}$.

There is a subset $U\subseteq V(G)$ such that $Cu$ is the set of edges of $G-e$ with precisely one end-vertex in $U$, and every element of $Cu$ is either unoriented in $(O-e)\cup \{e_{j}^{\sigma _{u}(e_{j})}\}$ or directed away from $U$.\ As neither $Cu$ nor $Cu\cup \{e\}$ is a potential cut of $^{e}O\cup \{e_{j}^{\sigma _{u}(e_{j})}\}$, and $e$ is directed from $v$ to $u $ in $^{e}O$, it follows that $u\in U$ and $v\notin U$.

Recall that, as in Lemma~\ref{cuts}, $e$ is included in a potential cycle $e_{i_{1}},\ldots,e_{i_{s}}$ of $O$. We may presume that $e=e_{i_{1}}$; then $e_{i_{2}},\ldots,e_{i_{s}}$ is a potential path of $O-e$ from $v$ to $u$. As $v\notin U$ and $u\in U$, some edge of this path must have precisely one end-vertex in $U$, and be either bioriented or directed toward this vertex. But no such edge exists, according to the description of $Cu$ in the preceding paragraph.
\end{proof}

\begin{lemma}
\label{cyclesc}
$L^{o}(O)\neq L^{o}(O-e)\Rightarrow L^{o}(O)=L^{o}(O/e)$.
\end{lemma}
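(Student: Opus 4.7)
The plan is to establish the two containments $L^{o}(O)\subseteq L^{o}(O/e)$ and $L^{o}(O/e)\subseteq L^{o}(O)$ separately. The first is routine and uses neither the hypothesis $L^{o}(O)\neq L^{o}(O-e)$ nor Lemma~\ref{lem:preliminary}: if $f\in L^{o}(O)$ is the minimum of a potential cycle $Cy$ of $O$, then since $e=e_{p}$ is the maximum edge and is not a loop we have $f\neq e$; if $e\notin Cy$ then $Cy$ is a potential cycle of $O-e$ and hence of $O/e$, while if $e\in Cy$ then (recalling that $e$ is oriented from $u$ to $v$ in $O$) $Cy\setminus\{e\}$ is a potential path from $v$ to $u$ in $O-e$, which becomes a potential cycle of $O/e$ through the contracted vertex. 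In either case the minimum is preserved as $f$, so $f\in L^{o}(O/e)$.

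For the reverse containment, let $f\in L^{o}(O/e)$ be witnessed by a potential cycle $Cy$ of $O/e$. Pulling $Cy$ back to $G$ splits the argument into three cases: \textbf{(i)} $Cy$ avoids the contracted vertex, so $Cy$ is already a potential cycle of $O-e$ and $f\in L^{o}(O-e)\subseteq L^{o}(O)$; \textbf{(ii)} $Cy$ passes through the contracted vertex along a potential path from $v$ to $u$ in $O-e$, so $\{e\}\cup Cy$ is a potential cycle of $O$ whose minimum is still $f$; or \textbf{(iii)} $Cy$ passes through the contracted vertex along a potential path $P_{2}$ from $u$ to $v$ in $O-e$ with minimum $f$, in which case $\{e\}\cup P_{2}$ is a potential cycle of $^{e}O$ but \emph{not} of $O$. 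Cases~(i) and~(ii) give $f\in L^{o}(O)$ immediately. The hypothesis $L^{o}(O)\neq L^{o}(O-e)$ is needed precisely to rule out case~(iii) without contradiction.

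To handle case~(iii), I argue by contradiction, supposing $f\notin L^{o}(O)$, which also gives $f\notin L^{o}(O-e)$ since $L^{o}(O-e)\subseteq L^{o}(O)$. The hypothesis provides some $e_{i}\in L^{o}(O)\setminus L^{o}(O-e)$; every potential cycle of $O$ with minimum $e_{i}$ must use $e$ (otherwise it would already be a potential cycle of $O-e$ with minimum $e_{i}$), so there is a potential path $P_{1}$ from $v$ to $u$ in $O-e$ whose minimum edge is $e_{i}$. Now $P_{1}$ and $P_{2}$ are potential paths of $O-e$ in opposite directions between $u$ and $v$, so Lemma~\ref{lem:preliminary} applies to the closed walk $P_{1}P_{2}$ in $O-e$. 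Applied with designated oriented edge $e_{i}$, it produces a potential cycle of $O-e$ containing $e_{i}$ with minimum $e_{m}$; since $e_{i}\notin L^{o}(O-e)$ we must have $e_{m}<e_{i}$, and since $e_{m}\in P_{1}\cup P_{2}$ with $e_{i}$ the minimum of $P_{1}$, we get $e_{m}\in P_{2}$, whence $f\leq e_{m}<e_{i}$. Applied with designated edge $f$, it produces a potential cycle of $O-e$ containing $f$ with minimum $e_{l}$; since $f\notin L^{o}(O-e)$ we have $e_{l}<f$, and since $f$ is the minimum of $P_{2}$ we get $e_{l}\in P_{1}$, whence $e_{i}\leq e_{l}<f$. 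Combining $f<e_{i}$ and $e_{i}<f$ yields the desired contradiction.

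The main obstacle is case~(iii): the other two pullback cases immediately produce a potential cycle of $O$ with minimum $f$, but in case~(iii) no such cycle is directly at hand. The crux of the argument will be knitting together the path $P_{1}$ (supplied by the hypothesis) and the path $P_{2}$ (supplied by the witness for $f\in L^{o}(O/e)$) via a double application of Lemma~\ref{lem:preliminary}, to force a contradiction between the minima of these two paths.
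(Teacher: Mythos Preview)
Your proof is correct and follows essentially the same approach as the paper: both establish $L^{o}(O)\subseteq L^{o}(O/e)$ directly, then for the reverse direction pair the potential $v\to u$ path $P_{1}$ (supplied by the hypothesis, with minimum $e_{i}$) with the potential $u\to v$ path $P_{2}$ (coming from a witness for $f\in L^{o}(O/e)\setminus L^{o}(O)$) and invoke Lemma~\ref{lem:preliminary} on the closed walk $P_{1}P_{2}$ in $O-e$. The paper compresses your pullback cases (i)--(iii) into the single remark that neither $Cy'$ nor $Cy'\cup\{e\}$ can be a potential cycle of $O$, and it reaches the contradiction by a dichotomy on whether $i\le j$ or $j<i$ rather than your equivalent ping-pong derivation of both $f<e_{i}$ and $e_{i}<f$; these are purely cosmetic differences.
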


\begin{proof}
Every potential cycle $Cy$ of $O$ has the property that $Cy\setminus \{e\}$ is a union of disjoint potential cycles of $O/e$, so $L^{o}(O)\subseteq L^{o}(O/e)$. If $L^{o}(O)\not=L^{o}(O/e)$ then $L^{o}(O/e)$ has an element $e_{j}$ that is not included in $L^{o}(O)$. Then there is a potential cycle $Cy^{\prime }$ of $O/e$, such that $j$ is the smallest index appearing in $Cy^{\prime }$ and neither $Cy^{\prime }$ nor $Cy^{\prime }\cup \{e\}$ is a potential cycle of $O$. It follows that $Cy^{\prime }$ yields a cycle $Cy^{\prime \prime }$ of $G$, of the form $e=e_{j_{1}}^{\prime},e_{j_{2}}^{\prime },\ldots,e_{j_{t}}^{\prime }$, such that $e_{j_{2}}^{\prime},\ldots,e_{j_{t}}^{\prime }$ is an $O$-path but $Cy^{\prime \prime }$ is not a potential cycle of~$O$.

As we have seen in other lemmas, there must be a potential cycle $e_{i_{1}},\ldots,e_{i_{s}}$ of $O$ that contains $e$. Permuting indices if necessary we may presume that $e=e_{i_{1}}$, so that $e_{i_{2}},\ldots,e_{i_{s}}$ is a potential path of $O$ from $v$ to $u$. Both $e_{i}$ and $e_{j}$ are oriented in $O$, so Lemma \ref{lem:preliminary} tells us that the closed walk $e_{j_{2}}^{\prime},\ldots,e_{j_{t}}^{\prime },e_{i_{2}},\ldots,e_{i_{s}}$ contains a potential cycle of $O$ that contains $e_{i}$, and also contains a potential cycle of $O $ that contains $e_{j}$. (They may be the same potential cycle.) If $i\leq j$ then we have a contradiction, as the potential cycle that includes $e_{i}$ does not also include $e$. We conclude that $j<i$; but this also yields a contradiction, as the existence of a potential cycle of $O$ that includes $e_{j}$ would imply that $e_{j}\in L^{o}(O)$.
\end{proof}

\begin{lemma}
\label{cyclescb}
$L^{o}(O)\neq L^{o}(O-e)\Rightarrow L^{b}(O)=L^{b}(O/e)$.
\end{lemma}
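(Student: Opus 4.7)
The plan is to prove the two inclusions separately, following the framework of Lemma \ref{cyclesc}. The direction $L^{b}(O)\subseteq L^{b}(O/e)$ is routine: given a potential cycle $Cy$ of $O\setminus\{e_j^{\sigma_b(e_j)}\}$ with minimum $e_j$, either $e\notin Cy$ (so $Cy$ is itself a potential cycle of $(O/e)\setminus\{e_j^{\sigma_b(e_j)}\}$), or $e\in Cy$ and $Cy\setminus\{e\}$ is a potential cycle of $(O/e)\setminus\{e_j^{\sigma_b(e_j)}\}$ with $e_j$ still as minimum (since $e=e_{\mathrm{max}}\neq e_j$).

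For the reverse inclusion I will argue by contradiction, assuming some $e_j \in L^{b}(O/e)\setminus L^{b}(O)$ witnessed by a potential cycle $Cy'$ of $(O/e)\setminus\{e_j^{\sigma_b(e_j)}\}$ with minimum $e_j$. Lifted to $G$, the edges of $Cy'$ either form a cycle of $G$---which is then a potential cycle of $O\setminus\{e_j^{\sigma_b(e_j)}\}$ with min $e_j$, contradicting $e_j\notin L^{b}(O)$---or they form a simple path $P'$ between $u$ and $v$ with $P'\cup\{e\}$ a cycle of $G$. If $P'$ runs from $v$ to $u$, then together with $e$ (oriented $u\to v$ in $O$) it forms a potential cycle of $O\setminus\{e_j^{\sigma_b(e_j)}\}$ with min $e_j$, again a contradiction. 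So I may assume $P'$ runs from $u$ to $v$, and thus $P'$ is a potential path of $(O-e)\setminus\{e_j^{\sigma_b(e_j)}\}$ whose minimum index is $j$.

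Now I invoke the hypothesis $L^o(O)\neq L^o(O-e)$ as in Lemma \ref{cyclesc} to obtain an edge $e_i \in L^o(O)\setminus L^o(O-e)$ together with a potential path $P$ of $O-e$ from $v$ to $u$ whose minimum index is $i$. I split into two cases. If $i\leq j$, then $P$ and $P'$ are both potential paths of $O-e$; applying Lemma \ref{lem:preliminary} to the closed walk $P'P$ in $O-e$ yields a potential cycle of $O-e$ containing the oriented edge $e_i$, and since $P'P$ has all indices $\geq i$, this cycle's minimum edge must be $e_i$, contradicting $e_i\notin L^o(O-e)$. If instead $j<i$, every edge of $P$ has index $\geq i>j$, so $e_j\notin P$ and hence $P$ is in fact a potential path of $(O-e)\setminus\{e_j^{\sigma_b(e_j)}\}$. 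In this fourientation $e_j$ is oriented, so Lemma \ref{lem:preliminary} applied to $P'P$ produces a potential cycle of $(O-e)\setminus\{e_j^{\sigma_b(e_j)}\}$ containing $e_j$ whose minimum edge must be $e_j$. This yields $e_j\in L^b(O)$, the desired contradiction.

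The main subtlety I expect is the concern that the path $P$ furnished by the hypothesis could traverse $e_j$ in the forbidden direction $\sigma_b(e_j)$, which would obstruct applying Lemma \ref{lem:preliminary} in the fourientation $(O-e)\setminus\{e_j^{\sigma_b(e_j)}\}$. The case split on whether $i\leq j$ or $j<i$ dissolves this difficulty cleanly: when $j<i$, the index ordering automatically forces $e_j\notin P$ and one works in $(O-e)\setminus\{e_j^{\sigma_b(e_j)}\}$; when $i\leq j$, one works entirely in $O-e$ and derives the contradiction from $e_i$ rather than $e_j$.
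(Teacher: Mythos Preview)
Your proof is correct and follows essentially the same approach as the paper's. The only minor difference is in the $j<i$ case: the paper applies Lemma~\ref{lem:preliminary} in $O$ and then invokes the ``moreover'' clause about bioriented edges (since $e_j$ appears only in $P_1$, the direction of $e_j$ in the resulting cycle agrees with its direction in $P_1$), whereas you pass directly to the fourientation $(O-e)\setminus\{e_j^{\sigma_b(e_j)}\}$ where $e_j$ is oriented, making the ``moreover'' clause unnecessary. These are equivalent ways of organizing the same argument.
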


\begin{proof}
The proof is only a little more complicated than the corresponding argument for Lemma~\ref{cyclesc}. If $j<p$ then each potential cycle $Cy$ of $O\setminus \{e_{j}^{\sigma _{b}(e_{j})}\}$ has the property
that $Cy\setminus \{e\}$ is a union of disjoint potential cycles of $(O\setminus \{e_{j}^{\sigma _{b}(e_{j})}\})/e$, so $L^{b}(O)\subseteq L^{b}(O/e)$. If $L^{b}(O)\neq L^{b}(O/e)$ then $L^{b}(O/e)$ has an element $e_{j}\notin L^{b}(O)$. Then there is a potential cycle $Cy^{\prime} $ of $(O\setminus \{e_{j}^{\sigma _{b}(e_{j})}\})/e$ such that $j$ is the smallest index appearing in $Cy^{\prime }$ and neither $Cy^{\prime }$ nor $Cy^{\prime }\cup \{e\}$ is a potential cycle of $O\setminus \{e_{j}^{\sigma_{b}(e_{j})}\}$. Consequently there is a cycle $Cy^{\prime \prime }$ of $G$, of the form $e=e_{j_{1}}^{\prime },e_{j_{2}}^{\prime },\ldots,e_{j_{t}}^{\prime}$, such that $e_{j_{2}}^{\prime },\ldots,e_{j_{t}}^{\prime }$ is a potential path of $O\setminus \{e_{j}^{\sigma _{b}(e_{j})}\}$ but $Cy^{\prime \prime }$ is not a potential cycle of $O\setminus \{e_{j}^{\sigma _{b}(e_{j})}\}$. Uay $P_{1}=e_{j_{2}}^{\prime},\ldots,e_{j_{t}}^{\prime }$ is a potential path of $O\setminus \{e_{j}^{\sigma _{b}(e_{j})}\}$ from $u$ to $v$. Of course $P_{1}$ is also a potential path of~$O$.

Let $e_{i_{1}},\ldots,e_{i_{s}}$ be a potential cycle of $O$ which includes $e$, which as we have seen in other lemmas must exist. Permuting indices if necessary we may presume that $e=e_{i_{1}}$, so that $P_{2}=e_{i_{2}},\ldots,e_{i_{s}}$ is a potential path of $O$ from $v$ to $u$. Lemma \ref{lem:preliminary} tells us that the concatenation $P_{1}P_{2}$ contains a potential cycle $Cy_{1}$ of $O$ that contains $e_{i}$, and also a potential cycle $Cy_{2}$ of $O$ that contains $e_{j}$. (It is possible that $Cy_{1}=Cy_{2}$.) If $i\leq j$ the existence of $Cy_{1}$ contradicts the fact that $e_{i}\notin L^{o}(O-e)$, so it must be that $j<i$. But then $e_{j}$ does not appear in $P_{2}$, so Lemma \ref{lem:preliminary} tells us that the direction of $e_{j}$ consistent with the preferred orientation of $Cy_{2}$ is the same as the direction of $e_{j}$ in $P_{1}$. It follows that $Cy_{2}$ is a potential cycle of $O\setminus \{e_{j}^{\sigma _{b}(e_{j})}\}$, contradicting the hypothesis that $e_{j}\notin L^{b}(O)$.
\end{proof}

\begin{lemma}
$L^{o}(O)\neq L^{o}(O-e)\Rightarrow I^{o}(O)=I^{o}(O/e)$.
\end{lemma}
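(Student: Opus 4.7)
The plan is to prove $I^{o}(O) = I^{o}(O/e)$ by establishing both inclusions, using the standard correspondence between potential cuts of $O/e$ and potential cuts of $O$ in which the endpoints $u,v$ of $e$ lie on the same side, together with the fact (extracted from the hypothesis $L^{o}(O)\neq L^{o}(O-e)$, exactly as in Lemmas~\ref{cuts}, \ref{cutsb}, \ref{cyclesc}, \ref{cyclescb}) that there is a potential path of $O$ from $v$ to $u$ using only edges of $O-e$.

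First I would handle the easy inclusion $I^{o}(O/e) \subseteq I^{o}(O)$. Any potential cut $Cu'$ of $O/e$ corresponds to a subset $U\subseteq V(G)$ with $u,v$ on the same side; the set of $G$-edges crossing $U$ is then a potential cut $Cu$ of $O$, not containing $e$. So if $e_{j}\in I^{o}(O/e)$ is the min edge of $Cu'$, then $e_{j}$ is also the min edge of $Cu$ in $O$, hence $e_{j}\in I^{o}(O)$. (Note $e_{j}\neq e$ automatically.)

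For the reverse inclusion $I^{o}(O)\subseteq I^{o}(O/e)$, let $e_{j}\in I^{o}(O)$. Since $e$ is not an isthmus and is maximal, $e\notin I^{o}(O)$, so $e_{j}\neq e$. Pick a potential cut $Cu$ of $O$ witnessed by $U\subseteq V(G)$ with $\min(E(Cu))=e_{j}$. If $u$ and $v$ lie on the same side of $U$, then $e\notin Cu$ and $Cu$ descends to a potential cut of $O/e$ with the same min edge, yielding $e_{j}\in I^{o}(O/e)$. The only other possibility is $u\in U$, $v\notin U$ (the case $v\in U$, $u\notin U$ is ruled out because $e$ is oriented from $u$ to $v$ and must be directed out of $U$ or unoriented, not into $U$). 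In this remaining case I will derive a contradiction and conclude the case cannot occur, so $e_{j}\in I^{o}(O/e)$.

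The heart of the argument, and the main (though by now familiar) obstacle, is the contradiction in the case $u\in U$, $v\notin U$. From $L^{o}(O)\neq L^{o}(O-e)$ there is some $e_{i}\in L^{o}(O)\setminus L^{o}(O-e)$; any potential cycle of $O$ witnessing $e_{i}$ must contain $e$, and after cyclic rotation this gives a potential path $P_{1}=e_{i_{2}},\ldots,e_{i_{s}}$ of $O-e$ from $v$ to $u$. Walking along $P_{1}$ from $v\notin U$ to $u\in U$, there must be some edge $f\in P_{1}$ with one endpoint outside $U$ and one endpoint inside $U$. Such an $f$ lies in $Cu$, so by the cut condition $f$ is either unoriented in $O$ or oriented out of $U$; but $f$ also lies on a potential path directed into $U$ at that crossing, so $f$ is either bioriented in $O$ or oriented into $U$. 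These conditions are incompatible, contradicting the existence of $Cu$ with $u\in U$, $v\notin U$. This completes the proof.
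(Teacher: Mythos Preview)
Your proof is correct and follows essentially the same approach as the paper's: both establish the easy inclusion $I^{o}(O/e)\subseteq I^{o}(O)$ and then use the potential cycle of $O$ through $e$ (arising from the hypothesis) to rule out any potential cut of $O$ containing $e$. The only difference is cosmetic: the paper invokes the blanket fact that no edge can lie in both a potential cycle and a potential cut of the same fourientation, whereas you unpack that fact inline via the path-crossing argument.
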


\begin{proof}
Again, if $L^{o}(O)\neq L^{o}(O-e)$ then $G$ has\ an edge $e_{i}\in L^{o}(O)$, such that\ $e$ is included in every potential cycle $e_{i_{1}},\ldots,e_{i_{s}}$ of $O$ with $i=\min \{i_{1},\ldots,i_{s}\}$. Each potential cut of $O/e$ is a potential cut of $O$, so $I^{o}(O/e)\subseteq I^{o}(O)$. If $I^{o}(O)\not=I^{o}(O/e)$ then there is an $e_{j}\in I^{o}(O)\setminus I^{o}(O/e)$. Then every potential cut of $O$ with minimum element $e_{j}$ also contains $e$. As $e$ cannot appear in both a potential cycle of $O$ and a potential cut of $O$, the preceding sentence contradicts the first sentence of the proof.
\end{proof}

\begin{lemma}
$L^{o}(O)\neq L^{o}(O-e)\Rightarrow I^{u}(O)=I^{u}(O/e)$.
\end{lemma}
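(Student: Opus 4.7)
The plan is to establish the two inclusions separately; only one direction will require the hypothesis. For $I^{u}(O/e)\subseteq I^{u}(O)$, note first that any $e_j\in I^{u}(O/e)$ satisfies $e_j\neq e$ (since $e_j$ is unoriented while $e$ is oriented in $O$). A witnessing potential cut $Cu$ of $(O/e)\cup\{e_j^{\sigma_u(e_j)}\}$ with minimum edge $e_j$ corresponds to a vertex set $U\subseteq V(G/e)$, which lifts to a subset of $V(G)$ containing either both or neither of $u,v$; the same edge set $Cu$ is then a potential cut of $O\cup\{e_j^{\sigma_u(e_j)}\}$ (not containing $e$), with the same minimum edge. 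This gives $e_j\in I^{u}(O)$ unconditionally.

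For $I^{u}(O)\subseteq I^{u}(O/e)$ I would follow the template used in Lemma~\ref{cutsb}. As in the proofs of the preceding lemmas in this subsection, the hypothesis $L^{o}(O)\neq L^{o}(O-e)$ yields a potential cycle $e=e_{i_1},e_{i_2},\ldots,e_{i_s}$ of $O$ containing $e$; since $e$ is oriented from $u$ to $v$, the subpath $P=e_{i_2},\ldots,e_{i_s}$ is a potential path of $O$ from $v$ to $u$ that avoids $e$. Given $e_j\in I^{u}(O)$, take a witnessing potential cut $Cu$ of $O\cup\{e_j^{\sigma_u(e_j)}\}$ with defining vertex set $U\subseteq V(G)$ and minimum edge $e_j$. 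It suffices to show $e\notin Cu$; then $Cu$ serves as a potential cut of $(O/e)\cup\{e_j^{\sigma_u(e_j)}\}$ with minimum edge $e_j$, placing $e_j\in I^{u}(O/e)$.

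Suppose for contradiction $e\in Cu$. Since $e_j$ is unoriented in $O$ while $e$ is oriented, $e\neq e_j$, so $e$ still has its original orientation from $u$ to $v$ in $O\cup\{e_j^{\sigma_u(e_j)}\}$; the potential cut condition then forces $u\in U$ and $v\notin U$. The path $P$ starts at $v\in V\setminus U$ and ends at $u\in U$, so some edge $f$ on $P$ crosses from $V\setminus U$ into $U$ in the direction of $P$. As $f$ lies on a potential path of $O$, either $f$ is bioriented in $O$ or $f$ is oriented in $O$ in the direction of $P$, hence into $U$. In either case $f\neq e_j$, since $e_j$ is unoriented in $O$, so adjoining $e_j^{\sigma_u(e_j)}$ to $O$ does not affect $f$; thus $f\in Cu$ is either bioriented or oriented against the cut direction in $O\cup\{e_j^{\sigma_u(e_j)}\}$, violating the potential cut property and giving the desired contradiction.

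The hard part is really just bookkeeping: keeping track that $e_j\neq e$ and $f\neq e_j$, so that augmenting $O$ by the single direction $e_j^{\sigma_u(e_j)}$ neither alters $e$ nor touches the crossing edge $f$ produced from the potential path. Once these separations are recorded, the argument is a direct parallel of Lemma~\ref{cutsb}, with $O-e$ replaced by $O/e$ and $^{e}O$ replaced by $O$, and with the roles of the path endpoints oriented so that $e$ pointing from $u$ to $v$ plays the same geometric role in the contradiction.
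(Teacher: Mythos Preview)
Your proof is correct and follows essentially the same approach as the paper. The only difference is presentational: the paper, after observing that $e$ lies in a potential cycle of $O$ (hence of $O\cup\{e_j^{\sigma_u(e_j)}\}$) and also in the witnessing potential cut, simply invokes the general fact that no edge can lie in both a potential cut and a potential cycle of the same fourientation; you instead unpack that principle explicitly by tracing the potential path from $v$ to $u$ across the cut and exhibiting the offending crossing edge $f$.
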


\begin{proof}
Again, if $L^{o}(O)\neq L^{o}(O-e)$ then $G$ has\ an edge $e_{i}\in L^{o}(O)$, such that\ $e$ is included in every potential cycle $e_{i_{1}},\ldots,e_{i_{s}}$ of $O$ with $i=\min \{i_{1},\ldots,i_{s}\}$. If $j<p$ then each potential cut of $(O\cup \{e_{j}^{\sigma _{u}(e_{j})}\})/e$ is a potential cut of $O\cup \{e_{j}^{\sigma _{u}(e_{j})}\}$, so $I^{u}(O/e)\subseteq I^{u}(O)$. If $I^{u}(O)\not=I^{u}(O/e)$ then there is an $e_{j}\in I^{u}(O)\setminus I^{u}(O/e)$. Then every potential cut of $O\cup \{e_{j}^{\sigma _{u}(e_{j})}\}$ with minimum element $e_{j}$ also contains $e$.

According to the first sentence of the proof, $e$ appears in a potential cycle of $O$; of course a potential cycle of $O$ is also a potential cycle of $O\cup \{e_{j}^{\sigma_{u}(e_{j})}\}$. As no edge of $G$ can appear in both a potential cut of $O\cup \{e_{j}^{\sigma_{u}(e_{j})}\}$ and a potential cycle of $O\cup \{e_{j}^{\sigma_{u}(e_{j})}\}$, we have a contradiction.
\end{proof}

\subsection{Consequences of \texorpdfstring{$L^{b}(O)\neq L^{b}(O-e)$}{}}

\begin{lemma}
\label{cyclesb0}
$L^{b}(O)\neq L^{b}(O-e)\Rightarrow L^{o}(^{e}O)=L^{o}(O-e)$.
\end{lemma}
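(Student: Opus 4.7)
The plan is to argue by contradiction, paralleling the proof of Lemma~\ref{cyclesb} with the roles of oriented and bioriented edges swapped between $O$ and $^{e}O$. Every potential cycle of $O-e$ is also a potential cycle of $^{e}O$ (these two fourientations agree on $G-e$), so $L^{o}(O-e) \subseteq L^{o}(^{e}O)$ automatically, and only the reverse inclusion needs attention.

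Suppose for contradiction that some $e_{j} \in L^{o}(^{e}O) \setminus L^{o}(O-e)$ exists. Then $e_{j}$ is oriented in $O$ and is the minimum edge of some potential cycle $Cy_{j}$ of $^{e}O$, and every such $Cy_{j}$ must use $e$ --- otherwise $Cy_{j}$ would itself be a potential cycle of $O-e$ with minimum $e_{j}$. Since $e$ is directed $v \to u$ in $^{e}O$, $Cy_{j}$ traverses $e$ as $v \to u$. The hypothesis $L^{b}(O) \neq L^{b}(O-e)$ likewise furnishes $e_{i} \in L^{b}(O) \setminus L^{b}(O-e)$: $e_{i}$ is bioriented in $O$, it is the minimum edge of some potential cycle $Cy_{i}$ of $O \setminus \{e_{i}^{\sigma_{b}(e_{i})}\}$, and every such $Cy_{i}$ must use $e$, now traversed as $u \to v$. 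Set $P_{1} := Cy_{i} \setminus \{e\}$, a potential path from $v$ to $u$ of $O \setminus \{e_{i}^{\sigma_{b}(e_{i})}\}$ with minimum edge $e_{i}$ (traversed in direction $-\sigma_{b}(e_{i})$); and $P_{2} := Cy_{j} \setminus \{e\}$, a potential path from $u$ to $v$ of $^{e}O$ with minimum edge $e_{j}$. Both $P_{1}$ and $P_{2}$ are potential paths of $O-e$, and $e_{i} \neq e_{j}$ (bioriented versus oriented in $O$).

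If $j < i$, apply Lemma~\ref{lem:preliminary} to $(P_{1}, P_{2})$ inside the fourientation $O-e$ at the oriented edge $e_{j}$, obtaining a potential cycle $Cy$ of $O-e$ with $e_{j} \in Cy \subseteq P_{1}P_{2}$. The minimum edge of $Cy$ has index at most $j$ and lies in $P_{1} \cup P_{2}$, whose minimum element has index $\min\{i,j\} = j$; that minimum edge is therefore $e_{j}$, so $e_{j} \in L^{o}(O-e)$, a contradiction. If instead $i < j$, then $e_{i} \notin P_{2}$ (its minimum edge being $e_{j} > e_{i}$), so $P_{2}$ remains a potential path of the more restrictive fourientation $(O-e) \setminus \{e_{i}^{\sigma_{b}(e_{i})}\}$, inside which $e_{i}$ is now oriented in direction $-\sigma_{b}(e_{i})$; and $P_{1}$, which was already a potential path of $O \setminus \{e_{i}^{\sigma_{b}(e_{i})}\}$, is a potential path of this restricted fourientation as well. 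Apply Lemma~\ref{lem:preliminary} in $(O-e) \setminus \{e_{i}^{\sigma_{b}(e_{i})}\}$ at the now-oriented $e_{i}$: a potential cycle $Cy$ arises with $e_{i} \in Cy \subseteq P_{1}P_{2}$, whose minimum edge has index at most $i$ and lies in $P_{1} \cup P_{2}$ (minimum element $e_{i}$), so that minimum edge is $e_{i}$ itself. Thus $e_{i}$ is the minimum edge of a potential cycle of $(O-e) \setminus \{e_{i}^{\sigma_{b}(e_{i})}\}$, giving $e_{i} \in L^{b}(O-e)$, a contradiction.

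The main obstacle I anticipate is that Lemma~\ref{lem:preliminary} produces a potential cycle only through an \emph{oriented} edge, while the case $i < j$ requires a potential cycle through the bioriented $e_{i}$. The resolution is to move from $O-e$ to $(O-e) \setminus \{e_{i}^{\sigma_{b}(e_{i})}\}$, inside which $e_{i}$ becomes oriented; the inequality $i < j$ is precisely what ensures $e_{i} \notin P_{2}$, so $P_{2}$ survives this restriction as a potential path and Lemma~\ref{lem:preliminary} becomes applicable.
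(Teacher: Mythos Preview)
Your proof is correct. The paper's own proof is a one-liner: it observes that the statement is exactly the contrapositive of Lemma~\ref{cyclesb} applied to $^{e}O$ in place of $O$ (using $^{e}(^{e}O)=O$ and $^{e}O-e=O-e$). Your argument is precisely what that contrapositive unpacks to --- you even note at the outset that you are ``paralleling the proof of Lemma~\ref{cyclesb}'' with the roles swapped --- so the two approaches are the same in content; the paper simply avoids repeating the details by invoking the earlier lemma.
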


\begin{proof}
This is the contrapositive of Lemma \ref{cyclesb}, applied to $^{e}O$ rather than $O$.
\end{proof}

\begin{lemma}
\label{cyclesb1}
$L^{b}(O)\neq L^{b}(O-e)\Rightarrow L^{b}(^{e}O)=L^{b}(O-e)$.
\end{lemma}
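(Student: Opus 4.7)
The plan is to adapt the proof of Lemma~\ref{cyclesb} above, replacing the oriented cycle-active witness with a bioriented one. I would first note the easy inclusion $L^{b}(O-e) \subseteq L^{b}({}^{e}O)$: any potential cycle of $(O-e) \setminus \{f^{\sigma_{b}(f)}\}$ is a cycle of $G-e$, so it does not use $e$, and hence is also a potential cycle of ${}^{e}O \setminus \{f^{\sigma_{b}(f)}\}$. Thus it suffices to show the reverse inclusion, so I would suppose for contradiction that some $e_{k} \in L^{b}({}^{e}O) \setminus L^{b}(O-e)$ exists. Combined with the hypothesized $e_{j} \in L^{b}(O) \setminus L^{b}(O-e)$, every witnessing potential cycle of $e_{j}$ in $O \setminus \{e_{j}^{\sigma_{b}(e_{j})}\}$ and every witnessing potential cycle of $e_{k}$ in ${}^{e}O \setminus \{e_{k}^{\sigma_{b}(e_{k})}\}$ must actually pass through $e$.

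Fix such witnesses $C_{1}$ and $C_{2}$. Because $e$ is oriented $u \to v$ in $O$ and $v \to u$ in ${}^{e}O$ (and $e_{j}, e_{k} \neq e$, since $e$ is oriented rather than bioriented), the cycle $C_{1}$ must traverse $e$ from $u$ to $v$ and $C_{2}$ from $v$ to $u$. Deleting $e$ from each therefore produces a potential path $P_{1}$ of $O \setminus \{e_{j}^{\sigma_{b}(e_{j})}\}$ from $v$ to $u$ with $\min P_{1}=e_{j}$, and a potential path $P_{2}$ of $(O-e) \setminus \{e_{k}^{\sigma_{b}(e_{k})}\}$ from $u$ to $v$ with $\min P_{2}=e_{k}$; in particular, both are potential paths of $O-e$. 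By the symmetry between $j$ and $k$ I may assume $j \leq k$ (otherwise the same argument goes through with $k$ replacing $j$).

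The next step is to verify that $P_{1}$ and $P_{2}$ are both potential paths of the single fourientation $(O-e) \setminus \{e_{j}^{\sigma_{b}(e_{j})}\}$. For $P_{1}$ this is immediate. For $P_{2}$ the only potential conflict is that $e_{j}$ is now oriented in direction $-\sigma_{b}(e_{j})$, so if $e_{j}$ appeared on $P_{2}$ in direction $+\sigma_{b}(e_{j})$ the path would fail; but when $j < k$ we have $e_{j} < e_{k}=\min P_{2}$, so $e_{j} \notin P_{2}$, and when $j = k$ the two ``opened up'' fourientations coincide. Since $e_{j}$ is oriented in $(O-e) \setminus \{e_{j}^{\sigma_{b}(e_{j})}\}$ and $e_{j}=\min P_{1}$ appears on $P_{1}$, Lemma~\ref{lem:preliminary} (applied in $G-e$) yields a potential cycle $Cy \subseteq P_{1}P_{2}$ of $(O-e) \setminus \{e_{j}^{\sigma_{b}(e_{j})}\}$ with $e_{j} \in Cy$. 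Since all edges of $Cy$ lie on $P_{1}P_{2}$, we have $\min Cy \geq \min(e_{j},e_{k}) = e_{j}$ and hence $\min Cy = e_{j}$, which puts $e_{j} \in L^{b}(O-e)$---a contradiction.

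I expect the main obstacle to be the verification that $P_{2}$ remains a potential path when we switch which bioriented edge has been ``opened up,'' since bioriented edges can be traversed either way while oriented edges cannot. The resolution is precisely the min-edge hypothesis: it keeps the problematic edge off the other path whenever the two removed edges differ. This ordering argument is the same kind of move that drives Lemma~\ref{cyclesb}, and it is the reason the indices $j$ and $k$ must be compared at the outset.
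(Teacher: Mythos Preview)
Your proof is correct and follows essentially the same approach as the paper's own argument: both assume the lemma fails, extract two witnessing cycles through $e$ (one from $L^{b}(O)$, one from $L^{b}({}^{e}O)$), remove $e$ to get potential paths $P_{1}$ and $P_{2}$ in opposite directions, assume without loss of generality that the smaller min-index comes from the $O$ side, verify both paths live in the same ``opened up'' fourientation $(O-e)\setminus\{e_{j}^{\sigma_{b}(e_{j})}\}$, and then apply Lemma~\ref{lem:preliminary} to produce a potential cycle avoiding $e$ with min edge $e_{j}$, contradicting $e_{j}\notin L^{b}(O-e)$. Your handling of the case split $j<k$ versus $j=k$ is slightly more explicit than the paper's, but the content is identical.
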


\begin{proof}
If $L^{b}(O)\neq L^{b}(O-e)$ then $G$ has\ an edge $e_{i}\in L^{b}(O)$, which is excluded from $L^{b}(O-e)$. That is, there is a potential cycle $e_{i_{1}},\ldots,e_{i_{s}}$ of $O\setminus \{e_{i}^{\sigma _{b}(e_{i})}\}$ with  $i=\min \{i_{1},\ldots,i_{s}\}$, and every such potential cycle includes $e$. If the lemma fails then $G$ has\ an edge $e_{j}\in L^{b}(^{e}O)$, which is excluded from $L^{b}(O-e)$. That is, there is a potential cycle $e_{j_{1}}^{\prime },\ldots,e_{j_{t}}^{\prime }$ of $^{e}O\setminus \{e_{j}^{\sigma _{b}(e_{j})}\}$ with $j=\min \{j_{1},\ldots,j_{t}\}$, and every such potential cycle includes $e$.

Interchanging $O$ and $^{e}O$ if necessary, we may presume that $i\leq j$. Also, after cyclic permutations of the two cycles we may presume that $e_{i_{1}}=e_{j_{1}}^{\prime }=e$. Then $P_{1}=e_{i_{2}},\ldots,e_{i_{s}}$ is a potential path of $(O\setminus \{e_{i}^{\sigma _{b}(e_{i})}\})-e$ from $v$ to $u$ and $P_{2}=e_{j_{2}}^{\prime },\ldots,e_{j_{t}}^{\prime }$ is a potential path of $(O\setminus \{e_{j}^{\sigma _{b}(e_{j})}\})-e$ from $u$ to $v$. If $i=j$, then of course $\sigma _{b}(e_{i})=\sigma _{b}(e_{j})$; and if $i<j$ then $e_{j}$ is bioriented in $(O\setminus \{e_{i}^{\sigma_{b}(e_{i})}\})$. Either way, we conclude that $P_{1}$ and $P_{2}$ are both potential paths of $(O\setminus \{e_{i}^{\sigma _{b}(e_{i})}\})-e$, so Lemma~\ref{lem:preliminary} tells us that there is a potential cycle $Cy$ of $(O\setminus \{e_{i}^{\sigma _{b}(e_{i})}\})-e$, such that $e_{i}\in Cy\subseteq P_{1}P_{2}$. The existence of $Cy$ contradicts the hypothesis that $e_{i}\notin L^{b}(O-e)$.
\end{proof}

\begin{lemma}
\label{cyclesb2}
$L^{b}(O)\neq L^{b}(O-e)\Rightarrow L^{o}(O)=L^{o}(O/e)$.
\end{lemma}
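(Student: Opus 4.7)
The plan is to mirror the argument of Lemma~\ref{cyclescb}, swapping the roles of the cut-type/cycle-type data: the witnessing potential path will come from the hypothesis $L^{b}(O) \neq L^{b}(O-e)$, and a second path will come from a hypothetical failure of the conclusion. First I would observe that the containment $L^{o}(O) \subseteq L^{o}(O/e)$ holds trivially: if $Cy$ is a potential cycle of $O$ then $Cy \setminus \{e\}$ is a disjoint union of potential cycles of $O/e$, and since $e = e_{p}$ is the maximum edge, removing $e$ never shifts the minimum. So the content is the reverse inclusion.

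I would then assume for contradiction that some $e_{j} \in L^{o}(O/e) \setminus L^{o}(O)$ exists, with $j<p$. Lifting a witnessing potential cycle $Cy'$ of $O/e$ to $G$, either $Cy'$ avoids the contracted vertex (in which case it is already a potential cycle of $O$ with minimum $e_{j}$, a contradiction), or it lifts to a closed walk $e, e'_{j_{2}}, \dots, e'_{j_{t}}$ where $P_{2} := e'_{j_{2}}, \dots, e'_{j_{t}}$ is a potential path of $O-e$ between $u$ and $v$. If $P_{2}$ ran from $v$ to $u$, then $\{e\} \cup P_{2}$ would be a potential cycle of $O$ with minimum $e_{j}$, so $P_{2}$ must run from $u$ to $v$. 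Next I would unpack the hypothesis: there is some $e_{i} \in L^{b}(O) \setminus L^{b}(O-e)$ (with $i<p$, since $e_{i}$ is bioriented while $e$ is simply oriented), witnessed by a potential cycle of $O \setminus \{e_{i}^{\sigma_{b}(e_{i})}\}$ with minimum $e_{i}$ which is forced to contain $e$; cyclically reordering and dropping $e$ yields a potential path $P_{1}$ of $(O \setminus \{e_{i}^{\sigma_{b}(e_{i})}\})-e$ from $v$ to $u$, in which every index is at least $i$.

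The final step is to apply Lemma~\ref{lem:preliminary} to the concatenation $P_{1}P_{2}$; the cases $j<i$ and $j>i$ are treated separately (equality is ruled out because $e_{j}$ is simply oriented while $e_{i}$ is bioriented). When $j<i$, both $P_{1}$ and $P_{2}$ are potential paths of $O-e$ and $e_{j}$ is oriented there, so Lemma~\ref{lem:preliminary} produces a potential cycle of $O-e$ containing $e_{j}$ and lying inside $P_{1}P_{2}$; since every index in $P_{1}P_{2}$ is $\geq j$, this cycle has minimum $e_{j}$, forcing $e_{j} \in L^{o}(O-e) \subseteq L^{o}(O)$, contradiction. When $j>i$, I would first note that $e_{i} \notin P_{2}$, because every index on $P_{2}$ is at least $j>i$; this means $P_{2}$ remains a potential path of the smaller fourientation $(O \setminus \{e_{i}^{\sigma_{b}(e_{i})}\}) - e$, in which $e_{i}$ is now simply oriented. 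Applying Lemma~\ref{lem:preliminary} in this fourientation to the oriented edge $e_{i}$ produces a potential cycle whose indices are all $\geq i$, so its minimum is $e_{i}$, witnessing $e_{i} \in L^{b}(O-e)$, contradiction. The main obstacle I anticipate is the $j>i$ case: one must choose the correct ambient fourientation in which to invoke Lemma~\ref{lem:preliminary}, and it is precisely the index comparison $j>i$ together with $e_{j}$ being the minimum of $Cy'$ that guarantees $P_{2}$ survives the removal of $e_{i}^{\sigma_{b}(e_{i})}$.
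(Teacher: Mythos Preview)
Your proposal is correct and follows essentially the same route as the paper's proof: both establish $L^{o}(O)\subseteq L^{o}(O/e)$ trivially, extract a potential path from $u$ to $v$ from a hypothetical $e_{j}\in L^{o}(O/e)\setminus L^{o}(O)$, extract a potential path from $v$ to $u$ from the witness $e_{i}\in L^{b}(O)\setminus L^{b}(O-e)$, and then split on whether $i<j$ or $i>j$ before invoking Lemma~\ref{lem:preliminary}. Your write-up is in fact a bit cleaner than the paper's in one respect: you make explicit, in the $j>i$ case, that Lemma~\ref{lem:preliminary} is being applied in the fourientation $(O\setminus\{e_{i}^{\sigma_{b}(e_{i})}\})-e$ (where $e_{i}$ is simply oriented), whereas the paper's phrasing ``Both $e_{i}$ and $e_{j}$ are oriented in $O$'' is literally false and must be read as shorthand for working in that modified fourientation.
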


\begin{proof}
Every potential cycle $Cy$ of $O$ has the property that $Cy\setminus \{e\}$ is a union of disjoint potential cycles of $O/e$, so $L^{o}(O)\subseteq L^{o}(O/e)$. If $L^{o}(O)\not=L^{o}(O/e)$ then $L^{o}(O/e)$ has an element $e_{j}$ that is not included in $L^{o}(O)$. Then there is a potential cycle $Cy^{\prime }$ of $O/e$, such that $j$ is the smallest index appearing in $Cy^{\prime }$ and neither $Cy^{\prime }$ nor $Cy^{\prime }\cup \{e\}$ is a potential cycle of $O$. It follows that $Cy^{\prime }$ yields a cycle $Cy^{\prime \prime }$ of $G$, of the form $e=e_{j_{1}}^{\prime},e_{j_{2}}^{\prime },\ldots,e_{j_{t}}^{\prime }$, such that $e_{j_{2}}^{\prime},\ldots,e_{j_{t}}^{\prime }$ is a potential path of $O$ but $Cy^{\prime \prime}$ is not a potential cycle of $O$. Then $e_{j_{2}}^{\prime },\ldots,e_{j_{t}}^{\prime }$ is a potential
path of $O$ from $u$ to $v$.

If $L^{b}(O)\neq L^{b}(O-e)$ then $G$ has\ an edge $e_{i}\in L^{b}(O)$, such that\ $e$ is included in every potential cycle $e_{i_{1}},\ldots,e_{i_{s}}$ of $O\setminus \{e_{i}^{\sigma _{b}(e_{i})}\}$ with $i=\min \{i_{1},\ldots,i_{s}\}$. Consider a potential cycle $e_{i_{1}},\ldots,e_{i_{s}}$ of $O$ that contains $e$. Permuting indices if necessary we may presume that $e=e_{i_{1}}$, so that $e_{i_{2}},\ldots,e_{i_{s}}$ is an $O$-path from $v$ to $u$. Both $e_{i}$ and $e_{j}$ are oriented in $O$, so Lemma~\ref{lem:preliminary} tells us that the closed walk $e_{j_{2}}^{\prime},\ldots,e_{j_{t}}^{\prime },e_{i_{2}},\ldots,e_{i_{s}}$ contains a potential cycle of $O$ that contains $e_{i}$, and also contains a potential cycle of $O $ that contains $e_{j}$. (They may be the same potential cycle.) If $i<j$ then Lemma~\ref{lem:preliminary} tells us that the direction of $e_{i}$ that is consistent with the preferred orientation of the potential cycle that includes $e_{i}$ is the same as the direction of $e_{i}$ in the walk $e_{i_{2}},\ldots,e_{i_{s}}$. But then this potential cycle is a potential cycle of $O\setminus \{e_{i}^{\sigma _{b}(e_{i})}\}$, an impossibility as the cycle excludes $e$. Of course $i=j$ is impossible, as $e_{i}$ is bioriented in $O$ and $e_{j}$ is oriented in $O$. We conclude that $j<i$; but this also yields a contradiction, as the existence of a potential cycle of $O$ that includes $e_{j}$ would imply that $e_{j}\in L^{o}(O)$.
\end{proof}

\begin{lemma}
$L^{b}(O)\neq L^{b}(O-e)\Rightarrow L^{b}(O)=L^{b}(O/e)$.
\end{lemma}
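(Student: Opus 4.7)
The plan is to mirror the argument for Lemma~\ref{cyclesb2} (which handled the $L^{o}$ analogue), with extra care to account for the bioriented min-edge removals that enter the definition of~$L^{b}$. First I would establish the easy containment $L^{b}(O)\subseteq L^{b}(O/e)$: if $e_{j}$ is bioriented in $O$ (necessarily $j<p$) and $Cy$ is a potential cycle of $O\setminus\{e_{j}^{\sigma_{b}(e_{j})}\}$ with minimum $e_{j}$, then $Cy\setminus\{e\}$ is a disjoint union of potential cycles of $(O/e)\setminus\{e_{j}^{\sigma_{b}(e_{j})}\}$, one of which still has $e_{j}$ as its minimum.

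Assuming for contradiction that $L^{b}(O)\neq L^{b}(O/e)$, I would pick $e_{j}\in L^{b}(O/e)\setminus L^{b}(O)$ and lift a witnessing potential cycle of $(O/e)\setminus\{e_{j}^{\sigma_{b}(e_{j})}\}$ to a cycle of~$G$. Neither the option that this lifted cycle avoids $e$ nor the option that it contains $e$ with orientation $u\to v$ is possible (either would immediately give $e_{j}\in L^{b}(O)$). So the lifted cycle contains $e$ oriented $v\to u$, and removing~$e$ yields a potential path $P_{1}$ of $O\setminus\{e_{j}^{\sigma_{b}(e_{j})}\}$ from~$u$ to~$v$ with all edge indices at least~$j$. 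Next, the hypothesis $L^{b}(O)\neq L^{b}(O-e)$ produces $e_{i}\in L^{b}(O)\setminus L^{b}(O-e)$ together with a potential cycle of $O\setminus\{e_{i}^{\sigma_{b}(e_{i})}\}$ forced to contain~$e$, whose orientation sends $e$ from $u$ to $v$ in this case; removing~$e$ gives a potential path $P_{2}$ of $O\setminus\{e_{i}^{\sigma_{b}(e_{i})}\}$ from~$v$ to~$u$ with all edge indices at least~$i$.

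The heart of the argument is then to set $k:=\min(i,j)$ and verify, by a short case split on whether $i=j$, $i<j$, or $j<i$, that both $P_{1}$ and $P_{2}$ are in fact potential paths of the single fourientation $O\setminus\{e_{k}^{\sigma_{b}(e_{k})}\}$: whichever path has the larger minimum avoids~$e_{k}$ entirely, while the path with the smaller minimum already uses~$e_{k}$ in the surviving direction $-\sigma_{b}(e_{k})$. Applying Lemma~\ref{lem:preliminary} with these two paths and the oriented edge~$e_{k}$ produces a potential cycle $Cy^{\ast}\subseteq P_{2}P_{1}$ of $O\setminus\{e_{k}^{\sigma_{b}(e_{k})}\}$ with $e_{k}\in Cy^{\ast}$. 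Since $P_{2}P_{1}$ avoids~$e$ and every edge of $P_{2}P_{1}$ has index at least~$k$, $Cy^{\ast}$ witnesses $e_{k}\in L^{b}(O-e)$; this contradicts $e_{i}\notin L^{b}(O-e)$ if $k=i$, and contradicts $e_{j}\notin L^{b}(O)\supseteq L^{b}(O-e)$ if $k=j$.

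I expect the main obstacle to be the careful verification that $P_{1}$ and $P_{2}$ descend to potential paths of the \emph{common} fourientation $O\setminus\{e_{k}^{\sigma_{b}(e_{k})}\}$. This turns on reconciling the direction in which~$e_{k}$ appears in the minimum path with the single surviving orientation $-\sigma_{b}(e_{k})$, and on using that every edge of the other path exceeds $e_{k}$; once this bookkeeping is in hand, the rest is a close parallel of Lemma~\ref{cyclesb2}.
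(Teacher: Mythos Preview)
Your proposal is correct and follows essentially the same structure as the paper's proof: both extract a potential path $P_1$ from $u$ to $v$ (from the hypothetical $e_j\in L^b(O/e)\setminus L^b(O)$) and a potential path $P_2$ from $v$ to $u$ (from the hypothesis $e_i\in L^b(O)\setminus L^b(O-e)$), concatenate, and invoke Lemma~\ref{lem:preliminary} to obtain a contradicting potential cycle avoiding~$e$.

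The only noteworthy difference is in how Lemma~\ref{lem:preliminary} is invoked. The paper applies the lemma in the fourientation~$O$, where both $e_i$ and $e_j$ are still bioriented, obtains two potential cycles $Cy_1\ni e_i$ and $Cy_2\ni e_j$, and then appeals to the ``moreover'' clause of Lemma~\ref{lem:preliminary} (about the direction of a bioriented edge in the extracted cycle) to conclude that the relevant $Cy_\ell$ is in fact a potential cycle of $O\setminus\{e_k^{\sigma_b(e_k)}\}$. You instead first check that both $P_1$ and $P_2$ are potential paths of the single fourientation $O\setminus\{e_k^{\sigma_b(e_k)}\}$ (where $e_k$ is now simply oriented), and then apply only the main statement of Lemma~\ref{lem:preliminary} once to the oriented edge~$e_k$. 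Your route is marginally cleaner in that it sidesteps the ``moreover'' clause entirely; the paper's route has the small advantage of treating all three cases $i<j$, $i=j$, $i>j$ uniformly via two cycle extractions rather than a preliminary case split on which path must be transported.
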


\begin{proof}

If $j<p$ then each potential cycle $Cy$ of $O\setminus \{e_{j}^{\sigma_{b}(e_{j})}\}$ has the property that $Cy\setminus \{e\}$ is a union of disjoint potential cycles of $(O\setminus \{e_{j}^{\sigma _{b}(e_{j})}\})/e$, so $L^{b}(O)\subseteq L^{b}(O/e)$. If $L^{b}(O)\neq L^{b}(O/e)$ then $L^{b}(O/e)$ has an element $e_{j}\notin L^{b}(O)$. Then there is a potential cycle $Cy^{\prime }$ of $(O\setminus \{e_{j}^{\sigma_{b}(e_{j})}\})/e$ such that $j$ is the smallest index appearing in $Cy^{\prime }$ and neither $Cy^{\prime }$ nor $Cy^{\prime }\cup \{e\}$ is a potential cycle of $O\setminus \{e_{j}^{\sigma _{b}(e_{j})}\}$. Consequently there is a cycle $Cy^{\prime \prime }$ of $G$, of the form $e=e_{j_{1}}^{\prime },e_{j_{2}}^{\prime },\ldots,e_{j_{t}}^{\prime }$, such that $e_{j_{2}}^{\prime },\ldots,e_{j_{t}}^{\prime }$ is a potential path of $O\setminus \{e_{j}^{\sigma _{b}(e_{j})}\}$ but $Cy^{\prime \prime }$ is not a potential cycle of $O\setminus \{e_{j}^{\sigma _{b}(e_{j})}\}$. Then $P_{1}=e_{j_{2}}^{\prime},\ldots,e_{j_{t}}^{\prime }$ is a potential path of $O\setminus \{e_{j}^{\sigma _{b}(e_{j})}\}$ from $u$ to $v$. Of course $P_{1}$ is also a potential path of~$O$.

If $L^{b}(O)\neq L^{b}(O-e)$ then $G$ has\ an edge $e_{i}\in L^{b}(O)$, such that\ $e$ is included in every potential cycle $e_{i_{1}},\ldots,e_{i_{s}}$ of $O\setminus \{e_{i}^{\sigma _{b}(e_{i})}\}$ with $i=\min\{i_{1},\ldots,i_{s}\}$. Let $e_{i_{1}},\ldots,e_{i_{s}}$ be a potential cycle of $O$ which includes $e$. Permuting indices if necessary we may presume that $e=e_{i_{1}}$, so that $P_{2}=e_{i_{2}},\ldots,e_{i_{s}}$ is a potential path of $O$ from $v$ to $u$. Lemma \ref{lem:preliminary} tells us that the concatenation $P_{1}P_{2}$ contains a potential cycle $Cy_{1}$ of $O$ that contains $e_{i}$, and also a potential cycle $Cy_{2}$ of $O$ that contains $e_{j}$. (It is possible that $Cy_{1}=Cy_{2}$.) If $i<j$ then Lemma \ref{lem:preliminary} tells us that the direction of $e_{i}$ in $P_{2}$ is consistent with the preferred orientation of $Cy_{1}$, i.e., $Cy_{1}$ is a potential cycle of $O\setminus\{e_{i}^{\sigma _{b}(e_{i})}\}$. As $e\notin Cy_{1}$, this contradicts the first sentence of the proof. If $i=j$ we derive the same contradiction after noting that $e_{i}=e_{j}$ has the same direction in $P_{1}$ and $P_{2}$, namely the direction different from $\sigma _{b}(e_{i})$. If $i>j$ then Lemma \ref{lem:preliminary} tells us that the direction of $e_{j}$ in $P_{2}$\ is consistent with the preferred orientation of $Cy_{2}$. It follows that $Cy_{2}$ is a potential cycle of $O\setminus \{e_{j}^{\sigma _{b}(e_{j})}\}$, contradicting the hypothesis that $e_{j}\notin L^{b}(O)$.
\end{proof}

\begin{lemma}
$L^{b}(O)\neq L^{b}(O-e)\Rightarrow I^{o}(^{e}O)=I^{o}(O-e)$.
\end{lemma}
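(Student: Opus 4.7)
The plan is to mimic the strategy used in Lemma~\ref{cuts}, which handled the analogous implication under the hypothesis $L^{o}(O)\neq L^{o}(O-e)$. What we need is the following: the hypothesis $L^{b}(O)\neq L^{b}(O-e)$ should still furnish a potential path of $O-e$ from $v$ to $u$, and then any alleged potential cut of $O-e$ whose status changes under toggling $e$ will be contradicted by the existence of that path.

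First I would unpack the hypothesis. If $L^{b}(O)\neq L^{b}(O-e)$ then there exists $e_{i}\in L^{b}(O)\setminus L^{b}(O-e)$, so there is a potential cycle of $O\setminus\{e_{i}^{\sigma_{b}(e_{i})}\}$ with minimum edge $e_{i}$, and every such cycle must contain $e$ (otherwise we would have a potential cycle of $(O\setminus\{e_{i}^{\sigma_{b}(e_{i})}\})-e$, placing $e_{i}$ in $L^{b}(O-e)$). Deleting $e$ from one such cycle yields a potential path $P$ of $(O\setminus\{e_{i}^{\sigma_{b}(e_{i})}\})-e$ from $v$ to $u$. Since $e_{i}$ is still bioriented in $O$, every edge of $P$ is either bioriented in $O$ or oriented in $O$ in agreement with the direction of $P$; hence $P$ is a potential path of $O-e$.

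Next I would establish the inclusion $I^{o}(^{e}O)\subseteq I^{o}(O-e)$ by the usual observation that if $Cu$ is a potential cut of $^{e}O$ then $Cu\setminus\{e\}$ is a disjoint union of potential cuts of $O-e$ with the same minimum. Suppose for contradiction that the reverse inclusion fails: pick $e_{j}\in I^{o}(O-e)\setminus I^{o}(^{e}O)$ and a potential cut $Cu$ of $O-e$ with $\min(Cu)=e_{j}$, cut out by some $U\subseteq V(G)$. The failure of both $Cu$ and $Cu\cup\{e\}$ to be potential cuts of $^{e}O$, together with the fact that $e$ is directed from $v$ to $u$ in $^{e}O$, forces $u\in U$ and $v\notin U$.

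Finally I would derive the contradiction from $P$: since $P$ goes from $v\notin U$ to $u\in U$, some edge $f$ of $P$ has exactly one endpoint in $U$ and is traversed into $U$ along $P$. By the description of $P$, either $f$ is bioriented in $O$ or $f$ is oriented in $O$ into $U$. But $f\in E(G-e)$ and $f$ has one endpoint in $U$, so $f\in Cu$; the definition of $Cu$ requires $f$ to be unoriented in $O$ or oriented out of $U$, contradicting both possibilities. The main conceptual point, and the place where this argument differs from Lemma~\ref{cuts}, is making sure the potential path we extract is genuinely a potential path of $O-e$ (not merely of $(O\setminus\{e_{i}^{\sigma_{b}(e_{i})}\})-e$); the key is that $e_{i}$ is bioriented in $O$, so relaxing one of its two orientations does not add constraints. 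Given this, everything else is a routine adaptation of the earlier argument.
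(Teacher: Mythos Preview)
Your proof is correct and follows essentially the same approach as the paper's own proof. Both extract a potential path of $O-e$ from $v$ to $u$ out of the hypothesis (the paper phrases this by noting that the relevant potential cycle of $O\setminus\{e_i^{\sigma_b(e_i)}\}$ is automatically a potential cycle of $O$, while you make the equivalent point that $e_i$ being bioriented in $O$ upgrades the path), and then use that path to contradict the existence of a potential cut of $O-e$ with $u\in U$, $v\notin U$.
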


\begin{proof}
 The proof is the same argument that was given in the proof of Lemma~\ref{cuts}; we repeat it for the reader's convenience.

If the lemma fails then $I^{o}(^{e}O)\not=I^{o}(O-e)$. Each potential cut $Cu$ of $^{e}O$ yields a potential cut $Cu\setminus \{e\}$ of $O-e$, so $I^{o}(^{e}O)\subseteq I^{o}(O-e)$. It follows that $G$ has\ an edge $e_{j}\in I^{o}(O-e)$, which is excluded from $I^{o}(^{e}O)$. Then there is a potential cut $Cu$ of $O-e$ whose least-indexed element is $e_{j}$, and no such $Cu$ has either $Cu$ or $Cu\cup \{e\}$ a potential cut of $^{e}O$. 

Let $Cu$ be a potential cut of $O-e$, which includes $e_{j}$. That is, there is a subset $U\subseteq V(G)$ such that every edge of $G-e$ with precisely one end-vertex in $U$ is either unoriented in $O$ or directed away from $U$, and $Cu$ is the set of edges of $G-e$ with precisely one end-vertex in $U$. As neither $Cu$ nor $Cu\cup \{e\}$ is a potential cut of $^{e}O$, and $e$ is oriented from $v$ to $u$ in $^{e}O$, it follows that $u\in U$ and $v\notin U$.

If $L^{b}(O)\neq L^{b}(O-e)$ then $G$ has\ an edge $e_{i}\in L^{b}(O)$, such that\ $e$ is included in every potential cycle $e_{i_{1}},\ldots,e_{i_{s}}$ of $O\setminus \{e_{i}^{\sigma _{b}(e_{i})}\}$ with $i=\min\{i_{1},\ldots,i_{s}\}$. Suppose $e$ is included in a potential cycle $e_{i_{1}},\ldots,e_{i_{s}}$ of $O$. We may presume that $e=e_{i_{1}}$; then $e_{i_{2}},\ldots,e_{i_{s}}$ is a potential path of $O-e$ from $v$ to $u$. As $v\notin U$ and $u\in U$, some edge of this path must be bioriented or directed into $U$. The existence of such an edge contradicts the description of $Cu$ in the preceding paragraph.
\end{proof}

\begin{lemma}
$L^{b}(O)\neq L^{b}(O-e)\Rightarrow I^{u}(^{e}O)=I^{u}(O-e)$.
\end{lemma}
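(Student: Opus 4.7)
The plan is to mimic the argument of Lemma~\ref{cutsb} almost verbatim, replacing its use of the hypothesis $L^{o}(O)\neq L^{o}(O-e)$ by the new hypothesis $L^{b}(O)\neq L^{b}(O-e)$. In both cases the role played by the cycle hypothesis is the same: it supplies a potential cycle of $O$ containing the maximum edge $e$, whose ``opposite'' $O-e$-path will be used to cross a putative cut.

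First I would extract the relevant potential cycle. Since $L^{b}(O)\neq L^{b}(O-e)$, there is an edge $e_{i}\in L^{b}(O)\setminus L^{b}(O-e)$, and hence a potential cycle $e_{i_{1}},\ldots,e_{i_{s}}$ of $O\setminus\{e_{i}^{\sigma_{b}(e_{i})}\}$ with $i=\min\{i_{1},\ldots,i_{s}\}$, any such cycle being forced to contain~$e$. Because $O\setminus\{e_{i}^{\sigma_{b}(e_{i})}\}\subseteq O$, this is also a potential cycle of $O$. Cyclically permuting we may assume $e_{i_{1}}=e$, so that $e_{i_{2}},\ldots,e_{i_{s}}$ is a potential path of $O-e$ from $v$ to $u$.

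Next I would run the cut argument exactly as in Lemma~\ref{cutsb}. Every potential cut of $^{e}O\cup\{e_{j}^{\sigma_{u}(e_{j})}\}$ has its intersection with $E(G-e)$ equal to a disjoint union of potential cuts of $(O-e)\cup\{e_{j}^{\sigma_{u}(e_{j})}\}$, so $I^{u}(^{e}O)\subseteq I^{u}(O-e)$. Assuming the lemma fails, pick $e_{j}\in I^{u}(O-e)\setminus I^{u}(^{e}O)$ and a potential cut $Cu$ of $(O-e)\cup\{e_{j}^{\sigma_{u}(e_{j})}\}$ with least-indexed element $e_{j}$, chosen so that neither $Cu$ nor $Cu\cup\{e\}$ is a potential cut of $^{e}O\cup\{e_{j}^{\sigma_{u}(e_{j})}\}$. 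Let $U\subseteq V(G)$ be the corresponding vertex subset defining $Cu$. Since $e$ is directed from $v$ to $u$ in $^{e}O$ and adjoining $e$ to $Cu$ fails to yield a potential cut of $^{e}O\cup\{e_{j}^{\sigma_{u}(e_{j})}\}$, we conclude $u\in U$ and $v\notin U$.

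Finally I would derive the contradiction by examining how the potential path $e_{i_{2}},\ldots,e_{i_{s}}$ crosses $U$. Some edge $e'$ on this path has exactly one end in $U$, and since the path goes from $v\notin U$ to $u\in U$, the direction of $e'$ along the path points into $U$. If $e'$ is oriented in $O$, this orientation persists in $(O-e)\cup\{e_{j}^{\sigma_{u}(e_{j})}\}$ (since $e_{j}$ is unoriented in $O$, forcing $e'\neq e_{j}$), and $e'\in Cu$ being directed into $U$ contradicts the fact that every edge of $Cu$ is either unoriented or directed away from $U$. If instead $e'$ is bioriented in $O$, then $e'$ is still bioriented in $(O-e)\cup\{e_{j}^{\sigma_{u}(e_{j})}\}$ (again using $e'\neq e_{j}$), but a potential cut cannot contain a bioriented edge by definition, contradicting $e'\in Cu$. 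The main obstacle is simply the bookkeeping to verify $e'\neq e_{j}$ and to correctly invoke the definition of potential cut in the bioriented case; these are immediate from the fact that $e_{j}$ is unoriented in $O$ and that potential cycles of $O\setminus\{e_{i}^{\sigma_{b}(e_{i})}\}$ use only edges of $O$.
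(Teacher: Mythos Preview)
Your proof is correct and follows essentially the same approach as the paper's own proof, which explicitly states that it ``follows the proof of Lemma~\ref{cutsb} closely.'' You are in fact slightly more careful than the paper in spelling out why the crossing edge $e'$ cannot equal $e_j$ (since $e_j$ is unoriented in $O$ while edges on the potential path are oriented or bioriented); the paper leaves this implicit.
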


\begin{proof}
The argument follows the proof of Lemma~\ref{cutsb} closely.

If $j<p$ then each potential cut $Cu$ of $^{e}O\cup \{e_{j}^{\sigma_{u}(e_{j})}\}$ yields a potential cut $Cu\setminus \{e\}$ of $(O-e)\cup \{e_{j}^{\sigma _{u}(e_{j})}\}$, so $I^{u}(^{e}O)\subseteq I^{u}(O-e)$. Consequently, if the lemma fails then $G$ has\ an edge $e_{j}\in I^{u}(O-e) $, which is excluded from $I^{u}(^{e}O)$. Then there is a potential cut $Cu$ of $(O-e)\cup \{e_{j}^{\sigma _{u}(e_{j})}\}$ whose least-indexed element is $e_{j}$, and no such $Cu$ has either $Cu$ or $Cu\cup \{e\}$ a potential cut of $^{e}O\cup \{e_{j}^{\sigma _{u}(e_{j})}\}$.

There is a subset $U\subseteq V(G)$ such that $Cu$ is the set of edges of $G-e$ with precisely one end-vertex in $U$, and every element of $Cu$ is either unoriented in $(O-e)\cup \{e_{j}^{\sigma _{u}(e_{j})}\}$ or directed away from $U$.\ As neither $Cu$ nor $Cu\cup \{e\}$ is a potential cut of $^{e}O\cup \{e_{j}^{\sigma _{u}(e_{j})}\}$, and $e$ is directed from $v$ to $u $ in $^{e}O$, it follows that $u\in U$ and $v\notin U$.

Recall that $e$ is included in a potential cycle $e_{i_{1}},\ldots,e_{i_{s}}$ of $O$. We may presume that $e=e_{i_{1}}$; then $e_{i_{2}},\ldots,e_{i_{s}}$ is a potential path of $O-e$ from $v$ to $u$. As $v\notin U$ and $u\in U$, some edge of this path must have precisely one end-vertex in $U$, and be either bioriented or directed toward this vertex. But no such edge exists, according to the description of $Cu$ in the preceding paragraph.
\end{proof}

\begin{lemma}
$L^{b}(O)\neq L^{b}(O-e)\Rightarrow I^{o}(O)=I^{o}(O/e)$.
\end{lemma}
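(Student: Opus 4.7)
The plan is to mimic the earlier argument that established $L^{o}(O)\neq L^{o}(O-e)\Rightarrow I^{o}(O)=I^{o}(O/e)$; the only change is to produce the potential cycle of $O$ containing $e$ from a different hypothesis.

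First I would note the easy inclusion $I^{o}(O/e)\subseteq I^{o}(O)$: every potential cut $Cu$ of $O/e$ lifts (by placing both endpoints of $e$ on the same side of the cut) to a potential cut of $O$ not containing $e$. Suppose for contradiction that the conclusion fails, and pick $e_j\in I^{o}(O)\setminus I^{o}(O/e)$. Then any potential cut of $O$ whose minimum element is $e_j$ must contain $e$; otherwise it would descend to a potential cut of $O/e$ witnessing $e_j\in I^{o}(O/e)$. So there is a potential cut of $O$ containing both $e_j$ and $e$.

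Next I would unpack the hypothesis $L^{b}(O)\neq L^{b}(O-e)$ to obtain some $e_i\in L^{b}(O)\setminus L^{b}(O-e)$, and hence a potential cycle of $O\setminus\{e_i^{\sigma_b(e_i)}\}$ whose minimum element is $e_i$ and which necessarily contains $e$ (otherwise it would be a potential cycle of $(O-e)\setminus\{e_i^{\sigma_b(e_i)}\}$, contradicting $e_i\notin L^{b}(O-e)$). Since $O\setminus\{e_i^{\sigma_b(e_i)}\}\subseteq O$, every potential cycle of $O\setminus\{e_i^{\sigma_b(e_i)}\}$ is also a potential cycle of $O$. Thus $e$ lies in a potential cycle of $O$.

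The contradiction is then the standard orthogonality used repeatedly earlier in the paper: no edge can belong simultaneously to a potential cut and a potential cycle of the same fourientation $O$. Concretely, if $Cu$ is a potential cut of $O$ determined by $U\subseteq V(G)$ and $Cy$ is a potential cycle of $O$ with $e=\{u,v\}\in Cu\cap Cy$ where $e$ is oriented from $u\in U$ to $v\notin U$, then traversing $Cy$ from $u$ to $v$ and back forces a second edge $e_k\in Cu\cap Cy$ whose direction in $Cy$ points into $U$; the cut condition at $e_k$ requires $e_k$ to be unoriented in $O$ or oriented out of $U$, while the cycle condition requires $e_k$ to be bioriented in $O$ or oriented into $U$, and these are incompatible.

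The main (and only real) obstacle is simply assembling the two ingredients cleanly; the cut-cycle orthogonality has already been invoked in the earlier analogous lemma, and the lifting $I^{o}(O/e)\subseteq I^{o}(O)$ and the argument promoting a potential cycle of $O\setminus\{e_i^{\sigma_b(e_i)}\}$ to a potential cycle of $O$ are routine once the definitions are unwound.
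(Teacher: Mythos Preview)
Your proof is correct and follows essentially the same approach as the paper: both arguments use the inclusion $I^{o}(O/e)\subseteq I^{o}(O)$ to find a potential cut of $O$ through $e$, extract from $L^{b}(O)\neq L^{b}(O-e)$ a potential cycle of $O\setminus\{e_i^{\sigma_b(e_i)}\}$ (hence of $O$) through $e$, and invoke the cut--cycle orthogonality. Your version is slightly more explicit in justifying the direction of the inclusion $L^{b}(O-e)\subseteq L^{b}(O)$ and in spelling out the orthogonality argument, but the logic is identical.
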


\begin{proof}
Again, if $L^{b}(O)\neq L^{b}(O-e)$ then $G$ has an edge $e_{i}\in L^{b}(O) $, such that $e$ is included in every potential cycle $e_{i_{1}},\ldots,e_{i_{s}}$ of $O\setminus \{e_{i}^{\sigma _{b}(e_{i})}\}$ with $i=\min \{i_{1},\ldots,i_{s}\}$. A potential cycle of $O\setminus\{e_{i}^{\sigma _{b}(e_{i})}\}$ is also a potential cycle of $O$, of course.

Each potential cut of $O/e$ is a potential cut of $O$, so $I^{o}(O/e)\subseteq I^{o}(O)$. If $I^{o}(O)\not=I^{o}(O/e)$ then there is an $e_{j}\in I^{o}(O)\setminus I^{o}(O/e)$. Then every potential cut of $O$ with minimum element $e_{j}$ also contains $e$. As $e$ cannot appear in both a potential cycle of $O$ and a potential cut of $O$, we have a contradiction.
\end{proof}

\begin{lemma}
$L^{b}(O)\neq L^{b}(O-e)\Rightarrow I^{u}(O)=I^{u}(O/e)$.
\end{lemma}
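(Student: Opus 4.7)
The plan is to mimic closely the proof of the immediately preceding lemma ($L^{b}(O)\neq L^{b}(O-e)\Rightarrow I^{o}(O)=I^{o}(O/e)$), simply replacing $O$ by $O\cup\{e_{j}^{\sigma_{u}(e_{j})}\}$ throughout on the cut side. First I would extract from the hypothesis $L^{b}(O)\neq L^{b}(O-e)$ the same fact used in all of the lemmas in Subsection~B.2: there exists $e_{i}\in L^{b}(O)$ such that every potential cycle of $O\setminus\{e_{i}^{\sigma_{b}(e_{i})}\}$ whose minimum index is $i$ contains $e$. In particular, at least one such potential cycle exists, and it is also a potential cycle of $O$ (enlarging a fourientation only adds potential cycles).

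Next I would set up the inclusion $I^{u}(O/e)\subseteq I^{u}(O)$. Since every $e_{j}\in I^{u}(O)$ must lie in $O^{u}$ while $e$ is oriented in $O$, we automatically have $j<p$, so the notation $O\cup\{e_{j}^{\sigma_{u}(e_{j})}\}$ makes sense in both $G$ and $G/e$; each potential cut of $(O\cup\{e_{j}^{\sigma_{u}(e_{j})}\})/e$ lifts to a potential cut of $O\cup\{e_{j}^{\sigma_{u}(e_{j})}\}$ that avoids $e$, which gives the desired inclusion. Supposing the lemma fails, pick $e_{j}\in I^{u}(O)\setminus I^{u}(O/e)$. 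Then every potential cut of $O\cup\{e_{j}^{\sigma_{u}(e_{j})}\}$ whose minimum element is $e_{j}$ must contain $e$.

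Finally, I would derive a contradiction from the fact that no edge can simultaneously belong to a potential cut and a potential cycle of the same fourientation (this was used to finish the previous lemma). The potential cycle of $O$ containing $e$ produced in the first paragraph is still a potential cycle after passing to the larger fourientation $O\cup\{e_{j}^{\sigma_{u}(e_{j})}\}$, so $e$ lies in both a potential cut and a potential cycle of $O\cup\{e_{j}^{\sigma_{u}(e_{j})}\}$, which is impossible. Since the structural ingredients (monotonicity of potential cycles under enlargement, correspondence of potential cuts under contraction, and incompatibility of potential cuts and cycles on a common edge) have all already been established or used transparently in the preceding lemmas, no new obstacle is anticipated; the only care needed is the bookkeeping check that $j<p$, which ensures the constructions $O/e$ and $O\cup\{e_{j}^{\sigma_{u}(e_{j})}\}$ interact correctly.
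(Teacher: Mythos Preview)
Your proposal is correct and follows essentially the same argument as the paper: extract from $L^{b}(O)\neq L^{b}(O-e)$ a potential cycle of $O\setminus\{e_{i}^{\sigma_{b}(e_{i})}\}$ through $e$, note $I^{u}(O/e)\subseteq I^{u}(O)$, and for a putative $e_{j}\in I^{u}(O)\setminus I^{u}(O/e)$ observe that $e$ would lie in both a potential cut and a potential cycle of $O\cup\{e_{j}^{\sigma_{u}(e_{j})}\}$, a contradiction. The paper's proof is the same, including the remark that the potential cycle of $O\setminus\{e_{i}^{\sigma_{b}(e_{i})}\}$ remains a potential cycle of $O\cup\{e_{j}^{\sigma_{u}(e_{j})}\}$ regardless of how $i$ and $j$ compare.
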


\begin{proof}
Again, if $L^{b}(O)\neq L^{b}(O-e)$ then $G$ has an edge $e_{i}\in L^{b}(O) $, such that $e$ is included in every potential cycle $e_{i_{1}},\ldots,e_{i_{s}}$ of $O\setminus \{e_{i}^{\sigma _{b}(e_{i})}\}$ with $i=\min \{i_{1},\ldots,i_{s}\}$. A potential cycle of $O\setminus\{e_{i}^{\sigma _{b}(e_{i})}\}$ is also a potential cycle of $O$, of course.

If $j<p$ then each potential cut of $(O\cup \{e_{j}^{\sigma_{u}(e_{j})}\})/e $ is a potential cut of $O\cup \{e_{j}^{\sigma_{u}(e_{j})}\}$, so $I^{u}(O/e)\subseteq I^{u}(O)$. If $I^{u}(O)\not=I^{u}(O/e)$ then there is an $e_{j}\in I^{u}(O)\setminus I^{u}(O/e)$. Then $e$ is contained in every potential cut of $O\cup \{e_{j}^{\sigma _{u}(e_{j})}\}$ with minimum element $e_{j}$. According to the first sentence of the proof, $e$ appears in a potential cycle of $O\setminus \{e_{i}^{\sigma _{b}(e_{i})}\}$. Every potential cycle of $O\setminus \{e_{i}^{\sigma _{b}(e_{i})}\}$ is also a potential cycle of $O\cup \{e_{j}^{\sigma _{u}(e_{j})}\}$, no matter how $i$ and $j$ are related. As no edge of $G$ can appear in both a potential cut of $O\cup\{e_{j}^{\sigma _{u}(e_{j})}\}$ and a potential cycle of $O\cup\{e_{j}^{\sigma _{u}(e_{j})}\}$, we have a contradiction.
\end{proof}

\subsection{Consequences of \texorpdfstring{$L^{o}(O)\neq L^{o}(O/e)$}{}}

\begin{lemma}
\label{cyclesc0}
$L^{o}(O)\neq L^{o}(O/e)\Rightarrow L^{o}(O)=L^{o}(O-e)$ and~$L^{b}(O)=L^{b}(O-e)$.
\end{lemma}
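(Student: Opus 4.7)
The plan is to leverage the hypothesis $L^{o}(O)\neq L^{o}(O/e)$ to manufacture a potential path $P$ of $O$ from $u$ to $v$, and then use Lemma~\ref{lem:preliminary} to convert any potential cycle involving $e$ into one that avoids $e$. First I would observe that $L^{o}(O)\subseteq L^{o}(O/e)$ always, since every potential cycle of $O$ descends to a potential cycle of $O/e$ containing the same minimum edge (if the cycle visits both $u$ and $v$ without using $e$, it decomposes at the contracted vertex, and the minimum edge lies in one of the pieces). The hypothesis therefore produces some $e_{j}\in L^{o}(O/e)\setminus L^{o}(O)$. A witnessing potential cycle of $O/e$ with minimum $j$ must lift to a cycle of $G$ that uses $e$---otherwise it would already be a potential cycle of $O$ with minimum $j$---and $e$ must appear in that lifted cycle in the direction opposite to its orientation $u\to v$ in $O$ (otherwise the lifted cycle itself would witness $e_{j}\in L^{o}(O)$). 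Deleting $e$ therefore leaves a potential path $P$ of $O$ from $u$ to $v$ with minimum index $j$, and $e_{j}$ is necessarily oriented in $O$.

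For $L^{o}(O)=L^{o}(O-e)$, the inclusion $\supseteq$ is immediate. For $\subseteq$, take $e_{i}\in L^{o}(O)$ with witness $Cy$. If $e\notin Cy$ we are done, so assume $Cy=e\cdot Q_{1}$, where $Q_{1}$ is a potential path of $O$ from $v$ to $u$ of minimum index $i$. The concatenation $Q_{1}\cdot P$ is a closed walk lying entirely in $E(G-e)$, and both halves are potential paths in $O$. I would apply Lemma~\ref{lem:preliminary} in $O$ twice along this walk: first to the oriented edge $e_{j}$ to rule out $j<i$ (which would otherwise place $e_{j}\in L^{o}(O)$), and then to the oriented edge $e_{i}$ to extract a potential cycle of $O$ contained in $Q_{1}\cdot P$. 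Since all indices in $Q_{1}\cdot P$ are at least $\min(i,j)=i$, this cycle has minimum exactly $i$, and since it avoids $e$ it is a potential cycle of $O-e$.

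The proof of $L^{b}(O)=L^{b}(O-e)$ follows the same template, but the hard part will be that the witnessing potential cycle for $e_{i}\in L^{b}(O)$ lives in $O\setminus\{e_{i}^{\sigma_{b}(e_{i})}\}$, and $P$ may fail to be a potential path of this modified fourientation if $e_{i}\in P$ with direction $\sigma_{b}(e_{i})$. I would take $e_{i}\in L^{b}(O)$ witnessed by a potential cycle $e\cdot Q_{1}$ of $O\setminus\{e_{i}^{\sigma_{b}(e_{i})}\}$ (the case where the witness avoids $e$ being trivial); since $Q_{1}$ is also a potential path of $O$, the $L^{o}$-style application of Lemma~\ref{lem:preliminary} in $O$ still forces $i\leq j$. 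The pivot is then to rule out $e_{i}\in P$: if $e_{i}$ appeared in $P$, then $i\geq j$, hence $i=j$, but $e_{i}=e_{j}$ would have to be simultaneously bioriented (as $e_{i}\in L^{b}(O)\subseteq O^{b}$) and oriented (as $e_{j}\in L^{o}(O/e)$ forces $e_{j}\in O^{o}$), which is impossible. With $e_{i}\notin P$ established, $P$ survives as a potential path of $O\setminus\{e_{i}^{\sigma_{b}(e_{i})}\}$, so applying Lemma~\ref{lem:preliminary} in that fourientation to the oriented edge $e_{i}$ inside $Q_{1}\cdot P$ yields the required potential cycle of $(O-e)\setminus\{e_{i}^{\sigma_{b}(e_{i})}\}$ with minimum $i$. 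This orientation-versus-biorientation clash between $e_{i}$ and $e_{j}$ is what makes the hypothesis $L^{o}(O)\neq L^{o}(O/e)$ strong enough to control $L^{b}$ as well as $L^{o}$.
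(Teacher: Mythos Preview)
Your argument is correct. You give a direct proof: from $L^{o}(O)\neq L^{o}(O/e)$ you extract a potential path $P$ of $O$ from $u$ to $v$, and then use Lemma~\ref{lem:preliminary} on the concatenations $Q_{1}P$ to manufacture witnesses in $O-e$. The two key steps---ruling out $j<i$ by producing a potential cycle through $e_{j}$ in $O$, and handling the $L^{b}$ case by observing that $e_{i}$ bioriented while $e_{j}$ oriented forces $e_{i}\notin P$---are both sound.

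The paper takes a much shorter route. Lemma~\ref{cyclesc} already established $L^{o}(O)\neq L^{o}(O-e)\Rightarrow L^{o}(O)=L^{o}(O/e)$, and Lemma~\ref{cyclesb2} established $L^{b}(O)\neq L^{b}(O-e)\Rightarrow L^{o}(O)=L^{o}(O/e)$; the present lemma is simply the contrapositive of each. So the paper's proof is a one-line citation, exploiting the order in which the appendix is organized. Your approach is essentially an unpacked, self-contained version of those two contrapositives rolled together---the same underlying mechanics (the potential path $P$ from $u$ to $v$ and Lemma~\ref{lem:preliminary}) drive both Lemmas~\ref{cyclesc} and~\ref{cyclesb2}, so you have in effect re-derived them in contrapositive form. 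Nothing is lost in correctness, but you do more work than necessary given what has already been proved.
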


\begin{proof}
These follow from\ Lemmas~\ref{cyclesc} and~\ref{cyclesb2}, applied to $^{e}O$ rather than~$O$.
\end{proof}

\begin{lemma}
$L^{o}(O)\neq L^{o}(O/e)\Rightarrow L^{o}(^{e}O)=L^{o}(O/e)$.
\end{lemma}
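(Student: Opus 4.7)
The plan is to prove the two containments $L^{o}(^{e}O) \subseteq L^{o}(O/e)$ and $L^{o}(O/e) \subseteq L^{o}(^{e}O)$ separately. The first inclusion actually holds without any hypothesis: exactly as at the start of the proof of Lemma~\ref{cyclesc}, every potential cycle $Cy$ of $^{e}O$ has $Cy \setminus \{e\}$ a union of disjoint potential cycles of $^{e}O/e = O/e$, so $L^{o}(^{e}O) \subseteq L^{o}(O/e)$ always.

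For the reverse inclusion I would pick $e_{j}\in L^{o}(O/e)$ and split on whether $e_{j}\in L^{o}(O)$. If yes, then by the previously established Lemma~\ref{cyclesc0} the hypothesis gives $L^{o}(O)=L^{o}(O-e)$, and since every potential cycle of $O-e$ has the same edges with the same orientations as in $^{e}O$, it is also a potential cycle of $^{e}O$; so $e_{j}\in L^{o}(^{e}O)$. If instead $e_{j}\in L^{o}(O/e)\setminus L^{o}(O)$, I would fix a potential cycle $Cy'$ of $O/e$ whose minimum-indexed edge is $e_{j}$ and analyze how $Cy'$ lifts to $G$.

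The subset $Cy'$ is either a cycle of $G-e$ or a path in $G-e$ from $u$ to $v$ that becomes a cycle in $G/e$ after identifying the endpoints of $e$. In the first situation $Cy'$ is automatically a potential cycle of $O$ with minimum-indexed edge $e_{j}$, forcing $e_{j}\in L^{o}(O)$ and contradicting the case assumption. In the second situation the consistent cyclic direction of $Cy'$ in $O/e$ corresponds to traversing the lifted path either from $v$ to $u$ or from $u$ to $v$; the direction $v \to u$ would make $Cy' \cup \{e\}$ a potential cycle of $O$, again forcing $e_{j}\in L^{o}(O)$ (using crucially that $e_{j}<e=e_{p}$); so only the direction $u \to v$ survives, in which case $Cy' \cup \{e\}$ is a potential cycle of $^{e}O$ with minimum-indexed edge $e_{j}$, yielding $e_{j}\in L^{o}(^{e}O)$ as desired.

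The main obstacle is the orientation bookkeeping in the last step: one must verify unambiguously that the consistent direction of $Cy'$ in $O/e$ lifts to exactly one of the two cyclic orientations of $Cy' \cup \{e\}$ in $G$, and therefore determines precisely one of $O$ or $^{e}O$ with respect to which the lifted cycle can be a potential cycle. Once this is settled the trichotomy becomes a clean case distinction, and the inclusion $L^{o}(O/e)\subseteq L^{o}(^{e}O)$ follows, completing the proof.
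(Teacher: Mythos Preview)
Your proof is correct, but it follows a different route from the paper's. The paper argues by contradiction: assuming both $L^{o}(O)\neq L^{o}(O/e)$ and $L^{o}(^{e}O)\neq L^{o}(O/e)$, it extracts witnesses $e_i\in L^{o}(O/e)\setminus L^{o}(O)$ and $e_j\in L^{o}(O/e)\setminus L^{o}(^{e}O)$, lifts the two witnessing cycles to a potential path of $O$ from $u$ to $v$ and a potential path of $^{e}O$ from $v$ to $u$ (neither containing $e$), and then invokes the path-concatenation Lemma~\ref{lem:preliminary} to produce a potential cycle of $O-e$ whose minimum edge is $e_{\min\{i,j\}}$, contradicting one of the two exclusions.

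You instead prove the inclusion $L^{o}(O/e)\subseteq L^{o}(^{e}O)$ directly, splitting on whether $e_j\in L^{o}(O)$. Your Case~1 leans on the already-established Lemma~\ref{cyclesc0} to pass through $L^{o}(O-e)$, while your Case~2 observes that a single witnessing cycle in $O/e$, once it is known not to extend to a potential cycle of $O$, must (by the orientation of $e$) extend to a potential cycle of $^{e}O$. This second case is pleasantly elementary: it sidesteps Lemma~\ref{lem:preliminary} entirely. The trade-off is that your argument is less symmetric in $O$ and $^{e}O$ and relies on the prior lemma in Case~1 (which itself ultimately rests on Lemma~\ref{lem:preliminary} via Lemma~\ref{cyclesc}), whereas the paper's argument is self-contained modulo the concatenation lemma and treats $O$ and $^{e}O$ on an equal footing.
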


\begin{proof}
Every potential cycle $Cy$ of $O$ has the property that $Cy\setminus \{e\}$ is a union of disjoint potential cycles of $O/e$, so $L^{o}(O)\subseteq L^{o}(O/e)$. If $L^{o}(O)\not=L^{o}(O/e)$ then $L^{o}(O/e)$ has an element $e_{i}$ that is not included in $L^{o}(O)$. Then there is a potential cycle $Cy$ of $O/e$, such that $i$ is the smallest index appearing in $Cy$ and neither $Cy$ nor $Cy\cup \{e\}$ is a potential cycle of $O$. It follows that $Cy$ yields a cycle of $G$, of the form $e=e_{i_{1}},e_{i_{2}},\ldots,e_{i_{s}}$, such that $e_{i_{2}},\ldots,e_{i_{s}}$ is a potential path of~$O$ but $e_{i_{1}},e_{i_{2}},\ldots,e_{i_{s}}$ is not a potential cycle of~$O$.

Similarly, if $L^{o}(^{e}O)\not=L^{o}(O/e)$ then $L^{o}(O/e)$ has an element $e_{j}$ that is not included in $L^{o}(^{e}O)$. Then there is a potential cycle $Cy^{\prime }$ of $O/e$, such that $j$ is the smallest index appearing in $Cy^{\prime }$ and neither $Cy^{\prime }$ nor $Cy^{\prime }\cup \{e\}$ is a potential cycle of $^{e}O$. It follows that $Cy^{\prime }$ yields a cycle of $G$, of the form $e=e_{j_{1}}^{\prime },e_{j_{2}}^{\prime},\ldots,e_{j_{t}}^{\prime }$, such that $e_{j_{2}}^{\prime},\ldots,e_{j_{t}}^{\prime }$ is a potential path of $^{e}O$ but $e_{j_{1}}^{\prime },e_{j_{2}}^{\prime },\ldots,e_{j_{t}}^{\prime }$ is not a potential cycle of $^{e}O$. Then $e_{j_{2}}^{\prime },\ldots,e_{j_{t}}^{\prime }$ is a potential path of $^{e}O$ from $v$ to $u$, and $e_{i_{2}},\ldots,e_{i_{s}}$ is a potential path of $O$ from $u$ to $v$.

As neither of these potential paths contains $e$, Lemma~\ref{lem:preliminary} applies to them in both $O$ and $^{e}O$. We conclude that $G$ has a cycle that is a potential cycle of both $O$ and $^{e}O$, which contains $e_{\min\{i,j\}}$. This contradicts at least one of the hypotheses $e_{i}\notin L^{o}(O)$, $e_{j}\notin L^{o}(^{e}O)$.
\end{proof}

\begin{lemma}
\label{cyclesc2}
$L^{o}(O)\neq L^{o}(O/e)\Rightarrow L^{b}(^{e}O)=L^{b}(O/e)$.
\end{lemma}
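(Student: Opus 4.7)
My plan is to follow the same template as Lemma~\ref{cyclescb}, which handles the analogous $L^b$-equality under the $L^o(O)\neq L^o(O-e)$ hypothesis; the two arguments share a common skeleton. The easy inclusion $L^b({}^eO)\subseteq L^b(O/e)$ comes from contracting potential cycles: any potential cycle of ${}^eO\setminus\{e_j^{\sigma_b(e_j)}\}$ descends, upon removing $e$ if it appears, to a potential cycle of $(O/e)\setminus\{e_j^{\sigma_b(e_j)}\}$ with the same minimum index, and no conflict arises at $e$ since $e_j$ is bioriented while $e$ is oriented, forcing $e_j\neq e$. The reverse inclusion is the real work, and I would attack it by contradiction.

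Suppose $e_j\in L^b(O/e)\setminus L^b({}^eO)$. The hypothesis $L^o(O)\neq L^o(O/e)$ furnishes $e_i\in L^o(O/e)\setminus L^o(O)$ and a potential cycle of $O/e$ with minimum edge $e_i$. Following the pattern of the earlier lemmas in this subsection, this cycle must lift to a cycle of $G$ using $e$, and the lifted orientation of $e$ must be $v\to u$ (matching ${}^eO$, not $O$); otherwise the lift would be a potential cycle of $O$ with minimum $e_i$, contradicting $e_i\notin L^o(O)$. This yields a potential path $P_2$ of $O$ (equivalently of ${}^eO$) from $u$ to $v$ with minimum index $i$. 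The witness for $e_j\in L^b(O/e)$ lifts similarly through $e$, but here the orientation of $e$ in the lift must instead match $O$ ($u\to v$): the ${}^eO$-alternative would produce a potential cycle of $({}^eO)\setminus\{e_j^{\sigma_b(e_j)}\}$ with minimum $e_j$, directly contradicting $e_j\notin L^b({}^eO)$. Call the resulting path from $v$ to $u$ by $P_1$; it is a potential path of $O\setminus\{e_j^{\sigma_b(e_j)}\}$ with minimum index $j$. I then apply Lemma~\ref{lem:preliminary} to the closed walk $P_1 P_2$. If $i\leq j$, applying the lemma (with fourientation $O$) to the oriented edge $e_i$ gives a potential cycle of $O$ contained in $P_1 P_2$ and containing $e_i$; since every index in $P_1 P_2$ is at least $\min(i,j)=i$, its minimum index is forced to equal $i$, contradicting $e_i\notin L^o(O)$. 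If $j<i$, then $e_j\notin P_2$ because $i$ is the minimum of $P_2$, so $P_2$ is also a potential path of $O\setminus\{e_j^{\sigma_b(e_j)}\}$, a fourientation in which $e_j$ is oriented. Applying Lemma~\ref{lem:preliminary} with this smaller fourientation produces a potential cycle $Cy'$ of $O\setminus\{e_j^{\sigma_b(e_j)}\}$ containing $e_j$ and contained in $P_1 P_2$, with minimum index forced to be $j$; since $Cy'$ avoids $e$, it is equally a potential cycle of $({}^eO)\setminus\{e_j^{\sigma_b(e_j)}\}$, so $e_j\in L^b({}^eO)$, the desired contradiction.

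The main obstacle I anticipate is disciplined bookkeeping of the direction assigned to $e$ in each lifted cycle: the whole argument rests on the asymmetry that the $L^o$-side cycle for $e_i$ is forced into ${}^eO$ while the $L^b$-side cycle for $e_j$ is forced into $O$, and it is precisely this asymmetry that makes $P_1$ and $P_2$ traverse $e$'s endpoints in opposite directions and thus form a legitimate closed walk amenable to Lemma~\ref{lem:preliminary}. A secondary subtlety is the passage from $O$ to $O\setminus\{e_j^{\sigma_b(e_j)}\}$ in the $j<i$ subcase, which is needed so that the bioriented edge $e_j$ can be treated as an oriented edge and thereby trigger the lemma.
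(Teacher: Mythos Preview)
Your proof is correct and follows essentially the same approach as the paper: extract a potential path from each hypothesis (forced to cross $e$ in opposite directions), concatenate them into a closed walk avoiding $e$, and apply Lemma~\ref{lem:preliminary} to reach a contradiction in each of the two index-ordering cases. If anything, your treatment of the $j<i$ case is slightly cleaner than the paper's, since you explicitly pass to the fourientation $O\setminus\{e_j^{\sigma_b(e_j)}\}$ (where $e_j$ is oriented) before invoking Lemma~\ref{lem:preliminary}, whereas the paper applies the lemma somewhat informally to the bioriented edge $e_j$ in $O$.
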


\begin{proof}
Every potential cycle $Cy$ of $O$ has the property that $Cy\setminus \{e\}$ is a union of disjoint potential cycles of $O/e$, so $L^{o}(O)\subseteq L^{o}(O/e)$. If $L^{o}(O)\not=L^{o}(O/e)$ then $L^{o}(O/e)$ has an element $e_{i}$ that is not included in $L^{o}(O)$. Then there is a potential cycle $Cy$ of $O/e$, such that $i$ is the smallest index appearing in $Cy$ and neither $Cy$ nor $Cy\cup \{e\}$ is a potential cycle of $O$. It follows that $Cy$ yields a cycle of $G$, of the form $e=e_{i_{1}},e_{i_{2}},\ldots,e_{i_{s}}$, such that $e_{i_{2}},\ldots,e_{i_{s}}$ is a potential path of $O$ but $e_{i_{1}},e_{i_{2}},\ldots,e_{i_{s}}$ is not a potential cycle of~$O$.

Similarly, if $j<p$ then each potential cycle $Cy$ of $^{e}O\setminus \{e_{j}^{\sigma _{b}(e_{j})}\}$ has the property that $Cy\setminus \{e\}$ is a union of disjoint potential cycles of $(^{e}O\setminus \{e_{j}^{\sigma_{b}(e_{j})}\})/e$, so $L^{b}(^{e}O)\subseteq L^{b}(O/e)$. If $L^{b}(^{e}O)\neq L^{b}(O/e)$ then $L^{b}(O/e)$ has an element $e_{j}\notin L^{b}(^{e}O)$. Then there is a potential cycle $Cy^{\prime }$ of  $(^{e}O\setminus \{e_{j}^{\sigma _{b}(e_{j})}\})/e$ such that $j$ is the smallest index appearing in $Cy^{\prime }$ and neither $Cy^{\prime }$ nor $Cy^{\prime }\cup \{e\}$ is a potential cycle of $^{e}O\setminus \{e_{j}^{\sigma _{b}(e_{j})}\}$. Consequently there is a cycle of $G$, of the form $e=e_{j_{1}}^{\prime },e_{j_{2}}^{\prime },\ldots,e_{j_{t}}^{\prime }$, such that $e_{j_{2}}^{\prime },\ldots,e_{j_{t}}^{\prime }$ is a potential path of $^{e}O\setminus \{e_{j}^{\sigma _{b}(e_{j})}\}$ but $e_{j_{1}}^{\prime },\ldots,e_{j_{t}}^{\prime }$ is not a potential cycle of $^{e}O\setminus \{e_{j}^{\sigma _{b}(e_{j})}\}$. Then $P_{1}=e_{i_{2}},\ldots,e_{i_{s}}$ is a potential path of $O$ from $u$ to $v$ and $P_{2}=e_{j_{2}}^{\prime},\ldots,e_{j_{t}}^{\prime }$ is a potential path of $^{e}O\setminus \{e_{j}^{\sigma _{b}(e_{j})}\}$ from $v$ to $u$. Of course $P_{2}$ is also a potential path of $O$.

Lemma~\ref{lem:preliminary} tells us that the concatenation $P_{1}P_{2}$ contains a potential cycle $Cy_{1}$ of $O$ that contains $e_{i}$, and also a potential cycle $Cy_{2}$ of $O$ that contains $e_{j}$. (It is possible that $Cy_{1}=Cy_{2}$.) If $i<j$ then Lemma \ref{lem:preliminary} tells us that the direction of $e_{i}$ in $P_{1}$ is consistent with the preferred orientation of $Cy_{1}$, i.e., $Cy_{1}$ is a potential cycle of $O$. This contradicts the hypothesis that $e_{i}\notin L^{o}(O)$. As $e_{i}$ is oriented in $O$ and $e_{j}$ is not, $i\neq j$. But if $i>j$ then Lemma \ref{lem:preliminary} tells us that the direction of $e_{j}$ in $P_{2}$\ is consistent with the preferred orientation of $Cy_{2}$. It follows that $Cy_{2}$ is a potential cycle of $O\setminus \{e_{j}^{\sigma _{b}(e_{j})}\}$, contradicting the hypothesis that $e_{j}\notin L^{b}(^{e}O)$.
\end{proof}

\begin{lemma}
\label{cyclesc3}
$L^{o}(O)\neq L^{o}(O/e)\Rightarrow I^{o}(O)=I^{o}(O-e)$.
\end{lemma}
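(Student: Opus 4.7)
The plan is to combine the structural information extracted from the hypothesis (in the style of Lemma~\ref{cyclesc}) with a direct case analysis of potential cuts. As in the proof of Lemma~\ref{cyclesc}, the hypothesis $L^o(O)\neq L^o(O/e)$ yields a cycle $e=e_{i_1},e_{i_2},\ldots,e_{i_s}$ of $G$ such that $P:=e_{i_2},\ldots,e_{i_s}$ is a potential path of $O$ in $G-e$, while $e_{i_1},\ldots,e_{i_s}$ is not a potential cycle of $O$. Since $e$ is oriented from $u$ to $v$ in $O$, the only way $P\cup\{e\}$ can fail to close up as a directed cycle is if $P$ is directed from $u$ to $v$; so I fix $P$ as a potential path of $O$ from $u$ to $v$ avoiding $e$.

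Because $e=e_{\mathrm{max}}$ is not an isthmus, $e$ cannot be the minimum of any potential cut, so $e\notin I^o(O)$ and $e\notin I^o(O-e)$. Both sets are therefore subsets of the common pool of edges $e_j<e$ oriented in $O$, and I show equality by two inclusions.

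For the inclusion $I^o(O)\subseteq I^o(O-e)$, I take $e_j\in I^o(O)$ with witnessing potential cut $Cu$ of $O$ coming from a bipartition $(A,B)$. If $e\notin Cu$, then $Cu$ is already a potential cut of $O-e$ with the same partition and min edge $e_j$. If $e\in Cu$, then by the minimality of $Cu$ the subgraphs $G|_A$ and $G|_B$ are connected; combined with the fact that $e$ is not an isthmus, this implies $Cu\setminus\{e\}$ is a cut of $G-e$ with partition $(A,B)$, and it is plainly a potential cut of $O-e$ with min $e_j$.

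The harder direction is $I^o(O-e)\subseteq I^o(O)$, and this is where the potential path $P$ does the work. Given $e_j\in I^o(O-e)$ coming from a potential cut $Cu'$ of $O-e$ with partition $(A,B)$ and direction $A\to B$, I case-split on where $u,v$ sit. If $u,v$ lie on the same side, $Cu'$ is already a potential cut of $O$. If $u\in A$ and $v\in B$, then $e$ (oriented $u\to v$) agrees with the cut, so $Cu'\cup\{e\}$ is a potential cut of $O$, and its min is still $e_j<e$. The crux is to rule out $u\in B$, $v\in A$: here $P$ runs from $u\in B$ to $v\in A$ and must cross the partition a net odd number of times, but any crossing edge lies in $Cu'$ (hence is unoriented or oriented $A\to B$ in $O$) and must also satisfy the potential-path condition (bioriented or oriented in the path's direction). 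Intersecting the constraints shows every crossing of $P$ must go $A\to B$, which is incompatible with a net $B\to A$ crossing, giving the contradiction.

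The main obstacle is precisely this last step: correctly orchestrating the potential-cut versus potential-path constraints on a crossing edge to eliminate the ``wrong orientation'' case. Once that is done, the two inclusions combine to give $I^o(O)=I^o(O-e)$.
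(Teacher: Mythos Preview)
Your proof is correct and follows essentially the same strategy as the paper's. Both extract from the hypothesis a potential path of $O$ from $u$ to $v$ avoiding $e$, observe the easy inclusion $I^o(O)\subseteq I^o(O-e)$, and then use a path--cut crossing argument to handle the reverse inclusion. The only cosmetic difference is packaging: the paper argues by contradiction (assuming some $e_j\in I^o(O-e)\setminus I^o(O)$ forces $v\in U$, $u\notin U$, then contradicts the path), whereas you do an explicit case split on the position of $u,v$ relative to the cut and rule out the bad case the same way.
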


\begin{proof}
If $Cy$ is a potential cycle of $O$ then $Cy\setminus \{e\}$ is a union of disjoint potential cycles of $O/e$, so $L^{o}(O)\subseteq L^{o}(O/e)$. Consequently, if $L^{o}(O)\neq L^{o}(O/e)$ then there is an $e_{i}\in L^{o}(O/e)$ that is excluded from $L^{o}(O)$. That is, there is a cycle $e_{i_{1}},\ldots,e_{i_{s}}$ of $G$ which is not a potential cycle of $O$, such that $e_{i_{1}}=e$, $i=\min \{i_{1},\ldots,i_{s}\}$ and $e_{i_{2}},\ldots,e_{i_{s}} $ is a potential path of $O$.

Suppose $I^{o}(O)\not=I^{o}(O-e)$. Each potential cut $Cu$ of $O$ has the property that $Cu\setminus \{e\}$ is a union of disjoint potential cuts of $O-e$, so $I^{o}(O)\subseteq I^{o}(O-e)$. It follows that $G$ has\ an edge $e_{j}\in I^{o}(O-e)$, which is excluded from $I^{o}(O)$. Then there is a potential cut $Cu$ of $O-e$ whose least-indexed element is $e_{j}$, and no such $Cu$ has either $Cu$ or $Cu\cup \{e\}$ a potential cut of $O$.

Let $Cu$ be a potential cut of $O-e$, as described. Then there is a subset $U\subseteq V(G)$ such that $Cu$ includes every edge of $G-e$ with precisely one vertex in $U$, and every element of $Cu$ is unoriented in $O$ or directed away from $U$. As neither $Cu$ nor $Cu\cup \{e\}$ is a potential cut of $O$, $v\in U$ and $u\notin U$. The first paragraph of the proof implies that $e_{i_{2}},\ldots,e_{i_{s}}$ is a potential path of $O$ from $u$ to $v$, which does not include $e$. The first edge of this potential path with a vertex in $U$ is bioriented or directed into $U$, however, so it contradicts the description of $Cu$.
\end{proof}

\begin{lemma}
\label{cyclesc4}
$L^{o}(O)\neq L^{o}(O/e)\Rightarrow I^{u}(O)=I^{u}(O-e)$.
\end{lemma}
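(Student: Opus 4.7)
The plan is to adapt the argument of Lemma~\ref{cyclesc3} (the $I^{o}$ analogue) to the unoriented setting. As in that proof, the hypothesis $L^{o}(O)\neq L^{o}(O/e)$ supplies an edge $e_i\in L^{o}(O/e)\setminus L^{o}(O)$ and consequently a cycle $e_{i_1},e_{i_2},\ldots,e_{i_s}$ of $G$ with $e_{i_1}=e$ and $i=\min\{i_1,\ldots,i_s\}$, such that $P:=e_{i_2},\ldots,e_{i_s}$ is a potential path of $O$ while the whole cycle is not a potential cycle of $O$. The non-potential-cycle condition forces $P$ to run from $u$ to $v$ (not from $v$ to $u$), and $P$ avoids $e$.

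Next I would establish $I^{u}(O)\subseteq I^{u}(O-e)$ routinely: since $e$ is oriented in $O$ and $e_j\in O^u$ is unoriented, $e_j\neq e$; so any potential cut $Cu$ of $O\cup\{e_{j}^{\sigma_u(e_j)}\}$ with minimum element $e_j$ yields, via $Cu\setminus\{e\}$, a potential cut of $(O-e)\cup\{e_{j}^{\sigma_u(e_j)}\}$ still containing $e_j$ as its minimum. For the reverse inclusion I would argue by contradiction: suppose some $e_j\in I^{u}(O-e)\setminus I^{u}(O)$ with witnessing potential cut $Cu$ corresponding to a vertex partition $(U,\bar U)$, so that every edge of $Cu$ is either unoriented in $(O-e)\cup\{e_{j}^{\sigma_u(e_j)}\}$ or directed away from $U$. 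The hypothesis that neither $Cu$ nor $Cu\cup\{e\}$ is a potential cut of $O\cup\{e_{j}^{\sigma_u(e_j)}\}$ rules out both (i) $u,v$ on the same side of the partition (in which case $Cu$ would still be a potential cut of $O\cup\{e_j^{\sigma_u(e_j)}\}$) and (ii) $u\in U$, $v\notin U$ (in which case $e$ would be directed away from $U$ and $Cu\cup\{e\}$ would be a potential cut). Hence $v\in U$ and $u\notin U$.

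To finish, I would invoke the potential path $P$: it starts at $u\notin U$ and ends at $v\in U$, so some edge $f$ of $P$ crosses from $\bar U$ to $U$, and $f\in E(G-e)$ since $e\notin P$, so $f\in Cu$. As $P$ is a potential path of $O$, $f$ is bioriented in $O$ or oriented into $U$ in $O$. Since $e_j$ is unoriented in $O$ we have $e_j\notin P$, so $f\neq e_j$ and $f$ has the same status in $O$ as in $(O-e)\cup\{e_j^{\sigma_u(e_j)}\}$. Thus $f\in Cu$ is neither unoriented nor directed away from $U$ in $(O-e)\cup\{e_j^{\sigma_u(e_j)}\}$, contradicting the potential-cut property. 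The main thing to check carefully is the partition case analysis used to extract $v\in U$, $u\notin U$, but this mirrors Lemma~\ref{cyclesc3} precisely and should present no serious obstacle.
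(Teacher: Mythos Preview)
Your proof is correct and follows essentially the same approach as the paper's. The only cosmetic difference is that the paper handles the final contradiction by noting that $P$ is automatically a potential path of $O\cup\{e_j^{\sigma_u(e_j)}\}$ (since adding an orientation to an unoriented edge cannot destroy a potential path), whereas you argue directly that the crossing edge $f$ cannot equal $e_j$; these are equivalent ways of dispatching the same technical point.
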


\begin{proof}
If $Cy$ is a potential cycle of $O$ then $Cy\setminus \{e\}$ is a union of disjoint potential cycles of $O/e$, so $L^{o}(O)\subseteq L^{o}(O/e)$. Consequently, if $L^{o}(O)\neq L^{o}(O/e)$ then there is an $e_{i}\in L^{o}(O/e)$ that is excluded from $L^{o}(O)$. That is, there is a cycle $e_{i_{1}},\ldots,e_{i_{s}}$ of $G$ which is not a potential cycle of $O$, such that $e_{i_{1}}=e$, $i=\min \{i_{1},\ldots,i_{s}\}$ and $e_{i_{2}},\ldots,e_{i_{s}} $ is a potential path of $O$.

If $j<p$ then each potential cut $Cu$ of $O\cup \{e_{j}^{\sigma_{u}(e_{j})}\}$ has the property that $Cu\setminus \{e\}$ is a union of disjoint potential cuts of $O-e$, so $I^{u}(O)\subseteq I^{u}(O-e)$. If $I^{u}(O)\neq I^{u}(O-e)$ it follows that $G$ has\ an edge $e_{j}\in I^{u}(O-e)$, which is excluded from $I^{u}(O)$. Then there is a potential cut $Cu$ of $(O\cup \{e_{j}^{\sigma _{u}(e_{j})}\})-e$ whose least-indexed element is $e_{j}$, and no such $Cu$ has either $Cu$ or $Cu\cup \{e\}$ a potential cut of $O\cup \{e_{j}^{\sigma _{u}(e_{j})}\}$.

Let $Cu$ be a potential cut of $(O\cup \{e_{j}^{\sigma _{u}(e_{j})}\})-e$, as described. Then there is a subset $U\subseteq V(G)$ such that $Cu$ includes every edge of $G-e$ with precisely one vertex in $U$, and every element of $Cu$ is unoriented in $O\cup \{e_{j}^{\sigma _{u}(e_{j})}\}$ or directed away from $U$. As neither $Cu$ nor $Cu\cup \{e\}$ is a potential cut of $O\cup \{e_{j}^{\sigma _{u}(e_{j})}\}$, $v\in U$ and $u\notin U$. The first paragraph of the proof implies that $e_{i_{2}},\ldots,e_{i_{s}}$ is a potential path of $O\cup \{e_{j}^{\sigma _{u}(e_{j})}\}$ from $u$ to $v$, which does not include $e$. The first edge of this potential path with a vertex in $U$ is bioriented or directed into $U$, however, so it contradicts the description of $Cu$.
\end{proof}

\begin{lemma}
\label{cyclesc5}
$L^{o}(O)\neq L^{o}(O/e)\Rightarrow I^{o}(^{e}O)=I^{o}(O/e)$.
\end{lemma}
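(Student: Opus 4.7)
The plan is to follow the template of Lemma~\ref{cuts}, with $O-e$ replaced by $O/e$, using the potential path coming from the hypothesis $L^{o}(O)\neq L^{o}(O/e)$ in the analogous role.

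First I would note the easy inclusion $I^{o}(O/e)\subseteq I^{o}(^{e}O)$: a potential cut of $O/e$ is given by a subset $U\subseteq V(G)$ with $u$ and $v$ on the same side, so $e$ does not appear in the cut, and since $^{e}O$ and $O/e$ agree on all edges other than $e$ this same cut (with the same orientation and the same minimum) is a potential cut of $^{e}O$. It therefore suffices to derive a contradiction from the assumption that some $e_{j}$ lies in $I^{o}(^{e}O)\setminus I^{o}(O/e)$. For such an $e_{j}$ (necessarily with $j<p$, as $e$ is not an isthmus), every potential cut $Cu$ of $^{e}O$ with $\min Cu=e_{j}$ must contain $e$; otherwise the associated $U$ would place $u$ and $v$ on the same side and $Cu$ would descend to a potential cut of $O/e$ contradicting $e_{j}\notin I^{o}(O/e)$. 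Fix such a cut $Cu$ with defining subset $U\subseteq V(G)$. Since $e$ points $v\to u$ in $^{e}O$ and lies in the cut with a consistent orientation, either (a) the cut is oriented out of $U$ with $v\in U$, $u\notin U$, or (b) the cut is oriented into $U$ with $u\in U$, $v\notin U$; in either case every edge of $Cu$ (all of which lie in $E(G-e)$, so share their status in $O$ and $^{e}O$) is either unoriented in $O$ or directed in the cut's orientation.

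Next I would extract the potential path supplied by the hypothesis, exactly as in the opening of Lemma~\ref{cyclesc3}: $L^{o}(O)\neq L^{o}(O/e)$ yields $e_{i}\in L^{o}(O/e)\setminus L^{o}(O)$, and hence a cycle $e=e_{i_{1}},e_{i_{2}},\ldots,e_{i_{s}}$ of $G$ whose smallest index is $i$ and such that $P:=e_{i_{2}},\ldots,e_{i_{s}}$ is a potential path of $O$. The critical point, which distinguishes this setting from that of Lemma~\ref{cuts}, is that $P$ must go from $u$ to $v$; if $P$ went from $v$ to $u$, then appending $e$ (directed $u\to v$ in $O$) would produce a potential cycle of $O$, contradicting the stipulation that the lifted cycle is not a potential cycle of $O$.

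The contradiction then drops out by tracing $P$. In case (a), $P$ begins outside $U$ and ends inside $U$, so there is a first edge $f\in P$ whose tail lies outside $U$ and whose head lies in $U$; in case (b), $P$ begins in $U$ and ends outside $U$, so there is a first edge $f\in P$ whose tail lies in $U$ and whose head lies outside. Either way $f\neq e$, and $f$ has precisely one endpoint in $U$, so $f\in Cu$. But $f$ lies on the potential path $P$ of $O$, so $f$ is either bioriented in $O$ or directed from its tail to its head; the former is incompatible with membership in any potential cut, while the latter is directed opposite to the orientation of $Cu$ (into $U$ rather than out in case (a), out of $U$ rather than in in case (b)). This contradicts $f\in Cu$, completing the argument. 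The main obstacle is pinning down the direction of $P$: the hypothesis that neither $Cy$ nor $Cy\cup\{e\}$ is a potential cycle of $O$ is precisely what rules out the unwanted direction $v\to u$, and without this observation the argument would produce only the inconsequential conclusion that the crossing edge agrees with the cut orientation.
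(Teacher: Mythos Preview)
Your proof is correct and follows essentially the same approach as the paper's: establish $I^{o}(O/e)\subseteq I^{o}(^{e}O)$, take a hypothetical $e_{j}\in I^{o}(^{e}O)\setminus I^{o}(O/e)$ with a witnessing potential cut $Cu$ necessarily containing $e$, extract from the hypothesis a potential path $P$ of $O$ from $u$ to $v$ avoiding $e$, and derive a contradiction by locating a crossing edge of $P$ that violates the cut condition. Your case split (a)/(b) is harmless but unnecessary, since the paper's convention is simply to choose $U$ so that the potential cut is directed away from $U$; with that choice only your case~(a) arises, and your case~(b) is the same situation after replacing $U$ by its complement.
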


\begin{proof}
If $Cy$ is a potential cycle of $O$ then $Cy\setminus \{e\}$ is a union of disjoint potential cycles of $O/e$, so $L^{o}(O)\subseteq L^{o}(O/e)$. Consequently, if $L^{o}(O)\neq L^{o}(O/e)$ then there is an $e_{i}\in L^{o}(O/e)$ that is excluded from $L^{o}(O)$. That is, there is a cycle $e_{i_{1}},\ldots,e_{i_{s}}$ of $G$ which is not a potential cycle of $O$, such that $e_{i_{1}}=e$, $i=\min \{i_{1},\ldots,i_{s}\}$ and $e_{i_{2}},\ldots,e_{i_{s}} $ is a potential path of $O$.

Every potential cut of $O/e$ is also a potential cut of $^{e}O$, so $I^{o}(O/e)\subseteq I^{o}(^{e}O)$. If $I^{o}(O/e)\neq I^{o}(^{e}O)$ it follows that $G$ has\ an edge $e_{j}\in I^{o}(^{e}O)\setminus I^{o}(O/e)$. Then there is a potential cut $Cu$ of $^{e}O$ whose least-indexed element is $e_{j}$, and every such $Cu$ includes~$e$.

Let $Cu$ be a potential cut of $^{e}O$, as described. Then there is a subset $U\subseteq V(G)$ such that $Cu$ includes every edge of $G$ with precisely one vertex in $U$, and every element of $Cu$ is unoriented in $^{e}O$ or directed away from $U$. As $e\in Cu$, $v\in U$ and $u\notin U$. The first paragraph of the proof implies that $e_{i_{2}},\ldots,e_{i_{s}}$ is a potential path of $^{e}O$ from $u$ to $v$, which does not include $e$. The first edge of this potential path with a vertex in $U$ is bioriented or directed into $U $, however, so it contradicts the description of $Cu$.
\end{proof}

\begin{lemma}
\label{cyclesc6}
$L^{o}(O)\neq L^{o}(O/e)\Rightarrow I^{u}(^{e}O)=I^{u}(O/e)$.
\end{lemma}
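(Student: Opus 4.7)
The plan is to follow the template established by Lemmas~\ref{cyclesc5} and~\ref{cyclesc4}, adapting the argument to the pairing of $^{e}O$ with $O/e$ and to the activity $I^{u}$. First, the hypothesis $L^{o}(O)\neq L^{o}(O/e)$ yields, exactly as in the first paragraph of Lemma~\ref{cyclesc5}, a cycle $e=e_{i_{1}}, e_{i_{2}},\ldots,e_{i_{s}}$ of $G$ with $i=\min\{i_{1},\ldots,i_{s}\}$ that fails to be a potential cycle of $O$, while $e_{i_{2}},\ldots,e_{i_{s}}$ is a potential path of $O$ from $u$ to $v$. Since this path avoids $e$, it is equally a potential path of $^{e}O$.

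Next, I would establish the containment $I^{u}(O/e) \subseteq I^{u}(^{e}O)$ (for $j<p$): for any $e_{j}$ that is unoriented in $O$ (equivalently in $^{e}O$, since $e$ is oriented), any potential cut of $(O \cup \{e_{j}^{\sigma_{u}(e_{j})}\})/e$ lifts to a potential cut of $^{e}O \cup \{e_{j}^{\sigma_{u}(e_{j})}\}$ in $G$ whose defining vertex partition has $u$ and $v$ on the same side, so $e$ is not in the cut and flipping $e$'s orientation is irrelevant. If the two sets of $I^{u}$ were unequal there would exist $e_{j} \in I^{u}(^{e}O) \setminus I^{u}(O/e)$, so some potential cut $Cu$ of $^{e}O \cup \{e_{j}^{\sigma_{u}(e_{j})}\}$ has minimum element $e_{j}$, and every such cut must contain $e$.

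Finally, I would derive a contradiction by fixing one such $Cu$ with defining vertex set $U$. Since $e \in Cu$ and $e$ is directed from $v$ to $u$ in $^{e}O$, it follows that $v \in U$ and $u \notin U$. The potential path $e_{i_{2}},\ldots,e_{i_{s}}$ of $^{e}O$ from $u$ to $v$ is also a potential path of $^{e}O \cup \{e_{j}^{\sigma_{u}(e_{j})}\}$, since $e_{j}$ is unoriented in $O$ and potential paths forbid unoriented edges, so $e_{j}$ cannot appear on the path in the first place. Thus the first edge of this path with a vertex in $U$ must be bioriented or directed into $U$, contradicting the defining property of the potential cut $Cu$ (whose edges are either unoriented or directed away from $U$).

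The only genuinely new ingredient beyond Lemma~\ref{cyclesc5} is the observation that the ambient fourientation for the potential cut is $^{e}O \cup \{e_{j}^{\sigma_{u}(e_{j})}\}$ rather than $^{e}O$, and so I must check that the potential path of $^{e}O$ furnished in the first paragraph is still a potential path after adjoining $e_{j}^{\sigma_{u}(e_{j})}$. That verification is the only possible obstacle, and it is handled by the disallowance of unoriented edges from potential paths; with that in hand, the remaining argument is essentially a cosmetic rewrite of the proof of Lemma~\ref{cyclesc5}.
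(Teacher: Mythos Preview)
Your proposal is correct and follows essentially the same argument as the paper: extract the potential path $e_{i_2},\ldots,e_{i_s}$ of $O$ (hence of $^{e}O$) from $u$ to $v$, establish $I^{u}(O/e)\subseteq I^{u}(^{e}O)$, take $e_{j}\in I^{u}(^{e}O)\setminus I^{u}(O/e)$ with a witnessing cut $Cu\ni e$, and derive a contradiction from the path crossing into $U$. Your explicit justification that $e_{j}$ cannot lie on the path (so the path remains a potential path in $^{e}O\cup\{e_{j}^{\sigma_{u}(e_{j})}\}$) is a nice touch; the paper asserts this step without comment.
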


\begin{proof}
If $Cy$ is a potential cycle of $O$ then $Cy\setminus \{e\}$ is a union of disjoint potential cycles of $O/e$, so $L^{o}(O)\subseteq L^{o}(O/e)$. Consequently, if $L^{o}(O)\neq L^{o}(O/e)$ then there is an $e_{i}\in L^{o}(O/e)$ that is excluded from $L^{o}(O)$. That is, there is a cycle $e_{i_{1}},\ldots,e_{i_{s}}$ of $G$ which is not a potential cycle of $O$, such that $e_{i_{1}}=e$, $i=\min \{i_{1},\ldots,i_{s}\}$ and $e_{i_{2}},\ldots,e_{i_{s}} $ is a potential path of $O$.

If $j<p$ then every potential cut of $(O\cup \{e_{j}^{\sigma_{u}(e_{j})}\})/e$ is also a potential cut of $^{e}O\cup \{e_{j}^{\sigma_{u}(e_{j})}\}$, so $I^{u}(O/e)\subseteq I^{u}(^{e}O)$. If $I^{u}(O/e)\neq I^{u}(^{e}O)$ it follows that $G$ has\ an edge $e_{j}\in I^{u}(^{e}O)\setminus I^{u}(O/e)$. Then there is a potential cut $Cu$ of $^{e}O\cup \{e_{j}^{\sigma _{u}(e_{j})}\}$ whose least-indexed element is $e_{j}$, and every such $Cu$ includes $e$.

Let $Cu$ be a potential cut of $^{e}O\cup \{e_{j}^{\sigma _{u}(e_{j})}\}$, as described. Then there is a subset $U\subseteq V(G)$ such that $Cu$ includes every edge of $G$ with precisely one vertex in $U$, and every element of $Cu$ is unoriented in $^{e}O\cup \{e_{j}^{\sigma _{u}(e_{j})}\}$ or directed away from $U$. As $e\in Cu$, $v\in U$ and $u\notin U$. The first paragraph of the proof implies that $e_{i_{2}},\ldots,e_{i_{s}}$ is a potential path of $^{e}O\cup \{e_{j}^{\sigma _{u}(e_{j})}\}$ from $u$ to $v$, which does not include $e$. The first edge of this potential path with a vertex in $U$ is bioriented or directed into $U$, however, so it contradicts the description of $Cu$.
\end{proof}

\subsection{Consequences of \texorpdfstring{$L^{b}(O)\not=L^{b}(O/e)$}{}}

\begin{lemma}
$L^{b}(O)\neq L^{b}(O/e)\Rightarrow L^{o}(O)=L^{o}(O-e)$ and~$L^{b}(O)=L^{b}(O-e)$ and~$L^{o}(^{e}O)=L^{o}(O/e)$.
\end{lemma}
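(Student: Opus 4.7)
The plan is to derive each of the three conclusions as the contrapositive of a previously established lemma, in the same spirit as the earlier consolidation Lemma~\ref{cyclesc0}. No new potential-path or potential-cycle analysis should be required; the work consists only in matching each conclusion to a lemma already proved.

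First, the conclusion $L^{o}(O) = L^{o}(O-e)$ follows immediately as the contrapositive of Lemma~\ref{cyclescb}, which asserts that $L^{o}(O) \neq L^{o}(O-e)$ implies $L^{b}(O) = L^{b}(O/e)$. Likewise, the conclusion $L^{b}(O) = L^{b}(O-e)$ follows as the contrapositive of the (unlabeled) lemma in the preceding subsection that says $L^{b}(O) \neq L^{b}(O-e)$ implies $L^{b}(O) = L^{b}(O/e)$.

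For the remaining conclusion $L^{o}(^{e}O) = L^{o}(O/e)$, I would apply Lemma~\ref{cyclesc2} with $^{e}O$ substituted in place of $O$. Using that $^{e}(^{e}O) = O$, and that both $O-e$ and $O/e$ are unchanged when $O$ is replaced by $^{e}O$, that lemma becomes: if $L^{o}(^{e}O) \neq L^{o}(O/e)$ then $L^{b}(O) = L^{b}(O/e)$, whose contrapositive is exactly the desired claim. Since everything here reduces to selecting the right earlier lemmas, I do not expect any significant obstacle; the only mild subtlety is the $O \leftrightarrow {}^{e}O$ symmetry used in this final part.
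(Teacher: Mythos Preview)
Your proposal is correct and matches the paper's approach, which simply says ``These implications follow from earlier results, applied to $^{e}O$ rather than $O$.'' You have made this explicit by identifying the precise earlier lemmas (Lemma~\ref{cyclescb}, the unlabeled lemma $L^{b}(O)\neq L^{b}(O-e)\Rightarrow L^{b}(O)=L^{b}(O/e)$, and Lemma~\ref{cyclesc2}) and taking contrapositives, with the $O\leftrightarrow{}^{e}O$ substitution where needed.
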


\begin{proof}
These implications follow from\ earlier results, applied to $^{e}O$ rather than $O$.
\end{proof}

\begin{lemma}
$L^{b}(O)\neq L^{b}(O/e)\Rightarrow L^{b}(^{e}O)=L^{b}(O/e)$.
\end{lemma}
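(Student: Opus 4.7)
The plan is to proceed by contradiction, mirroring the structure used in Lemma~\ref{cyclesb1} but with both sides now referring to contractions rather than to deletions. First I would establish the one-sided inclusions $L^{b}(O) \subseteq L^{b}(O/e)$ and $L^{b}({}^{e}O) \subseteq L^{b}(O/e)$ from the observation that, for any $e_k$ with $k<p$, a potential cycle $Cy$ of $O\setminus\{e_k^{\sigma_b(e_k)}\}$ satisfies that $Cy\setminus\{e\}$ is a disjoint union of potential cycles of $(O\setminus\{e_k^{\sigma_b(e_k)}\})/e$. So the hypothesis $L^{b}(O) \neq L^{b}(O/e)$ produces some $e_i \in L^{b}(O/e) \setminus L^{b}(O)$, and assuming for contradiction $L^{b}({}^{e}O) \neq L^{b}(O/e)$, some $e_j \in L^{b}(O/e) \setminus L^{b}({}^{e}O)$. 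Both $e_i$ and $e_j$ must be bioriented in $O$ (hence in ${}^{e}O$), with $i, j < p$.

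Next I would lift the witnessing cycles from the contractions back to $G$. Because $e_i\notin L^b(O)$, every potential cycle of $(O\setminus\{e_i^{\sigma_b(e_i)}\})/e$ with minimum edge $e_i$ must lift to a cycle of $G$ containing $e$; the non-$e$ part gives a potential path $P_1$ of $O\setminus\{e_i^{\sigma_b(e_i)}\}$ from $v$ to $u$, with every edge of $P_1$ having index $\geq i$. Symmetrically, since $e$ is oriented from $v$ to $u$ in ${}^{e}O$, I obtain a potential path $P_2$ of ${}^{e}O\setminus\{e_j^{\sigma_b(e_j)}\}$ from $u$ to $v$, with every edge of $P_2$ having index $\geq j$. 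Both paths avoid $e$.

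Now assume without loss of generality $i\leq j$; I then need both $P_1$ and $P_2$ to be potential paths of the single fourientation $(O\setminus\{e_i^{\sigma_b(e_i)}\})-e$ so that Lemma~\ref{lem:preliminary} applies. For $P_1$ this is automatic. For $P_2$: because $O$ and ${}^{e}O$ agree off $e$ and $P_2$ avoids $e$, $P_2$ is a potential path of $O\setminus\{e_j^{\sigma_b(e_j)}\}$; and either $i=j$ (so $\sigma_b(e_i)=\sigma_b(e_j)$ and the two fourientations coincide) or $i<j$ and $e_i\notin P_2$ (since all indices in $P_2$ are $\geq j>i$), so removing $e_i^{\sigma_b(e_i)}$ does not disturb $P_2$. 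Lemma~\ref{lem:preliminary} then produces a potential cycle $Cy$ of $(O\setminus\{e_i^{\sigma_b(e_i)}\})-e$ with $e_i\in Cy\subseteq P_1 P_2$; since $Cy$ avoids $e$ and all edges of $P_1 P_2$ have index $\geq i$, the edge $e_i$ is the minimum of $Cy$, so $e_i\in L^b(O)$, contradicting the choice of $e_i$. The case $j<i$ is handled symmetrically by swapping the roles of the two paths to produce a potential cycle of ${}^{e}O\setminus\{e_j^{\sigma_b(e_j)}\}$ with minimum $e_j$, contradicting $e_j\notin L^{b}({}^{e}O)$.

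The main obstacle I anticipate is the bookkeeping in the paragraph above: $P_1$ was constructed inside $O\setminus\{e_i^{\sigma_b(e_i)}\}$ and $P_2$ inside the different fourientation ${}^{e}O\setminus\{e_j^{\sigma_b(e_j)}\}$, so one must verify carefully that the two semi-orientations imposed on $e_i$ and $e_j$ can be reconciled. The minimality of $i$ and $j$, together with the WLOG hypothesis $i\leq j$, is exactly what forces $e_i\notin P_2$ when $i<j$ and what makes the $i=j$ collision harmless.
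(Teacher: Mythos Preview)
Your argument is correct and follows essentially the same route as the paper's proof: establish the inclusions $L^{b}(O)\subseteq L^{b}(O/e)$ and $L^{b}({}^{e}O)\subseteq L^{b}(O/e)$, extract witnesses $e_i$ and $e_j$, lift the witnessing cycles to paths in $G$ avoiding $e$, reduce to a single ambient fourientation using $i\leq j$, and apply Lemma~\ref{lem:preliminary} to the oriented edge $e_i$ of $O\setminus\{e_i^{\sigma_b(e_i)}\}$ to obtain a contradiction. Your write-up is in fact more explicit than the paper's about why $P_2$ is a potential path of $O\setminus\{e_i^{\sigma_b(e_i)}\}$ once $i\leq j$.

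One minor slip: the directions of $P_1$ and $P_2$ are swapped. Since $e$ is oriented $u\to v$ in $O$, the failure of $Cy'\cup\{e\}$ to be a potential cycle of $O\setminus\{e_i^{\sigma_b(e_i)}\}$ forces the path $P_1$ to run from $u$ to $v$ (so that closing it with $e$ would require traversing $e$ from $v$ to $u$, against its orientation). Dually, since $e$ is oriented $v\to u$ in ${}^{e}O$, the path $P_2$ runs from $v$ to $u$. This swap is harmless for the argument, since all that matters is that one path goes each way so that the concatenation $P_1P_2$ is a closed walk to which Lemma~\ref{lem:preliminary} applies.
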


\begin{proof}
If $i<p$ and $Cy$ is a potential cycle of $O\setminus \{e_{i}^{\sigma_{b}(e_{i})}\}$ then $Cy\setminus \{e\}$ is a union of disjoint potential cycles of $(O\setminus \{e_{i}^{\sigma _{b}(e_{i})}\})/e$, so $L^{b}(O)\subseteq L^{b}(O/e)$. Consequently, if $L^{b}(O)\neq L^{b}(O/e)$ then there is an $e_{i}\in L^{b}(O/e)$ that is excluded from $L^{b}(O)$. That is, there is a cycle $e_{i_{1}},\ldots,e_{i_{s}}$ of $G$ which is not a potential cycle of $O\setminus \{e_{i}^{\sigma _{b}(e_{i})}\}$, such that $e_{i_{1}}=e$, $i=\min \{i_{1},\ldots,i_{s}\}$ and $e_{i_{2}},\ldots,e_{i_{s}}$ is a potential path of $O\setminus \{e_{i}^{\sigma _{b}(e_{i})}\}$. 

Similarly, if $L^{b}(^{e}O)\neq L^{b}(O/e)$ then there is an $e_{j}\in L^{b}(O/e)$ that is excluded from $L^{b}(^{e}O)$. That is, there is a cycle $e_{j_{1}},\ldots,e_{j_{t}}$ of $G$ which is not a potential cycle of $O\setminus \{e_{j}^{\sigma _{b}(e_{j})}\}$, such that $e_{j_{1}}=e$, $j=\min\{j_{1},\ldots,j_{t}\}$ and $e_{j_{2}},\ldots,e_{j_{t}}$ is a potential path of $^{e}O\setminus \{e_{j}^{\sigma _{b}(e_{j})}\}$.

Interchanging $O$ and $^{e}O$ if necessary, we may presume that $i\leq j$. Lemma \ref{lem:preliminary} tells us that the closed walk $e_{i_{2}},\ldots,e_{i_{s}}$, $e_{j_{2}},\ldots,e_{j_{t}}$ contains a potential cycle of $O\setminus \{e_{i}^{\sigma _{b}(e_{i})}\}$, which contains $e_{i}$. The existence of such a potential cycle contradicts the hypothesis that $e_{i}\notin L^{b}(O)$.
\end{proof}

\begin{lemma}
$L^{b}(O)\neq L^{b}(O/e)\Rightarrow I^{o}(O)=I^{o}(O-e)$.
\end{lemma}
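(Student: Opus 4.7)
The plan is to mirror the argument for Lemma \ref{cyclesc3} (the $L^o$-analog of the current implication) in three phases: extract a potential path in $O$ from the hypothesis, describe the structure forced on a hypothetical counterexample cut, and derive a contradiction at the first boundary crossing.

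First, I would use the inclusion $L^b(O) \subseteq L^b(O/e)$, which follows from the observation that each potential cycle of $O \setminus \{e_i^{\sigma_b(e_i)}\}$ decomposes (upon deleting $e$) into potential cycles of $(O/e) \setminus \{e_i^{\sigma_b(e_i)}\}$. Failure of equality yields $e_i \in L^b(O/e) \setminus L^b(O)$, and the witnessing potential cycle of $(O/e) \setminus \{e_i^{\sigma_b(e_i)}\}$ with minimum index $i$ lifts in $G$ to a cycle $e = e_{i_1}, e_{i_2}, \ldots, e_{i_s}$ whose tail $e_{i_2}, \ldots, e_{i_s}$ is a potential path of $O \setminus \{e_i^{\sigma_b(e_i)}\}$, hence of $O$. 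The full cycle cannot itself be a potential cycle of $O \setminus \{e_i^{\sigma_b(e_i)}\}$ (otherwise $e_i \in L^b(O)$), and this forces the tail to run from $u$ to $v$ rather than $v$ to $u$; the opposite orientation would assemble with the orientation of $e$ in $O$ into a consistent potential cycle.

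Next, I would assume for contradiction that $I^o(O) \neq I^o(O-e)$. From $I^o(O) \subseteq I^o(O-e)$ (each potential cut of $O$ restricts, after dropping $e$, to a disjoint union of potential cuts of $O-e$) we extract some $e_j \in I^o(O-e) \setminus I^o(O)$ and a witnessing potential cut $Cu$ of $O-e$ coming from a vertex subset $U \subseteq V(G)$. Since neither $Cu$ nor $Cu \cup \{e\}$ is a potential cut of $O$ and $e$ runs from $u$ to $v$ in $O$, one is forced into $u \notin U$ and $v \in U$. The potential path from the first phase runs from $u \notin U$ to $v \in U$, so some edge of it crosses into $U$; the first such edge is either bioriented in $O$ or oriented into $U$, either of which contradicts the description of $Cu$ as consisting of edges unoriented in $O$ or directed away from $U$. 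The only genuinely new ingredient relative to Lemma \ref{cyclesc3} is the direction argument in the first phase, where working modulo $\{e_i^{\sigma_b(e_i)}\}$ rather than with $O$ itself is what supplies the required asymmetry; everything else transposes verbatim.
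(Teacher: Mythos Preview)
Your proof is correct and follows essentially the same approach as the paper's own proof. In fact, your argument makes explicit the direction reasoning (that the potential path must run from $u$ to $v$, not $v$ to $u$) which the paper leaves implicit when it simply asserts that the tail is a potential path of $O$ from $u$ to $v$.
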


\begin{proof}
If $i<p$ and $Cy$ is a potential cycle of $O\setminus \{e_{i}^{\sigma_{b}(e_{i})}\}$ then $Cy\setminus \{e\}$ is a union of disjoint potential cycles of $(O\setminus \{e_{i}^{\sigma _{b}(e_{i})}\})/e$, so $L^{b}(O)\subseteq L^{b}(O/e)$. Consequently, if $L^{b}(O)\neq L^{b}(O/e)$ then there is an $e_{i}\in L^{b}(O/e)$ that is excluded from $L^{b}(O)$. That is, there is a cycle $e_{i_{1}},\ldots,e_{i_{s}}$ of $G$ which is not a potential cycle of $O\setminus \{e_{i}^{\sigma _{b}(e_{i})}\}$, such that $e_{i_{1}}=e$, $i=\min \{i_{1},\ldots,i_{s}\}$ and $e_{i_{2}},\ldots,e_{i_{s}}$ is a potential path of $O\setminus \{e_{i}^{\sigma _{b}(e_{i})}\}$.

Suppose $I^{o}(O)\not=I^{o}(O-e)$. Each potential cut $Cu$ of $O$ has the property that $Cu\setminus \{e\}$ is a union of disjoint potential cuts of $O-e$, so $I^{o}(O)\subseteq I^{o}(O-e)$. It follows that $G$ has\ an edge $e_{j}\in I^{o}(O-e)$, which is excluded from $I^{o}(O)$. Then there is a potential cut $Cu$ of $O-e$ whose least-indexed element is $e_{j}$, and no such $Cu$ has either $Cu$ or $Cu\cup \{e\}$ a potential cut of $O$.

Let $Cu$ be a potential cut of $O-e$, as described. Then there is a subset $U\subseteq V(G)$ such that $Cu$ includes every edge of $G-e$ with precisely one vertex in $U$, and every element of $Cu$ is unoriented in $O$ or directed away from $U$. As neither $Cu$ nor $Cu\cup \{e\}$ is a potential cut of $O$, $v\in U$ and $u\notin U$. The first paragraph of the proof implies that $e_{i_{2}},\ldots,e_{i_{s}}$ is a potential path of $O$ from $u$ to $v$, which does not include $e$. The first edge of this potential path with a vertex in $U$ is bioriented or directed into $U$, however, so it contradicts the description of $Cu$.
\end{proof}

\begin{lemma}
$L^{b}(O)\neq L^{b}(O/e)\Rightarrow I^{u}(O)=I^{u}(O-e)$.
\end{lemma}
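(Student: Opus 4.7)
The plan is to follow the template of the immediately preceding lemma (which handles $L^{b}(O)\neq L^{b}(O/e)\Rightarrow I^{o}(O)=I^{o}(O-e)$) almost verbatim, adjusting only for the $I^u$ definition involving $\sigma_u(e_j)$. First I would replay the first paragraph of that proof: observing that for $i<p$, a potential cycle $Cy$ of $O\setminus\{e_i^{\sigma_b(e_i)}\}$ yields $Cy\setminus\{e\}$ as a disjoint union of potential cycles of $(O\setminus\{e_i^{\sigma_b(e_i)}\})/e$, we get $L^{b}(O)\subseteq L^{b}(O/e)$. The hypothesis then produces an edge $e_i\in L^{b}(O/e)\setminus L^{b}(O)$, and this in turn produces a cycle $e_{i_1},\ldots,e_{i_s}$ with $e_{i_1}=e$, $i=\min\{i_1,\ldots,i_s\}$, and $e_{i_2},\ldots,e_{i_s}$ a potential path of $O\setminus\{e_i^{\sigma_b(e_i)}\}$. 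Since $e_i$ is bioriented in $O$, this path is also a potential path of $O$, and traversing $e$ in the opposite direction would close it into a (non-potential) cycle, so the path goes from $u$ to $v$.

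Next I would assume for contradiction that $I^{u}(O)\neq I^{u}(O-e)$. Each $e_j\in I^{u}(O-e)$ lies in $E(G-e)$, so automatically $j<p$; and for any $j<p$, each potential cut of $O\cup\{e_j^{\sigma_u(e_j)}\}$ restricts (by deleting $e$) to a potential cut of $(O-e)\cup\{e_j^{\sigma_u(e_j)}\}$, giving $I^{u}(O)\subseteq I^{u}(O-e)$. So there exists $e_j\in I^{u}(O-e)\setminus I^{u}(O)$ and a potential cut $Cu$ of $(O-e)\cup\{e_j^{\sigma_u(e_j)}\}$ with least-indexed element $e_j$, such that neither $Cu$ nor $Cu\cup\{e\}$ is a potential cut of $O\cup\{e_j^{\sigma_u(e_j)}\}$. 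Choose $U\subseteq V(G)$ realizing $Cu$, with each element of $Cu$ either unoriented in $(O-e)\cup\{e_j^{\sigma_u(e_j)}\}$ or directed away from $U$. Because $e$ is oriented from $u$ to $v$ in $O\cup\{e_j^{\sigma_u(e_j)}\}$, the failure of both $Cu$ and $Cu\cup\{e\}$ as potential cuts of $O\cup\{e_j^{\sigma_u(e_j)}\}$ forces $v\in U$ and $u\notin U$.

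Finally I would derive the contradiction from the potential path. Since $e_j\in I^{u}(O-e)$, $e_j$ is unoriented in $O$, and unoriented edges cannot appear in a potential path; hence $e_j$ does not appear in $e_{i_2},\ldots,e_{i_s}$, and so this remains a potential path of $(O-e)\cup\{e_j^{\sigma_u(e_j)}\}$ from $u$ to $v$. The first edge of this path with precisely one endpoint in $U$ belongs to $Cu$ but is bioriented or directed into $U$, contradicting the description of $Cu$.

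The main obstacle is not really conceptual but purely bookkeeping: I must confirm that the potential path transfers correctly among the three fourientations $O\setminus\{e_i^{\sigma_b(e_i)}\}$, $O$, and $(O-e)\cup\{e_j^{\sigma_u(e_j)}\}$. The essential observations are that $e_i\neq e_j$ (since $e_i$ is bioriented and $e_j$ is unoriented in $O$), that $e_j$ is forced out of the path because unoriented edges never appear in potential paths, and that toggling $e_i^{\sigma_b(e_i)}$ only swaps $e_i$ between oriented-in-path-direction and bioriented, both of which remain consistent with the path being potential.
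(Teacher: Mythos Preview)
Your proof is correct and follows essentially the same approach as the paper's own argument. The only cosmetic difference is that the paper simply asserts the path $e_{i_2},\ldots,e_{i_s}$ is a potential path of $O\cup\{e_j^{\sigma_u(e_j)}\}$ (true because adding an orientation to an unoriented edge can only create more potential paths, never fewer), whereas you give the slightly more explicit justification that $e_j$, being unoriented, cannot occur in the path at all; both routes yield the same contradiction with the description of $Cu$.
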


\begin{proof}
If $i<p$ and $Cy$ is a potential cycle of $O\setminus \{e_{i}^{\sigma_{b}(e_{i})}\}$ then $Cy\setminus \{e\}$ is a union of disjoint potential cycles of $(O\setminus \{e_{i}^{\sigma _{b}(e_{i})}\})/e$, so $L^{b}(O)\subseteq L^{b}(O/e)$. Consequently, if $L^{b}(O)\neq L^{b}(O/e)$ then there is an $e_{i}\in L^{b}(O/e)$ that is excluded from $L^{b}(O)$. That is, there is a cycle $e_{i_{1}},\ldots,e_{i_{s}}$ of $G$ which is not a potential cycle of $O\setminus \{e_{i}^{\sigma _{b}(e_{i})}\}$, such that $e_{i_{1}}=e$, $i=\min \{i_{1},\ldots,i_{s}\}$ and $e_{i_{2}},\ldots,e_{i_{s}}$ is a potential path of $O\setminus \{e_{i}^{\sigma _{b}(e_{i})}\}$.

If $j<p$ then each potential cut $Cu$ of $O\cup \{e_{j}^{\sigma_{u}(e_{j})}\}$ has the property that $Cu\setminus \{e\}$ is a union of disjoint potential cuts of $O-e$, so $I^{u}(O)\subseteq I^{u}(O-e)$. If $I^{u}(O)\neq I^{u}(O-e)$ it follows that $G$ has\ an edge $e_{j}\in I^{u}(O-e)$, which is excluded from $I^{u}(O)$. Then there is a potential cut $Cu$ of $(O\cup \{e_{j}^{\sigma _{u}(e_{j})}\})-e$ whose least-indexed element is $e_{j}$, and no such $Cu$ has either $Cu$ or $Cu\cup \{e\}$ a potential cut of $O\cup \{e_{j}^{\sigma _{u}(e_{j})}\}$.

Let $Cu$ be a potential cut of $(O\cup \{e_{j}^{\sigma _{u}(e_{j})}\})-e$, as described. Then there is a subset $U\subseteq V(G)$ such that $Cu$ includes every edge of $G-e$ with precisely one vertex in $U$, and every element of $Cu$ is unoriented in $O\cup \{e_{j}^{\sigma _{u}(e_{j})}\}$ or directed away from $U$. As neither $Cu$ nor $Cu\cup \{e\}$ is a potential cut of $O\cup \{e_{j}^{\sigma _{u}(e_{j})}\}$, $v\in U$ and $u\notin U$. The first paragraph of the proof implies that $e_{i_{2}},\ldots,e_{i_{s}}$ is a potential path of $O\cup \{e_{j}^{\sigma _{u}(e_{j})}\}$ from $u$ to $v$, which does not include $e$. The first edge of this potential path with a vertex in $U$ is bioriented or directed into $U$, however, so it contradicts the description of $Cu$.
\end{proof}

\begin{lemma}
$L^{b}(O)\neq L^{b}(O/e)\Rightarrow I^{o}(^{e}O)=I^{o}(O/e)$.
\end{lemma}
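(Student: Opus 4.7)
The plan is to model the argument on Lemma~\ref{cyclesc5}, adjusting for the fact that our hypothesis concerns bioriented-edge cycle activity rather than oriented-edge cycle activity. First I will observe the containment $I^{o}(O/e)\subseteq I^{o}(^{e}O)$: a potential cut of $O/e$ corresponds to a vertex subset of $G$ treating $u,v$ identically, hence yields a cut of $G$ not involving $e$, and this is automatically a potential cut of $^{e}O$ (since $^{e}O$ agrees with $O$ off of $e$). Arguing by contradiction, I will take $e_{j}\in I^{o}(^{e}O)\setminus I^{o}(O/e)$; then every potential cut of $^{e}O$ with minimum element $e_{j}$ must contain $e$.

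Next I will exploit $L^{b}(O)\neq L^{b}(O/e)$ to produce a useful potential path, following the opening of Lemma~\ref{cyclesc2} but with $L^{b}$ in place of $L^{o}$. Using the always-true containment $L^{b}(O)\subseteq L^{b}(O/e)$, the hypothesis yields $e_{i}\in L^{b}(O/e)\setminus L^{b}(O)$ together with a cycle $e=e_{i_{1}},e_{i_{2}},\ldots,e_{i_{s}}$ of $G$ with $i=\min\{i_{1},\ldots,i_{s}\}$, such that $e_{i_{2}},\ldots,e_{i_{s}}$ is a potential path of $O\setminus\{e_{i}^{\sigma_{b}(e_{i})}\}$ while the full cycle is not. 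Comparing orientations, the latter forces $e$'s direction in the preferred cycle orientation to run $v\to u$, so the tail runs from $u$ to $v$. Since $e_{i}\neq e$, the tail avoids $e$; and since the only edge whose status differs between $O\setminus\{e_{i}^{\sigma_{b}(e_{i})}\}$ and $^{e}O$ is $e$ itself (with a possible occurrence of $e_{i}$ on the tail being bioriented in both $O$ and $^{e}O$, which still satisfies the path condition), the tail is a potential path of $^{e}O$ from $u$ to $v$.

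The proof will then finish with the standard cut-versus-path crossing contradiction. Let $Cu$ be a potential cut of $^{e}O$ with minimum element $e_{j}$ and defining subset $U\subseteq V(G)$, necessarily containing $e$. Because $e$ is directed $v\to u$ in $^{e}O$, I will conclude $v\in U$ and $u\notin U$. The potential path just constructed begins at $u\notin U$ and ends at $v\in U$, so it has a first edge $e_{k}$ crossing from $\bar{U}$ into $U$; as a path edge of $^{e}O$ this $e_{k}$ is bioriented or directed $\bar{U}\to U$, while as a cut edge of $^{e}O$ it must be unoriented or directed $U\to\bar{U}$, which is incompatible. The only subtle point I anticipate is the direction-of-path step in the middle paragraph, where one must use the failure of the full cycle to be a potential cycle to pin down $e$'s cycle orientation; once that is settled, the crossing argument is routine.
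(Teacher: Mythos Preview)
Your proposal is correct and follows essentially the same approach as the paper's own proof: use $L^{b}(O)\subseteq L^{b}(O/e)$ to extract a potential path of $O\setminus\{e_{i}^{\sigma_{b}(e_{i})}\}$ from $u$ to $v$ avoiding $e$, observe it is also a potential path of $^{e}O$, and obtain a contradiction by crossing it against a potential cut $Cu$ of $^{e}O$ containing $e$. The only cosmetic difference is that the paper presents these steps in reverse order, first deriving the path and then the cut; your direction-pinning argument for why the path runs $u\to v$ is spelled out more explicitly than in the paper but is exactly the intended reason.
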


\begin{proof}
As before, if $L^{b}(O)\neq L^{b}(O/e)$ then there is an $e_{i}\in L^{b}(O/e)$ that is excluded from $L^{b}(O)$, so there is a cycle $e_{i_{1}},\ldots,e_{i_{s}}$ of $G$ which is not a potential cycle of $O\setminus \{e_{i}^{\sigma _{b}(e_{i})}\}$, such that $e_{i_{1}}=e$, $i=\min \{i_{1},\ldots,i_{s}\}$ and $e_{i_{2}},\ldots,e_{i_{s}}$ is a potential path of $O\setminus \{e_{i}^{\sigma _{b}(e_{i})}\}$.

Also $I^{o}(O/e)\subseteq I^{o}(^{e}O)$, so if $I^{o}(^{e}O)\not=I^{o}(O/e)$ then $G$ has\ an edge $e_{j}\in I^{o}(^{e}O)\setminus I^{o}(O/e)$. Then there is a potential cut $Cu$ of $^{e}O$ whose least-indexed element is $e_{j}$, and every such $Cu$ includes $e$.

Let $Cu$ be a potential cut of $^{e}O$, as described. Then there is a subset $U\subseteq V(G)$ such that $Cu$ includes every edge of $G$ with precisely one vertex in $U$, and every element of $Cu$ is unoriented in $^{e}O$ or directed away from $U$. As $e\in Cu$, $v\in U$ and $u\notin U$. The first paragraph of the proof implies that $e_{i_{2}},\ldots,e_{i_{s}}$ is a potential path of $^{e}O$ from $u$ to $v$, which does not include $e$. The first edge of this potential path with a vertex in $U$ is bioriented or directed into $U $, however, so it contradicts the description of $Cu$.
\end{proof}

\begin{lemma}
$L^{b}(O)\neq L^{b}(O/e)\Rightarrow I^{u}(^{e}O)=I^{u}(O/e)$.
\end{lemma}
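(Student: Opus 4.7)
The plan is to mirror, almost verbatim, the argument used for the immediately preceding lemma ($L^{b}(O)\neq L^{b}(O/e)\Rightarrow I^{o}(^{e}O)=I^{o}(O/e)$), with the usual modification (also seen in the earlier pairs of $I^o$-vs-$I^u$ lemmas in this appendix) for passing from $I^o$ to $I^u$: adjoin $e_j^{\sigma_u(e_j)}$ to the fourientation throughout, and check that nothing in the cycle/path construction is disturbed by this single extra oriented direction.

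First I would extract the cycle data from $L^{b}(O)\neq L^{b}(O/e)$. The inclusion $L^b(O)\subseteq L^b(O/e)$ holds because $Cy\setminus\{e\}$ decomposes into potential cycles of $(O\setminus\{e_i^{\sigma_b(e_i)}\})/e$ whenever $Cy$ is a potential cycle of $O\setminus\{e_i^{\sigma_b(e_i)}\}$. So there exists $e_i\in L^b(O/e)\setminus L^b(O)$, and lifting the witnessing potential cycle of $(O\setminus\{e_i^{\sigma_b(e_i)}\})/e$ to $G$ produces a cycle $e_{i_1}=e,e_{i_2},\ldots,e_{i_s}$ with $i=\min\{i_1,\ldots,i_s\}$ such that $P:=e_{i_2},\ldots,e_{i_s}$ is a potential path of $O\setminus\{e_i^{\sigma_b(e_i)}\}$ from $u$ to $v$. (If $P$ did not pass through $e$, then $P$ itself would witness $e_i\in L^b(O)$.) Note that $e_i$ is bioriented in $O$, as the definition of $L^b$ requires.

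Next, assume for contradiction that $I^u(^{e}O)\neq I^u(O/e)$. When $j<p$, each potential cut of $(O\cup\{e_j^{\sigma_u(e_j)}\})/e$ is a potential cut of $^{e}O\cup\{e_j^{\sigma_u(e_j)}\}$, so $I^u(O/e)\subseteq I^u(^{e}O)$. Hence there is $e_j\in I^u(^{e}O)\setminus I^u(O/e)$ and a potential cut $Cu$ of $^{e}O\cup\{e_j^{\sigma_u(e_j)}\}$ whose least-indexed edge is $e_j$ and which necessarily contains $e$. The usual description yields $U\subseteq V(G)$ such that every edge of $Cu$ either is unoriented in $^{e}O\cup\{e_j^{\sigma_u(e_j)}\}$ or is directed out of $U$; since $e\in Cu$ and $e$ is oriented from $v$ to $u$ in $^{e}O$, we get $v\in U$ and $u\notin U$.

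The contradiction comes by showing that $P$ is also a potential path of $^{e}O\cup\{e_j^{\sigma_u(e_j)}\}$, so that its first edge meeting $U$ must be bioriented or directed into $U$, violating the description of $Cu$. The only edges whose status might have changed when passing from $O\setminus\{e_i^{\sigma_b(e_i)}\}$ to $^{e}O\cup\{e_j^{\sigma_u(e_j)}\}$ are $e$, $e_i$, and $e_j$. The edge $e$ is not in $P$; the edge $e_i$ is bioriented in $O$ and remains bioriented in $^{e}O\cup\{e_j^{\sigma_u(e_j)}\}$ (since $e_i\neq e$ and $e_i\neq e_j$, the latter because $e_i\in O^b$ while $e_j\in O^u$), hence is admissible in any potential path; and $e_j$ cannot appear in $P$ at all, since $P$ is a potential path of $O\setminus\{e_i^{\sigma_b(e_i)}\}$ but $e_j$ is unoriented in $O$ and therefore in $O\setminus\{e_i^{\sigma_b(e_i)}\}$. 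The only real verification — that the $e_j$-adjustment doesn't break the potential-path property — is exactly this one observation, so I do not anticipate any obstacle beyond carefully bookkeeping these three edges; the argument is strictly parallel to that of the preceding lemma.
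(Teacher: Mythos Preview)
Your argument is correct and follows the paper's approach essentially verbatim: extract the potential path $P=e_{i_2},\ldots,e_{i_s}$ from $u$ to $v$ coming from $L^b(O)\neq L^b(O/e)$, assume $I^u(^eO)\neq I^u(O/e)$ to get a potential cut of $^eO\cup\{e_j^{\sigma_u(e_j)}\}$ containing $e$ with $v\in U$, $u\notin U$, and then note that $P$ remains a potential path of $^eO\cup\{e_j^{\sigma_u(e_j)}\}$, so its first edge meeting $U$ contradicts the cut. One small wording issue: your parenthetical ``If $P$ did not pass through $e$, then $P$ itself would witness $e_i\in L^b(O)$'' is garbled, since $P$ never contains $e$ by construction; what you mean is that the cycle lifted from $G/e$ must include $e$ (otherwise it would already be a potential cycle of $O\setminus\{e_i^{\sigma_b(e_i)}\}$ in $G$, contradicting $e_i\notin L^b(O)$), and this is what forces $P$ to be a genuine $u$-to-$v$ path rather than a cycle.
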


\begin{proof}
Again, if $L^{b}(O)\neq L^{b}(O/e)$ then there is an $e_{i}\in L^{b}(O/e) $ that is excluded from $L^{b}(O)$, so there is a cycle $e_{i_{1}},\ldots,e_{i_{s}}$ of $G$ which is not a potential cycle of $O\setminus \{e_{i}^{\sigma _{b}(e_{i})}\}$, such that $e_{i_{1}}=e$, $i=\min \{i_{1},\ldots,i_{s}\}$ and $e_{i_{2}},\ldots,e_{i_{s}}$ is a potential path of $O\setminus \{e_{i}^{\sigma _{b}(e_{i})}\}$.

If $j<p$ then every potential cut of $(O\cup \{e_{j}^{\sigma_{u}(e_{j})}\})/e$ is also a potential cut of $^{e}O\cup \{e_{j}^{\sigma_{u}(e_{j})}\}$, so $I^{u}(O/e)\subseteq I^{u}(^{e}O)$. If $I^{u}(O/e)\neq I^{u}(^{e}O)$ it follows that $G$ has\ an edge $e_{j}\in I^{u}(^{e}O)\setminus I^{u}(O/e)$. Then there is a potential cut $Cu$ of $^{e}O\cup \{e_{j}^{\sigma _{u}(e_{j})}\}$ whose least-indexed element is $e_{j}$, and every such $Cu$ includes $e$.

Let $Cu$ be a potential cut of $^{e}O\cup \{e_{j}^{\sigma _{u}(e_{j})}\}$, as described. Then there is a subset $U\subseteq V(G)$ such that $Cu$ includes every edge of $G$ with precisely one vertex in $U$, and every element of $Cu$ is unoriented in $^{e}O\cup \{e_{j}^{\sigma _{u}(e_{j})}\}$ or directed away from $U$. As $e\in Cu$, $v\in U$ and $u\notin U$. The first paragraph of the proof implies that $e_{i_{2}},\ldots,e_{i_{s}}$ is a potential path of $^{e}O\cup \{e_{j}^{\sigma _{u}(e_{j})}\}$ from $u$ to $v$, which does not include $e$. The first edge of this potential path with a vertex in $U$ is bioriented or directed into $U$, however, so it contradicts the description of $Cu$.
\end{proof}

\subsection{Consequences of \texorpdfstring{$I^{o}(O)\not=I^{o}(O-e)$}{}}

\begin{lemma}
$I^{o}(O)\not=I^{o}(O-e)\Rightarrow L^{o}(^{e}O)=L^{o}(O-e)$ and~$L^{b}(^{e}O)=L^{b}(O-e)$ and~$L^{o}(O)=L^{o}(O/e)$ and~$L^{b}(O)=L^{b}(O/e)$.
\end{lemma}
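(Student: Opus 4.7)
The plan is to derive all four claimed equalities as direct contrapositives of lemmas already established earlier in this appendix. This mirrors the pattern used to open the subsections titled ``Consequences of $L^{o}(O) \neq L^{o}(O/e)$'' and ``Consequences of $L^{b}(O) \neq L^{b}(O-e)$'', each of which began with a short lemma that was proved by reference to a previously established implication rather than by any new combinatorial argument.

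First, I would handle $L^{o}(O) = L^{o}(O/e)$ by citing the contrapositive of Lemma \ref{cyclesc3}, which asserts $L^{o}(O) \neq L^{o}(O/e) \Rightarrow I^{o}(O) = I^{o}(O-e)$. Similarly, $L^{b}(O) = L^{b}(O/e)$ will follow from the contrapositive of the lemma in the ``$L^{b}(O) \neq L^{b}(O/e)$'' subsection asserting $L^{b}(O) \neq L^{b}(O/e) \Rightarrow I^{o}(O) = I^{o}(O-e)$. For the two equalities $L^{o}({^{e}O}) = L^{o}(O-e)$ and $L^{b}({^{e}O}) = L^{b}(O-e)$, my plan is to apply Lemma \ref{cuts} and the lemma in the ``$L^{b}(O) \neq L^{b}(O-e)$'' subsection concluding $I^{o}({^{e}O}) = I^{o}(O-e)$, each with ${^{e}O}$ substituted for $O$, and then take contrapositives. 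The set-theoretic identities ${^{e}({^{e}O})} = O$, $({^{e}O}) - e = O - e$, and $({^{e}O})/e = O/e$ convert the hypotheses into $L^{o}({^{e}O}) \neq L^{o}(O-e)$ (respectively $L^{b}({^{e}O}) \neq L^{b}(O-e)$) and the conclusions into $I^{o}(O) = I^{o}(O-e)$, which is exactly what is needed.

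Because all of the substantive reasoning about potential cuts, cycles, and paths has already been packaged into the earlier lemmas of the appendix, there is no genuine combinatorial obstacle here; the only matters to verify are the routine set-theoretic identities governing the substitution $O \rightsquigarrow {^{e}O}$ and the fact that the contrapositives of the cited implications have the forms claimed. Thus, like the first lemmas of several earlier subsections, this lemma is a bookkeeping consequence of the more substantive work done previously.
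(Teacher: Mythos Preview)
Your proposal is correct and takes essentially the same approach as the paper, which simply states that ``these implications follow from earlier results, applied to $^{e}O$ rather than $O$.'' Your version is in fact more explicit than the paper's, since you identify precisely which earlier lemmas are being invoked for each of the four equalities and note that the first two follow by direct contrapositive without any substitution.
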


\begin{proof}
These implications follow from\ earlier results, applied to $^{e}O$ rather than $O$.
\end{proof}

\begin{lemma}
$I^{o}(O)\not=I^{o}(O-e)\Rightarrow I^{o}(^{e}O)=I^{o}(O-e)$.
\end{lemma}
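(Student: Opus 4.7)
The plan is to mirror the structure of the first lemma in this appendix (the one establishing $L^{o}(O)\neq L^{o}(O-e) \Rightarrow L^{o}(^{e}O)=L^{o}(O-e)$), but to replace its path-concatenation step (Lemma~\ref{lem:preliminary}) with a cocircuit-combinatorial argument. First I would record the two basic inclusions $I^{o}(O)\subseteq I^{o}(O-e)$ and $I^{o}(^{e}O)\subseteq I^{o}(O-e)$, which both follow from the observation that any potential cut $Cu$ of $O$ or of $^{e}O$ has $Cu\setminus\{e\}$ a disjoint union of potential cuts of $O-e$ (cf.\ the proofs of Lemmas~\ref{cuts} and~\ref{cyclesc3}). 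The hypothesis $I^{o}(O)\neq I^{o}(O-e)$ then yields some $e_{i}\in I^{o}(O-e)\setminus I^{o}(O)$, and any potential cut $Cu_{i}$ of $O-e$ realizing $e_{i}$ as its minimum corresponds to a vertex partition $(U_{i},V\setminus U_{i})$ which, if it gave a cocircuit of $G$ (i.e., $e$ lies within $U_{i}$ or within $V\setminus U_{i}$) or extended to a cocircuit of $G$ with $e$ pointing the right way in $O$, would witness $e_{i}\in I^{o}(O)$. Hence these cases are ruled out, forcing $v\in U_{i}$, $u\notin U_{i}$, and $e$ pointing the wrong way in $O$ (equivalently, the right way in $^{e}O$); so $Cu_{i}\cup\{e\}$ is a cocircuit of $G$ and a potential cut of $^{e}O$ with minimum $e_{i}$, giving $e_{i}\in I^{o}(^{e}O)$.

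For the reverse inclusion $I^{o}(O-e)\subseteq I^{o}(^{e}O)$ I would argue by contradiction. Suppose some $e_{j}\in I^{o}(O-e)\setminus I^{o}(^{e}O)$. The symmetric analysis (swapping the roles of $O$ and $^{e}O$) shows that every min-$e_{j}$ potential cut $Cu_{j}$ of $O-e$ corresponds to a partition $(U_{j},V\setminus U_{j})$ with $u\in U_{j}$, $v\notin U_{j}$, so that $Cu_{j}\cup\{e\}$ is a cocircuit of $G$ and a potential cut of $O$ with minimum $e_{j}$; in particular $e_{j}\in I^{o}(O)$. Interchanging $O$ and $^{e}O$ if necessary, we may assume $i\leq j$, and $i=j$ is impossible since a single edge cannot simultaneously be excluded from $I^{o}(O)$ and from $I^{o}(^{e}O)$. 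Thus $i<j$, so every edge of $Cu_{j}$ strictly exceeds $e_{i}$ in the edge order, and in particular $e_{i}\notin Cu_{j}$.

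The heart of the argument, and the main obstacle, will be to combine the two $G$-cocircuits $Cu_{i}\cup\{e\}$ and $Cu_{j}\cup\{e\}$ to produce a third cocircuit $C_{3}$ of $G$ with $e_{i}\in C_{3}\subseteq Cu_{i}\cup Cu_{j}$, $e\notin C_{3}$, and such that $C_{3}$ is a potential cut of $O$. Refining the vertex set by $A=U_{i}\cap U_{j}$, $B=U_{i}\setminus U_{j}$, $C=U_{j}\setminus U_{i}$, $D=V\setminus(U_{i}\cup U_{j})$ (so $v\in B$, $u\in C$), the requirement that both $Cu_{i}$ and $Cu_{j}$ be potential cuts of $O-e$ pins down the direction in $O$ of each oriented edge between distinct blocks: such edges must run $A\to B$, $A\to C$, $A\to D$, $B\to D$, $C\to D$, and every edge between $B$ and $C$ other than $e$ is forced to be unoriented (since $Cu_{i}$ demands direction $B\to C$ while $Cu_{j}$ demands $C\to B$). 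Since $e_{i}$ is oriented, it must lie between $A$ and $C$, between $A$ and $D$, or between $B$ and $D$.

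The natural candidates for $C_{3}$ are the edge cuts $K_{c}$ of the partition $(A,V\setminus A)$ and $K_{b}$ of $(V\setminus D,D)$; both inherit from the block ordering $A\prec\{B,C\}\prec D$ a consistent direction in $O$, and hence are potential cuts of $O$ whenever they are actual cocircuits of $G$. Depending on where $e_{i}$ sits, at least one of $K_{b}$, $K_{c}$ contains $e_{i}$, and one obtains such a $C_{3}$ either by direct construction or by strong cocircuit elimination applied to $Cu_{i}\cup\{e\}$ and $Cu_{j}\cup\{e\}$ with respect to $e$, exploiting that $e_{i}\in(Cu_{i}\cup\{e\})\setminus(Cu_{j}\cup\{e\})$. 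The delicate bookkeeping will be avoiding the ``bad'' cocircuit of $(A\cup D,B\cup C)$, which is \emph{not} a potential cut of $O$, and handling the case where any of $G[A]$, $G[B]$, $G[C]$, $G[D]$ is disconnected by replacing the offending cut with an appropriate sub-cocircuit in $Cu_{i}\cup Cu_{j}$. Once $C_{3}$ is in hand, every edge of $C_{3}\subseteq Cu_{i}\cup Cu_{j}$ has index at least $e_{i}$ (because $\min Cu_{i}=e_{i}$ and $\min Cu_{j}=e_{j}>e_{i}$), so $e_{i}=\min C_{3}$, forcing $e_{i}\in I^{o}(O)$ and yielding the desired contradiction.
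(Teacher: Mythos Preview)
Your proposal is correct and follows essentially the same approach as the paper: the paper's two cases ``$x\in U'$'' and ``$x\notin U'$'' correspond exactly to your choice of $K_c$ (the edge cut of $U_i\cap U_j=A$) versus $K_b$ (the edge cut of $U_i\cup U_j=V\setminus D$), and the contradiction is reached in the same way by observing that the resulting edge set is a union of potential cuts of $O$, one of which has $e_i$ as its minimum. Your preliminary observations that $e_i\in I^{o}(^{e}O)$ and $e_j\in I^{o}(O)$, your separate treatment of $i=j$, and your worries about strong cocircuit elimination and the ``bad'' partition $(A\cup D,B\cup C)$ are all unnecessary detours --- the paper's argument handles $i\leq j$ uniformly and never needs anything beyond the direct $U\cap U'$ / $U\cup U'$ construction.
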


\begin{proof}
Suppose $I^{o}(O)\not=I^{o}(O-e)$. Each potential cut $Cu$ of $O$ has the property that $Cu\setminus \{e\}$ is a union of disjoint potential cuts of $O-e$, so $I^{o}(O)\subseteq I^{o}(O-e)$. It follows that $G$ has\ an edge $e_{i}\in I^{o}(O-e)$, which is excluded from $I^{o}(O)$. Then there is a potential cut $Cu$ of $O-e$ whose least-indexed element is $e_{i}$, and no such $Cu$ has either $Cu$ or $Cu\cup \{e\}$ a potential cut of $O$. Similarly, if $I^{o}(^{e}O)\not=I^{o}(O-e)$ then $G$ has\ an edge $e_{j}\in I^{o}(O-e)\setminus I^{o}(^{e}O)$. Then there is a potential cut $Cu^{\prime }$ of $O-e$ whose least-indexed element is $e_{j}$, and no such $Cu^{\prime }$ has either $Cu^{\prime }$ or $Cu^{\prime }\cup \{e\}$ a potential cut of $^{e}O$.

Suppose $Cu$ and $Cu^{\prime }$ are as described. Then there are subsets $U,U^{\prime }\subseteq V(G)$ such that $Cu$ is the set of edges of $G-e$ with precisely one vertex in $U$, $Cu^{\prime }$ is the set of edges of $G-e$ with precisely one vertex in $U^{\prime }$, every edge of $Cu$ is either unoriented or directed away from $U$, and every edge of $Cu^{\prime }$ is either unoriented or directed away from $U^{\prime }$. As neither $Cu$ nor $Cu\cup \{e\}$ is a potential cut of $O$, it must be that $v\in U$ and $u\notin U$; similarly, $u\in U^{\prime }$ and $v\notin U^{\prime }$.

Interchanging $O$ and $^{e}O$ if necessary, we may presume that $i\leq j$. Let $e_{i}$ have vertices $x$ and $y$, with $x\in U$ and $y\notin U$. Then $e_{i}$ is directed from $x$ to $y$.

Suppose $x\in U^{\prime }$. Let $Cu^{\prime \prime }$ be the set of edges of $G$ with precisely one vertex in $U\cap U^{\prime }$; then $Cu^{\prime \prime }$ is a union of disjoint cuts of $G$. As $e_{i}\in Cu^{\prime \prime}$, $Cu^{\prime \prime }\neq \varnothing $. As $Cu^{\prime \prime }\subseteq Cu\cup Cu^{\prime }$, every element of $Cu^{\prime \prime }$ is either unoriented or directed away from $U\cap U^{\prime }$. In addition, $e_{i}$ is the least-indexed element of $Cu^{\prime \prime }$; but then $e_{i}$ is the least-indexed element of a potential cut of $O$ contained in $Cu^{\prime \prime }$, and this contradicts the hypothesis that $e_{i}\notin I^{o}(O)$.

If $x\notin U^{\prime }$ then $y\notin U^{\prime }$ too, as every edge with precisely one vertex in $U^{\prime }$ is directed away from $U^{\prime }$. Let $Cu^{\prime \prime }$ be the set of edges of $G$ with precisely one vertex in $U\cup U^{\prime }$. Again, $Cu^{\prime \prime }\neq \varnothing $ as $e_{i}\in Cu^{\prime \prime }$, and $Cu^{\prime \prime }\subseteq Cu\cup Cu^{\prime }$, so every element of $Cu^{\prime \prime }$ is either unoriented or directed away from $U\cup U^{\prime }$. In addition, $e_{i}$ is the least-indexed element of $Cu^{\prime \prime }$; but again, this contradicts the hypothesis that $e_{i}\notin I^{o}(O)$.
\end{proof}

\begin{lemma}
$I^{o}(O)\not=I^{o}(O-e)\Rightarrow I^{u}(^{e}O)=I^{u}(O-e)$.
\end{lemma}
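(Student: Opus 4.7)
The plan is to argue by contradiction, closely following the immediately preceding lemma's proof that $I^{o}(O)\neq I^{o}(O-e)\Rightarrow I^{o}(^{e}O)=I^{o}(O-e)$, but with additional case work to handle the asymmetry between our hypothesis (about $I^{o}$ on $O$) and our conclusion (about $I^{u}$ on $^{e}O$). From the hypothesis I would extract, exactly as in that lemma, an edge $e_{i}\in I^{o}(O-e)\setminus I^{o}(O)$ witnessed by a potential cut $Cu$ of $O-e$ with minimum element $e_{i}$ such that neither $Cu$ nor $Cu\cup\{e\}$ is a potential cut of $O$; this produces a set $U\subseteq V(G)$ with $v\in U$, $u\notin U$, and every element of $Cu$ either unoriented in $O$ or directed away from $U$. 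Assuming for contradiction that $I^{u}(^{e}O)\neq I^{u}(O-e)$, the easy inclusion $I^{u}(^{e}O)\subseteq I^{u}(O-e)$ (each potential cut of $^{e}O\cup\{e_{j}^{\sigma_{u}(e_{j})}\}$ restricts to a union of potential cuts of $(O-e)\cup\{e_{j}^{\sigma_{u}(e_{j})}\}$) gives $e_{j}\in I^{u}(O-e)\setminus I^{u}(^{e}O)$ together with a potential cut $Cu'$ of $(O-e)\cup\{e_{j}^{\sigma_{u}(e_{j})}\}$ with minimum element $e_{j}$ such that neither $Cu'$ nor $Cu'\cup\{e\}$ is a potential cut of $^{e}O\cup\{e_{j}^{\sigma_{u}(e_{j})}\}$; this produces $U'$ with $u\in U'$, $v\notin U'$. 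Since $e_{i}$ is oriented in $O$ while $e_{j}$ is unoriented there, $i\neq j$, and I would split into $i<j$ and $j<i$.

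In the case $i<j$ the argument of the preceding lemma goes through essentially verbatim. Letting $e_{i}$ have endpoints $x\in U$, $y\notin U$ with $e_{i}$ directed $x\to y$, I would take $W:=U\cap U'$ if $x\in U'$ and $W:=U\cup U'$ otherwise, ruling out the sub-case $x\notin U'$ with $y\in U'$ by noting that there $e_{i}\in Cu'$ would be oriented into $U'$, violating the $Cu'$ condition. The cut $Cu''$ determined by $W$ contains $e_{i}$, avoids $e$ (since $u$ and $v$ lie on the same side of $W$ in both choices), sits inside $Cu\cup Cu'$, and has minimum element $e_{i}$ because $\min(Cu\cup Cu')=e_{i}$ here; since $e_{j}$ is unoriented in $O$, any presence of $e_{j}$ in $Cu''$ does no damage to the potential-cut condition in $O$, and one concludes $e_{i}\in I^{o}(O)$, a contradiction.

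In the case $j<i$ the target contradiction must be $e_{j}\in I^{u}(^{e}O)$ instead, and a different construction is needed. I would first observe that $e_{j}\notin Cu$, else $\min Cu\leq e_{j}<e_{i}$; hence the endpoints $p\in U'$, $q\notin U'$ of $e_{j}$ lie on the same side of $U$. Under the four-block partition $A:=U\cap U'$, $B:=U\setminus U'$, $C:=U'\setminus U$, $D:=V(G)\setminus(U\cup U')$ this leaves only the sub-cases $p\in A$, $q\in B$ and $p\in C$, $q\in D$. In the first I would take $W:=A$; in the second $W:=A\cup B\cup C$. The checks to perform are that $u,v$ land on the same side of $W$ (so the associated cut $Cu'''$ avoids $e$), that $e_{j}$ crosses $W$ with its $\sigma_{u}(e_{j})$-orientation pointing away from $W$, and that every other edge of $Cu'''$ lies in $Cu\cup Cu'$ and so is unoriented in $^{e}O\cup\{e_{j}^{\sigma_{u}(e_{j})}\}$ or directed away from $W$. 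Since $\min(Cu\cup Cu')=e_{j}$ here, $e_{j}$ is the minimum element of the resulting potential cut of $^{e}O\cup\{e_{j}^{\sigma_{u}(e_{j})}\}$, contradicting $e_{j}\notin I^{u}(^{e}O)$.

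The main obstacle is precisely this lack of symmetry: the hypothesis and conclusion pertain to different fourientations and different activity sets, so the $O\leftrightarrow {}^{e}O$ swap used in several earlier lemmas is unavailable, and two independent arguments producing contradictions in different target sets ($I^{o}(O)$ versus $I^{u}(^{e}O)$) are forced. A secondary bookkeeping point is tracking the orientation status of $e_{j}$, which is unoriented in $O$ and $^{e}O$ but oriented in $(O-e)\cup\{e_{j}^{\sigma_{u}(e_{j})}\}$ and $^{e}O\cup\{e_{j}^{\sigma_{u}(e_{j})}\}$; this is exactly what permits the $i<j$ cut $Cu''$ to qualify as a potential cut of $O$ and the $j<i$ cut $Cu'''$ to have $e_{j}$ correctly oriented away from $W$.
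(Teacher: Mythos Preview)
Your proposal is correct and follows essentially the same approach as the paper's proof: extract $e_{i}\in I^{o}(O-e)\setminus I^{o}(O)$ and $e_{j}\in I^{u}(O-e)\setminus I^{u}(^{e}O)$, obtain the sets $U,U'$ with $v\in U,\,u\notin U$ and $u\in U',\,v\notin U'$, split into $i<j$ versus $j<i$, and in each case build a cut from $U\cap U'$ or $U\cup U'$ that avoids $e$ and contradicts the relevant exclusion. Your block notation $A,B,C,D$ and the observation that $e_{j}\notin Cu$ forces its endpoints to the same side of $U$ are exactly what the paper uses (phrased slightly differently), and your handling of the orientation status of $e_{j}$ is correct.
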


\begin{proof}
Suppose $I^{o}(O)\not=I^{o}(O-e)$. Each potential cut $Cu$ of $O$ has the property that $Cu\setminus \{e\}$ is a union of disjoint potential cuts of $O-e$, so $I^{o}(O)\subseteq I^{o}(O-e)$. It follows that $G$ has\ an edge $e_{i}\in I^{o}(O-e)$, which is excluded from $I^{o}(O)$. Then there is a potential cut $Cu$ of $O-e$ whose least-indexed element is $e_{i}$, and no such $Cu$ has either $Cu$ or $Cu\cup \{e\}$ a potential cut of $O$. Similarly, if $I^{u}(^{e}O)\not=I^{u}(O-e)$ then $G$ has\ an edge $e_{j}\in I^{u}(O-e)\setminus I^{u}(^{e}O)$. Then there is a potential cut $Cu^{\prime }$ of $(O\cup \{e_{j}^{\sigma _{u}(e_{j})}\})-e$ whose least-indexed element is $e_{j}$, and no such $Cu^{\prime }$ has either $Cu^{\prime }$ or $Cu^{\prime }\cup \{e\}$ a potential cut of $^{e}O\cup \{e_{j}^{\sigma _{u}(e_{j})}\}$.

Suppose $Cu$ and $Cu^{\prime }$ are as described. Then there are subsets $U,U^{\prime }\subseteq V(G)$ such that $Cu$ is the set of edges of $G-e$ with precisely one vertex in $U$, $Cu^{\prime }$ is the set of edges of $G-e$ with precisely one vertex in $U^{\prime }$, every edge of $Cu$ is either unoriented or directed away from $U$ in $O$, and every edge of $Cu^{\prime }$ is either unoriented or directed away from $U^{\prime }$ in $^{e}O\cup \{e_{j}^{\sigma _{u}(e_{j})}\}$.

Suppose $i<j$. Let $e_{i}$ have vertices $x$ and $y$, with $x\in U$ and $y\notin U$. Then $e_{i}$ is directed from $x$ to $y$ in $O$.

If $x\in U^{\prime }$ then $y\in U^{\prime }$ too, as $i<j$. Let $Cu^{\prime \prime }$ be the set of edges of $G$ with precisely one vertex in $U\cap U^{\prime }$; then $Cu^{\prime \prime }$ is a union of disjoint cuts of $G$, and $Cu^{\prime \prime }\neq \varnothing $ as $e_{i}\in Cu^{\prime \prime }$. As $Cu^{\prime \prime }\subseteq Cu\cup Cu^{\prime }$, every element of $Cu^{\prime \prime }$ is either unoriented in $O$ or directed away from $U\cap U^{\prime }$. (The direction of $e_{j}$ in $^{e}O\cup \{e_{j}^{\sigma_{u}(e_{j})}\}$ is irrelevant to this assertion, as $e_{j}$ is unoriented in $O$.) In addition, $e_{i}$ is the least-indexed element of $Cu^{\prime \prime }$; but this contradicts the hypothesis that $e_{i}\notin I^{o}(O)$.

If $x\notin U^{\prime }$ then $y\notin U^{\prime }$ too, as $i<j$. Let $Cu^{\prime \prime }$ be the set of edges of $G$ with precisely one vertex in $U\cup U^{\prime }$. Again, $Cu^{\prime \prime }\neq \varnothing $ as $e_{i}\in Cu^{\prime \prime }$, and $Cu^{\prime \prime }\subseteq Cu\cup Cu^{\prime }$, so every element of $Cu^{\prime \prime }$ is either unoriented or directed away from $U\cup U^{\prime }$. In addition, $e_{i}$ is the least-indexed element of $Cu^{\prime \prime }$; but again, this contradicts the hypothesis that $e_{i}\notin I^{o}(O)$.

We conclude that $i>j$. (N.b. $e_{i}\neq e_{j}$ as $e_{i}$ is oriented in $O$ and $e_{j}$ is not.) Let $e_{j}$ have vertices $x$ and $y$, with $x\in U^{\prime }$ and $y\notin U^{\prime }$. Then $e_{j}$ is directed from $x$ to $y$ in $^{e}O\cup \{e_{j}^{\sigma _{u}(e_{j})}\}$.

Suppose $x\in U$. As $j<i$, it follows that $y\in U$ too. Let $Cu^{\prime \prime }$ be the set of edges of $G$ with precisely one vertex in $U\cap U^{\prime }$; $Cu^{\prime \prime }\neq \varnothing $ as $e_{j}\in Cu^{\prime \prime }$. As $Cu^{\prime \prime }\subseteq Cu\cup Cu^{\prime }$, every element of $Cu^{\prime \prime }$ is either unoriented in $^{e}O\cup \{e_{j}^{\sigma _{u}(e_{j})}\}$ or directed away from $U\cap U^{\prime }$. In addition, $e_{j}$ is the least-indexed element of $Cu^{\prime \prime }$; but this contradicts the hypothesis that $e_{j}\notin I^{u}(^{e}O)$.

Suppose now that $x\not\in U$; then $y\not\in U$ too. Let $Cu^{\prime \prime}$ be the set of edges of $G$ with precisely one vertex in $U\cup U^{\prime} $; $Cu^{\prime \prime }\neq \varnothing $ as $e_{j}\in Cu^{\prime \prime }$. As $Cu^{\prime \prime }\subseteq Cu\cup Cu^{\prime }$, every element of $Cu^{\prime \prime }$ is either unoriented in $^{e}O\cup \{e_{j}^{\sigma_{u}(e_{j})}\}$ or directed away from $U\cup U^{\prime }$. In addition, $e_{j}$ is the least-indexed element of $Cu^{\prime \prime }$; but again, this contradicts the hypothesis that $e_{j}\notin I^{u}(^{e}O)$.
\end{proof}

\begin{lemma}
$I^{o}(O)\not=I^{o}(O-e)\Rightarrow I^{o}(O)=I^{o}(O/e)$.
\end{lemma}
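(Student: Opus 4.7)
The plan is to mirror the proof of the preceding lemma in this subsection, combining two subsets of $V(G)$ via intersection and union to force a contradiction. From $I^{o}(O)\neq I^{o}(O-e)$ I extract, exactly as in that proof, an edge $e_{i}\in I^{o}(O-e)\setminus I^{o}(O)$, a potential cut $Cu$ of $O-e$ with $\min Cu = e_{i}$, and a set $U\subseteq V(G)$ with $v\in U$ and $u\notin U$. Arguing by contradiction, I assume there is some $e_{j}\in I^{o}(O)\setminus I^{o}(O/e)$ (noting that $I^{o}(O/e)\subseteq I^{o}(O)$ because every potential cut of $O/e$ is a potential cut of $O$). Every potential cut $Cu'$ of $O$ with $\min Cu' = e_{j}$ must then contain $e$, since otherwise it would already be a potential cut of $O/e$. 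Picking such a $Cu'$ with associated set $U'$, and using that $e$ is directed $u\to v$ in $O$ and must be directed away from $U'$, we obtain $u\in U'$ and $v\notin U'$.

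The central construction is the pair of edge cuts
\[W:=\{f\in E(G)\colon f\text{ has exactly one endpoint in }U\cap U'\}\text{ and }W':=\{f\in E(G)\colon f\text{ has exactly one endpoint in }U\cup U'\}.\]
Since $u$ and $v$ lie on opposite sides of both $U$ and $U'$, both endpoints of $e$ lie in $U\Delta U'$, so $e\notin W\cup W'$. A routine check shows $W,W'\subseteq Cu\cup Cu'$ and that every edge of $W$ (resp.\ $W'$) is either unoriented in $O$ or directed away from $U\cap U'$ (resp.\ $U\cup U'$); hence $W$ and $W'$ are unions of disjoint potential cuts of $O$, and since they avoid $e$ they are also unions of potential cuts of $O/e$.

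It remains to locate $e_{i}$ and $e_{j}$. Writing $e_{i}=\{x,y\}$ with $x\in U$ and $y\notin U$, so that $e_{i}$ is directed $x\to y$ in $O$, I check that $e_{i}\in W\cup W'$: the case $x\in U'$ puts $e_{i}$ in $W$; the case $y\notin U'$ puts $e_{i}$ in $W'$; and the only remaining case $x\notin U',y\in U'$ would place $e_{i}$ in $Cu'$ with the forbidden direction $x\to y$ into $U'$, an impossibility. A parallel analysis of the endpoints $q_{1}\in U'$ and $q_{2}\notin U'$ of $e_{j}$ shows that $e_{j}\in W\cup W'$ unless $q_{1}\notin U$ and $q_{2}\in U$, in which case $e_{j}\in Cu$ and so $j\geq i$. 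If $i\leq j$, then the member of $\{W,W'\}$ containing $e_{i}$ is a subset of $Cu\cup Cu'$ whose minimum index is at least $\min(i,j)=i$, so $e_{i}$ is its minimum and hence the minimum of some potential cut of $O$ contained in it, contradicting $e_{i}\notin I^{o}(O)$. If $j<i$, the case $e_{j}\in Cu$ is excluded (it would force $j\geq i$), so $e_{j}\in W\cup W'$, and the same reasoning identifies $e_{j}$ as the minimum of some potential cut of $O/e$, contradicting $e_{j}\notin I^{o}(O/e)$. The main obstacle is the case-by-case bookkeeping for the endpoints of $e_{i}$ and $e_{j}$ together with the verification that $W$ and $W'$ genuinely inherit the potential-cut structure; both follow the template established by the preceding lemmas in this subsection.
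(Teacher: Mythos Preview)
Your proof is correct and follows essentially the same approach as the paper's: both extract the sets $U$ and $U'$ with $v\in U\setminus U'$ and $u\in U'\setminus U$, and then use the edge cuts determined by $U\cap U'$ and $U\cup U'$ (your $W$ and $W'$, the paper's $Cu''$) to locate a potential cut of $O$ or of $O/e$ with the forbidden minimum. The only cosmetic difference is that you define $W$ and $W'$ once and then argue which one contains the relevant edge, whereas the paper splits first on $i<j$ versus $i\geq j$ and constructs $Cu''$ case by case; the underlying combinatorics is identical.
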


\begin{proof}
Suppose $I^{o}(O)\not=I^{o}(O-e)$. Each potential cut $Cu$ of $O$ has the property that $Cu\setminus \{e\}$ is a union of disjoint potential cuts of $O-e$, so $I^{o}(O)\subseteq I^{o}(O-e)$. It follows that $G$ has\ an edge $e_{i}\in I^{o}(O-e)$, which is excluded from $I^{o}(O)$. Then there is a potential cut $Cu$ of $O-e$ whose least-indexed element is $e_{i}$, and no such $Cu$ has either $Cu$ or $Cu\cup \{e\}$ a potential cut of $O$.

Every potential cut of $O/e$ is a potential cut of $O$, so $I^{o}(O/e)\subseteq I^{o}(O)$. If $I^{o}(O)\not=I^{o}(O/e)$, then, $G$ has an edge $e_{j}\in I^{o}(O)\setminus I^{o}(O/e)$. Then there is a potential cut $Cu^{\prime }$ of $O$ whose least-indexed element is $e_{j}$, and every such $Cu^{\prime }$ includes $e$.

Suppose $Cu$ and $Cu^{\prime }$ are as described. Then there are subsets $U,U^{\prime }\subseteq V(G)$ such that $Cu$ is the set of edges of $G-e$ with precisely one vertex in $U$, $Cu^{\prime }$ is the set of edges of $G$ with precisely one vertex in $U^{\prime }$, every edge of $Cu$ is either unoriented or directed away from $U$ in $O$, and every edge of $Cu^{\prime }$ is either unoriented or directed away from $U^{\prime }$ in $O$. As neither $Cu$ nor $Cu\cup \{e\}$ is a potential cut of $O$, $v\in U$ and $u\notin U$; as $e\in Cu^{\prime }$, $u\in U^{\prime }$ and $v\notin U^{\prime }$.

Suppose $i<j$. Let $e_{i}$ have vertices $x$ and $y$, with $x\in U$ and $y\notin U$. Then $e_{i}$ is directed from $x$ to $y$ in $O$.

If $x\in U^{\prime }$ then $y\in U^{\prime }$ too, as $i<j$. Let $Cu^{\prime \prime }$ be the set of edges of $G$ with precisely one vertex in $U\cap U^{\prime }$; then $Cu^{\prime \prime }$ is a union of disjoint cuts of $G$, and $Cu^{\prime \prime }\neq \varnothing $ as $e_{i}\in Cu^{\prime \prime }$. As $Cu^{\prime \prime }\subseteq Cu\cup Cu^{\prime }$, every element of $Cu^{\prime \prime }$ is either unoriented in $O$ or directed away from $U\cap U^{\prime }$. In addition, $e_{i}$ is the least-indexed element of $Cu^{\prime \prime }$; but this contradicts the hypothesis that $e_{i}\notin I^{o}(O)$.

If $x\notin U^{\prime }$ then $y\notin U^{\prime }$ too, as $i<j$. Let $Cu^{\prime \prime }$ be the set of edges of $G$ with precisely one vertex in  $U\cup U^{\prime }$. Again, $Cu^{\prime \prime }\neq \varnothing $ as $e_{i}\in Cu^{\prime \prime }$, and $Cu^{\prime \prime }\subseteq Cu\cup Cu^{\prime }$, so every element of $Cu^{\prime \prime }$ is either unoriented or directed away from $U\cup U^{\prime }$. In addition, $e_{i}$ is the least-indexed element of $Cu^{\prime \prime }$; but again, this contradicts the hypothesis that $e_{i}\notin I^{o}(O)$.

Suppose now that $i\geq j$. Let $e_{j}$ have vertices $x$ and $y$, with $x\in U^{\prime }$ and $y\notin U^{\prime }$. Then $e_{j}$ is directed from $x $ to $y$ in $O$.

Suppose $x\in U$. Let $Cu^{\prime \prime }$ be the set of edges of $G$ with precisely one vertex in $U\cap U^{\prime }$; $Cu^{\prime \prime }\neq \varnothing $ as $e_{j}\in Cu^{\prime \prime }$. As $Cu^{\prime \prime}\subseteq Cu\cup Cu^{\prime }$, every element of $Cu^{\prime \prime }$ is either unoriented in $O$ or directed away from $U\cap U^{\prime }$. Moreover, the fact that $u,v\notin U\cap U^{\prime }$ implies that $Cu^{\prime \prime }$ provides a potential cut of $O/e$, which contains $e_{j}$. The hypothesis $e_{j}\notin I^{o}(O/e)$ is contradicted because $e_{j}$ is the least-indexed element of~$Cu^{\prime \prime }$.

If $x\not\in U$, instead, then let $Cu^{\prime \prime }$ be the set of edges of $G$ with precisely one vertex in $U\cup U^{\prime }$. As $e_{j}$ is directed from $x$ to $y$ in $O$, it must be that $y\not\in U$ too. Then $e_{j}\in Cu^{\prime \prime }$, so $Cu^{\prime \prime }\neq \varnothing $. As $Cu^{\prime \prime }\subseteq Cu\cup Cu^{\prime }$, every element of $Cu^{\prime \prime }$ is either unoriented in $O$ or directed\ away from $U\cup U^{\prime }$. Moreover, the fact that $u,v\in U\cup U^{\prime }$ implies that $Cu^{\prime \prime }$ provides a potential cut of $O/e$, which contains $e_{j}$. The hypothesis $e_{j}\notin I^{o}(O/e)$ is contradicted because $e_{j}$ is the least-indexed element of~$Cu^{\prime \prime }$.
\end{proof}

\begin{lemma}
$I^{o}(O)\not=I^{o}(O-e)\Rightarrow I^{u}(O)=I^{u}(O/e)$.
\end{lemma}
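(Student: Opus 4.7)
The plan is to argue by contradiction, following the template established in the preceding lemmas of this subsection. First I would use the observation that each potential cut of $(O\cup\{e_{j}^{\sigma_{u}(e_{j})}\})/e$ is a potential cut of $O\cup\{e_{j}^{\sigma_{u}(e_{j})}\}$ to conclude $I^{u}(O/e)\subseteq I^{u}(O)$, so a failure of equality produces some $e_{j}\in I^{u}(O)\setminus I^{u}(O/e)$. Then every potential cut of $O\cup\{e_{j}^{\sigma_{u}(e_{j})}\}$ having minimum $e_{j}$ must contain $e$, and picking one such cut gives a vertex set $U'\subseteq V(G)$ with (since $e$ is directed from $u$ to $v$) $u\in U'$ and $v\notin U'$. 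Meanwhile, from the hypothesis $I^{o}(O)\neq I^{o}(O-e)$ the standard opening paragraph of the preceding lemmas yields an $e_{i}\in I^{o}(O-e)$ along with $U\subseteq V(G)$ satisfying $v\in U$, $u\notin U$, for which $Cu:=$ the edges of $G-e$ with exactly one vertex in $U$ is a potential cut of $O-e$ with minimum $e_{i}$. Note that $e_{i}$ is oriented in $O$ while $e_{j}$ is not, so $i\neq j$.

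The strategy is then the familiar one of combining $U$ and $U'$. I would split into the cases $i<j$ and $i>j$, and in each case split further according to which side of the other set the distinguished edge lies on (this is forced by the fact that the distinguished edge cannot belong to the cut whose minimum exceeds it). According to the subcase, I would take $W:=U\cap U'$ or $W:=U\cup U'$, chosen so that the distinguished edge still has one endpoint on each side of $W$ while both $u$ and $v$ lie on the \emph{same} side of $W$; this ensures that $e$ does not appear in the cut defined by $W$. Letting $D$ be the set of edges of $G$ with exactly one endpoint in $W$, an elementary verification shows $D\subseteq Cu\cup Cu'$, with minimum equal to $\min\{e_{i},e_{j}\}$.

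The main obstacle, and the new wrinkle compared with the previous lemmas, is the potential-cut verification for the edge $f=e_{j}$, whose status differs between $O$ and $O\cup\{e_{j}^{\sigma_{u}(e_{j})}\}$. I would handle this case by case. In the case $i<j$ we want a potential cut of $O$ with minimum $e_{i}$ (contradicting $e_{i}\notin I^{o}(O)$), and here the potential cut condition for $e_{j}$ is satisfied for free because $e_{j}$ is unoriented in $O$. In the case $i>j$ we instead want a potential cut of $(O\cup\{e_{j}^{\sigma_{u}(e_{j})}\})/e$ with minimum $e_{j}$ (contradicting $e_{j}\notin I^{u}(O/e)$); here $e_{j}$ is oriented (by $\sigma_{u}(e_{j})$) in $O\cup\{e_{j}^{\sigma_{u}(e_{j})}\}$ directed from its endpoint in $U'$ to its endpoint outside $U'$, and one verifies that the subcase-dependent choice of $W$ makes this direction point away from $W$ as well. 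All other edges $f\neq e_{j}$ in $D$ retain their $O$-status in $O\cup\{e_{j}^{\sigma_{u}(e_{j})}\}$, so their potential-cut conditions transfer without issue.

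Once the verification is in place, the punchline is immediate: the minimum of $D$ is $e_{i}$ in the first case and $e_{j}$ in the second, yielding $e_{i}\in I^{o}(O)$ or $e_{j}\in I^{u}(O/e)$, contradicting the respective hypothesis. I expect the bulk of the work to be nothing more than bookkeeping in the four subcases, with the only genuinely delicate point being the direction-of-$e_{j}$ check in the $i>j$ case, which is why it was essential that $W$ be chosen as $U\cap U'$ or $U\cup U'$ in exactly the right configuration.
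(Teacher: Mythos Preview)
Your proposal is correct and follows essentially the same approach as the paper's proof: extract $e_i\in I^{o}(O-e)\setminus I^{o}(O)$ with its set $U$ (having $v\in U$, $u\notin U$) and $e_j\in I^{u}(O)\setminus I^{u}(O/e)$ with its set $U'$ (having $u\in U'$, $v\notin U'$), then split into $i<j$ versus $i>j$ and in each case form the cut determined by $U\cap U'$ or $U\cup U'$ to contradict $e_i\notin I^{o}(O)$ or $e_j\notin I^{u}(O/e)$ respectively. Your handling of the only delicate point---the status of $e_j$ in the relevant fourientation in each case---matches the paper's treatment.
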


\begin{proof}
Suppose $I^{o}(O)\not=I^{o}(O-e)$. Each potential cut $Cu$ of $O$ has the property that $Cu\setminus \{e\}$ is a union of disjoint potential cuts of $O-e$, so $I^{o}(O)\subseteq I^{o}(O-e)$. It follows that $G$ has\ an edge $e_{i}\in I^{o}(O-e)$, which is excluded from $I^{o}(O)$. Then there is a potential cut $Cu$ of $O-e$ whose least-indexed element is $e_{i}$, and no such $Cu$ has either $Cu$ or $Cu\cup \{e\}$ a potential cut of $O$.

If $j<p$ then every potential cut of $(O\cup \{e_{j}^{\sigma_{u}(e_{j})}\})/e$ is a potential cut of $O$, so $I^{u}(O/e)\subseteq I^{u}(O)$. If $I^{u}(O)\not=I^{u}(O/e)$, $G$ has\ an edge $e_{j}\in I^{u}(O)\setminus I^{u}(O/e)$. Then there is a potential cut $Cu^{\prime }$ of $O\cup \{e_{j}^{\sigma _{u}(e_{j})}\}$ whose least-indexed element is $e_{j}$, and every such $Cu^{\prime }$ includes $e$.

Suppose $Cu$ and $Cu^{\prime }$ are as described. Then there are subsets $U,U^{\prime }\subseteq V(G)$ such that $Cu$ is the set of edges of $G-e$ with precisely one vertex in $U$, $Cu^{\prime }$ is the set of edges of $G$ with precisely one vertex in $U^{\prime }$, every edge of $Cu$ is either unoriented or directed away from $U$ in $O$, and every edge of $Cu^{\prime }$ is either unoriented or directed away from $U^{\prime }$ in $O\cup \{e_{j}^{\sigma _{u}(e_{j})}\}$. As neither $Cu$ nor $Cu\cup \{e\}$ is a potential cut of $O$, $v\in U$ and $u\notin U$; as $e\in Cu^{\prime }$, $u\in U^{\prime }$ and $v\notin U^{\prime }$.

Suppose $i<j$. Let $e_{i}$ have vertices $x$ and $y$, with $x\in U$ and $y\notin U$. Then $e_{i}$ is directed from $x$ to $y$ in $O$.

If $x\in U^{\prime }$ then $y\in U^{\prime }$ too, as $i<j$. Let $Cu^{\prime \prime }$ be the set of edges of $G$ with precisely one vertex in $U\cap U^{\prime }$; then $Cu^{\prime \prime }$ is a union of disjoint cuts of $G$, and $Cu^{\prime \prime }\neq \varnothing $ as $e_{i}\in Cu^{\prime \prime }$. As $Cu^{\prime \prime }\subseteq Cu\cup Cu^{\prime }$, every element of $Cu^{\prime \prime }$ is either unoriented in $O$ or directed away from $U\cap U^{\prime }$. In addition, $e_{i}$ is the least-indexed element of $Cu^{\prime \prime }$; but this contradicts the hypothesis that $e_{i}\notin I^{o}(O)$.

If $x\notin U^{\prime }$ then $y\notin U^{\prime }$ too, as $i<j$. Let $Cu^{\prime \prime }$ be the set of edges of $G$ with precisely one vertex in $U\cup U^{\prime }$. Again, $Cu^{\prime \prime }\neq \varnothing $ as $e_{i}\in Cu^{\prime \prime }$, and $Cu^{\prime \prime }\subseteq Cu\cup Cu^{\prime }$, so every element of $Cu^{\prime \prime }$ is either unoriented in $O$ or directed away from $U\cup U^{\prime }$. In addition, $e_{i}$ is the least-indexed element of $Cu^{\prime \prime }$; but again, this contradicts the hypothesis that $e_{i}\notin I^{o}(O)$.

As $e_{i}$ is oriented in $O$ and $e_{j}$ is not, $i\neq j$; we conclude that $i>j$. Let $e_{j}$ have vertices $x$ and $y$, with $x\in U^{\prime }$ and $y\notin U^{\prime }$. Then $e_{j}$ is directed from $x$ to $y$ in $O\cup \{e_{j}^{\sigma _{u}(e_{j})}\}$.

Suppose $x\in U$. Let $Cu^{\prime \prime }$ be the set of edges of $G$ with precisely one vertex in $U\cap U^{\prime }$; $Cu^{\prime \prime }\neq \varnothing $ as $e_{j}\in Cu^{\prime \prime }$. As $Cu^{\prime \prime}\subseteq Cu\cup Cu^{\prime }$, every element of $Cu^{\prime \prime }$ is either unoriented in $O\cup \{e_{j}^{\sigma _{u}(e_{j})}\}$ or directed away from $U\cap U^{\prime }$. Moreover, the fact that $u,v\notin U\cap U^{\prime}$ implies that $Cu^{\prime \prime }$ provides a potential cut of $(O\cup \{e_{j}^{\sigma _{u}(e_{j})}\})/e$, which contains $e_{j}$. The hypothesis $e_{j}\notin I^{u}(O/e)$ is contradicted because $e_{j}$ is the least-indexed element of~$Cu^{\prime \prime }$.

If $x\not\in U$ then as $j<i$, $y\not\in U$ too. Let $Cu^{\prime \prime }$ be the set of edges of $G$ with precisely one vertex in $U\cup U^{\prime }$. Then $e_{j}\in Cu^{\prime \prime }$, so $Cu^{\prime \prime }\neq \varnothing $. As $Cu^{\prime \prime }\subseteq Cu\cup Cu^{\prime }$, every element of $Cu^{\prime \prime }$ is either unoriented in $O\cup \{e_{j}^{\sigma _{u}(e_{j})}\}$ or directed\ away from $U\cup U^{\prime }$. Moreover, the fact that $u,v\in U\cup U^{\prime }$ implies that $Cu^{\prime \prime }$ provides a potential cut of $(O\cup \{e_{j}^{\sigma _{u}(e_{j})}\})/e$, which contains $e_{j}$. The hypothesis $e_{j}\notin I^{u}(O/e)$ is contradicted because $e_{j}$ is the least-indexed element of~$Cu^{\prime \prime }$.
\end{proof}

\subsection{Consequences of \texorpdfstring{$I^{u}(O)\not=I^{u}(O-e)$}{}}

\begin{lemma}
$I^{u}(O)\not=I^{u}(O-e)\Rightarrow L^{o}(^{e}O)=L^{o}(O-e)$ and~$L^{b}(^{e}O)=L^{b}(O-e)$ and~$L^{o}(O)=L^{o}(O/e)$ and~$L^{b}(O)=L^{b}(O/e)$ and~$I^{o}(O)=I^{o}(O/e)$.
\end{lemma}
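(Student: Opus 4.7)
The plan is to mimic the structure of the preceding lemmas in this subsection (especially the ``consequences of $I^o(O) \neq I^o(O-e)$'' block) almost verbatim, swapping the roles of $I^o$ and $I^u$ appropriately. First I would unpack the hypothesis: the same simple bijection between potential cuts of $O \cup \{e_i^{\sigma_u(e_i)}\}$ and unions of potential cuts of $(O-e) \cup \{e_i^{\sigma_u(e_i)}\}$ shows that $I^u(O) \subseteq I^u(O-e)$, so the failure of equality yields an edge $e_i \in I^u(O-e) \setminus I^u(O)$ witnessed by a potential cut $Cu$ of $(O-e) \cup \{e_i^{\sigma_u(e_i)}\}$ whose least-indexed element is $e_i$, and such that neither $Cu$ nor $Cu \cup \{e\}$ is a potential cut of $O \cup \{e_i^{\sigma_u(e_i)}\}$. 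Writing $Cu$ as the coboundary of some $U \subseteq V(G)$, this last condition combined with $e$ being simply oriented from $u$ to $v$ in $O$ forces $v \in U$ and $u \notin U$.

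Next, assume for contradiction that $I^o(O) \neq I^o(O/e)$. Since every potential cut of $O/e$ is a potential cut of $O$ we have $I^o(O/e) \subseteq I^o(O)$, so there exists $e_j \in I^o(O) \setminus I^o(O/e)$, witnessed by a potential cut $Cu'$ of $O$ minimal at $e_j$ and necessarily containing $e$. Writing $Cu'$ as the coboundary of $U' \subseteq V(G)$, the fact that $e \in Cu'$ is directed from $u$ to $v$ forces $u \in U'$ and $v \notin U'$. Note $e_i \neq e_j$ because $e_i$ is unoriented in $O$ while $e_j$ is simply oriented in $O$.

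Now I would run the same case analysis on $i$ versus $j$ that drives the preceding lemmas. If $i < j$, let $e_i$ have endpoints $x \in U$, $y \notin U$; because $i < j$, $e_i \notin Cu'$, so $x$ and $y$ lie on the same side of $U'$. Taking $Cu''$ to be the coboundary of $U \cap U'$ when $x \in U'$, or of $U \cup U'$ when $x \notin U'$, the fact that $u$ and $v$ are on the same side of both $U \cap U'$ and $U \cup U'$ means $e \notin Cu''$, and $Cu'' \subseteq Cu \cup Cu'$ ensures that every edge of $Cu''$ is unoriented in $O \cup \{e_i^{\sigma_u(e_i)}\}$ or directed away from the chosen set. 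Thus $Cu''$ supplies a potential cut of $O \cup \{e_i^{\sigma_u(e_i)}\}$ with minimum $e_i$, contradicting $e_i \notin I^u(O)$. If $i > j$, the symmetric argument with $e_j$ (endpoints $x' \in U'$, $y' \notin U'$; using that $e_j$ is oriented away from $U'$ and that $j < i$ forces $e_j \notin Cu$) produces a potential cut of $O/e$ minimal at $e_j$, contradicting $e_j \notin I^o(O/e)$. The main technical nuisance will be the careful $u,v$-bookkeeping in the two augmented orientations $O$ and $O \cup \{e_i^{\sigma_u(e_i)}\}$, since the hypothesis lives in the $I^u$-world (deleted $e$ plus added phantom orientation of $e_i$) while the conclusion lives in the $I^o$-world (contracted $e$); the fact that the combinatorics conspires to make $e$ vanish from $Cu''$ in both branches is exactly what makes the swap of vertex sets legitimate.
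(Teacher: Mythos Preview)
The paper's proof of this lemma is a single sentence: all five implications are obtained as contrapositives of lemmas already established in the preceding subsections, applied to ${}^eO$ in place of $O$ where appropriate. Your proposal does not take this route. You give a direct cut-splicing argument, but only for the fifth conclusion $I^o(O)=I^o(O/e)$; you never say anything about the four cycle equalities $L^{o}({}^{e}O)=L^{o}(O-e)$, $L^{b}({}^{e}O)=L^{b}(O-e)$, $L^{o}(O)=L^{o}(O/e)$, $L^{b}(O)=L^{b}(O/e)$. Those four really are immediate contrapositives of Lemmas~\ref{cutsb}, its $L^b$-analogue, Lemma~\ref{cyclesc4}, and its $L^b$-analogue, so at minimum you should state that and reserve the detailed argument for the part that needs it.

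Your direct argument for $I^o(O)=I^o(O/e)$ is correct, and in fact it coincides step for step with the proof the paper gives two lemmas later, where $I^o(O)=I^o(O/e)$ is stated and proved again with exactly this $U\cap U'$/$U\cup U'$ cut-splicing and the same $i<j$ versus $i>j$ case split. That redundancy in the paper---together with the fact that the assertion $I^o({}^eO)=I^o(O-e)$ of condition~(1) is otherwise never handled in this subsection---strongly suggests the fifth conclusion here is a typo for $I^o({}^eO)=I^o(O-e)$, which \emph{does} follow as the contrapositive of the lemma $I^{o}(O)\neq I^{o}(O-e)\Rightarrow I^{u}({}^{e}O)=I^{u}(O-e)$ from the previous block, applied with ${}^eO$ in place of $O$. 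So your instinct that the contrapositive machinery does not yield $I^o(O)=I^o(O/e)$ directly is well founded; the paper simply proves that implication separately.
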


\begin{proof}
These implications follow from\ earlier results, applied to $^{e}O$ rather than $O$.
\end{proof}

\begin{lemma}
$I^{u}(O)\not=I^{u}(O-e)\Rightarrow I^{u}(^{e}O)=I^{u}(O-e)$.
\end{lemma}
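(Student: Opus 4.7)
The plan is to mirror, in the $I^u$ setting, the argument already given earlier for the implication $I^o(O) \neq I^o(O-e) \Rightarrow I^o({}^eO) = I^o(O-e)$. The main adjustment is that the witness cuts live in the augmented fourientations $(O-e)\cup\{e_i^{\sigma_u(e_i)}\}$ and $(O-e)\cup\{e_j^{\sigma_u(e_j)}\}$ rather than in $O-e$ itself, and one must track the status of the edges $e_i$, $e_j$ carefully when transferring cuts back to $O\cup\{e_i^{\sigma_u(e_i)}\}$.

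First I would observe the containment $I^u(O)\subseteq I^u(O-e)$ (since each potential cut $Cu$ of $O\cup\{e_j^{\sigma_u(e_j)}\}$ has the property that $Cu\setminus\{e\}$ is a disjoint union of potential cuts of $(O-e)\cup\{e_j^{\sigma_u(e_j)}\}$), and the analogous containment for ${}^eO$. Under the hypothesis $I^u(O)\neq I^u(O-e)$, pick $e_i\in I^u(O-e)\setminus I^u(O)$, witnessed by a potential cut $Cu$ of $(O-e)\cup\{e_i^{\sigma_u(e_i)}\}$ with minimum $e_i$, defined by some $U\subseteq V(G)$, and such that neither $Cu$ nor $Cu\cup\{e\}$ is a potential cut of $O\cup\{e_i^{\sigma_u(e_i)}\}$. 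For contradiction, suppose $I^u({}^eO)\neq I^u(O-e)$ and analogously extract $e_j$, $Cu'$, $U'$. The usual case analysis on the endpoints of $e$ forces $v\in U$, $u\notin U$ and $u\in U'$, $v\notin U'$, because otherwise $Cu$ (resp.\ $Cu\cup\{e\}$) would itself be a potential cut of $O\cup\{e_i^{\sigma_u(e_i)}\}$.

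Now interchange $O$ and ${}^eO$ if needed so that $i\le j$, and let $e_i=\{x,y\}$ with $x\in U$, $y\notin U$, oriented from $x$ to $y$ in $(O-e)\cup\{e_i^{\sigma_u(e_i)}\}$. Split into the standard two subcases:
\begin{itemize}
\item If $x\in U'$, let $Cu^{\prime\prime}$ be the set of edges of $G$ with precisely one vertex in $U\cap U'$.
\item If $x\notin U'$, let $Cu^{\prime\prime}$ be the set of edges of $G$ with precisely one vertex in $U\cup U'$.
\end{itemize}
In either subcase, $e_i\in Cu^{\prime\prime}$, $e\notin Cu^{\prime\prime}$ (because $u,v$ land on opposite sides of both $U$ and $U'$), and $Cu^{\prime\prime}\subseteq Cu\cup Cu'$. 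I would then verify that every edge of $Cu^{\prime\prime}$ is either unoriented in $O\cup\{e_i^{\sigma_u(e_i)}\}$ or directed away from the relevant subset: edges other than $e,e_i,e_j$ have the same status in $(O-e)\cup\{e_i^{\sigma_u(e_i)}\}$, $(O-e)\cup\{e_j^{\sigma_u(e_j)}\}$, and $O\cup\{e_i^{\sigma_u(e_i)}\}$; the edge $e_i$ is oriented from $x$ to $y$ in the latter; and the crucial point is that when $i<j$ the edge $e_j$ is unoriented in $O\cup\{e_i^{\sigma_u(e_i)}\}$, so its inclusion in $Cu^{\prime\prime}$ poses no obstruction. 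When $i=j$ one has $e_i=e_j$ and the common augmentation $e_i^{\sigma_u(e_i)}=e_j^{\sigma_u(e_j)}$ forces $x=x'$ and $y=y'$, placing $x\in U\cap U'$ and $y\notin U\cup U'$, and again $Cu^{\prime\prime}$ works. Consequently $Cu^{\prime\prime}$ contains a potential cut of $O\cup\{e_i^{\sigma_u(e_i)}\}$ whose minimum edge is $e_i$, contradicting $e_i\notin I^u(O)$.

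The main obstacle I expect is bookkeeping: one must keep straight the orientation of each edge in four different fourientations, namely $(O-e)\cup\{e_i^{\sigma_u(e_i)}\}$, $(O-e)\cup\{e_j^{\sigma_u(e_j)}\}$, $O\cup\{e_i^{\sigma_u(e_i)}\}$, and ${}^eO\cup\{e_j^{\sigma_u(e_j)}\}$, and ensure that the potential-cut conditions for $Cu$ and $Cu'$ transfer to $Cu^{\prime\prime}$ in the ambient fourientation $O\cup\{e_i^{\sigma_u(e_i)}\}$. The decisive observation that makes everything go through is that, in the target fourientation $O\cup\{e_i^{\sigma_u(e_i)}\}$, the edge $e_j$ reverts to being unoriented whenever $i<j$, which is exactly what is needed so that its presence in $Cu^{\prime\prime}$ never violates the potential-cut condition; the $i=j$ case is handled by the uniqueness of the augmentation direction $\sigma_u(e_i)$.
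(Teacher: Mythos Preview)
Your proposal is correct and follows essentially the same argument as the paper's proof: set up the two witness cuts $Cu,Cu'$ coming from $I^{u}(O-e)\setminus I^{u}(O)$ and $I^{u}(O-e)\setminus I^{u}({}^{e}O)$, locate $u,v$ relative to $U,U'$, assume $i\le j$, and build $Cu''$ from $U\cap U'$ or $U\cup U'$ to contradict $e_{i}\notin I^{u}(O)$. The only cosmetic difference is that the paper splits your second subcase into two ($x\notin U',\,y\notin U'$ versus $x\notin U',\,y\in U'$), disposing of the latter directly; your handling via the $i=j$ analysis (which forces $x\in U'$, hence rules out the second subcase unless $i<j$, whence $e_{i}\notin Cu'$ and $y\notin U'$) achieves the same thing, though you should state explicitly that this is why $y\notin U'$ holds in your second subcase.
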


\begin{proof}
Suppose $I^{u}(O)\not=I^{u}(O-e)$. Each potential cut $Cu$ of $O$ has the property that $Cu\setminus \{e\}$ is a union of disjoint potential cuts of $O-e$, so $I^{u}(O)\subseteq I^{u}(O-e)$. It follows that $G$ has\ an edge $e_{i}\in I^{u}(O-e)$, which is excluded from $I^{u}(O)$. Then there is a potential cut $Cu$ of $(O\cup \{e_{i}^{\sigma _{u}(e_{i})}\})-e$ whose least-indexed element is $e_{i}$, and no such $Cu$ has either $Cu$ or $Cu\cup \{e\}$ a potential cut of $O\cup \{e_{i}^{\sigma _{u}(e_{i})}\}$. Similarly, if $I^{u}(^{e}O)\not=I^{u}(O-e)$ then $G$ has\ an edge $e_{j}\in I^{u}(O-e)\setminus I^{u}(^{e}O)$. Then there is a potential cut $Cu^{\prime }$ of $(O\cup \{e_{j}^{\sigma _{u}(e_{j})}\})-e$ whose least-indexed element is $e_{j}$, and no such $Cu^{\prime }$ has either $Cu^{\prime }$ or $Cu^{\prime }\cup \{e\}$ a potential cut of $^{e}O\cup\{e_{j}^{\sigma _{u}(e_{j})}\}$.

Suppose $Cu$ and $Cu^{\prime }$ are as described. Then there are subsets $U,U^{\prime }\subseteq V(G)$ such that $Cu$ is the set of edges of $G-e$ with precisely one vertex in $U$, $Cu^{\prime }$ is the set of edges of $G-e$ with precisely one vertex in $U^{\prime }$, every edge of $Cu$ is either unoriented or directed away from $U$ in $O\cup \{e_{i}^{\sigma_{u}(e_{i})}\} $, and every edge of $Cu^{\prime }$ is either unoriented or directed away from $U^{\prime }$ in $^{e}O\cup \{e_{j}^{\sigma_{u}(e_{j})}\} $.

Interchanging $O$ and $^{e}O$ if necessary, we may presume that $i\leq j$. Let $e_{i}$ have vertices $x$ and $y$, with $x\in U$ and $y\notin U$. Then $e_{i}$ is directed from $x$ to $y$ in $O\cup \{e_{i}^{\sigma _{u}(e_{i})}\}$.

Suppose $x\in U^{\prime }$. Let $Cu^{\prime \prime }$ be the set of edges of $G$ with precisely one vertex in $U\cap U^{\prime }$; then $Cu^{\prime \prime }$ is a union of disjoint cuts of $G$. As $e_{i}\in Cu^{\prime \prime}$, $Cu^{\prime \prime }\neq \varnothing $. As $Cu^{\prime \prime }\subseteq Cu\cup Cu^{\prime }$, every element of $Cu^{\prime \prime }$ is either unoriented or directed away from $U\cap U^{\prime }$ in $O\cup \{e_{i}^{\sigma _{u}(e_{i})}\}$. (If $i<j$ then the direction of $e_{j}$ in $^{e}O\cup \{e_{j}^{\sigma _{u}(e_{j})}\}$ is irrelevant to this assertion, as $e_{j}$ is unoriented in $O\cup \{e_{i}^{\sigma _{u}(e_{i})}\}$.) In addition, $e_{i}$ is the least-indexed element of $Cu^{\prime \prime }$; but then $e_{i}$ is the least-indexed element of a potential cut of $O\cup \{e_{i}^{\sigma _{u}(e_{i})}\}$ contained in $Cu^{\prime \prime }$, and this contradicts the hypothesis that $e_{i}\notin I^{u}(O)$.

If $x\notin U^{\prime }$ and $y\notin U^{\prime }$ then let $Cu^{\prime \prime }$ be the set of edges of $G$ with precisely one vertex in $U\cup U^{\prime }$. Again, $Cu^{\prime \prime }\neq \varnothing $ as $e_{i}\in Cu^{\prime \prime }$, and $Cu^{\prime \prime }\subseteq Cu\cup Cu^{\prime }$, so every element of $Cu^{\prime \prime }$ is either unoriented or directed away from $U\cup U^{\prime }$ in $O\cup \{e_{i}^{\sigma _{u}(e_{i})}\}$. In addition, $e_{i}$ is the least-indexed element of $Cu^{\prime \prime }$; but again, this contradicts the hypothesis that $e_{i}\notin I^{u}(O)$.

If $x\notin U^{\prime }$ and $y\in U^{\prime }$ then $e_{i}\in Cu^{\prime \prime }$, so it must be that $i=j$. But this is impossible, as $e_{i}$ is directed from $x$ to $y$ in $O\cup \{e_{i}^{\sigma _{u}(e_{i})}\}$.
\end{proof}

\begin{lemma}
$I^{u}(O)\not=I^{u}(O-e)\Rightarrow I^{o}(O)=I^{o}(O/e)$.
\end{lemma}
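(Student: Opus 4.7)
My plan is to mimic the argument used in the immediately preceding lemma ($I^{o}(O) \ne I^{o}(O-e) \Rightarrow I^{u}(O) = I^{u}(O/e)$), with the roles of the oriented/unoriented flags swapped. The overall shape of the proof is a two-case analysis comparing the indices of two hypothetical ``bad'' edges, where in each case one builds a new cut as either the intersection or the union of two vertex subsets, and derives a contradiction.

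First I would dispose of the easy containment $I^{o}(O/e)\subseteq I^{o}(O)$: any potential cut of $O/e$ is literally a potential cut of $O$ (its defining set $U'\subseteq V(G)$ has the two endpoints of $e$ on the same side, so $e$ is absent). Then, for the reverse containment, I would suppose for contradiction that there exists $e_{j}\in I^{o}(O)\setminus I^{o}(O/e)$. This yields a potential cut $Cu'$ of $O$ with $\min(Cu')=e_{j}$ such that every potential cut of $O$ with this property contains $e$; so the defining set $U'$ satisfies $u\in U'$ and $v\notin U'$. Meanwhile, the hypothesis $I^{u}(O)\ne I^{u}(O-e)$ yields (as in every other argument in this subsection) an $e_{i}\in I^{u}(O-e)\setminus I^{u}(O)$ together with a potential cut $Cu$ of $(O\cup\{e_{i}^{\sigma_{u}(e_{i})}\})-e$ with $\min(Cu)=e_{i}$, whose defining set $U$ satisfies $v\in U$ and $u\notin U$. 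Note $e_{i}\ne e_{j}$ because $e_{i}$ is unoriented in $O$ while $e_{j}$ is oriented.

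The main work is then the index dichotomy. If $i<j$, I would take $e_{i}$ with endpoints $x\in U$, $y\notin U$, and split on whether $x\in U'$, $y\in U'$, or both. The sub-cases where $e_{i}\in Cu'$ force $j\le i$ and are ruled out immediately; in the remaining sub-cases I build $Cu''$ as the set of edges with exactly one endpoint in $U\cap U'$ (when $x,y\in U'$) or in $U\cup U'$ (when $x,y\notin U'$), verify that $e$ lies on the same side of the partition in both cases, and check using the hypotheses on $Cu$ and $Cu'$ that every edge of $Cu''$ is either unoriented or directed away from the relevant set in $O\cup\{e_{i}^{\sigma_{u}(e_{i})}\}$. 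Since $\min(Cu'')=e_{i}$, some component of $Cu''$ exhibits $e_{i}$ as a min of a potential cut of $O\cup\{e_{i}^{\sigma_{u}(e_{i})}\}$ in $G$, contradicting $e_{i}\notin I^{u}(O)$. The case $i>j$ is symmetric: starting from an endpoint pair $x',y'$ for $e_{j}$ with $x'\in U'$, $y'\notin U'$, the sub-cases where $e_{j}\in Cu$ force $i\le j$ (and are ruled out), while in the other sub-cases the cut $Cu''$ built from $U\cap U'$ or $U\cup U'$ avoids $e$ because either both $u,v\notin U\cap U'$ or both $u,v\in U\cup U'$, and since $e_{i}$ is unoriented in $O$ it poses no obstruction to $Cu''$ being a potential cut of $O/e$ with $\min(Cu'')=e_{j}$, contradicting $e_{j}\notin I^{o}(O/e)$.

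The computations are entirely bookkeeping; the only real obstacle, which is exactly the one that made the analogous preceding lemma nontrivial, is checking the Boolean combination of endpoint-memberships in $U$ and $U'$ carefully enough that (a) $e$ is excluded from $Cu''$ and (b) the orientation constraints from $Cu$ and $Cu'$ combine consistently on $Cu''$. The asymmetry between $O$ and $O\cup\{e_{i}^{\sigma_{u}(e_{i})}\}$ only matters along the edge $e_{i}$ itself, which is handled by noting that in every sub-case either $e_{i}\notin Cu''$ or else the new orientation of $e_{i}$ points ``away'' from the new set.
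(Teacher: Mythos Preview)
Your proposal is correct and follows essentially the same approach as the paper's proof: the same easy containment $I^{o}(O/e)\subseteq I^{o}(O)$, the same pair of witnesses $e_i\in I^{u}(O-e)\setminus I^{u}(O)$ and $e_j\in I^{o}(O)\setminus I^{o}(O/e)$ with their defining vertex sets $U,U'$, the same index dichotomy $i<j$ versus $i>j$, and in each case the same construction of $Cu''$ from $U\cap U'$ or $U\cup U'$ to derive the contradiction. Your handling of the asymmetry at $e_i$ (unoriented in $O$, hence harmless for the potential-cut condition in $O$ or $O/e$) matches the paper's reasoning exactly.
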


\begin{proof}
Suppose $I^{u}(O)\not=I^{u}(O-e)$. If $i<p$ then each potential cut $Cu$ of $O\cup \{e_{i}^{\sigma _{u}(e_{i})}\}$ has the property that $Cu\setminus \{e\}$ is a union of disjoint potential cuts of $(O\cup \{e_{i}^{\sigma_{u}(e_{i})}\})-e$, so $I^{u}(O)\subseteq I^{u}(O-e)$. It follows that $G$ has an edge $e_{i}\in I^{u}(O-e)$, which is excluded from $I^{u}(O)$. Then there is a potential cut $Cu$ of $(O\cup \{e_{i}^{\sigma_{u}(e_{i})}\})-e$ whose least-indexed element is $e_{i}$, and no such $Cu$ has either $Cu$ or $Cu\cup \{e\}$ a potential cut of $O\cup \{e_{i}^{\sigma_{u}(e_{i})}\}$.

Every potential cut of $O/e$ is a potential cut of $O$, so $I^{o}(O/e)\subseteq I^{o}(O)$. If $I^{o}(O)\not=I^{o}(O/e)$, then, $G$ has an edge $e_{j}\in I^{o}(O)\setminus I^{o}(O/e)$. Then there is a potential cut $Cu^{\prime }$ of $O$ whose least-indexed element is $e_{j}$, and every such $Cu^{\prime }$ includes $e$.

Suppose $Cu$ and $Cu^{\prime }$ are as described. Then there are subsets $U,U^{\prime }\subseteq V(G)$ such that $Cu$ is the set of edges of $G-e$ with precisely one vertex in $U$, $Cu^{\prime }$ is the set of edges of $G$ with precisely one vertex in $U^{\prime }$, every edge of $Cu$ is either unoriented or directed away from $U$ in $O\cup \{e_{i}^{\sigma_{u}(e_{i})}\} $, and every edge of $Cu^{\prime }$ is either unoriented or directed away from $U^{\prime }$ in $O$. As neither $Cu$ nor $Cu\cup \{e\}$ is a potential cut of $O\cup \{e_{i}^{\sigma _{u}(e_{i})}\}$, $v\in U$ and $u\notin U$; as $e\in Cu^{\prime }$, $u\in U^{\prime }$ and $v\notin U^{\prime }$.

Suppose $i<j$. Let $e_{i}$ have vertices $x$ and $y$, with $x\in U$ and $y\notin U$. Then $e_{i}$ is directed from $x$ to $y$ in $O\cup \{e_{i}^{\sigma _{u}(e_{i})}\}$.

If $x\in U^{\prime }$ then $y\in U^{\prime }$ too, as $i<j$. Let $Cu^{\prime \prime }$ be the set of edges of $G$ with precisely one vertex in $U\cap U^{\prime }$; then $Cu^{\prime \prime }$ is a union of disjoint cuts of $G$, and $Cu^{\prime \prime }\neq \varnothing $ as $e_{i}\in Cu^{\prime \prime }$. As $Cu^{\prime \prime }\subseteq Cu\cup Cu^{\prime }$, every element of $Cu^{\prime \prime }$ is either unoriented in $O\cup \{e_{i}^{\sigma_{u}(e_{i})}\}$ or directed away from $U\cap U^{\prime }$. In addition, $e_{i}$ is the least-indexed element of $Cu^{\prime \prime }$; but this contradicts the hypothesis that $e_{i}\notin I^{u}(O)$.

If $x\notin U^{\prime }$ then $y\notin U^{\prime }$ too, as $i<j$. Let $Cu^{\prime \prime }$ be the set of edges of $G$ with precisely one vertex in  $U\cup U^{\prime }$. Again, $Cu^{\prime \prime }\neq \varnothing $ as $e_{i}\in Cu^{\prime \prime }$, and $Cu^{\prime \prime }\subseteq Cu\cup Cu^{\prime }$, so every element of $Cu^{\prime \prime }$ is either unoriented in $O\cup \{e_{i}^{\sigma _{u}(e_{i})}\}$ or directed away from $U\cup U^{\prime }$. In addition, $e_{i}$ is the least-indexed element of $Cu^{\prime \prime }$; but again, this contradicts the hypothesis that $e_{i}\notin I^{u}(O)$.

We conclude that $i>j$. (N.b. $e_{i}\neq e_{j}$ as $e_{i}$ is unoriented in $O$, and $e_{j}$ is oriented.) Let $e_{j}$ have vertices $x$ and $y$, with $x\in U^{\prime }$ and $y\notin U^{\prime }$. Then $e_{j}$ is directed from $x $ to $y$ in $O$.

Suppose $x\in U$. As $j<i$, it follows that $y\in U$ too. Let $Cu^{\prime \prime }$ be the set of edges of $G$ with precisely one vertex in $U\cap U^{\prime }$; $Cu^{\prime \prime }\neq \varnothing $ as $e_{j}\in Cu^{\prime \prime }$. As $Cu^{\prime \prime }\subseteq Cu\cup Cu^{\prime }$, every element of $Cu^{\prime \prime }$ is either unoriented in $O$ or directed away from $U\cap U^{\prime }$. Moreover, the fact that $u,v\notin U\cap U^{\prime }$ implies that $Cu^{\prime \prime }$ provides a potential cut of $O/e$, which contains $e_{j}$. The hypothesis $e_{j}\notin I^{o}(O/e)$ is contradicted because $e_{j}$ is the least-indexed element of~$Cu^{\prime \prime }$.

If $x\not\in U$ then $y\not\in U$ too, as $j>i$. Let $Cu^{\prime \prime }$ be the set of edges of $G$ with precisely one vertex in $U\cup U^{\prime }$; $Cu^{\prime \prime }\neq \varnothing $ as $e_{j}\in Cu^{\prime \prime }$. As $Cu^{\prime \prime }\subseteq Cu\cup Cu^{\prime }$, every element of $Cu^{\prime \prime }$ is either unoriented in $O$ or directed\ away from $U\cup U^{\prime }$. Moreover, the fact that $u,v\in U\cup U^{\prime }$ implies that $Cu^{\prime \prime }$ provides a potential cut of $O/e$, which contains $e_{j}$. The hypothesis $e_{j}\notin I^{o}(O/e)$ is contradicted because $e_{j}$ is the least-indexed element of~$Cu^{\prime \prime }$.
\end{proof}

\begin{lemma}
$I^{u}(O)\not=I^{u}(O-e)\Rightarrow I^{u}(O)=I^{u}(O/e)$.
\end{lemma}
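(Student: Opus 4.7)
The plan is to mimic the argument of the immediately preceding lemma, now with ``$u$'' on both sides. First, the usual restriction arguments give $I^{u}(O)\subseteq I^{u}(O-e)$ (each potential cut of $O\cup\{e_{i}^{\sigma_{u}(e_{i})}\}$ restricts, after removing $e$, to a disjoint union of potential cuts of the deletion) and $I^{u}(O/e)\subseteq I^{u}(O)$ (each potential cut of $(O\cup\{e_{j}^{\sigma_{u}(e_{j})}\})/e$ is already a potential cut of $O\cup\{e_{j}^{\sigma_{u}(e_{j})}\}$). So assume for contradiction that there exist $e_{i}\in I^{u}(O-e)\setminus I^{u}(O)$ and $e_{j}\in I^{u}(O)\setminus I^{u}(O/e)$, witnessed by potential cuts $Cu$ of $(O\cup\{e_{i}^{\sigma_{u}(e_{i})}\})-e$ and $Cu^{\prime}$ of $O\cup\{e_{j}^{\sigma_{u}(e_{j})}\}$ with respective minima $e_{i}$ and $e_{j}$, where no such $Cu$ extends to a potential cut of $O\cup\{e_{i}^{\sigma_{u}(e_{i})}\}$ and every such $Cu^{\prime}$ contains $e$. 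Choose $U,U^{\prime}\subseteq V(G)$ so that $Cu$ is the set of edges of $G-e$ with precisely one end-vertex in $U$ and $Cu^{\prime}$ is the set of edges of $G$ with precisely one end-vertex in $U^{\prime}$. The standard first reductions force $v\in U$ and $u\notin U$ (from the non-extendability of $Cu$) and $u\in U^{\prime}$, $v\notin U^{\prime}$ (since $e\in Cu^{\prime}$ is directed from $u$ to $v$).

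Next I would split on whether $i\leq j$ or $i>j$, noting that $i=j$ is now possible because both $e_{i}$ and $e_{j}$ are unoriented in $O$. In the case $i\leq j$, write $e_{i}=\{x,y\}$ with $x\in U$ and $y\notin U$, so that $e_{i}$ is directed from $x$ to $y$ in $O\cup\{e_{i}^{\sigma_{u}(e_{i})}\}$. Depending on which side of $U^{\prime}$ contains $x$ and $y$, either $U\cap U^{\prime}$ or $U\cup U^{\prime}$ produces a combined subset whose associated cut $Cu^{\prime\prime}$ contains $e_{i}$; since $u\notin U$ and $v\notin U^{\prime}$, in both configurations $u$ and $v$ lie on the same side of the combined subset, so $e\notin Cu^{\prime\prime}$. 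The aim is to verify that $Cu^{\prime\prime}$ is a potential cut of $O\cup\{e_{i}^{\sigma_{u}(e_{i})}\}$ of minimum $e_{i}$, which contradicts $e_{i}\notin I^{u}(O)$. The case $i>j$ is symmetric, producing a potential cut of $(O\cup\{e_{j}^{\sigma_{u}(e_{j})}\})/e$ of minimum $e_{j}$ and contradicting $e_{j}\notin I^{u}(O/e)$.

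The main obstacle is the bookkeeping needed to check that $Cu^{\prime\prime}$ really is a potential cut with respect to the correct orientation. The edges of $Cu^{\prime\prime}$ lie in $Cu\cup Cu^{\prime}$, but $Cu$ is a potential cut of $O\cup\{e_{i}^{\sigma_{u}(e_{i})}\}$ while $Cu^{\prime}$ is a potential cut of $O\cup\{e_{j}^{\sigma_{u}(e_{j})}\}$; when $i<j$ the only edge whose status differs between these two orientations is $e_{j}$, which is unoriented in $O\cup\{e_{i}^{\sigma_{u}(e_{i})}\}$, and analogously when $i>j$, so in both unequal cases the direction data transfer without trouble. The genuinely delicate case is $i=j$, where the two orientations coincide and one must rule out the configuration $x\notin U^{\prime}$, $y\in U^{\prime}$: in that configuration $e_{i}=e_{j}\in Cu^{\prime}$ would have to be directed both from $x$ to $y$ (by the constraint of $Cu$) and away from $U^{\prime}$, i.e.\ from $y$ to $x$ (by the constraint of $Cu^{\prime}$), a contradiction. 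With this obstruction cleared, $Cu^{\prime\prime}$ is well-defined in every subcase and the argument closes.
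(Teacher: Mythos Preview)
Your plan is correct and follows the paper's argument essentially verbatim: the same containments $I^{u}(O)\subseteq I^{u}(O-e)$ and $I^{u}(O/e)\subseteq I^{u}(O)$, the same vertex sets $U,U'$ with $v\in U\setminus U'$ and $u\in U'\setminus U$, and the same $U\cap U'$ versus $U\cup U'$ combination to manufacture a contradictory potential cut $Cu''$. One remark: the case $i=j$ that you single out as ``genuinely delicate'' is in fact impossible here, since $e_i\notin I^{u}(O)$ while $e_j\in I^{u}(O)$; curiously, the paper also treats $i=j$ explicitly rather than making this observation, so your extra care there is harmless but unnecessary.
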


\begin{proof}
Suppose $I^{u}(O)\not=I^{u}(O-e)$. If $i<p$ then each potential cut $Cu$ of $O\cup \{e_{i}^{\sigma _{u}(e_{i})}\}$ has the property that $Cu\setminus \{e\}$ is a union of disjoint potential cuts of $(O\cup \{e_{i}^{\sigma_{u}(e_{i})}\})-e$, so $I^{u}(O)\subseteq I^{u}(O-e)$. It follows that $G$ has an edge $e_{i}\in I^{u}(O-e)$, which is excluded from $I^{u}(O)$. Then there is a potential cut $Cu$ of $(O\cup \{e_{i}^{\sigma_{u}(e_{i})}\})-e$ whose least-indexed element is $e_{i}$, and no such $Cu$ has either $Cu$ or $Cu\cup \{e\}$ a potential cut of $O\cup \{e_{i}^{\sigma_{u}(e_{i})}\}$.

If $j<p$ then every potential cut of $(O\cup \{e_{j}^{\sigma_{u}(e_{j})}\})/e$ is a potential cut of $O\cup \{e_{j}^{\sigma_{u}(e_{j})}\}$, so $I^{u}(O/e)\subseteq I^{u}(O)$. If $I^{u}(O)\not=I^{u}(O/e)$, $G$ has\ an edge $e_{j}\in I^{u}(O)\setminus I^{u}(O/e)$. Then there is a potential cut $Cu^{\prime }$ of $O\cup \{e_{j}^{\sigma _{u}(e_{j})}\}$ whose least-indexed element is $e_{j}$, and every such $Cu^{\prime }$ includes $e$.

Suppose $Cu$ and $Cu^{\prime }$ are as described. Then there are subsets $U,U^{\prime }\subseteq V(G)$ such that $Cu$ is the set of edges of $G-e$ with precisely one vertex in $U$, $Cu^{\prime }$ is the set of edges of $G$ with precisely one vertex in $U^{\prime }$, every edge of $Cu$ is either unoriented or directed away from $U$ in $O\cup \{e_{i}^{\sigma_{u}(e_{i})}\} $, and every edge of $Cu^{\prime }$ is either unoriented or directed away from $U^{\prime }$ in $O\cup \{e_{j}^{\sigma _{u}(e_{j})}\}$. As neither $Cu$ nor $Cu\cup \{e\}$ is a potential cut of $O\cup \{e_{i}^{\sigma _{u}(e_{i})}\}$, $v\in U$ and $u\notin U$; as $e\in Cu^{\prime }$, $u\in U^{\prime }$ and $v\notin U^{\prime }$.

Suppose $i<j$. Let $e_{i}$ have vertices $x$ and $y$, with $x\in U$ and $y\notin U$. Then $e_{i}$ is directed from $x$ to $y$ in $O\cup \{e_{i}^{\sigma _{u}(e_{i})}\}$.

If $x\in U^{\prime }$ then $y\in U^{\prime }$ too, as $i<j$. Let $Cu^{\prime \prime }$ be the set of edges of $G$ with precisely one vertex in $U\cap U^{\prime }$; then $Cu^{\prime \prime }$ is a union of disjoint cuts of $G$, and $Cu^{\prime \prime }\neq \varnothing $ as $e_{i}\in Cu^{\prime \prime }$. As $Cu^{\prime \prime }\subseteq Cu\cup Cu^{\prime }$, every element of $Cu^{\prime \prime }$ is either unoriented in $O\cup \{e_{i}^{\sigma_{u}(e_{i})}\}$ or directed away from $U\cap U^{\prime }$. In addition, $e_{i}$ is the least-indexed element of $Cu^{\prime \prime }$; but this contradicts the hypothesis that $e_{i}\notin I^{u}(O)$.

If $x\notin U^{\prime }$ then $y\notin U^{\prime }$ too, as $i<j$. Let $Cu^{\prime \prime }$ be the set of edges of $G$ with precisely one vertex in $U\cup U^{\prime }$. Again, $Cu^{\prime \prime }\neq \varnothing $ as $e_{i}\in Cu^{\prime \prime }$, and $Cu^{\prime \prime }\subseteq Cu\cup Cu^{\prime }$, so every element of $Cu^{\prime \prime }$ is either unoriented in $O\cup \{e_{i}^{\sigma _{u}(e_{i})}\}$ or directed away from $U\cup U^{\prime }$. In addition, $e_{i}$ is the least-indexed element of $Cu^{\prime \prime }$; but again, this contradicts the hypothesis that $e_{i}\notin I^{u}(O)$.

We conclude that $i\geq j$. Let $e_{j}$ have vertices $x$ and $y$, with $x\in U^{\prime }$ and $y\notin U^{\prime }$. Then $e_{j}$ is directed from $x $ to $y$ in $O\cup \{e_{j}^{\sigma _{u}(e_{j})}\}$.

Suppose $x\in U$. Let $Cu^{\prime \prime }$ be the set of edges of $G$ with precisely one vertex in $U\cap U^{\prime }$; $Cu^{\prime \prime }\neq \varnothing $ as $e_{j}\in Cu^{\prime \prime }$. As $Cu^{\prime \prime}\subseteq Cu\cup Cu^{\prime }$, every element of $Cu^{\prime \prime }$ is either unoriented in $O\cup \{e_{j}^{\sigma _{u}(e_{j})}\}$ or directed away from $U\cap U^{\prime }$. Moreover, the fact that $u,v\notin U\cap U^{\prime}$ implies that $Cu^{\prime \prime }$ provides a potential cut of $(O\cup \{e_{j}^{\sigma _{u}(e_{j})}\})/e$, which contains $e_{j}$. The hypothesis $e_{j}\notin I^{u}(O/e)$ is contradicted because $e_{j}$ is the least-indexed element of~$Cu^{\prime \prime }$.

If $x\not\in U$, instead, then let $Cu^{\prime \prime }$ be the set of edges of $G$ with precisely one vertex in $U\cup U^{\prime }$. Note that if $i>j$ then $y\notin U$ because $e_{j}\notin Cu$, and if $i=j$ then $y\notin U$ because no edge is directed into $U$ in $O\cup \{e_{i}^{\sigma _{u}(e_{i})}\}$. It follows that $e_{j}\in Cu^{\prime \prime }$, so $Cu^{\prime \prime}\neq \varnothing $. As $Cu^{\prime \prime }\subseteq Cu\cup Cu^{\prime }$, every element of $Cu^{\prime \prime }$ is either unoriented in $O\cup \{e_{j}^{\sigma _{u}(e_{j})}\}$ or directed\ away from $U\cup U^{\prime }$. Moreover, the fact that $u,v\in U\cup U^{\prime }$ implies that $Cu^{\prime \prime }$ provides a potential cut of $(O\cup \{e_{j}^{\sigma_{u}(e_{j})}\})/e$, which contains $e_{j}$. The hypothesis $e_{j}\notin I^{u}(O/e)$ is contradicted because $e_{j}$ is the least-indexed element of $Cu^{\prime \prime }$.
\end{proof}

\subsection{Consequences of \texorpdfstring{$I^{o}(O)\not=I^{o}(O/e)$}{}}

\begin{lemma}
$I^{o}(O)\not=I^{o}(O/e)\Rightarrow L^{o}(O)=L^{o}(O-e)$ and~$L^{b}(O)=L^{b}(O-e)$ and~$L^{o}(^{e}O)=L^{o}(O/e)$ and~$L^{b}(^{e}O)=L^{b}(O/e)$ and~$I^{o}(^{e}O)=I^{o}(O/e)$ and~$I^{u}(O)=I^{u}(O-e)$.
\end{lemma}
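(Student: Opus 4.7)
The plan is to deduce each of the six asserted equalities from the hypothesis $I^{o}(O)\ne I^{o}(O/e)$ via earlier lemmas, together with one direct argument for the most delicate conclusion. The first three of them follow as immediate contrapositives of lemmas already proved: from the subsections on $L^{o}(O)\ne L^{o}(O-e)$, $L^{b}(O)\ne L^{b}(O-e)$, and $I^{u}(O)\ne I^{u}(O-e)$ we have the implications $L^{o}(O)\ne L^{o}(O-e)\Rightarrow I^{o}(O)=I^{o}(O/e)$, $L^{b}(O)\ne L^{b}(O-e)\Rightarrow I^{o}(O)=I^{o}(O/e)$, and $I^{u}(O)\ne I^{u}(O-e)\Rightarrow I^{o}(O)=I^{o}(O/e)$, whose contrapositives under our hypothesis yield $L^{o}(O)=L^{o}(O-e)$, $L^{b}(O)=L^{b}(O-e)$, and $I^{u}(O)=I^{u}(O-e)$.

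Two further conclusions follow by the same contrapositive trick after substitution. The subsections on $L^{o}(O)\ne L^{o}(O/e)$ and $L^{b}(O)\ne L^{b}(O/e)$ provide $L^{o}(O)\ne L^{o}(O/e)\Rightarrow I^{o}(^{e}O)=I^{o}(O/e)$ and $L^{b}(O)\ne L^{b}(O/e)\Rightarrow I^{o}(^{e}O)=I^{o}(O/e)$. Applied with $^{e}O$ in place of $O$, and using $^{e}(^{e}O)=O$ together with $(^{e}O)/e=O/e$, these become $L^{o}(^{e}O)\ne L^{o}(O/e)\Rightarrow I^{o}(O)=I^{o}(O/e)$ and $L^{b}(^{e}O)\ne L^{b}(O/e)\Rightarrow I^{o}(O)=I^{o}(O/e)$; their contrapositives give $L^{o}(^{e}O)=L^{o}(O/e)$ and $L^{b}(^{e}O)=L^{b}(O/e)$.

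The remaining conclusion, $I^{o}(^{e}O)=I^{o}(O/e)$, is the main obstacle and demands a direct argument, modeled on the proofs of the analogous lemmas in the subsection on $L^{o}(O)\ne L^{o}(O/e)$. One inclusion is general: every potential cut of $O/e$, viewed as a cut of $G$, avoids $e$ and remains a potential cut of $^{e}O$ since $O$ and $^{e}O$ agree on every edge other than $e$, giving $I^{o}(O/e)\subseteq I^{o}(^{e}O)$. For the reverse, suppose for contradiction that $e_{k}\in I^{o}(^{e}O)\setminus I^{o}(O/e)$; then there is a potential cut $Cu'$ of $^{e}O$ with minimum edge $e_{k}$ that must contain $e$, defined by some $U'\subseteq V(G)$ forcing $v\in U'$ and $u\notin U'$. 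The hypothesis simultaneously supplies $e_{j}\in I^{o}(O)\setminus I^{o}(O/e)$, witnessed by a potential cut $Cu$ of $O$ containing $e$ with minimum $e_{j}$, defined by some $U$ with $u\in U$ and $v\notin U$.

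The closing step is to exploit the cuts $\widehat{Cu}$ and $\widetilde{Cu}$ of $G$ determined by $U\cap U'$ and $U\cup U'$. Both avoid $e$, since neither $u$ nor $v$ lies in $U\cap U'$ while both lie in $U\cup U'$; and combining the potential-cut conditions on $Cu$ and $Cu'$ with the fact that $O$ and $^{e}O$ agree off $e$ shows that each is a (possibly non-minimal) potential cut of $O$, which corresponds to a potential cut of $O/e$. Partition $V(G)=A\sqcup B\sqcup C\sqcup D$ with $A=U\cap U'$, $B=U\setminus U'$, $C=U'\setminus U$, $D=V(G)\setminus(U\cup U')$. An oriented edge cannot lie in both $Cu$ and $Cu'$ while crossing between $B$ and $C$, since that would require opposite orientations in $O$ and $^{e}O$, so both $e_{j}$ and $e_{k}$ lie in $\widehat{Cu}\cup\widetilde{Cu}$. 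A careful subcase analysis on which of $e_{j},e_{k}$ is smaller and on which of $\widehat{Cu},\widetilde{Cu}$ contains the smaller one then forces the minimum edge of $\widehat{Cu}$ or $\widetilde{Cu}$ to be $e_{j}$ or $e_{k}$, placing that edge in $I^{o}(O/e)$ and contradicting the choice of $e_{j}$ or $e_{k}$.
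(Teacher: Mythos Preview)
Your proposal is correct. For five of the six conclusions you proceed exactly as the paper intends: the paper's one-line proof ``These implications follow from earlier results, applied to $^{e}O$ rather than $O$'' is precisely your contrapositive-and-substitution argument, and your citations are all accurate.

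For the remaining conclusion $I^{o}(^{e}O)=I^{o}(O/e)$, you are right that it does not fall out of the earlier lemmas by contraposition alone. In fact the paper handles this implication in a \emph{separate} lemma immediately following, with its own direct proof; its appearance in the list of conclusions of the lemma under discussion is redundant (arguably a slip---the natural sixth ``earlier results'' conclusion here would have been $I^{o}(O)=I^{o}(O-e)$, which does follow by contraposing the $I^{o}(O)\ne I^{o}(O-e)\Rightarrow I^{o}(O)=I^{o}(O/e)$ lemma). Your direct argument for $I^{o}(^{e}O)=I^{o}(O/e)$ is essentially the paper's own proof of that next lemma: one takes the witnessing potential cut $Cu$ of $O$ (defined by $U$ with $u\in U$, $v\notin U$) and the hypothetical potential cut $Cu'$ of $^{e}O$ (defined by $U'$ with $v\in U'$, $u\notin U'$), and then passes to the edge set determined by $U\cap U'$ or $U\cup U'$ to produce an $e$-free potential cut whose minimum edge is $e_{\min\{j,k\}}$, contradicting $e_{\min\{j,k\}}\notin I^{o}(O/e)$. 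The paper organizes the case split by first interchanging $O$ and $^{e}O$ to assume the smaller index sits on the $O$-side and then branching on whether the relevant endpoint lies in $U'$; your $A/B/C/D$ partition and the observation that no oriented edge other than $e$ can run between $B$ and $C$ amount to the same thing.
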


\begin{proof}
These implications follow from earlier results, applied to $^{e}O$ rather than $O$.
\end{proof}

\begin{lemma}
$I^{o}(O)\not=I^{o}(O/e)\Rightarrow I^{o}(^{e}O)=I^{o}(O/e)$.
\end{lemma}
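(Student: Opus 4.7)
The plan follows the template of the earlier appendix lemmas (in particular the lemma showing $I^{o}(O)=I^{o}(O/e)$ under $I^{o}(O)\neq I^{o}(O-e)$). The easy inclusion is $I^{o}(O/e)\subseteq I^{o}(^{e}O)$: a potential cut of $O/e$ comes from a vertex subset with $u$ and $v$ on the same side, so $e$ does not appear in it, and only orientations of edges other than $e$ are relevant; those coincide in $O$, $^{e}O$, and $O/e$. It therefore suffices to prove the reverse inclusion.

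Suppose for contradiction $I^{o}(^{e}O)\neq I^{o}(O/e)$ and pick $e_{i}\in I^{o}(^{e}O)\setminus I^{o}(O/e)$. Then there is a potential cut $Cu'$ of $^{e}O$ with minimum $e_{i}$, and any such $Cu'$ must contain $e$ (otherwise $Cu'$ would already witness $e_{i}\in I^{o}(O/e)$); write $U'\subseteq V(G)$ for the underlying vertex set, so that $v\in U'$ and $u\notin U'$. On the other hand the hypothesis $I^{o}(O)\neq I^{o}(O/e)$ supplies $e_{j}\in I^{o}(O)\setminus I^{o}(O/e)$ together with a potential cut $Cu$ of $O$ of minimum $e_{j}$ that contains $e$, with underlying set $U$ satisfying $u\in U$ and $v\notin U$.

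I would split on whether $i<j$ or $i\geq j$. When $i<j$, let $x,y$ be the endpoints of $e_{i}$ with $x\in U'$ and $y\notin U'$; since $e_{i}\neq e$, $e_{i}$ is directed $x\to y$ in both $O$ and $^{e}O$. Because $\min(Cu)=e_{j}>e_{i}$, the edge $e_{i}$ cannot lie in $Cu$, so $x$ and $y$ lie on the same side of $U$. Depending on whether they lie inside or outside $U$, I form $U\cap U'$ or $U\cup U'$; either way its coboundary $Cu''$ avoids $e$ (both $u,v$ end up outside the intersection, or both inside the union), and the standard bookkeeping gives $Cu''\subseteq Cu\cup Cu'$, with every edge of $Cu''$ other than $e$ being unoriented in $O$ or directed outward (combining the potential-cut conditions of $Cu$ in $O$ and $Cu'$ in $^{e}O$, using that $O$ and $^{e}O$ agree off of $e$), and every index in $Cu''$ being at least $i$. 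Hence $Cu''$ yields a potential cut of $O/e$ of minimum $e_{i}$, contradicting $e_{i}\notin I^{o}(O/e)$. When $i\geq j$, I run the symmetric argument with $e_{j}$: its endpoints $x\in U$, $y\notin U$ give $e_{j}$ directed $x\to y$; the configuration $x\notin U'$, $y\in U'$ is impossible because it would place $e_{j}$ in the boundary of $U'$ directed into $U'$, violating the potential-cut condition on $Cu'$ in $^{e}O$; in each remaining subcase I combine $U$ and $U'$ by intersection or union exactly as before, and extract a potential cut of $O/e$ with minimum $e_{j}$, contradicting $e_{j}\notin I^{o}(O/e)$. The equality $i=j$ (so $e_{i}=e_{j}$) causes no extra trouble.

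The only real obstacle is bookkeeping: in every subcase one must verify that $e$ escapes the new coboundary, that every other coboundary edge inherits the correct orientation from $Cu$ or $Cu'$, and that the minimum index of the new coboundary is the expected one. These verifications are entirely analogous to those already carried out in this appendix, and no new ingredient is required.
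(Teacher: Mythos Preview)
Your argument is correct and follows essentially the same route as the paper's proof: assume both $I^{o}(O)\neq I^{o}(O/e)$ and $I^{o}(^{e}O)\neq I^{o}(O/e)$, extract the two witnessing potential cuts with underlying sets $U,U'$ (with $u\in U\setminus U'$, $v\in U'\setminus U$), and combine them via $U\cap U'$ or $U\cup U'$ to produce a potential cut of $O/e$ (avoiding $e$) whose minimum is the smaller-indexed edge, a contradiction. The only cosmetic differences are that you swap the roles of the labels $i,j$ relative to the paper and treat the cases $i<j$ and $i\geq j$ separately, whereas the paper invokes the $O\leftrightarrow{}^{e}O$ symmetry of the contradictory hypotheses to reduce to a single case.
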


\begin{proof}
Suppose $I^{o}(O)\not=I^{o}(O/e)$. Every potential cut of $O/e$ is also a potential cut of $O$, so $I^{o}(O/e)\subseteq I^{o}(O)$. It follows that $G $ has\ an edge $e_{i}\in I^{o}(O)$, which is excluded from $I^{o}(O/e)$. Then there is a potential cut $Cu$ of $O$ whose least-indexed element is $e_{i}$, and every such $Cu$ includes $e$. Similarly, if $I^{o}(^{e}O)\not=I^{o}(O/e)$ then $G$ has an edge $e_{j}\in I^{o}(^{e}O)$, which is excluded from $I^{o}(O/e)$. Then there is a potential cut $Cu^{\prime }$ of $^{e}O$ whose least-indexed element is $e_{j}$, and every such $Cu^{\prime }$ includes $e$.

Interchanging $O$ and $^{e}O$ if necessary, we may presume that $i\leq j$.

Suppose $Cu$ and $Cu^{\prime }$ are as described. Then there are subsets $U,U^{\prime }\subseteq V(G)$ such that $Cu$ is the set of edges of $G$ with precisely one vertex in $U$, $Cu^{\prime }$ is the set of edges of $G$ with precisely one vertex in $U^{\prime }$, every edge of $Cu$ is either unoriented or directed away from $U$ in $O$, and every edge of $Cu^{\prime }$ is either unoriented or directed away from $U^{\prime }$ in $^{e}O$. As $e\in Cu\cap Cu^{\prime }$, $u\in U\setminus U^{\prime }$ and $v\in U^{\prime}\setminus U$.

Let $e_{i}$ have vertices $x$ and $y$, with $x\in U$ and $y\notin U$. Then $e_{i}$ is directed from $x$ to $y$ in $O$.

If $x\in U^{\prime }$, let $Cu^{\prime \prime }$ be the set of edges of $G$ with precisely one vertex in $U\cap U^{\prime }$. Then $Cu^{\prime \prime }$ is a union of disjoint cuts of $G$, and $Cu^{\prime \prime }\neq \varnothing$ as $e_{i}\in Cu^{\prime \prime }$. As $Cu^{\prime \prime }\subseteq Cu\cup Cu^{\prime }$, every element of $Cu^{\prime \prime }$ is either unoriented in $O$ or directed away from $U\cap U^{\prime }$. In addition, $e_{i}$ is the least-indexed element of $Cu^{\prime \prime }$. But $e\notin Cu^{\prime \prime }$, and this contradicts the hypothesis that $e_{i}\notin I^{o}(O/e) $.

If $x\notin U^{\prime }$, then $y\notin U^{\prime }$ as no edge is directed into $U^{\prime }$. Let $Cu^{\prime \prime }$ be the set of edges of $G$ with precisely one vertex in $U\cup U^{\prime }$. Again, $Cu^{\prime \prime}\neq \varnothing $ as $e_{i}\in Cu^{\prime \prime }$, and $Cu^{\prime \prime }\subseteq Cu\cup Cu^{\prime }$, so every element of $Cu^{\prime \prime }$ is either unoriented or directed away from $U\cup U^{\prime }$. In addition, $e_{i}$ is the least-indexed element of $Cu^{\prime \prime }$; but again, the fact that $e\notin Cu^{\prime \prime }$ contradicts the hypothesis that $e_{i}\notin I^{o}(O/e)$.
\end{proof}

\begin{lemma}
$I^{o}(O)\not=I^{o}(O/e)\Rightarrow I^{u}(^{e}O)=I^{u}(O/e)$.
\end{lemma}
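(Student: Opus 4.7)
The plan is to follow the template established by the preceding eight cut-side lemmas in this appendix. I will assume for contradiction that both $I^{o}(O)\neq I^{o}(O/e)$ and $I^{u}(^{e}O)\neq I^{u}(O/e)$, extract witnessing potential cuts on the two sides, and then combine their defining vertex subsets (taking either the intersection or the union) to manufacture a new potential cut which contradicts one of the two hypotheses. As usual, write $e=\{u,v\}$ with $e$ directed from $u$ to $v$ in $O$; then $e$ is directed from $v$ to $u$ in $^{e}O$.

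First I would extract the witnesses. Since every potential cut of $O/e$ is a potential cut of $O$, we have $I^{o}(O/e)\subseteq I^{o}(O)$, so there is an $e_{i}\in I^{o}(O)\setminus I^{o}(O/e)$ together with a potential cut $Cu$ of $O$ (with subset $U\subseteq V(G)$, where $u\in U$, $v\notin U$) whose least-indexed element is $e_{i}$, and every such $Cu$ must contain $e$. Similarly, because every potential cut of $(O/e)\cup\{e_{j}^{\sigma_{u}(e_{j})}\}$ is a potential cut of $^{e}O\cup\{e_{j}^{\sigma_{u}(e_{j})}\}$, we have $I^{u}(O/e)\subseteq I^{u}(^{e}O)$, so there is an $e_{j}\in I^{u}(^{e}O)\setminus I^{u}(O/e)$ together with a potential cut $Cu'$ of $^{e}O\cup\{e_{j}^{\sigma_{u}(e_{j})}\}$ (with subset $U'\subseteq V(G)$, where $v\in U'$, $u\notin U'$) whose least-indexed element is $e_{j}$, and every such $Cu'$ must contain $e$. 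Note $e_{i}\neq e_{j}$ because $e_{i}$ is oriented and $e_{j}$ is unoriented in $O$, so $i\neq j$.

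The core step is a case analysis exactly parallel to the one used in the proof that $I^{o}(O)\neq I^{o}(O-e)\Rightarrow I^{o}(O)=I^{o}(O/e)$. If $i<j$, let $e_{i}=\{x,y\}$ with $x\in U$, $y\notin U$, oriented from $x$ to $y$. Depending on whether $x\in U'$ or $x\notin U'$ (and using $j>i$ to exclude the case that $e_{i}$ is oriented into $U'$), form $Cu''$ as the set of edges of $G$ with exactly one endpoint in $U\cap U'$ or in $U\cup U'$ respectively, so that $e_{i}\in Cu''$ and $e\notin Cu''$ (the latter because $u,v$ lie in $U\triangle U'$ in the first construction, and $u,v\in U\cup U'$ in the second). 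Because $Cu''\subseteq Cu\cup Cu'$ and $\min(Cu\cup Cu')=e_{i}$, the min of $Cu''$ is $e_{i}$; extracting a minimal sub-cut $Cu^{*}\subseteq Cu''$ containing $e_{i}$ yields a potential cut of $O$ with min $e_{i}$ (the unoriented-in-$O$ status of $e_{j}$ means any appearance of $e_{j}$ with orientation $\sigma_{u}(e_{j})$ is still permissible for a potential cut of $O$), hence a potential cut of $O/e$ with min $e_{i}$, contradicting $e_{i}\notin I^{o}(O/e)$. If $i>j$, we perform the symmetric construction centered on $e_{j}=\{p,q\}$ with $p\in U'$, $q\notin U'$: when $p\in U$ use $U\cap U'$, and when $p\notin U$ use $U\cup U'$ (noting $j<i$ forces $q\notin U$ as well), obtaining $Cu''$ with $e_{j}\in Cu''$, $e\notin Cu''$, and min equal to $e_{j}$. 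Extracting a minimal sub-cut $Cu^{*}$ gives a potential cut of $(O/e)\cup\{e_{j}^{\sigma_{u}(e_{j})}\}$ with min $e_{j}$, contradicting $e_{j}\notin I^{u}(O/e)$.

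The main delicate point will be the treatment of the edge $e_{j}$ itself when it happens to lie in $Cu''$. The cut $Cu'$ is defined in the fourientation $^{e}O\cup\{e_{j}^{\sigma_{u}(e_{j})}\}$, whereas the target $Cu''$ should be a potential cut of $O\cup\{e_{j}^{\sigma_{u}(e_{j})}\}$; these two fourientations agree on every edge other than $e$, so in particular the direction $\sigma_{u}(e_{j})$ assigned to $e_{j}$ in the two fourientations coincides, and the analysis of whether that direction is ``away from $U\cap U'$" (or from $U\cup U'$) reduces to checking that the $U'$-endpoint of $e_{j}$ also lies in the target set. The subordinate bookkeeping to watch is that whenever we restrict to a minimal sub-cut $Cu^{*}$, the status of edges $e'\neq e,e_{j}$ in $O\cup\{e_{j}^{\sigma_{u}(e_{j})}\}$ matches their status in $O$ and in $^{e}O$, so the verification that $Cu^{*}$ is a potential cut of $O$ (respectively of $(O/e)\cup\{e_{j}^{\sigma_{u}(e_{j})}\}$) goes through without further incident.
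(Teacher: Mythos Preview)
Your proposal is correct and follows essentially the same approach as the paper's proof: extract witnessing potential cuts for $e_i\in I^{o}(O)\setminus I^{o}(O/e)$ and $e_j\in I^{u}(^{e}O)\setminus I^{u}(O/e)$ with vertex sets $U$ and $U'$ (noting $u\in U\setminus U'$, $v\in U'\setminus U$), split into cases $i<j$ versus $i>j$, and in each case form $Cu''$ from $U\cap U'$ or $U\cup U'$ so that $e\notin Cu''$ while the smaller-indexed witness lies in $Cu''$ as its minimum, contradicting the appropriate exclusion from $I^{o}(O/e)$ or $I^{u}(O/e)$. The only cosmetic differences are that the paper phrases the sub-case split as ``$j<i$ implies either $x,y\in U$ or $x,y\notin U$'' rather than branching on a single endpoint, and it omits your explicit closing discussion of why the status of $e_j$ transfers correctly between $^{e}O\cup\{e_j^{\sigma_u(e_j)}\}$ and $O\cup\{e_j^{\sigma_u(e_j)}\}$.
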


\begin{proof}
Suppose $I^{o}(O)\not=I^{o}(O/e)$. Every potential cut of $O/e$ is also a potential cut of $O$, so $I^{o}(O/e)\subseteq I^{o}(O)$. It follows that $G $ has\ an edge $e_{i}\in I^{o}(O)$, which is excluded from $I^{o}(O/e)$. Then there is a potential cut $Cu$ of $O$ whose least-indexed element is $e_{i}$, and every such $Cu$ includes $e$. Similarly, if $I^{u}(^{e}O)\not=I^{u}(O/e)$ then $G$ has an edge $e_{j}\in I^{u}(^{e}O)$, which is excluded from $I^{u}(O/e)$. Then there is a potential cut $Cu^{\prime }$ of $^{e}O\cup \{e_{j}^{\sigma_{u}(e_{j})}\}$ whose least-indexed element is $e_{j}$, and every such $Cu^{\prime }$ includes $e$.

Suppose $Cu$ and $Cu^{\prime }$ are as described. Then there are subsets $U,U^{\prime }\subseteq V(G)$ such that $Cu$ is the set of edges of $G$ with precisely one vertex in $U$, $Cu^{\prime }$ is the set of edges of $G$ with precisely one vertex in $U^{\prime }$, every edge of $Cu$ is either unoriented or directed away from $U$ in $O$, and every edge of $Cu^{\prime }$ is either unoriented or directed away from $U^{\prime }$ in $^{e}O\cup \{e_{j}^{\sigma _{u}(e_{j})}\}$. As $e\in Cu\cap Cu^{\prime }$, $u\in U\setminus U^{\prime }$ and $v\in U^{\prime }\setminus U$.

Suppose $i<j$, and let $e_{i}$ have vertices $x\in U$ and $y\notin U$. Then $e_{i}$ is directed from $x$ to $y$ in $O$. Notice that $i<j$ implies that either $x,y\in U^{\prime }$ or $x,y\notin U^{\prime }$.

If $x,y\in U^{\prime }$, then let $Cu^{\prime \prime }$ be the set of edges of $G$ with precisely one vertex in $U\cap U^{\prime }$. Then $Cu^{\prime \prime }$ is a union of disjoint cuts of $G$, and $Cu^{\prime \prime }\neq \varnothing $ as $e_{i}\in Cu^{\prime \prime }$. As $Cu^{\prime \prime }\subseteq Cu\cup Cu^{\prime }$, every element of $Cu^{\prime \prime }$ is either unoriented in $O$ or directed away from $U\cap U^{\prime }$. In addition, $e_{i}$ is the least-indexed element of $Cu^{\prime \prime }$. But  $e\notin Cu^{\prime \prime }$, so we have contradicted the hypothesis that $e_{i}\notin I^{o}(O/e)$.

If $x,y\notin U^{\prime }$, let $Cu^{\prime \prime }$ be the set of edges of  $G$ with precisely one vertex in $U\cup U^{\prime }$. Again, $Cu^{\prime \prime }\neq \varnothing $ as $e_{i}\in Cu^{\prime \prime }$, and $Cu^{\prime \prime }\subseteq Cu\cup Cu^{\prime }$, so every element of $Cu^{\prime \prime }$ is either unoriented in $O$ or directed away from $U\cup U^{\prime }$. In addition, $e_{i}$ is the least-indexed element of $Cu^{\prime \prime }$; but again, $e\notin Cu^{\prime \prime }$ contradicts the hypothesis that $e_{i}\notin I^{o}(O/e)$.

As $i<j$ leads to a contradiction, and $i=j$ contradicts the fact that $e_{i} $ is oriented in $O$ and $e_{j}$ is not, we conclude that $i>j$. Let $e_{j}$ have vertices $x\in U^{\prime }$ and $y\notin U^{\prime }$; then $e_{j}$ is directed from $x$ to $y$ in $^{e}O\cup \{e_{j}^{\sigma_{u}(e_{j})}\}$. Notice that $j<i$ implies either $x,y\in U$ or $x,y\notin U$. We derive contradictions just as before, using $U\cap U^{\prime }$ if $x,y\in U$ and $U\cup U^{\prime }$ if $x,y\notin U$.
\end{proof}

\subsection{Consequences of \texorpdfstring{$I^{u}(O)\not=I^{u}(O/e)$}{}}

\begin{lemma}
$I^{u}(O)\not=I^{u}(O/e)\Rightarrow L^{o}(O)=L^{o}(O-e)$ and~$L^{b}(O)=L^{b}(O-e)$ and~$L^{o}(^{e}O)=L^{o}(O/e)$ and~$L^{b}(^{e}O)=L^{b}(O/e)$ and~$I^{o}(^{e}O)=I^{o}(O/e)$ and~$I^{u}(O)=I^{u}(O-e)$ and~$I^{o}(^{e}O)=I^{o}(O/e)$.
\end{lemma}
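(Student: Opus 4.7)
The plan is to recognize that each of the assertions in the conclusion is the contrapositive of an implication already proved in an earlier subsection, possibly after substituting $^{e}O$ for $O$. The key identities enabling the substitution are $^{e}(^{e}O)=O$, $(^{e}O)/e=O/e$, and $(^{e}O)-e=O-e$. Thus a previously established implication of the form $C(O)\Rightarrow X(O)=Y(O)$ may be reapplied with $^{e}O$ in the role of $O$ to yield $C(^{e}O)\Rightarrow X(^{e}O)=Y(O)$ whenever $Y$ is built from $O/e$ or $O-e$.

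First I would treat the three assertions $L^{o}(O)=L^{o}(O-e)$, $L^{b}(O)=L^{b}(O-e)$, and $I^{u}(O)=I^{u}(O-e)$ by directly contraposing the final lemmas in the subsections ``Consequences of $L^{o}(O)\neq L^{o}(O-e)$,'' ``Consequences of $L^{b}(O)\neq L^{b}(O-e)$,'' and ``Consequences of $I^{u}(O)\neq I^{u}(O-e)$,'' each of which already has $I^{u}(O)=I^{u}(O/e)$ as its conclusion. Next, for the assertions $L^{o}(^{e}O)=L^{o}(O/e)$, $L^{b}(^{e}O)=L^{b}(O/e)$, and $I^{o}(^{e}O)=I^{o}(O/e)$, I would take the implications $L^{o}(O)\neq L^{o}(O/e)\Rightarrow I^{u}(^{e}O)=I^{u}(O/e)$ (Lemma~\ref{cyclesc6}), $L^{b}(O)\neq L^{b}(O/e)\Rightarrow I^{u}(^{e}O)=I^{u}(O/e)$, and $I^{o}(O)\neq I^{o}(O/e)\Rightarrow I^{u}(^{e}O)=I^{u}(O/e)$ from the earlier subsections, apply each with $^{e}O$ substituted for $O$ (so that the right-hand side becomes $I^{u}(O)=I^{u}(O/e)$ using $^{e}(^{e}O)=O$), and then contrapose.

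I expect essentially no genuine mathematical obstacle here: the heavy combinatorial work involving potential cuts, cycles, and Lemma~\ref{lem:preliminary} was already expended in the preceding subsections, and the present lemma is a purely formal assembly. The only care required is to match each desired conclusion to the precise earlier implication and to verify that the toggle $^{e}(\cdot)$ commutes correctly with deletion and contraction. This mirrors the proofs of the analogous ``collected consequence'' lemmas that open each of the previous seven subsections, all of which are dispatched by a one-line appeal to earlier results applied to $^{e}O$.
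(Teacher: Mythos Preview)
Your proposal is correct and follows essentially the same approach as the paper, which dispatches this lemma in one line by saying the implications follow from earlier results applied to $^{e}O$ rather than $O$. Your write-up is in fact more explicit than the paper's: you identify precisely which earlier lemma yields each conclusion and spell out the role of the identities $^{e}(^{e}O)=O$ and $(^{e}O)/e=O/e$, $(^{e}O)-e=O-e$ in making the substitution work.
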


\begin{proof}
These implications follow from\ earlier results, applied to $^{e}O$ rather than $O$.
\end{proof}

\begin{lemma}
$I^{u}(O)\not=I^{u}(O/e)\Rightarrow I^{u}(^{e}O)=I^{u}(O/e)$.
\end{lemma}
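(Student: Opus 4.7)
My proof would adapt, essentially verbatim, the case-analysis template already established for the $I^u$-related lemmas in this subsection, with the roles of deletion and contraction swapped. The structure parallels, in particular, the proof of $I^u(O) \neq I^u(O-e) \Rightarrow I^u(^{e}O) = I^u(O-e)$, but inverting the logic: here every witnessing potential cut is forced to \emph{contain} $e$ rather than avoid it.

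For the setup: whenever $i < p$, each potential cut of $(O \cup \{e_i^{\sigma_u(e_i)}\})/e$ is also a potential cut of $O \cup \{e_i^{\sigma_u(e_i)}\}$ (and similarly for $^{e}O$), yielding $I^u(O/e) \subseteq I^u(O) \cap I^u(^{e}O)$. Under the hypothesis and the negation of the conclusion we can fix $e_i \in I^u(O) \setminus I^u(O/e)$ and $e_j \in I^u(^{e}O) \setminus I^u(O/e)$ together with vertex subsets $U, U' \subseteq V(G)$ inducing potential cuts $Cu$ of $O \cup \{e_i^{\sigma_u(e_i)}\}$ and $Cu'$ of $^{e}O \cup \{e_j^{\sigma_u(e_j)}\}$, minimized by $e_i, e_j$ respectively, such that every such choice of cut must contain $e$. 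Since $e$ is directed $u \to v$ in $O$ and $v \to u$ in $^{e}O$, the constraints $e \in Cu$ and $e \in Cu'$ force $u \in U \setminus U'$ and $v \in U' \setminus U$.

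For the main argument: after possibly interchanging $O$ and $^{e}O$, assume $i \leq j$. Let $e_i$ have endpoints $x \in U, y \notin U$, so $e_i$ is directed $x \to y$ in $O \cup \{e_i^{\sigma_u(e_i)}\}$. Split on whether $x \in U'$; take $Cu''$ to be the cut induced by $U \cap U'$ in the first subcase and by $U \cup U'$ in the second (after checking that $y \notin U'$ in the second subcase as well, using either $i < j$ or, if $i = j$, the equality $\sigma_u(e_i) = \sigma_u(e_j)$). In both subcases $Cu'' \subseteq Cu \cup Cu'$, so each edge is constrained by either $Cu$ or $Cu'$; $e \notin Cu''$ because $u, v$ lie on opposite sides of both $U \cap U'$ and $U \cup U'$; and $e_i$ is the minimum element of $Cu''$. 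The resulting $Cu''$ therefore descends to a potential cut of $(O \cup \{e_i^{\sigma_u(e_i)}\})/e$ witnessing $e_i \in I^u(O/e)$, contradicting the choice of $e_i$.

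The main obstacle I anticipate is verifying that every edge of $Cu''$ is unoriented \emph{in $O \cup \{e_i^{\sigma_u(e_i)}\}$} (as opposed to in $^{e}O \cup \{e_j^{\sigma_u(e_j)}\}$) or directed away from the chosen subset. An edge $f \in Cu''$ inherited from $Cu$ is fine by definition, but one inherited from $Cu'$ (but not $Cu$) has its orientation constraint originally stated for $^{e}O \cup \{e_j^{\sigma_u(e_j)}\}$, so the translation must be justified. These two fourientations agree off $\{e, e_i, e_j\}$, so the only troublesome edges are $f = e_j$ (and, in the borderline case $i = j$, $f = e_i = e_j$). For $i < j$ the edge $e_j$ is unoriented in $O \cup \{e_i^{\sigma_u(e_i)}\}$, vacuously satisfying the constraint; for $i = j$, the equality of the $\sigma_u$ labels makes the orientation of $e_i$ agree in both augmented fourientations, and a direct check shows its endpoints sit in $U \cap U'$ and outside $U \cup U'$, so either variant of $Cu''$ accommodates it.
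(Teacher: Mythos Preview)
Your proposal is correct and follows essentially the same approach as the paper's own proof: the same containment $I^u(O/e)\subseteq I^u(O)\cap I^u(^{e}O)$, the same witnesses $e_i,e_j$ with cuts $Cu,Cu'$ forced to contain $e$, the same symmetry reduction to $i\le j$, and the same $U\cap U'$ versus $U\cup U'$ case split to produce a potential cut of $(O\cup\{e_i^{\sigma_u(e_i)}\})/e$ contradicting $e_i\notin I^u(O/e)$. One small slip: you write that $e\notin Cu''$ because $u,v$ lie on \emph{opposite} sides of $U\cap U'$ and $U\cup U'$, but in fact they lie on the \emph{same} side (both outside $U\cap U'$, both inside $U\cup U'$), which is precisely why $e\notin Cu''$; your extra care in transferring the orientation constraint on $Cu'$-edges from $^{e}O\cup\{e_j^{\sigma_u(e_j)}\}$ back to $O\cup\{e_i^{\sigma_u(e_i)}\}$ is a point the paper leaves implicit.
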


\begin{proof}
Suppose $I^{u}(O)\not=I^{u}(O/e)$. If $i<p$ then every potential cut of $(O\cup \{e_{i}^{\sigma _{u}(e_{i})}\})/e$ is also a potential cut of $O\cup \{e_{i}^{\sigma _{u}(e_{i})}\}$, so $I^{u}(O/e)\subseteq I^{u}(O)$. It follows that $G$ has\ an edge $e_{i}\in I^{u}(O)$, which is excluded from $I^{u}(O/e)$. Then there is a potential cut $Cu$ of $O\cup \{e_{i}^{\sigma_{u}(e_{i})}\}$ whose least-indexed element is $e_{i}$, and every such $Cu$ includes $e$. Similarly, if $I^{u}(^{e}O)\not=I^{u}(O/e)$ then $G$ has\ an edge $e_{j}\in I^{u}(^{e}O)$, which is excluded from $I^{u}(O/e)$. Then there is a potential cut $Cu^{\prime }$ of $^{e}O\cup \{e_{j}^{\sigma _{u}(e_{j})}\}$ whose least-indexed element is $e_{j}$, and every such $Cu^{\prime }$ includes $e$.

Suppose $Cu$ and $Cu^{\prime }$ are as described. Then there are subsets $U,U^{\prime }\subseteq V(G)$ such that $Cu$ is the set of edges of $G$ with precisely one vertex in $U$, $Cu^{\prime }$ is the set of edges of $G$ with precisely one vertex in $U^{\prime }$, every edge of $Cu$ is either unoriented or directed away from $U$ in $O\cup \{e_{i}^{\sigma_{u}(e_{i})}\} $, and every edge of $Cu^{\prime }$ is either unoriented or directed away from $U^{\prime }$ in $^{e}O\cup \{e_{j}^{\sigma_{u}(e_{j})}\} $. As $e\in Cu\cap Cu^{\prime }$, $u\in U\setminus U^{\prime} $ and $v\in U^{\prime }\setminus U$.

Interchanging $O$ and $^{e}O$ if necessary, we may presume that $i\leq j$. Let $e_{i}$ have vertices $x\in U$ and $y\notin U$. Then $e_{i}$ is directed from $x$ to $y$ in $O\cup \{e_{i}^{\sigma _{u}(e_{i})}\}$.

If $x\in U^{\prime }$, let $Cu^{\prime \prime }$ be the set of edges of $G$ with precisely one vertex in $U\cap U^{\prime }$. Then $Cu^{\prime \prime }$ is a union of disjoint cuts of $G$, and $Cu^{\prime \prime }\neq \varnothing $ as $e_{i}\in Cu^{\prime \prime }$. As $Cu^{\prime \prime }\subseteq Cu\cup Cu^{\prime }$, every element of $Cu^{\prime \prime }$ is either unoriented in $O\cup \{e_{i}^{\sigma _{u}(e_{i})}\}$ or directed away from $U\cap U^{\prime }$. In addition, $e_{i}$ is the least-indexed element of $Cu^{\prime \prime }$. But $e\notin Cu^{\prime \prime }$, and this contradicts the hypothesis that $e_{i}\notin I^{u}(O/e)$.

Suppose $x\notin U^{\prime }$. If $i<j$ then $y\notin U^{\prime }$, as $e_{i}\notin Cu^{\prime }$. If $i=j$ then $y\notin U^{\prime }$, as no edge is directed into $U^{\prime }$ in $^{e}O\cup \{e_{j}^{\sigma _{u}(e_{j})}\}$. Either way, $y\notin U^{\prime }$. Let $Cu^{\prime \prime }$ be the set of edges of $G$ with precisely one vertex in $U\cup U^{\prime }$. Again, $Cu^{\prime \prime }\neq \varnothing $ as $e_{i}\in Cu^{\prime \prime }$, and $Cu^{\prime \prime }\subseteq Cu\cup Cu^{\prime }$, so every element of $Cu^{\prime \prime }$ is either unoriented in $O\cup \{e_{i}^{\sigma_{u}(e_{i})}\}$ or directed away from $U\cup U^{\prime }$. In addition, $e_{i}$ is the least-indexed element of $Cu^{\prime \prime }$; but again, the fact that $e\notin C^{\prime \prime }$ contradicts the hypothesis that $e_{i}\notin I^{u}(O/e)$.
\end{proof}

\end{document}